\theoremstyle{plain}
\newtheorem{thm}{Theorem}[section]
\newtheorem{prop}[thm]{Proposition}
\newtheorem{lem}[thm]{Lemma}
\theoremstyle{definition}
\newtheorem{defn}[thm]{Definition}
\newtheorem{nota}[thm]{Notation}
\theoremstyle{remark}
\newtheorem*{rem}{Remark}
\numberwithin{equation}{section}
\title{Blow-up conditions for gravity water-waves}
\author{Thibault de Poyferr\'{e}\footnote{UMR 8553 CNRS, Laboratoire de Mathématiques et Applications de l'Ecole Normale Supérieure, 75005 Paris, France. Email: tdepoyfe@dma.ens.fr}}
\date{}
\DeclareMathOperator{\RE}{Re}
\DeclareMathOperator{\rot}{rot}
\DeclareMathOperator{\dive}{div}
\def \div {\dive}
\def\C{\bm{\mathrm{C}}}
\def\d{\,\mathrm{d}}
\def\eps{\varepsilon}
\def\jap{\langle D_x\rangle}
\def\la{\left\vert}
\def\lA{\left\Vert}
\def\lb{\left[}
\def\lB{\left\{}
\def\lp{\left(}
\def\ls{\left\langle}
\def\mez{\frac{1}{2}}
\def\M{\mathcal{M}}
\def\N{\bm{\mathrm{N}}}
\def\nab{\nabla_{\!x,y}}
\def\nabz{\nabla_{\!x,z}}
\def\P{\mathcal{P}}
\def\ph{\varphi}
\def\R{\bm{\mathrm{R}}}
\def\ra{\right\vert}
\def\rA{\right\Vert}
\def\rb{\right]}
\def\rB{\right\}}
\def\rp{\right)}
\def\rs{\right\rangle}
\def\tder{\frac{\mathrm{d}}{\mathrm{d}t}}
\begin{document}

\maketitle

\begin{abstract}
We exhibit blow-up conditions for the gravity water-waves equations in any dimension and in domains with arbitrary bottoms. 
We follow the method by Alazard, Burq and Zuily of using a paradifferential reduction of the equations and derive precise a priori Sobolev estimates. Those estimates are then used to prove three different blow-up conditions where neither the boundedness of the curvature of the surface nor 
the boundedness in time of the Lipschitz norm of the velocity are needed.
\end{abstract}

\section{Introduction}

In this paper, we derive a blow-up criterion for the water-waves system, without surface tension and with arbitrary bottom.
The water-waves problem is the study of the motion under the influence of gravity of a homogeneous, inviscid fluid, typically water, inside a laterally infinite container, and separated from the atmosphere by 
a free interface. %, the surface of this ocean.

We will assume the presence of a constant gravity field acting along the~$e_y$ axis, distinguishing it from the horizontal plane. This horizontal plane will be of dimension~$d\geq1$, with in 
applications~$d=1$ or~$2$. Positions will be expressed in coordinates~$(x,y)\in\R^d\times\R$.
We write~$\nabla=\nabla_{\!x}=(\partial_{x_1},\dots,\partial_{x_d})$ and~$\nab=(\nabla_{\!x},\partial_y)$.

At each time~$t\in\R^+$, the fluid will occupy a domain~$\Omega(t)$. We suppose that the free surface, which will be denoted~$\Sigma(t)$ is the graph of a continuous function~$y=\eta(t,x)$ representing the variation of the water 
surface from its rest level.
In order to account for a wide variety of bottoms, we will consider a simply connected open subset~$\mathcal{O}$ of~$\R^{d+1}$, such that
\begin{equation*}
	\Omega(t)=\lB(x,y)\in\R^d\times\R;\;(x,y)\in\mathcal{O},y<\eta(t,x)\rB.
\end{equation*}
We suppose that there exists $h>0$ such that, for all times, the domain~$\Omega(t)$ contains a horizontal strip of width~$h$,
\begin{equation} \label{eq:bothyp}
	\Omega_h(t):=\lB(x,y)\in\R^d\times\R;\;\eta(t,x)-h<y<\eta(t,x)\rB\subset\Omega(t).
\end{equation}
This means that the bottom, denoted by~$\Gamma$, is nowhere emerging (which precludes islands and beaches).

The velocity~$v(t,x,y)\in\R^{d+1}$ of the fluid occupying~$\Omega(t)$ follows the incompressible Euler equations
\begin{equation} \label{eq:Euler}
	\lB
	\begin{gathered}
		\partial_tv+\lp v\cdot\nab\rp v+\nab P=-ge_y,\\
		\div_{x,y}v=0,
	\end{gathered}
	\right. 
\end{equation}
where~$g$ is the acceleration of gravity, supposed constant and positive, and where~$P(t,x,y)\in\R$ is the pressure of the fluid.
It is customary in oceanography to impose in addition for the fluid to be curl-free, so that~$\rot_{x,y}v=0$ in~$\Omega(t)$.

In addition, we need to impose boundary conditions on~$\Sigma(t)$ and~$\Gamma$.
First there are the kinematic conditions that the fluid does not cross or leaves those boundaries, so that
\begin{gather} 
	v\cdot n=0\quad\text{on }\Gamma, \label{eq:BotCnd}\\
	\partial_t\eta=\sqrt{1+\la\nabla\eta\ra^2}\, v\cdot\nu\quad\text{on }\Sigma, \label{eq:SurfCnd}
\end{gather}
where~$n$ and~$\nu(t)$ are the the exterior unit normals respectively to~$\Gamma$ and~$\Sigma(t)$.
At last there is a dynamic boundary condition on the pressure. We suppose that there is no surface tension at the surface, which implies that there is no pressure jump between the fluid and the atmosphere.
We assume this atmospheric pressure to be constant, and we can change the definition of~$P$ by an additive constant to take~$P_\text{atm}=0$. Then
\begin{equation*}
	P\rvert_{y=\eta}=0.
\end{equation*}

\begin{rem}
	\begin{itemize}
		\item By imposing for the surface~$\Sigma(t)$ to be a graph, we implicitly assumed that our solutions will blow up when this ceases to be the case. 
		      It has been proved by Castro, C{\'o}rdoba, Fefferman, Gancedo and G{\'o}mez-Serrano~\cite{CastroCordobaFeffermanGancedoGomezSerranoSingGrav} 
		      (see also Coutand-Shkoller \cite{CoutandShkollerCMP}) that some cases of a non-graph smooth surface 
		      can evolve to a self-intersecting surface, the so-called splash singularities, where this physical model does not make sense anymore. 
		      This shows that any study of blow-up without the graph hypothesis should involve some geometric quantities.
		\item The curl-free hypothesis is a good approximation for most deep-ocean applications, however it ceases to apply near a cost or when we take the Coriolis effect into account.
		      See Castro and Lannes~(\cite{CastroLannesVortForm}) for a formulation and some results with vorticity.
		\item Since~$\Gamma$ is not always smooth, its normal may not be defined. We will later give a variational meaning to this condition, coinciding with the strong sense when the normal exists.
	\end{itemize}
\end{rem}
For more on those hypotheses and on this model see the book by Lannes~\cite{LannesWWPblm}.

Now from the simple connectedness of~$\mathcal{O}$, and therefore of~$\Omega(t)$, and because~$\div_{x,y}v=0$ and~$\rot_{x,y}v=0$,
we see that there exists a scalar function~$\phi$ defined on the fluid domain such that
\begin{gather*}
	\nab\phi=v\quad\text{in }\Omega,\\
	\Delta_{x,y}\phi=0\quad\text{in }\Omega.
\end{gather*}
Now the Euler equation~(\ref{eq:Euler}) and the boundary conditions~(\ref{eq:BotCnd}) and~(\ref{eq:SurfCnd}) can be recast for this velocity potential, 
becoming ---up to a harmless change of the definition of~$\phi$ by a time-dependent constant--- the Bernoulli formulation
\begin{equation} \label{eq:Bern}
	\lB
	\begin{aligned}
		\partial_t\phi+\mez\la\nab\phi\ra^2+gy=-P\quad&\text{in }\Omega(t),\\
		\partial_t\eta=\partial_y\phi-\nabla\eta\cdot\nabla\phi\quad&\text{on }\Sigma(t),\\
		\partial_n\phi=0\quad&\text{on }\Gamma,\\
		P=0\quad&\text{on }\Sigma(t).
	\end{aligned}
	\right.
\end{equation}

The Cauchy problem for this system has been widely studied, starting from the works of Nalimov~(\cite{NalimovCPProb}), Shinbrot~(\cite{ShinbrotInPbl}), Yosihara~(\cite{YosiharaGrWa}) 
and Craig~(\cite{CraigExWWBoussKdV}).
The first results for Sobolev spaces and without smallness assumptions are due to Wu~(\cite{WuWPos2D,WuWPos3D}). A recent extension with rougher data, essentially H\"{o}lder with exponent~$3/2$
has been proposed by Alazard, Burq and Zuily in~\cite{AlazardBurqZuilyExw/ST}, with another extension using Strichartz estimates in~\cite{AlazardBurqZuilyStrEstGrav}.
More Recently, Kinsley and Wu have obtained in~\cite{KinsleyWuAngled} a priori estimates covering the case with angled crests.
The next natural objective is 
to find a blow-up criterion for the system. 
Christodoulou and Lindblad~(\cite{ChristodoulouLindbladMotFrSur}) proved such a criterion involving geometric quantities for the case 
without bottom. 
They showed that the  solutions can be extended as long as the curvature of the surface and the derivative of the velocity remain bounded. More recently, Wang and Zhang~(\cite{WangZhangBrkWW})
used some of the methods of~\cite{AlazardBurqZuilyExw/ST} to prove that as long as
\begin{equation*}
	\sup_{0\leq t< T}\lA\kappa(t)\rA_{L^2\cap L^p}+\int_0^T\lA\lp\nabla V,\nabla B\rp\rA_{W^{1,\infty}}^6\d t
\end{equation*}
is bounded, the solution can be extended after the time~$T$. Here~$\kappa$ is the curvature of~$\Sigma$, $V$ and~$B$ are respectively the horizontal and vertical traces of the velocity~$v$ at~$\Sigma$
and~$p>2d$. 
We will prove three blow-up criterions which extend this result. The results proved in this paper
involve less regular norms of the free surface and are valid for the case with rough bottom. More importantly, we will prove two results which 
involve only $L^1$ norms in time of the highest-order norms. 
Notice that one of the results below (see Theorem~\ref{thm:mainsob2}) is used in \cite{AlazardBurqZuilyStrEstGrav}(see Section 5.4) to deduce an existence result 
from a priori Sobolev and Strichartz estimates. Since Strichartz estimates involve $L^2$ norms in time (in dimension $d\geq 2$), it is crucial 
to have a blow-up result which involves only $L^p$ norms for $p\leq 2$. In this direction, 
we will obtain sharp results involving only $L^1$ norms (see Theorem~\ref{thm:mainsob} and Theorem~\ref{thm:mainsob2}).
In the case of 2D water-waves ($d=1$), Hunter, Ifrim and Tataru have obtained in~\cite{HunterIfrimTataru2DWW} a blow-up criterion in holomorphic coordinates,
corresponding to ours but sharpened to BMO norms instead of Hölder norms in space.

An important quantity appears in the analysis of the system~(\ref{eq:Bern}), the so-called Rayleigh-Taylor coefficient
\begin{equation*}
	a:=-\partial_yP\rvert_{y=\eta}.
\end{equation*}
In order to solve the Cauchy problem, we need to make a positivity hypothesis on~$a$.
One of the important contributions of Wu's articles~\cite{WuWPos2D,WuWPos3D} is that this condition is always true when the depth is infinite, which corresponds to the case~$\Gamma=\emptyset$.
Lannes then proved the same result for a small regular perturbation of a flat bottom~(\cite{LannesWellPos}).

Inspired by Craig \cite{CraigExWWBoussKdV} and Lannes~\cite{LannesWellPos}, we 
will use the eulerian formulation of the equations in connection with elliptic estimates and 
microlocal or harmonic analysis. In particular, we use the Craig-Sulem-Zakharov formulation of the equations~(\cite{CraigSulemNumWW,ZakharovStabPer}). Notice that since the potential~$\phi$ is harmonic, it is entirely determined by its value at the surface. We define
\begin{equation*}
	\psi(t,x)=\phi(t,x,\eta(x)).
\end{equation*}
The equation can then be recast in terms of~$\eta$ and~$\psi$, which are functions defined on~$\R^d$.
In order to simplify the presentation, Craig and Sulem introduced the use of the Dirichlet-Neumann operator in~\cite{CraigSulemNumWW}. 
This operator is defined as associating to a function defined on~$\Sigma$ the exterior normal of its harmonic 
extension to~$\Omega(t)$. Here for convenience we re-normalize it to get
\begin{equation*}
	G(\eta)\psi=\sqrt{1+\la\nabla\eta\ra^2}\partial_n\phi\rvert_{z=\eta}.
\end{equation*}
With this operator, we get a closed system of equations, known as the Craig-Sulem-Zakharov System
\begin{equation} \label{eq:ZaCrSu}
\lB
\begin{aligned}
&		\partial_t\eta-G(\eta)\psi=0,\\
&		\partial_t\psi+g\eta+\mez\la\nabla\psi\ra^2-\mez\frac{\lp\nabla\eta\cdot\nabla\psi+G(\eta)\psi\rp^2}{1+\la\nabla\eta\ra^2}=0.
\end{aligned}
	\right.
\end{equation}
\begin{rem}
	\begin{itemize}
		\item Under this formulation, the system is Hamiltonian. This is what motivated the original idea of Zakharov. The Hamiltonian is 
		      \begin{equation} \label{eq:Ham}
		      	\mez\int_{\R^d}\psi G(\eta)\psi\d x+\mez\int_{\R^d}g\eta^2\d x,
		      \end{equation}
		      and is conserved by the evolution (see e.g. \cite{LannesWWPblm}).
		%\item One interest of this formulation is that its unknowns~$\psi$ and~$\eta$ are both functions defined on the whole space~$\R^d$ rather than on a moving domain. However, of course, it is still non-local, since the Dirichlet-Neumann operator is non-local itself.
		\item The formal equivalence of this system to the original one is clear and we refer to~\cite{AlazardBurqZuilyZakEul} 
		for a rigorous proof.
	\end{itemize}
\end{rem}

This work is based upon the paper~\cite{AlazardBurqZuilyExw/ST} by Alazard, Burq and Zuily. To recall their main result, we introduce the vertical and horizontal parts of the velocity at the surface,
\begin{equation*}
	B:=(\partial_y\phi)\rvert_{y=\eta},\quad V=(\nabla_{\!x}\phi)\rvert_{y=\eta}.
\end{equation*}
Those quantities can be computed from knowing only~$\eta$ and~$\psi$.
Then
\begin{thm}[Theorem 2.1 of \cite{AlazardBurqZuilyExw/ST}] \label{thm:AlazardBurqZuilyExw/ST}
Let $d\geq1$, $s>1+d/2$ and consider $(\eta_0,\psi_0)$ such that 
\begin{enumerate}
  \item $\eta_0\in H^{s+\mez}(\R^d),\quad\psi_0\in H^{s+\mez}(\R^d),\quad V_0\in H^s(\R^d),\quad B_0\in H^s(\R^d),$
  \item there is $h>0$ such that condition (\ref{eq:bothyp}) holds for $t=0$,
  \item there is a positive constant $c$ such that, for any $x\in\R^d$, $a_0(x)\geq c$.
\end{enumerate}
Then there exists $T>0$ such that the Cauchy problem for (\ref{eq:ZaCrSu}) with initial data $(\eta_0,\psi_0)$ has a unique solution 
$(\eta,\psi)$ in $C^0\left([0,T];H^{s+\mez}(\R^d)\times H^{s+\mez}(\R^d)\right)$, such that
\begin{enumerate}
\item we have $(V,B)\in C^0\left([0,T];H^s(\R^d)\times H^s(\R^d)\right)$,
\item the condition (\ref{eq:bothyp}) holds for $0\leq t\leq T$, with $h$ replaced by $h/2$,
\item for any $0\leq t\leq T$ and any $x\in\R^d$, $a(t,x)\geq c/2$.
\end{enumerate}
\end{thm}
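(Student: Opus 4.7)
The plan is to follow the paradifferential approach of Alazard, Burq and Zuily: reduce the Craig-Sulem-Zakharov system \eqref{eq:ZaCrSu} to a symmetrized paradifferential evolution of the form of a half-wave equation, derive a closed Sobolev energy estimate that relies on the Taylor sign condition, and then construct the solution by an approximation scheme. The first step is to paralinearize the Dirichlet-Neumann operator $G(\eta)\psi$ so that, modulo smoothing remainders, $G(\eta)\psi = T_\lambda(\psi - T_B\eta) - T_V\cdot\nabla\eta$, where $\lambda(x,\xi)$ is the principal symbol of the Dirichlet-Neumann operator and $U := \psi - T_B\eta$ is Alinhac's good unknown. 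Substituting this into the second equation of \eqref{eq:ZaCrSu} and using the explicit computation of $\partial_t$ of the cancelled quadratic terms, one finds that the pair $(\eta, U)$ satisfies
\begin{equation*}
	\partial_t\eta + T_V\cdot\nabla\eta - T_\lambda U = f_1,\qquad \partial_tU + T_V\cdot\nabla U + T_a\eta = f_2,
\end{equation*}
with $a$ the Rayleigh-Taylor coefficient and $f_1,f_2$ of lower order in a tame sense.

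The second step is symmetrization. Since hypothesis~(iii) gives $a\geq c>0$, I can conjugate by $T_{\sqrt{a/\lambda}}$ (or equivalently introduce the weighted unknown $\Phi := T_{\sqrt{a/\lambda}}\eta$) to turn the system into one whose principal part is skew-symmetric with symbol $\pm\sqrt{a\lambda}$, up to remainders controlled by the Sobolev norms at level $s$. Commuting with $\jap^s$ and testing against $(\jap^s\Phi,\jap^s U)$ in $L^2$, the paradifferential calculus together with the fact that $\sqrt{a\lambda}$ is a real elliptic symbol of order $1/2$ yields
\begin{equation*}
	\tder\lp\lA\Phi\rA_{H^s}^2+\lA U\rA_{H^s}^2\rp \leq F\lp\lA(\eta,\psi,V,B)\rA_{H^{s+\mez}\times H^{s+\mez}\times H^s\times H^s}\rp,
\end{equation*}
for some continuous $F$ depending also on $h$ and $c$. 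Inverting the relation between $(\Phi,U)$ and $(\eta,\psi)$ and using elliptic regularity to recover $\psi$ from $(\eta,U)$ closes the a priori estimate at the regularity level stated in the theorem.

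For the construction, I would regularize the initial data and solve an approximating sequence of smooth problems (for instance by adding an artificial viscosity or by a Galerkin-type truncation of the frequency range), for which classical well-posedness is available. The a priori bound above, together with standard contraction estimates in a lower-norm space that give uniqueness, allows passage to the limit and yields the solution on a time interval $[0,T]$ whose length depends only on the norms of the initial data, on $h$, and on $c$; items (2) and (3) in the conclusion follow from continuity in time by taking $T$ small enough.

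The main obstacle is the first step: paralinearizing $G(\eta)$ and obtaining sharp tame Sobolev estimates on it and on the Taylor coefficient $a$ under the very limited regularity $\eta\in H^{s+1/2}$ with $s>1+d/2$, while simultaneously handling the arbitrary bottom $\Gamma$. The bottom is not smooth and its normal is only defined variationally, so the elliptic analysis in $\Omega(t)$ must be done in a flattened strip using the change of variables to $\R^d\times(-1,0)$ adapted to $\Omega_h(t)$, with the Dirichlet-Neumann operator defined via the variational solution of the Laplace problem. Controlling the remainders of this harmonic analysis in terms of $\|\eta\|_{H^{s+1/2}}$ alone, and propagating the strip condition \eqref{eq:bothyp} and the Taylor positivity in time, is where the bulk of the technical work lies.
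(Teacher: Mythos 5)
The paper does not prove this theorem: it imports it verbatim as Theorem~2.1 of \cite{AlazardBurqZuilyExw/ST} and only summarizes, in one sentence, that its proof relies on paradifferential reduction to a symmetrizable quasi-linear system followed by classical energy methods. Your sketch correctly describes that strategy and closely parallels the reduction that Sections~\ref{sec:Paralin}--\ref{sec:EneEst} of this paper carry out for the a~priori estimates underlying the blow-up criteria.

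One bookkeeping point to fix before the energy step can close. After conjugation the principal symbol is $\sqrt{a\lambda}$, of order $1/2$, and the half-wave energy only cancels if the two components of the symmetrized unknown sit at the same Sobolev level. With your choices $U=\psi-T_B\eta\in H^{s+\mez}$ and $\Phi=T_{\sqrt{a/\lambda}}\eta$, the symbol $\sqrt{a/\lambda}$ has order $-1/2$, so $\Phi\in H^{s+1}$ and the two levels differ by $1/2$; an $H^s\times H^s$ energy estimate for $(\Phi,U)$ is then subcritical and will not recover $\eta\in H^{s+\mez}$. The fix, and what this paper (following \cite{AlazardBurqZuilyExw/ST}) actually does, is to differentiate once and work with $\zeta=\nabla\eta\in H^{s-\mez}$ together with the vectorial good unknown $U=V+T_\zeta B\in H^s$; then $\theta=T_{\sqrt{a/\lambda}}\zeta$ and $U$ are both in $H^s$ and the estimate closes at the stated level. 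Apart from this, your identification of the main obstacle (elliptic regularity for the Dirichlet--Neumann operator and the Taylor coefficient with the rough bottom, done via the variational formulation and the flattening diffeomorphism) is accurate, and the construction-of-solutions step is indeed routine once the tame a~priori estimate and a contraction estimate in a lower norm are in hand.
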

The proof of this theorem relies on paradifferential calculus to reduce the equations to a quasi-linear system, and then use classical energy methods for hyperbolic symmetrizable 
quasi-linear systems.
Some notions about paradifferential calculus are recalled in appendix~\ref{ap:para}. It has the advantage of yielding tame estimates of the various nonlinearities, meaning that those estimates are linear with respect to the 
higher order norm.
This will enable us to derive new a priori energy estimates for the paradifferential reduction of the system, from which we will derive a blow-up criterion complementing Theorem~\ref{thm:AlazardBurqZuilyExw/ST}.

Our main result will be derived in three different flavors, which we believe are all equally interesting.
The first one controls the dynamic using only H\"{o}lder norms of the quantities.
\begin{thm} \label{thm:mainhold}
	Let $d\geq1$, $s>1+d/2$, $\eps>0$ and consider $(\eta_0,\psi_0)$ satisfying the assumptions 
	of Theorem~\ref{thm:AlazardBurqZuilyExw/ST}.
	If~$T$ is the maximum existence time of the solution given by this theorem, then either~$T=+\infty$ or one of the following quantities is infinite
		\begin{itemize}
			\item $\sup_{0\leq t< T}\frac{1}{h(t)},$
			\item $\sup_{0\leq t< T}\frac{1}{c(t)},$
			\item $\sup_{0\leq t< T}\lA\eta(t)\rA_{W^{1+\eps,\infty}(\R^d)},$
			\item $\sup_{0\leq t< T}\lA(V,B)(t)\rA_{W^{\eps,\infty}(\R^d)},$
			\item $\sup_{0\leq t< T}\lA a(t)\rA_{W^{\eps,\infty}(\R^d)},$
			\item $\int_0^T\lA(\partial_t a+V\cdot\nabla a)(t)\rA_{L^\infty(\R^d)}\d t,$
			\item $\int_0^T\lA a(t)\rA_{W^{\frac{1}{2},\infty}(\R^d)}\d t,$
			\item $\int_0^T\lA\nabla\eta(t)\rA^3_{W^{\frac{1}{2},\infty}(\R^d)}\d t,$
			\item $\int_0^T\lA(V,B)(t)\rA^3_{W^{1+\eps,\infty}(\R^d)}\d t.$
		\end{itemize}
	Here~$h(t)$ is the largest~$h$ satisfying condition~(\ref{eq:BotCnd}) at time~$t$ and~$c(t)$ the largest~$c$ such that~$a(t,x)\geq c$ for all~$x\in\R^d$.
\end{thm}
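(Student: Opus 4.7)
The plan is a standard continuation argument. Assuming towards contradiction that $T<+\infty$ is the maximal existence time and that all of the quantities listed in the statement are finite, one derives a uniform a priori bound on
\begin{equation*}
	N_s(t):=\lA(\eta,\psi)(t)\rA_{H^{s+\mez}\times H^{s+\mez}}+\lA(V,B)(t)\rA_{H^s\times H^s}
\end{equation*}
on $[0,T)$; combined with the positivity hypotheses on $h(t)$ and $c(t)$, this lets one restart the Cauchy problem from a time just before $T$ via Theorem~\ref{thm:AlazardBurqZuilyExw/ST}, contradicting maximality of $T$. The entire proof thus reduces to producing the a priori bound whose Gronwall-type prefactors involve \emph{only} the norms appearing in the statement, with the stated $L^\infty$- or $L^1$-in-time integrability.

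Following Alazard-Burq-Zuily, I would reduce (\ref{eq:ZaCrSu}) to a paradifferential form. Introducing the good unknown $\omega:=\psi-T_B\eta$ and a suitable complex combination $u$ of $\eta$ and $\omega$, one shows after paralinearization that
\begin{equation*}
	\partial_t u+T_V\cdot\nabla u+iT_\gamma u=f,
\end{equation*}
where $\gamma$ is a paradifferential symbol of order $\mez$ with principal part $\sqrt{a\,\la\xi\ra}$ and $f$ is a source term estimated tamely by the listed quantities. The energy estimate is then obtained by conjugating $u$ by a paradifferential symmetrizer of order $s$ weighted by $\gamma$, and differentiating the resulting norm in time. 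The transport part $T_V\cdot\nabla$ produces a contribution controlled by $\lA\nabla V\rA_{L^\infty}\cdot\text{energy}$; the dispersive part $iT_\gamma$ is skew-adjoint modulo a residue governed by the Hölder norms $\lA a\rA_{W^{1/2,\infty}}$ and $\lA\nabla\eta\rA_{W^{1/2,\infty}}$; and the time derivative of the symmetrizer's weight produces exactly the factor $\lA\partial_t a+V\cdot\nabla a\rA_{L^\infty}$ that appears in the theorem's list.

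The main technical obstacle is to force the various remainder and commutator terms to be polynomial in these Hölder-type norms, with the \emph{correct power} so that a cubic $L^3$ norm in space integrates to $L^1$ in time (hence the appearance of cubed quantities such as $\lA\nabla\eta\rA_{W^{1/2,\infty}}^3$ and $\lA(V,B)\rA_{W^{1+\eps,\infty}}^3$). This requires a tame paralinearization of the Dirichlet-Neumann operator $G(\eta)$, plus a careful bookkeeping that any high-frequency factor costing a half-derivative above Lipschitz is multiplied by at most the square of a Lipschitz-type norm; the constants in all the ellipticity and product estimates must be allowed to depend on $h(t)^{-1}$, $c(t)^{-1}$ and on the $W^{1+\eps,\infty}$ norms of $\eta$, $(V,B)$, $a$, but on nothing higher. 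This is where the bulk of the technical work lies, and where the sharpness of the stated norms is decided.

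Assembling everything, the energy inequality should take the schematic form
\begin{equation*}
	\tder N_s(t)^2\leq\mathcal{F}_1(t)\,N_s(t)^2+\mathcal{F}_2(t)\,N_s(t),
\end{equation*}
where $\mathcal{F}_1$ and $\mathcal{F}_2$ are linear combinations of powers of the norms listed in the theorem. Under the blow-up hypothesis, $\mathcal{F}_1,\mathcal{F}_2\in L^1(0,T)$, so Gronwall's inequality gives a uniform bound on $N_s$ up to $T$. Combined with the propagation of $h(t)\geq h_0/2$ and $a(t,x)\geq c_0/2$ (which follow from their $L^\infty$-in-time boundedness and the equation), this feeds into Theorem~\ref{thm:AlazardBurqZuilyExw/ST} at a time $T-\delta$ to continue the solution past $T$, completing the argument.
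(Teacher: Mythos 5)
Your strategy matches the paper's: paralinearize, symmetrize with the symbol $\gamma=\sqrt{a\lambda}$ and its companion $q=\sqrt{a/\lambda}$, prove a tame energy inequality, and close with Gronwall. You trace each norm in the blow-up list to its origin (the transport to $\lA\nabla V\rA_{L^\infty}$, the symmetrizer's residue to the $C^{\mez}_*$ norms of $a$ and $\nabla\eta$, its time derivative to $\lA\partial_ta+V\cdot\nabla a\rA_{L^\infty}$), and you correctly identify the need for tame Dirichlet--Neumann estimates with constants depending only on H\"{o}lder norms. The cosmetic difference --- you propose a single complex scalar built from $\omega:=\psi-T_B\eta$, while the paper works with the eulerian traces $(V,B,\zeta)$ and a real $2\times 2$ symmetric system in $U_s:=\jap^s V+T_\zeta\jap^s B$ and $\theta_s:=T_q\jap^s\nabla\eta$ --- is immaterial.

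What you omit, and what the paper devotes all of Section~\ref{sec:OrUnk} to, is that Gronwall bounds $\lA(U_s,\theta_s)\rA_{L^2}$ and lower-order Sobolev norms, not $N_s(t)$ directly. Recovering $\lA\eta\rA_{H^{s+\mez}}$ from $\lA T_q\jap^s\nabla\eta\rA_{L^2}$ requires inverting $T_q$ via a parametrix $(I+R+\cdots+R^N)T_{1/q}T_q$ with $R:=I-T_{1/q}T_q$ of order $-\eps$ and $(N+1)\eps>\mez$, since $q$ has only $\eps$ regularity in $x$; recovering $(V,B)$ similarly requires inverting an operator with symbol $p=-\lambda+i\zeta\cdot\xi$ coming from the divergence of the good unknown. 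Even then, Proposition~\ref{prop:backhold} leaves factors $\lA\eta\rA_{C^{3/2}_*}$ and $\lA(V,B)\rA_{C^{1+\eps}_*}$ which are \emph{not} sup-bounded under the theorem's hypotheses; the paper absorbs them by interpolating $\lA(\eta,V,B)\rA_{C^{3/2}_*\times C^{1+\eps}_*\times C^{1+\eps}_*}\leq C_\nu\lA(\eta,V,B)\rA_{L^2}+\nu\lA(\eta,V,B)\rA_{H^{s+\mez}\times H^s\times H^s}$ with $\nu$ small, using Hamiltonian conservation to control $\sup_t\lA\eta(t)\rA_{L^2}$. Your schematic inequality for $\tder N_s^2$ implicitly presumes exactly this bootstrap without carrying it out.
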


Before introducing the second criterion, we observe that in the case where the domain is infinitely deep (that is~$\Gamma=\emptyset$), the equation enjoys a scaling invariance. The critical space corresponds to the index~$s=d/2+1/2$.
We expect to find a better criterion by authorizing a control of a Sobolev norm of a fixed reference index~$s_0$ close to the scaling. This corresponds to our second result
\begin{thm} \label{thm:mainsob}
	Let $d\geq1$, $s>1+d/2$, $s>s_0>1/2+d/2$ and $s_0-1/2-d/2>\eps>0$, and consider $(\eta_0,\psi_0)$ satisfying the assumptions 
	of Theorem~\ref{thm:AlazardBurqZuilyExw/ST}.
	If~$T$ is the maximum existence time of the solution given by this theorem, then either~$T=+\infty$ or one of the following quantities is infinite
		\begin{itemize}
			\item $\sup_{0\leq t< T}\frac{1}{h(t)},$
			\item $\sup_{0\leq t< T}\frac{1}{c(t)},$
			\item $\sup_{0\leq t< T}\lA(\eta,\psi,V,B)(t)\rA_{H^{s_0+\mez}(\R^d)\times H^{s_0+\mez}(\R^d)\times H^{s_0}(\R^d)\times H^{s_0}(\R^d)},$
			\item $\sup_{0\leq t< T}\lA a(t)\rA_{W^{\eps,\infty}(\R^d)},$
			\item $\int_0^T\lA(\partial_t a+V\cdot\nabla a)(t)\rA_{L^\infty(\R^d)}\d t,$
			\item $\int_0^T\lA a(t)\rA_{W^{\frac{1}{2},\infty}(\R^d)}\d t,$
			\item $\int_0^T\lA\nabla\eta(t)\rA_{W^{\frac{1}{2},\infty}(\R^d)}\d t,$
			\item $\int_0^T\lA(V,B)(t)\rA_{W^{1+\eps,\infty}(\R^d)}\d t.$
		\end{itemize}
	Here~$h(t)$ is the largest~$h$ satisfying condition~(\ref{eq:BotCnd}) at time~$t$ and~$c(t)$ the largest~$c$ such that~$a(t,x)\geq c$ for all~$x\in\R^d$.
\end{thm}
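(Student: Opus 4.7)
The strategy is a continuation argument based on the paralinearization/symmetrization pioneered by Alazard-Burq-Zuily: assuming that every quantity in the list is finite on $[0,T)$, I will prove that $(\eta,\psi)$ stays bounded in $H^{s+\mez}\times H^{s+\mez}$ on $[0,T)$, so that applying Theorem~\ref{thm:AlazardBurqZuilyExw/ST} at some time close to $T$ extends the solution past $T$, contradicting maximality.

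First I would paralinearize and symmetrize \eqref{eq:ZaCrSu} using Alinhac's good unknown, reducing the system to a scalar paradifferential equation of the form
\begin{equation*}
\partial_t\Phi+T_V\cdot\nabla\Phi+iT_\gamma\Phi=F,
\end{equation*}
where $\Phi=\jap^s U$ for the symmetrized good unknown $U$, $\gamma$ is a symbol of order $1$ built out of $a$, $\eta$ and the principal symbol of $G(\eta)$, and $\|\Phi\|_{L^2}^2$ is equivalent (with constants controlled by $1/c(t)$, $1/h(t)$ and the $H^{s_0+\mez}$ norms) to $\|(\eta,\psi)\|_{H^{s+\mez}}^2$. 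The source $F$ is controlled in $L^2$ by tame estimates in which the large factor $\|(\eta,\psi,V,B)\|_{H^{s+\mez}\times H^{s+\mez}\times H^s\times H^s}$ multiplies only low-regularity quantities from the list.

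Next I would differentiate $\|\Phi\|_{L^2}^2$ in time. Symmetrization makes the principal transport and dispersive parts self-adjoint, so $\tder\|\Phi\|_{L^2}^2$ reduces to a sum of commutators and subprincipal errors. Using the paradifferential estimates of Appendix~\ref{ap:para}, the commutator with $T_V\cdot\nabla$ costs $\|(V,B)\|_{W^{1+\eps,\infty}}$, the commutator with $iT_\gamma$ and the time derivative $\partial_tT_\gamma$ cost $\|\nabla\eta\|_{W^{\mez,\infty}}$, $\|a\|_{W^{\mez,\infty}}$ and $\|\partial_t a+V\cdot\nabla a\|_{L^\infty}$, while the zeroth-order self-adjointness defects of $T_V\cdot\nabla$ and $T_\gamma$ only need $\|a\|_{W^{\eps,\infty}}$. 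All other prefactors depend only on $\|(\eta,\psi,V,B)\|_{H^{s_0+\mez}\times H^{s_0+\mez}\times H^{s_0}\times H^{s_0}}$ and on $1/c(t),1/h(t)$. This yields
\begin{equation*}
\tder\|\Phi\|_{L^2}^2\leq\M(t)\|\Phi\|_{L^2}^2+\N(t),
\end{equation*}
where $\M$ is the sum of the $L^1_t$-quantities of the statement, multiplied by $L^\infty_t$-controlled tame factors, and $\N$ is similarly integrable. Grönwall then bounds $\Phi$ in $L^\infty([0,T);L^2)$, hence $(\eta,\psi)$ in $L^\infty([0,T);H^{s+\mez})$. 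The persistence of the strip condition and of positivity of $a$ follow routinely from the $L^1_tL^\infty_x$ control of $(V,B)$ and of $\partial_t a+V\cdot\nabla a$ respectively, and the $H^{s_0+\mez}$-assumption is exactly what is needed (since $s_0-d/2>\mez$) so that Sobolev embedding supplies the Hölder control of every $L^\infty_t$-coefficient appearing in the tame constants.

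The main obstacle is the second step: one must organize the paradifferential bookkeeping so that no norm higher than those appearing in the statement ever enters the energy estimate. Each commutator and each subprincipal remainder must be matched either to one of the $L^1_t$ quantities in the list, or to something controlled in $L^\infty_t$ by $H^{s_0+\mez}$ via Sobolev embedding. This is exactly what sharp paralinearization of the Dirichlet-Neumann operator is designed to give, but the delicate points are the handling of $\partial_tT_\gamma$ (which forces the appearance of the "material derivative" $\partial_t a+V\cdot\nabla a$ rather than $\partial_t a$ alone) and the proof that the coercivity/equivalence constants of the energy really remain bounded by $1/c(t)$ and the $H^{s_0+\mez}$ data — this last point being what makes the low-regularity Sobolev threshold $s_0>1/2+d/2$ sharp.
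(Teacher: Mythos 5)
Your overall strategy matches the paper's: a continuation argument via paralinearization, Alinhac-type symmetrization, tame $L^2$ energy estimates, and Grönwall. The cost assignments you describe for each commutator and subprincipal term ($\lA(V,B)\rA_{W^{1+\eps,\infty}}$ for the transport commutator, $\lA\nabla\eta\rA_{W^{\mez,\infty}}$ and $\lA a\rA_{W^{\mez,\infty}}$ and the material derivative of $a$ for the dispersive part and its time derivative, $H^{s_0+\mez}$ Sobolev embedding supplying all the $L^\infty_t$ factors) are exactly the ones appearing in Propositions~\ref{prop:parasyssob}, \ref{prop:quases} and \ref{prop:tres}, and your identification of the material derivative as the right object for $\partial_t T_\gamma$ is correct.

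The genuine gap is in the statement you make in passing: that $\lA\Phi\rA_{L^2}^2$ is \emph{equivalent} to $\lA(\eta,\psi)\rA_{H^{s+\mez}}^2$ with constants controlled by $1/c$, $1/h$ and $H^{s_0+\mez}$ data. This is false as a two-sided equivalence and is precisely where the nontrivial work of Section~\ref{sec:OrUnk} lives. The symmetrized unknowns are $U_s=\jap^s V+T_\zeta\jap^s B$ and $\theta_s=T_q\zeta_s$, with $q=\sqrt{a/\lambda}$ a symbol of order $-\mez$ whose spatial regularity is only $C^\eps_*$. Inverting $T_q$ (and the operator $T_p$ arising from $\operatorname{div}U=T_pB+\cdots$) costs a remainder $R=I-T_{1/q}T_q$ of order $-\eps$ only; one must iterate a Neumann series $(I+R+\dots+R^N)T_{1/q}$ with $(N+1)\eps>\mez$, and even after this one only gets one-sided bounds of the type $\lA\eta\rA_{H^{s+\mez}}\leq K_0\lp\lA\theta_s\rA_{L^2}+\lA\zeta_s\rA_{H^{-1}}\rp$ and $\lA(V,B)\rA_{H^s}\leq K_0\lp\lA U_s\rA_{L^2}+\lA(V,B)\rA_{H^{s-\mez}}\rp$, with lower-order Sobolev tails that are not immediately dominated by $H^{s_0}$ data (since $s-s_0$ can be large). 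As a consequence the paper does not close a scalar Grönwall inequality for $\lA\Phi\rA_{L^2}$ alone, but for the vector quantity $\mathcal{A}=\lA(U_s,\theta_s)\rA_{L^2}+\lA\psi\rA_{L^2}+\lA\zeta_s\rA_{H^{-1}}+\lA(V,B)\rA_{H^{s-\mez}}$, propagating the low-order tails by their own transport equations in Proposition~\ref{prop:tres}. Without either this parametrix construction together with the bookkeeping of the low-order Sobolev tails, or an explicit interpolation/absorption argument showing those tails can be reabsorbed into $\lA\Phi\rA_{L^2}$ using only $H^{s_0}$ control, your Grönwall step does not actually close, and the lower bound $s_0>1/2+d/2$ (needed so that a nonzero $\eps$ exists for the parametrix iteration) remains unjustified.
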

Here the main improvement to the preceding theorem is that we only need to control the $L^1$-norm in time of the higher order quantities, rather than~$L^3$ norms.
The proofs of those two theorems will be parallel, however one can not be deduced from the other.

The last criterion is a simplification of the preceding one, and is the most compact of the three. It trades a higher reference Sobolev index~$s_0>3/4+d/2$ against control of the Taylor coefficient.
\begin{thm} \label{thm:mainsob2}
	Let $d\geq1$, $s>1+d/2$, $s>s_0>3/4+d/2$ and $1/4>\eps>0$, and consider $(\eta_0,\psi_0)$ satisfying the assumptions of Theorem~\ref{thm:AlazardBurqZuilyExw/ST}.
	If~$T$ is the maximum existence time of the solution given by this theorem, then either~$T=+\infty$ or one of the following quantities is infinite
		\begin{itemize}
			\item $\sup_{0\leq t< T}\frac{1}{h(t)},$
			\item $\sup_{0\leq t< T}\frac{1}{c(t)},$
			\item $\sup_{0\leq t< T}\lA(\eta,\psi,V,B)(t)\rA_{H^{s_0+\mez}(\R^d)\times H^{s_0+\mez}(\R^d)\times H^{s_0}(\R^d)\times H^{s_0}(\R^d)},$
			\item $\int_0^T\lA\nabla\eta(t)\rA_{W^{\frac{1}{2},\infty}(\R^d)}\d t,$
			\item $\int_0^T\lA(V,B)(t)\rA_{W^{1+\eps,\infty}(\R^d)}\d t.$
		\end{itemize}
	Here~$h(t)$ is the largest~$h$ satisfying condition~(\ref{eq:BotCnd}) at time~$t$ and~$c(t)$ the largest~$c$ such that~$a(t,x)\geq c$ for all~$x\in\R^d$.
\end{thm}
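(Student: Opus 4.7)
The plan is to deduce Theorem~\ref{thm:mainsob2} from Theorem~\ref{thm:mainsob}. The only quantities appearing in Theorem~\ref{thm:mainsob} but absent from Theorem~\ref{thm:mainsob2} are those involving the Rayleigh-Taylor coefficient~$a$, namely $\sup_t\lA a(t)\rA_{W^{\varepsilon,\infty}}$, $\int_0^T\lA(\partial_t a+V\cdot\nabla a)(t)\rA_{L^\infty}\d t$, and $\int_0^T\lA a(t)\rA_{W^{1/2,\infty}}\d t$. It therefore suffices to show that under the stronger Sobolev hypothesis $s_0>3/4+d/2$, these three quantities are automatically finite in terms of the remaining data, so that Theorem~\ref{thm:mainsob} applies.

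The crucial input is the elliptic theory for the pressure. Since $a=-\partial_yP\rvert_{y=\eta}$ and $P$ solves a Dirichlet problem in~$\Omega(t)$ with a source quadratic in $\nabla_{x,y}v$, the paradifferential elliptic machinery of~\cite{AlazardBurqZuilyExw/ST} produces a tame estimate of the form
\begin{equation*}
\lA a(t)-g\rA_{H^{s_0-1/2}(\R^d)}\leq\mathcal{F}\!\lp\tfrac{1}{h(t)},\lA(\eta,\psi,V,B)(t)\rA_{H^{s_0+\mez}\times H^{s_0+\mez}\times H^{s_0}\times H^{s_0}}\rp.
\end{equation*}
Since $s_0-\mez>1/4+d/2>\varepsilon+d/2$ for $\varepsilon<1/4$, the Sobolev embedding $H^{s_0-\mez}\hookrightarrow W^{\varepsilon,\infty}$ is valid and immediately gives the sup-in-time bound on~$\lA a\rA_{W^{\varepsilon,\infty}}$. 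For $\int_0^T\lA a\rA_{W^{1/2,\infty}}\d t$, I would interpolate between the (uniformly bounded) $W^{\varepsilon,\infty}$ norm and a slightly higher Hölder norm controlled by $\lA\nabla\eta\rA_{W^{1/2,\infty}}$ and $\lA(V,B)\rA_{W^{1+\varepsilon,\infty}}$; the gain of $1/4$ derivative in~$s_0$ makes the higher-order piece appear with an exponent strictly less than one, so Hölder's inequality in time and the $L^1_t$ assumptions of the theorem close the estimate.

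The main obstacle is the term $\int_0^T\lA\partial_t a+V\cdot\nabla a\rA_{L^\infty}\d t$. I plan to exploit the identity $a-g=\partial_t B+V\cdot\nabla B$, which is nothing but the trace on $\Sigma(t)$ of the vertical Euler equation, to express the material derivative $D_t a$ (with $D_t=\partial_t+V\cdot\nabla$) as $D_t^2 B$. The paradifferential symmetrization of~\cite{AlazardBurqZuilyExw/ST} then rewrites $D_t^2 B$, modulo controllable remainders, as a second-order paradifferential operator applied to~$B$, whose principal symbol combines the Dirichlet-Neumann symbol and the coefficient~$a$ itself; bounding this in $L^\infty$ using Bernstein-type estimates together with the $L^1_t$ control of $\nabla\eta$ in $W^{\mez,\infty}$ and of $(V,B)$ in $W^{1+\varepsilon,\infty}$ should yield the required integrable-in-time bound. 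The delicate point here is verifying that every paradifferential commutator and remainder arising in this computation is controlled strictly by the norms listed in Theorem~\ref{thm:mainsob2}, without silently re-introducing a higher norm of~$a$ that we are trying to eliminate; the extra $1/4$ derivative in the Sobolev index is precisely what makes this possible.
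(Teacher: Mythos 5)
Your high-level strategy matches the paper's: Theorem~\ref{thm:mainsob2} is obtained from Theorem~\ref{thm:mainsob} by showing that the three Taylor-coefficient quantities are controlled by the remaining data once $s_0>3/4+d/2$. But the concrete route diverges substantially, and the two steps you leave vague are precisely the two steps that carry the weight. The paper's proof rests on a sharp estimate (Proposition~3.6 of~\cite{AlazardBurqZuilyStrEstGrav}), namely
\begin{equation*}
\lA a\rA_{C^{1/2}_*}+\lA(\partial_t+V\cdot\nabla) a\rA_{L^\infty}\leq\mathcal{F}\bigl(\lA(\eta,\psi,V,B)\rA_{H^{s_0+\frac12}\times\cdots}\bigr)\bigl[1+\lA\eta\rA_{C^{1/2+\eps}_*}+\lA(V,B)\rA_{C^{1+\eps}_*}\bigr],
\end{equation*}
whose crucial feature is that it is \emph{linear} in the Hölder norms of $(V,B)$ that are only $L^1$ in time — this linearity is what lets the $L^1_t$ hypotheses close. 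Your interpolation/Hölder-in-time argument for $\int_0^T\lA a\rA_{W^{1/2,\infty}}\d t$ presupposes a bound on a slightly higher Hölder norm of $a$ that you never establish, and it is not obvious it would come out linear in the $L^1_t$-controlled quantities; the paper instead obtains $\lA a\rA_{C^{1/2}_*}$ directly, with no interpolation.

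For the material derivative of $a$, your reduction $D_t a=D_t^2 B$ is essentially tautological: $D_tB=a-g$ is equation~(\ref{eq:Beq}), so $D_t^2B=D_ta$ gives no new structure, and rewriting it as a second-order paradifferential operator in $B$ would require paying for two extra derivatives in $L^\infty$, which the listed norms cannot afford. The paper's proof instead commutes $D_t$ past $\partial_y$ in $a=-\partial_yP\rvert_{y=\eta}$ to get $D_ta=-\partial_y(D_tP)\rvert_{y=\eta}+a\,\dive V$, and then observes the key cancellation
\begin{equation*}
\Delta_{x,y}\bigl((\partial_t+\nab\phi\cdot\nab)P\bigr)=4\,\nab^2\phi\cdot\nab^2P,
\end{equation*}
which eliminates the quadratic source $|\nab^2\phi|^2$ of the original pressure equation. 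This places $D_tP$ inside the same elliptic framework already used for $P$ in Proposition~\ref{prop:Tay}, and gives control of $\partial_y D_tP$ on $X^{s_0-3/4}$, hence in $L^\infty$ by trace and Sobolev embedding. You do not identify this cancellation, which is the main mechanism of the proof.

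Finally, your claim that the elliptic machinery produces $\lA a-g\rA_{H^{s_0-1/2}}\leq\mathcal{F}(1/h,\text{Sobolev norms})$ with a constant not involving~$a$ needs verification: the constants $K$ and $K_0$ appearing throughout Section~\ref{sec:Paralin} (in particular in Proposition~\ref{prop:Tay}) formally depend on $\lA a\rA_{W^{\eps,\infty}}$. You are right that the real danger is ``silently re-introducing a higher norm of~$a$,'' but your proposal does not resolve it — it simply asserts the estimate is non-circular. The paper resolves the issue precisely by citing a proposition whose bound is stated with constants independent of~$a$.
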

Let us explain why the index $3/4+d/2$ enters into the analysis. As already mentioned, in the recent paper~\cite{AlazardBurqZuilyStrEstGrav}, Alazard, Burq and Zuily used Strichartz estimates to deduce existence for data with regularity associated to~$s=11/12+d/2$. The theoretical limit 
of this method is at~$s>3/4+d/2$ for $d=2$ (since Strichartz estimate gains only $1/4$ derivative), and even for such solutions we expect the quantities in this last theorem to be finite on the existence time interval. This would not be the case of the 
quantities~$\sup_{0\leq t< T}\lA\eta(t)\rA_{W^{2,\infty}(\R^d)}$, corresponding to the curvature of the surface, or~$\int_0^T\lA(V,B)(t)\rA^3_{W^{1+\eps,\infty}(\R^d)}\d t$, since 
solutions can be found for which those quantities would be infinite.

Section~\ref{sec:EllReg} will start with a rigorous definition of the harmonic
extension~$\phi$ and of the Dirichlet-Neumann, adapted to the case with rough bottom. It also contains in subsection~\ref{subsec:MaxPrinc} a maximum principle adapted to this framework, that we believe is of
independent interest and ends with 
results on the elliptic regularity of this problem and their uses to control the Dirichlet-Neumann. In section~\ref{sec:Paralin} we will perform the reduction of the system to a symmetric quasilinear 
hyperbolic equation. This imposes to change the variables we work with; in section~\ref{sec:OrUnk} we will construct a parametrix to control the new variables with the originals. Section~\ref{sec:EneEst}
contains the a priori energy estimates of the new system, and section~\ref{sec:Proof} completes the proofs of the theorems.
Appendix~\ref{ap:para} recalls some notions on paradifferential calculus, the main technical tool of this analysis.

\section{Elliptic Regularity} \label{sec:EllReg}

Following the general strategy of \cite{AlazardBurqZuilyExw/ST}, the first step of the proof is to estimate solutions of the Laplace equation near the free surface. 
The method is essentially the same, but we look for tame estimates whose constants depend on the H\"{o}lder norm of the surface rather 
than on its Sobolev norm. This requires some new techniques, and in particular we shall prove a maximum principle adapted to this setting.
This analysis being valid at fixed time, we will drop the dependence in $t$ for this whole section.

\subsection{Variational solution} \label{subsec:vardef}

We have to give a suitable sense to quantities defined in $\Omega$, from data defined only on the free surface.
Here, we recall this construction.\\
Those quantities need to be, in a suitable sense, solutions of
\begin{equation} \label{eq:lapeq}
\Delta_{x,y}v=0,\quad v|_\Sigma=f,\quad\partial_nv|_\Gamma=0.
\end{equation}
This definition will come from variational theory.

\begin{nota}
Let $\mathcal{D}$ be the space of functions $u\in C^\infty(\Omega)$ such that $\nabla_{\!x,y}u\in L^2(\Omega)$.\\
Let then $\mathcal{D}_0$ be the subspace of $\mathcal{D}$ whose elements are equal to 0 near the top boundary $\Sigma$.
\end{nota}

\begin{prop}(\cite[Proposition~2.2]{AlazardBurqZuilyExwST}) \label{prop:weight}
There exists a positive weight $g\in L^{\infty}_{\text{loc}}(\Omega)$,
equal to 1 near the top boundary of $\Omega$, and a constant $C>0$ such that for all $u\in\mathcal{D}_0$,
$$\int_\Omega g(x,y)\lvert u(x,y)\rvert^2\;\mathrm{d}x\;\mathrm{d}y\leq C\int_{\Omega}\lvert\nabla_{\!x,y}u(x,y)\rvert^2\;\mathrm{d}x\;\mathrm{d}y.$$
\end{prop}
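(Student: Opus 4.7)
The strategy is to treat $u$ separately in the strip $\Omega_h$ near the free surface, where the vanishing of $u$ near $\Sigma$ yields an unweighted Poincar\'e-type estimate, and in the deeper part of $\Omega$, where a positive but decaying weight $g$ becomes necessary since no global Poincar\'e inequality can be expected when the bottom $\Gamma$ is rough or unbounded.

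In the strip, I would exploit that every $u\in\mathcal{D}_0$ vanishes near $y=\eta(x)$ to write, for $(x,y)\in\Omega_h$,
\[
u(x,y)=-\int_y^{\eta(x)}\partial_zu(x,z)\d z,
\]
then apply Cauchy--Schwarz in $z$, Fubini in $y\in[\eta(x)-h,\eta(x)]$, and integration in $x$ to obtain
\[
\int_{\Omega_h}\lvert u(x,y)\rvert^2\d x\d y\leq\mez h^2\int_\Omega\lvert\nab u\rvert^2\d x\d y.
\]
This already produces the part of the weight equal to $1$ near the top boundary.

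To extend the estimate below the strip, I would use the connectedness of $\Omega$ through a chain-of-balls argument. Fix a ball $B_0\Subset\Omega_h$. Any $z\in\Omega\setminus\Omega_h$ is joined to $B_0$ by a finite chain of open balls $B_0,B_1,\dots,B_N$ compactly contained in $\Omega$ with consecutive balls having positive-measure overlap. A classical Poincar\'e inequality on $B_{k-1}\cup B_k$, applied to $u-\overline u$ with $\overline u$ the mean over the overlap, together with control of this mean by $\lA u\rA_{L^2(B_{k-1})}$, gives
\[
\lA u\rA_{L^2(B_k)}\leq\lA u\rA_{L^2(B_{k-1})}+C_k\lA\nab u\rA_{L^2(B_{k-1}\cup B_k)}.
\]
Iterating from the strip bound on $B_0$ yields $\lA u\rA_{L^2(B_N)}\leq C_{(z)}\lA\nab u\rA_{L^2(\Omega)}$ with a constant $C_{(z)}$ depending only on the geometry of the chain.

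Finally I would assemble $g$ by exhausting $\Omega$ by compacts $K_0\Subset K_1\Subset\cdots$ with $K_0$ a neighborhood of $\Omega_h$, covering each $K_n$ by finitely many such chains, and letting $g$ take a small constant value $g_n>0$ on $K_n\setminus K_{n-1}$ chosen so that $g_n C_n^2\leq 2^{-n}$, where $C_n$ is the worst chain-of-balls constant on $K_n$. Summing over $n$ then bounds $\int_\Omega g\lvert u\rvert^2\d x\d y$ by a constant times $\lA\nab u\rA_{L^2(\Omega)}^2$, and the resulting $g$ is positive, in $L^\infty_{\mathrm{loc}}(\Omega)$, and equal to $1$ near $\Sigma$. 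The main obstacle is the arbitrariness of $\Gamma$: the argument must rely only on interior Poincar\'e estimates and on the topology of $\mathcal{O}$, never on any regularity of the bottom, and this is precisely what forces the introduction of the weight in the first place.
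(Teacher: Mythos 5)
The paper does not prove this proposition; it cites it as Proposition~2.2 of Alazard--Burq--Zuily, and your sketch follows essentially the same route that proof takes: a one-dimensional Poincar\'e inequality in the vertical direction inside the strip $\Omega_h$, where the vanishing of $u\in\mathcal{D}_0$ near $\Sigma$ makes the fundamental-theorem-of-calculus identity available, followed by a chain-of-balls propagation into the deeper part of $\Omega$ and a weight chosen to decay fast enough to absorb the (possibly unbounded) chain constants. The identification of why a weight is unavoidable --- no regularity of $\Gamma$ is assumed, so only interior Poincar\'e--Wirtinger estimates and connectedness of $\Omega$ can be used --- is exactly the right point.

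There is one technical slip in the assembly step. You propose to exhaust $\Omega$ by compacts $K_0\Subset K_1\Subset\cdots$ with $K_0$ a neighborhood of $\Omega_h$, but $\Omega_h$ is unbounded in $x$ (it is a strip over all of $\R^d$), so no compact set can contain it, and more generally $\Omega$ itself is unbounded and cannot be exhausted by compacts in that way. The fix is routine and does not change the spirit of the argument: set $g\equiv 1$ on $\Omega_{h/2}$; cover $\Omega\setminus\Omega_{h/2}$ by a \emph{locally finite} countable family of open balls $B_n\Subset\Omega$, each joined to a fixed ball $B_0\Subset\Omega_{h/2}$ by a finite chain with constant $C_n$, with $B_0$ itself controlled by the strip estimate; on $\Omega\setminus\Omega_{h/2}$ define $g$ at a point to be $\min\{g_n: B_n\ni(x,y)\}$ with $g_n>0$ chosen so that $\sum_n g_n C_n^2<\infty$. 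Local finiteness guarantees that $g$ is bounded below away from $0$ on compact subsets, hence positive and in $L^\infty_{\mathrm{loc}}(\Omega)$, and the summability gives the desired estimate. With that modification your proof is complete.
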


\begin{defn}
Let $\mathcal{H}^{1,0}$ be the space of functions $u$ on $\Omega$ such that there exists a sequence~$(u_n)_{n\in\N}$, with~$u_n\in D_0$, satisfying
$$\nabla_{\!x,y}u_n\rightarrow\nabla_{\!x,y}u\text{ in }L^2(\Omega,\d x\d y),
\quad u_n\rightarrow u\text{ in }L^2(\Omega,g(x,y)\d x\d y).$$
\end{defn}
We see from Proposition~\ref{prop:weight} that $\mathcal{H}^{1,0}$ can be equipped with the norm
$$\lA u\rA_{\mathcal{H}^{1,0}}=\lA \nabla_{\!x,y}u\rA_{L^2(\Omega)}.$$\\
As seen in \cite{AlazardBurqZuilyExwST}, it is a Hilbert space.
By regularizing the function~$\eta$, we can construct~$\eta_*\in C^\infty_b(\R^d)$ such that 
$\eta-h/20>\eta_*$ and
$$\left\{(x,y)\in\R^d\times\R;\eta_*(x)<y<\eta(x)\right\}\subset\Omega.$$

Recall that~$\mathcal{O}$ denotes the fixed container in which the fluid is located.
\begin{defn}
We denote by~$\mathcal{H}^1(\mathcal{O})$ the space of functions~$\widetilde{u}$ on~$\mathcal{O}$ such that there exists a sequence~$(u_n)\in C^{\infty}(\mathcal{O})$ 
such that
$$\nabla_{\!x,y}u_n\rightarrow\nabla_{\!x,y}\widetilde{u}\text{ in }L^2(\mathcal{O},\d x\d y),\quad u_n\rightarrow\widetilde{u}\text{ in }L^2(\mathcal{O},\hat{g}(x,y)\d x\d y),$$
where~$\hat{g}$ is the extension of~$g$ by 1 to~$\mathcal{O}$.
\end{defn}

\begin{lem} \label{lem:charnulltrace}
Let~$w$ be measurable on~$\Omega$.
Then~$w\in\mathcal{H}^{1,0}(\Omega)$ if and only if the zero extension of~$w$ to~$\mathcal{O}$ is in~$\mathcal{H}^1(\mathcal{O})$. 
\end{lem}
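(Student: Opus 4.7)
The plan is to prove both implications by relating approximating sequences for $\mathcal{H}^{1,0}(\Omega)$ to those for $\mathcal{H}^1(\mathcal{O})$ through extension by zero and truncation near $\Sigma$.

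For the direct implication, given $u_n\in\mathcal{D}_0$ converging to $w$ in the sense of the definition of $\mathcal{H}^{1,0}$, I would extend each $u_n$ by zero to a function $\tilde{u}_n$ on $\mathcal{O}$. Since $u_n$ vanishes in a full neighborhood of $\Sigma$ inside $\Omega$, the extension is genuinely in $C^\infty(\mathcal{O})$: smoothness across $\Sigma$ is trivial because the function vanishes on a full two-sided neighborhood there. The required convergences for $\mathcal{H}^1(\mathcal{O})$ then reduce exactly to those on $\Omega$, since $\nabla_{x,y}(\tilde{u}_n-\tilde{w})$ and $\tilde{u}_n-\tilde{w}$ vanish outside $\Omega$ and $\hat{g}=g$ on $\Omega$. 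Hence $\tilde{w}\in\mathcal{H}^1(\mathcal{O})$.

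For the converse, assume $\tilde{w}\in\mathcal{H}^1(\mathcal{O})$. The essential observation is that since $\tilde{w}$ vanishes identically on $\mathcal{O}\setminus\Omega$, its $H^{1/2}$-trace on $\Sigma$ from above is zero, and the compatibility of traces for an $H^1_{\text{loc}}$ function across $\Sigma$ then forces the trace of $w$ on $\Sigma$ from below also to be zero. I would then introduce smooth cutoffs $\chi_\delta(x,y)$ depending only on $y-\eta(x)$---smoothing $\eta$ from below by a regularization if necessary to ensure $C^\infty$ regularity of $\chi_\delta$---with $\chi_\delta=1$ on $\{y<\eta(x)-2\delta\}$, $\chi_\delta=0$ on $\{y>\eta(x)-\delta\}$, and $|\nabla\chi_\delta|\lesssim 1/\delta$. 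The main task is to show $\chi_\delta\tilde{w}\to\tilde{w}$ in the $\mathcal{H}^1(\mathcal{O})$ norms as $\delta\to 0$. The only delicate term is $\tilde{w}\nabla\chi_\delta$, supported in the strip $\{\eta(x)-2\delta<y<\eta(x)\}$; controlling it relies on the Hardy-type inequality
\begin{equation*}
\int_{\{\eta(x)-2\delta<y<\eta(x)\}}|w|^2\d x\d y\leq C\delta^2\int_{\{\eta(x)-2\delta<y<\eta(x)\}}|\partial_yw|^2\d x\d y,
\end{equation*}
which follows, for $w$ with zero trace on $\Sigma$, from the identity $w(x,y)=-\int_y^{\eta(x)}\partial_yw(x,y')\d y'$ and the Cauchy--Schwarz inequality. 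This yields $\int_{\mathcal{O}}|\tilde{w}\nabla\chi_\delta|^2\lesssim\int_{\{\eta-2\delta<y<\eta\}}|\partial_yw|^2\to 0$ by dominated convergence. The weighted $L^2$ convergence follows from dominated convergence since $\hat{g}|\tilde{w}|^2\in L^1(\mathcal{O})$. Each $\chi_\delta\tilde{w}\rvert_\Omega$ is then mollified inside $\Omega$ (with mollifier scale smaller than $\delta/2$ so the vanishing near $\Sigma$ is preserved) to produce a genuine $\mathcal{D}_0$ approximant, and a standard diagonal extraction completes the argument.

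I expect the main technical obstacle to be this cutoff step in the converse direction: ensuring enough smoothness of $\chi_\delta$ given only the available regularity of $\eta$, and, more importantly, absorbing the potentially singular factor $1/\delta$ in $\nabla\chi_\delta$ using exactly the zero trace of $w$ on $\Sigma$ through the Hardy inequality.
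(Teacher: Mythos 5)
Your direct implication follows the paper's. For the converse, your route is genuinely different: the paper splits $w$ with a partition of unity into a piece supported away from $\Sigma$ and a piece $w_1$ supported away from $\Gamma$, then pushes $w_1$ downward via the translate $\widetilde{w_1}(x,y+t)$ to create the needed vanishing near $\Sigma$, relying on $L^2$-continuity of translations. You instead observe that the zero trace of $w$ on $\Sigma$ (forced by the vanishing above $\Sigma$) supports a Hardy inequality absorbing the $1/\delta$ singularity in $\nabla\chi_\delta$, so that $\chi_\delta w\to w$ in $\mathcal{H}^{1,0}$-norm. Both are valid; yours is more quantitative and makes the role of the vanishing trace explicit, while the paper's translation trick sidesteps any appeal to traces or Hardy and keeps the argument soft.

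However, there is a gap in your final step. Mollifying $\chi_\delta\widetilde{w}\rvert_\Omega$ at scale $\epsilon<\delta/2$ only produces a function defined where the $\epsilon$-ball lies inside $\Omega$ (or inside $\mathcal{O}$, if you mollify $\chi_\delta\widetilde{w}$ on $\mathcal{O}$); it fails in an $\epsilon$-neighborhood of $\Gamma$, and since $\mathcal{O}$ is an arbitrary simply connected open set, $\Gamma$ need have no regularity permitting a smoothing procedure up to the boundary. This is precisely the point the paper's partition of unity is designed to handle: the piece $w_1$ that must be pushed away from $\Sigma$ is already supported away from $\Gamma$, so mollification there is safe. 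The simplest repair of your argument is to bypass the mollification of $\chi_\delta w$ entirely: take the $C^\infty(\mathcal{O})$ approximants $u_n$ furnished by the very definition of $\mathcal{H}^1(\mathcal{O})$ and use $\chi_\delta u_n\rvert_\Omega\in\mathcal{D}_0$ directly. For fixed $\delta$ one has $\chi_\delta u_n\to\chi_\delta w$ in the $\mathcal{H}^{1,0}$-norm, because $\nabla\chi_\delta$ is supported in the strip near $\Sigma$ where $\hat g=1$, so $\lVert(u_n-\widetilde{w})\nabla\chi_\delta\rVert_{L^2}\lesssim\delta^{-1}\lVert u_n-\widetilde{w}\rVert_{L^2(\hat g)}\to0$. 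This shows $\chi_\delta w\in\mathcal{H}^{1,0}(\Omega)$ for each $\delta>0$, and your Hardy estimate then completes the proof as $\delta\to0$.
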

\begin{proof}
We follow the proof for the classical Sobolev setting, found for example in~\cite{AdamsSobSpaces}.
It is routine to show that $\widetilde{\nabla_{\!x,y}w}=\nabla_{\!x,y}\widetilde{w}$, from which the direct part is immediate.

For the indirect part, suppose~$\widetilde{w}\in\mathcal{H}^1(\mathcal{O})$.
Now we can cover~$\Omega$ with $V_0$ which does not intersect $\Sigma$ and $V_1$ which does not intersect $\Gamma$.
Then using a partition of unity, we can split $w$ between $w_0$ supported in~$V_0$, which by definition is already in $\mathcal{H}^{1,0}(\Omega)$,
and $w_1$ supported in $V_1$. Then we consider $\widetilde{w_1}(x, y+t)$, which is in $\mathcal{H}^{1,0}(\Omega)$ and converge to $w_1$
as $t$ goes to $0^+$, since the translation in $L^2$ is continuous. This proves that $w_2$, and then $w$ is in $\mathcal{H}^{1,0}(\Omega)$.
\end{proof}

Let $f\in H^{1/2}(\R^d)$. We define $\psi$ an $H^1$ lifting of $f$ in $\Omega$.
Let $\chi_0(z)\in C^\infty(R)$ be such that $\chi_0(z)=1$ if $z\geq-1/2$ and $\chi_0(z)=0$ if $z\leq-1$. Set
$$\psi_1(x,z):=\chi_0(z)e^{z\lvert D_x\rvert}f(x),\;x\in\R^d,z\leq0.$$
Then set 
$$\psi(x,y):=\psi_1\left(x,\frac{y-\eta(x)}{h}\right),\;(x,y)\in\Omega,$$
which is well defined and vanishes near the bottom $\Gamma$.

>From the usual properties of the Poisson kernel, we have

\begin{equation*}
\lA\psi\rA_{H^1(\Omega)}\leq
\mathcal{F}\left(\lA\eta\rA_{W^{1,\infty}(\R^d)}\right)\lA f\rA_{H^{1/2}(\R^d)},
\end{equation*}
and

\begin{equation*}
\lA\psi\rA_{L^\infty(\Omega)}\leq
\lA f\rA_{L^{\infty}(\R^d)}.
\end{equation*}

We can now use this framework to define $u\in\mathcal{H}^{1,0}$ as a variational solution of the problem

\begin{equation*}
-\Delta_{x,y}u=\Delta_{x,y}\psi,\quad u|_\Sigma=0,\quad\partial_nu|_\Gamma=0.
\end{equation*}

We then define 
\begin{equation}
v:=u+\psi.\label{eq:defexthar}
\end{equation}

We see from lemma~3.5 of \cite{AlazardBurqZuilyExw/ST} that this is independent of the lifting function $\psi$ vanishing near the bottom and we freely get the estimate

\begin{equation*}
  \int_\Omega\lvert\nabla_{\!x,y}v\rvert^{2}\d x\d y\leq
  \mathcal{F}\left(\lA\eta\rA_{W^{1,\infty}(\R^d)}\right)\lA f\rA^2_{H^{1/2}(\R^d)}.
\end{equation*}

\subsection{Maximum principle} \label{subsec:MaxPrinc}
In studying equation~(\ref{eq:lapeq}) we will need a weak maximum principle adapted to our variational formulation.
Adapting the proof from \cite{TrudingerMaxprinc}, we get the following comparison principle.

\begin{prop} \label{prop:compprinc}
If $\phi$ is a weakly differentiable function such that:
\begin{enumerate}
\item $\phi^+=\max(\phi,0)\in\mathcal{H}^{1,0}$;\label{cndition1}
\item $\int_\Omega\nabla_{\!x,y}\phi\cdot\nabla_{\!x,y}\sigma\d x\d y\leq0$ for all $\sigma\geq0$ in $\mathcal{H}^{1,0}$;\label{cnditon2}
\end{enumerate}
then $\phi\leq0$ in $\Omega$.
\end{prop}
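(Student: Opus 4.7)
The plan is to follow the classical Stampacchia/Trudinger approach, using $\phi^+$ itself as a test function in the variational inequality. Hypothesis~\ref{cndition1} tells us that $\phi^+$ is admissible as a nonnegative element of $\mathcal{H}^{1,0}$, so substituting $\sigma = \phi^+$ into condition~\ref{cnditon2} yields
\begin{equation*}
	\int_\Omega \nabla_{\!x,y}\phi \cdot \nabla_{\!x,y}\phi^+ \, \d x\d y \leq 0.
\end{equation*}

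The next step is to exploit the standard chain rule for the positive part of a weakly differentiable function, namely
\begin{equation*}
	\nabla_{\!x,y}\phi^+ = \mathbf{1}_{\{\phi>0\}}\,\nabla_{\!x,y}\phi \quad\text{a.e. in }\Omega,
\end{equation*}
which in particular gives $\nabla_{\!x,y}\phi \cdot \nabla_{\!x,y}\phi^+ = |\nabla_{\!x,y}\phi^+|^2$ pointwise almost everywhere. Combining this with the preceding inequality shows that $\|\phi^+\|_{\mathcal{H}^{1,0}}^2 = \int_\Omega |\nabla_{\!x,y}\phi^+|^2 \d x\d y \leq 0$, hence $\nabla_{\!x,y}\phi^+ = 0$ a.e.

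To conclude that $\phi \leq 0$ a.e.\ in $\Omega$, it remains to translate the vanishing of the gradient of $\phi^+$ into the vanishing of $\phi^+$ itself. Here we invoke the Poincaré-type inequality of Proposition~\ref{prop:weight}, which applies because $\phi^+ \in \mathcal{H}^{1,0}$: it gives
\begin{equation*}
	\int_\Omega g(x,y) |\phi^+(x,y)|^2 \d x\d y \leq C \int_\Omega |\nabla_{\!x,y}\phi^+|^2\d x\d y = 0.
\end{equation*}
Since $g$ is a strictly positive weight on $\Omega$, this forces $\phi^+ = 0$ a.e., i.e.\ $\phi \leq 0$ a.e.

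The main subtlety is the first step: justifying the chain rule for $\phi^+$, and in particular the fact that $\phi^+$ is an admissible test function in the sense of $\mathcal{H}^{1,0}$ (not merely $H^1_{\mathrm{loc}}$). The former is a standard property of Sobolev functions, but one must check that the truncation used to approximate $\phi^+$ by smooth functions vanishing near $\Sigma$ is compatible with the vanishing-trace condition encoded in $\mathcal{H}^{1,0}$; this is exactly what hypothesis~\ref{cndition1} provides, so the argument is closed. The use of the weight $g$ from Proposition~\ref{prop:weight} is the crucial ingredient that replaces the usual Poincaré inequality and adapts the classical proof to the rough bottom framework.
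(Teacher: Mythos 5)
Your proof is correct and follows essentially the same Stampacchia/Trudinger approach as the paper: test with $\sigma=\phi^+$, use the chain rule $\nabla_{\!x,y}\phi\cdot\nabla_{\!x,y}\phi^+=\la\nabla_{\!x,y}\phi^+\ra^2$ a.e., and conclude from the vanishing of the $\mathcal{H}^{1,0}$-norm. The paper compresses the chain-rule identity and the appeal to Proposition~\ref{prop:weight} into a single line, while you spell both out explicitly; that is the only difference.
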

\begin{rem}
Condition \ref{cndition1} is the adapted way to say that $\phi|_\Sigma\leq0$ for the variational space $\mathcal{H}^{1,0}$.
\end{rem}
\begin{proof}
Since $\phi^+\geq0$, and $\phi^+\in\mathcal{H}^{1,0}$, we have from condition \ref{cnditon2}, taking $\sigma=\phi^+$ 
$$\lA \phi^+\rA^2_{\mathcal{H}^{1,0}}=\int_\Omega\nabla_{\!x,y}\phi^+\cdot\nabla_{\!x,y}\phi^+\d x\d y\leq0$$
so that $\phi^+=0$, which is the desired conclusion.
\end{proof}

We can now extend this comparison principle to get the following maximum principle.
\begin{prop} \label{prop:maxprinc}
Let $\eta\in W^{1,\infty}(\R^d)$, $f\in H^{1/2}(\R^d)$.
If $v$ is the solution of Laplace equation defined in (\ref{eq:defexthar}), and if $f$ is bounded, then
$$\lA v\rA_{L^{\infty}(\Omega)}\leq\lA f\rA_{L^{\infty}(\R^d)}.$$
\end{prop}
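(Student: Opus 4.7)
The plan is to apply the comparison principle of Proposition~\ref{prop:compprinc} to the functions $\phi_\pm = \pm v - M$, where $M := \lA f\rA_{L^\infty(\R^d)}$, and deduce $-M\leq v\leq M$ almost everywhere in $\Omega$.

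I start with the upper bound. Writing $\phi := v - M$, I need to verify the two hypotheses of Proposition~\ref{prop:compprinc}. The second one is immediate: by construction (see~\eqref{eq:defexthar}), $v$ is the variational solution, meaning $\int_\Omega \nabla_{\!x,y} v\cdot\nabla_{\!x,y}\sigma \,\d x\d y = 0$ for every $\sigma\in\mathcal{H}^{1,0}$, and since $\nabla_{\!x,y} M=0$, the required inequality in fact holds as an equality. All the work is therefore in checking condition (1), namely $\phi^+=(v-M)^+\in\mathcal{H}^{1,0}$.

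To handle this, I use the decomposition $v=u+\psi$ provided by the variational construction, where $u\in\mathcal{H}^{1,0}$ and $\psi$ is the specific lifting built through the Poisson kernel. The key algebraic point is that the pointwise estimate $\lA\psi\rA_{L^\infty(\Omega)}\leq\lA f\rA_{L^\infty(\R^d)}=M$ recalled right after the definition of $\psi$ gives $\psi-M\leq 0$ throughout $\Omega$. Hence
\begin{equation*}
(v-M)^+ = \bigl(u + (\psi - M)\bigr)^+.
\end{equation*}
Now pick a sequence $u_n\in\mathcal{D}_0$ with $\nabla_{\!x,y} u_n\to\nabla_{\!x,y} u$ in $L^2(\Omega)$ and $u_n\to u$ in the weighted $L^2$, and set $w_n := (u_n + \psi - M)^+$. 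Near $\Sigma$ we have $u_n=0$ and $\psi-M\leq 0$, so $w_n$ vanishes in a neighborhood of $\Sigma$; after a standard mollification inside $\Omega_h$ away from $\Sigma$ we get approximants in $\mathcal{D}_0$. The Lipschitz character of $t\mapsto t^+$ and the identity $\nabla w_n = \mathbf{1}_{\{u_n+\psi>M\}}(\nabla u_n+\nabla\psi)$ (valid in the weak sense for Lipschitz compositions) show that $\nabla w_n\to\nabla(v-M)^+$ in $L^2(\Omega)$, which exhibits $(v-M)^+$ as an element of $\mathcal{H}^{1,0}$. Applying Proposition~\ref{prop:compprinc} yields $v\leq M$.

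For the lower bound I run exactly the same argument with $\phi=-v-M$, noting that $-v$ is still a variational solution of $\Delta_{x,y}(-v)=0$, that the lifting of $-f$ is $-\psi$ and satisfies the same $L^\infty$ bound, and that on $\Sigma$ we have $-v|_\Sigma=-f\geq -M$. Together the two inequalities give $\lA v\rA_{L^\infty(\Omega)}\leq \lA f\rA_{L^\infty(\R^d)}$. The main obstacle is the one technical step above: the verification that $(v-M)^+$ belongs to $\mathcal{H}^{1,0}$, since the space is defined by approximation in $\mathcal{D}_0$ rather than by a trace operator. This is exactly where the $L^\infty$ bound on the lifting $\psi$ is crucial, and where Lemma~\ref{lem:charnulltrace} (identifying $\mathcal{H}^{1,0}$ with functions whose zero extension lies in $\mathcal{H}^1(\mathcal{O})$) provides an alternative, perhaps cleaner, route to the same conclusion if the mollification step above proves delicate.
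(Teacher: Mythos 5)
Your strategy is the same as the paper's: reduce to the comparison principle (Proposition~\ref{prop:compprinc}) and verify that a suitable positive part lies in $\mathcal{H}^{1,0}$, using the decomposition $v=u+\psi$ and the $L^\infty$ bound on the specific lifting $\psi$. You are also right that the entire difficulty sits in checking $(v-M)^+\in\mathcal{H}^{1,0}$.

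However, the route you actually write out — approximating $u$ by $u_n\in\mathcal{D}_0$, forming $w_n=(u_n+\psi-M)^+$, and then mollifying ``inside $\Omega_h$ away from $\Sigma$'' — has a genuine gap. The function $w_n$ is not $C^\infty$ and does not vanish near the bottom $\Gamma$ (the $u_n\in\mathcal{D}_0$ vanish only near $\Sigma$, not near $\Gamma$), and $\Gamma$ is allowed to be completely rough, so mollification near $\Gamma$ requires values of $w_n$ outside $\Omega$ that are not available. The mollified objects would be defined only on a shrunken subdomain, not on $\Omega$, and so are not in $\mathcal{D}_0$. This is not a cosmetic issue: it is exactly the boundary irregularity that the whole $\mathcal{H}^{1,0}$/$\mathcal{H}^1(\mathcal{O})$ formalism is designed to circumvent. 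The paper does not run this mollification; it passes to $\mathcal{O}$ via Lemma~\ref{lem:charnulltrace}, extends $f$ to $\mathcal{O}$ by a Poisson kernel (hence the $(1+\eps)$-regularization, which you correctly observe can be avoided if one sticks to the lifting $\psi$ inside $\Omega$), takes the positive part there, and uses that this positive part vanishes on $\mathcal{O}\setminus\Omega$ together with the converse direction of Lemma~\ref{lem:charnulltrace} to land back in $\mathcal{H}^{1,0}(\Omega)$.

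You do flag Lemma~\ref{lem:charnulltrace} at the end as ``an alternative, perhaps cleaner, route,'' and that intuition is correct — but it is not merely an alternative, it is the step that makes the proof close. As written, your main argument does not establish $(v-M)^+\in\mathcal{H}^{1,0}$; you should replace the mollification paragraph by the Lemma~\ref{lem:charnulltrace} argument, keeping your nice observation that the sharp bound $\lA\psi\rA_{L^\infty(\Omega)}\leq\lA f\rA_{L^\infty(\R^d)}$ for the specific lifting spares the $\eps\to0$ limiting procedure, at the cost of having to supply a suitable extension of $\psi$ (or of $f$) to $\mathcal{O}$ before taking the positive part there.
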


\begin{proof}
Keeping in mind the preceding theorem, the only thing we need to prove is that 
$(v-(1+\eps)\lA f\rA_{L^{\infty}(\R^d)})^+\in\mathcal{H}^{1,0}$.
Replacing $v$ with $-v$ and letting $\eps$ go to $0^+$ will then complete the proof.

To prove this claim, we will use Lemma~\ref{lem:charnulltrace}.
Since~$v\in\mathcal{H}^{1,0}(\Omega)$, the zero extension~$\widetilde{v}$ is in~$\mathcal{H}^1(\mathcal{O})$.
As in subsection~\ref{subsec:vardef}, we can extend~$f$ to~$\mathcal{O}$ using the Poisson kernel $e^{\eps z\langle D_x\rangle}$.
This extension~$\widetilde{f}$ is bounded by~$(1+\eps)\lA f\rA_{L^\infty}$ (see Lemma~\ref{lem:poissker}), so that~$\left(\widetilde{v}+\widetilde{f}-(1+\eps)\lA f\rA_{L^\infty}\right)^+$
is in~$\mathcal{H}^1(\mathcal{O})$ by elementary properties of this space, is zero on~$\mathcal{O}\setminus\Gamma$, and so by Lemma~\ref{lem:charnulltrace}
$\left(\widetilde{v}+\widetilde{f}-(1+\eps)\lA f\rA_{L^\infty}\right)^+\in\mathcal{H}^{1,0}(\Omega)$.
\end{proof}

We will mainly use the following classical consequence of the maximum principle:
\begin{prop} \label{prop:maxprincder}
  If $0<h'<h$, and $\Omega_{h'}=\{(x,y)\in\R^d\times\R,\eta(x)-h'<y<\eta(x)\}$, there exists a constant $C_{h'}>0$ such that 
  if $f\in C^{1+\eps}(\R^d)\cap H^\mez(\R^d)$ and $v$ is a variational solution of (\ref{eq:lapeq}),
  $$\lA v\rA_{C^{1+\eps}(\Omega_{h'})}\leq C_{h'}\lA f\rA_{C^{1+\eps}(\R^d)}.$$
\end{prop}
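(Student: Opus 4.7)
The plan is to reduce the assertion to a classical boundary Schauder estimate for a uniformly elliptic equation on a strip. Since the bottom $\Gamma$ lies at distance at least $h-h'$ from $\Omega_{h'}$, interior Schauder theory together with the global $L^\infty$ control from Proposition~\ref{prop:maxprinc} handles any point of $\Omega_{h'}$ staying at positive distance from $\Sigma$. The real issue is therefore Hölder regularity up to the free surface $y=\eta(x)$.

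First, fix $h''\in(h',h)$ and flatten the domain by the bi-Lipschitz diffeomorphism $\rho:\R^d\times[-1,0]\to\overline{\Omega_{h''}}$ defined by $\rho(x,z)=\eta(x)+h''z$. Setting $\tilde v(x,z)=v(x,\rho(x,z))$, one computes that $\tilde v$ satisfies a second-order equation $\mathcal{L}\tilde v=0$ in divergence form whose coefficients are polynomial in $\nabla\eta$ and in the bounded quantity $1/h''$, with boundary data $\tilde v\rvert_{z=0}=f$. Since $\eta\in W^{1+\eps,\infty}(\R^d)$ throughout this section, these coefficients lie in $C^{\eps}(\R^d\times[-1,0])$ and the operator is uniformly elliptic. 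Combining Proposition~\ref{prop:maxprinc} with standard boundary Schauder estimates for elliptic operators with $C^{\eps}$ coefficients (applied on balls $B_r(x_0,0)$ centered on the flat top boundary) yields
\begin{equation*}
  \lA\tilde v\rA_{C^{1+\eps}(B_{r/2}^+(x_0,0))}\leq C\bigl(\lA\tilde v\rA_{L^\infty}+\lA f\rA_{C^{1+\eps}(B_r(x_0))}\bigr)\leq C'\lA f\rA_{C^{1+\eps}(\R^d)}.
\end{equation*}
A covering argument in the tangential directions, together with interior Schauder on balls centered inside the strip, then gives $\lA\tilde v\rA_{C^{1+\eps}(\R^d\times[-1/2,0])}\leq C\lA f\rA_{C^{1+\eps}(\R^d)}$. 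Pulling this back through $\rho$—which is a $C^{1+\eps}$ diffeomorphism onto its image—yields the desired bound on $\Omega_{h'}$, provided $h''$ is chosen so that $\Omega_{h'}\subset\rho(\R^d\times[-1/2,0])$.

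The main technical obstacle is the non-compactness of $\Sigma$: the local boundary Schauder estimates must be assembled into a global one with a constant independent of the base point $x_0\in\R^d$. This requires that the $C^{\eps}$ norms of the coefficients of $\mathcal{L}$ be uniformly controlled in $x$, which is guaranteed precisely by the assumption that $\eta\in W^{1+\eps,\infty}(\R^d)$, and that the $L^\infty$ bound on $v$—the only genuinely nonlocal ingredient—is supplied globally by the maximum principle of the previous subsection. Once this uniformity is in place, the result is a direct transcription of classical Schauder theory into the flattened setting.
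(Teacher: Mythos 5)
Your argument is correct and is essentially the same approach as the paper's, which simply cites Corollary~8.36 of Gilbarg--Trudinger (a boundary Schauder estimate for $W^{1,2}$-weak solutions with $C^{1,\alpha}$ boundary and $C^{1,\alpha}$ Dirichlet data, combined with the $L^\infty$ bound from Proposition~\ref{prop:maxprinc}) directly on $\Omega_h$, noting uniformity of the constant; you have unpacked that citation by flattening the boundary explicitly, which is exactly what the proof of that corollary does internally. One small arithmetic slip: with $\rho(x,z)=(x,\eta(x)+h''z)$ one has $\rho(\R^d\times[-1/2,0])=\Omega_{h''/2}$, so requiring $\Omega_{h'}\subset\rho(\R^d\times[-1/2,0])$ forces $h'<h''/2$, which need not hold for $h'$ close to $h$; simply replace $[-1/2,0]$ by $[-z_0,0]$ with $h'/h''<z_0<1$ and the rest of the argument goes through unchanged.
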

\begin{proof}
Noticing that $v$ is an $H^1$ variational solution of $\Delta_{x,y}v=0$ in $\Omega_h$, Corollary~8.36 of \cite{GilbargTrudingerEllPDE} gives this on compact sub-domains of $\Omega_{h'}$, 
and the constant being uniform, we can deduce the result on the full $\Omega_{h'}$.
\end{proof}

\subsection{Straightening the boundary}

In order to study further regularity of those solutions, it is convenient to straighten the domain, transforming an equation with constant coefficients on a variable domain to 
an equation with variable coefficients on a fixed domain.
As seen in \cite{AlazardBurqZuilyExw/ST}, there exists a function $\eta_*$ such that
\begin{equation}
\left\lbrace
\begin{gathered}
  \eta_*+\frac{h}{4}\in H^\infty(\R^d),\\
  \eta(x)-\eta_*(x)=\frac{h}{4}+g,\quad\lA g\rA_{L^\infty(\R^d)}\leq\frac{h}{5},\\
  \Gamma\subset\lbrace(x,y)\in\mathcal{O}:y<\eta_*(x)\rbrace.
\end{gathered}
\right.
\end{equation}
We can take for example
$$\eta_*(x)=-\frac{h}{4}+e^{-\nu\langle D_x\rangle}\eta(x),$$
where $\nu>0$ is such that $\nu\lA\eta\rA_{W^{1,\infty}(\R^d)}\leq\frac{h}{5}$.

This gives 
$$\lA g\rA_{L^\infty(\R^d)}=\lA e^{-\nu\langle D_x\rangle}\eta-\eta\rA_{L^\infty(\R^d)}
  \leq\nu\lA\eta\rA_{W^{1,\infty}(\R^d)}\leq\frac{h}{5},$$
thanks to the following classical lemma.
\begin{lem} \label{lem:poissker}
  Let $f\in W^{1,\infty}(\R^d)$, $c>0$, and $t>0$. Then $e^{-bt\langle D_x\rangle}f\in L^\infty(\R^d)$ and
  $$\lA e^{-bt\langle D _x\rangle}f-f\rA_{L^\infty(\R^d)}\leq C\times bt\lA f\rA_{W^{1,\infty}(\R^d)},$$
  with $C>0$ a constant.
\end{lem}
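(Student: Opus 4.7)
The plan is to realize $e^{-bt\langle D_x\rangle}$ as convolution against a nonnegative integrable kernel $K_{bt}$, and then reduce the quantitative estimate to a first-moment bound for $K_{bt}$ combined with the Lipschitz estimate on $f$.

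First I would exhibit the kernel via the subordination formula
\[
e^{-s\langle\xi\rangle} = \int_0^\infty \frac{s}{2\sqrt{\pi u^3}}\, e^{-s^2/(4u) - u}\, e^{-u|\xi|^2}\, du,
\]
which writes $e^{-s\langle\xi\rangle}$ as a positive superposition of Gaussians in $\xi$. Its inverse Fourier transform $K_s$ is therefore a nonnegative Schwartz function built as the corresponding superposition of heat kernels, with total mass $\int_{\R^d} K_s(y)\, dy = e^{-s}$. In particular $\|e^{-bt\langle D_x\rangle} f\|_{L^\infty} \leq e^{-bt}\|f\|_{L^\infty}$, which gives the first assertion of the lemma.

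For the quantitative bound I would decompose
\[
e^{-bt\langle D_x\rangle} f(x) - f(x) = \int K_{bt}(y)\bigl(f(x-y) - f(x)\bigr)\, dy + (e^{-bt} - 1)\, f(x).
\]
The boundary term is directly controlled by $|1 - e^{-bt}|\, \|f\|_{L^\infty} \leq bt\, \|f\|_{L^\infty}$. For the convolution term, the Lipschitz bound $|f(x-y) - f(x)| \leq |y|\, \|\nabla f\|_{L^\infty}$ reduces everything to an estimate of the first absolute moment of $K_{bt}$.

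The hardest step is this moment estimate. A direct computation using the subordination formula above together with the elementary identity $\int_{\R^d} |y|\, e^{-|y|^2/(4u)}\, dy = c_d\, u^{(d+1)/2}$ yields
\[
\int_{\R^d} |y|\, K_s(y)\, dy = c'_d\, s \int_0^\infty \frac{e^{-s^2/(4u) - u}}{u}\, du,
\]
and bounding the remaining one-dimensional integral by an absolute constant (for $s = bt$ in any fixed bounded interval, which is the regime in which the lemma is subsequently applied in the construction of $\eta_*$) gives $\int |y|\, K_{bt}(y)\, dy \leq C\, bt$. Summing the two contributions yields the claimed bound $\|e^{-bt\langle D_x\rangle} f - f\|_{L^\infty} \leq C\, bt\, \|f\|_{W^{1,\infty}}$.
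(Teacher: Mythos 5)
Your subordination representation, the mass identity $\int K_s = e^{-s}$, and the decomposition into a convolution term plus a mass-defect term are all correct, and the step you flag as hardest is exactly where the argument breaks down. The one-dimensional integral
\[
\int_0^\infty \frac{e^{-s^2/(4u)-u}}{u}\,du
\]
equals $2K_0(s)$, the modified Bessel function of the second kind, and $K_0(s)\sim \log(1/s)$ as $s\to 0^+$. It is therefore \emph{not} bounded by an absolute constant on $s\in(0,1]$; your guard clause ``$s$ in a fixed bounded interval'' protects the wrong end of the range, since the divergence is at $s=0$, and that is precisely the regime in which the lemma is invoked (one chooses $\nu$ \emph{small} so that $\nu\lA\eta\rA_{W^{1,\infty}}\le h/5$). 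The correct output of your computation is $\int_{\R^d}|y|\,K_s(y)\,dy \sim c_d\, s\log(1/s)$ for small $s$, so the moment method only gives $\lA e^{-s\jap}f-f\rA_{L^\infty}\lesssim s\log(e+1/s)\lA f\rA_{W^{1,\infty}}$, a logarithm worse than the stated bound.

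This logarithmic loss is intrinsic to the problem at the $W^{1,\infty}$ level, not an artifact of your method. Near the origin the kernel of $e^{-s\jap}$ is comparable to the Poisson kernel $c_d\,s/(s^2+|y|^2)^{(d+1)/2}$ --- the ``$+1$'' in $\langle\xi\rangle$ only softens low frequencies, hence large $|y|$ --- and the first moment of the Poisson kernel already carries exactly this divergence. A Littlewood--Paley computation reproduces it as $\sum_{j\ge 0}\min(1,s2^j)\,2^{-j}\sim s\log(1/s)$, and the alternative route $e^{-s\jap}f-f=-\int_0^s e^{-r\jap}\jap f\,dr$ fails because $\jap f$ need not lie in $L^\infty$ for $f\in W^{1,\infty}$ (the vector multiplier $\xi/\langle\xi\rangle$ is not $L^\infty$-bounded). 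In short, your proof establishes the estimate with a harmless extra $\log(1/s)$ factor --- harmless because $\nu\log(1/\nu)\to 0$ as $\nu\to 0$, so the downstream construction of $\eta_*$ is unaffected --- but it does not, and cannot by this approach, establish the clean linear bound as stated; that version would require either the log correction or a slightly stronger hypothesis on $f$ such as $W^{1+\eps,\infty}$.
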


Now define
\begin{equation*}
\left\lbrace
\begin{aligned}
  \Omega_1&:=\lbrace(x,y:x\in\R^d,\eta_*(x)<y<\eta(x)\rbrace,\\
  \Omega_2&:=\lbrace(x,y)\in\mathcal{O}:y\leq\eta_*(x)\rbrace,\\
  \Omega&:=\Omega_1\cup\Omega_2,
\end{aligned}
\right.
\end{equation*}
and
\begin{equation*}
\left\lbrace
\begin{aligned}
  \widetilde{\Omega}_1&:=\lbrace(x,z):x\in\R^d,z\in I\rbrace,\quad I=(-1,0),\\
  \widetilde{\Omega}_2&:=\lbrace(x,z)\in\R^d\times(-\infty,-1]:(x,z+1+\eta_*(x))\in\Omega_2\rbrace,\\
  \widetilde{\Omega}&:=\widetilde{\Omega}_1\cup\widetilde{\Omega}_2.
\end{aligned}    
\right.
\end{equation*}

Following Lannes (\cite{LannesWellPos}), we consider the map $\rho$ from $\widetilde{\Omega}$ to $\R^d$ defined as
\begin{equation}
\left\lbrace
\begin{aligned} \label{eq:defvar}
  \rho(x,z):=&(1+z)e^{\delta z\langle D_x\rangle}\eta(x)-z\eta_*(x)\quad\text{if }(x,z)\in\widetilde{\Omega}_1,\\
  \rho(x,z):=&z+1+\eta_*(x)\quad\text{if }(x,z)\in\widetilde{\Omega}_2,
\end{aligned}
\right.
\end{equation}
with $\delta=\delta(\lA\eta\rA_{W^{1,\infty}(\R^d)})>0$ small.

Using lemma \ref{lem:poissker}, we have
$$\lA\partial_z\rho-\frac{h}{4}\rA_{L^\infty(\widetilde{\Omega}_1)}\leq\delta C\lA\eta\rA_{W^{1,\infty}(\R^d)}+\frac{h}{5},$$
which, taking $\delta$ small enough, gives
\begin{equation*}
\left\lbrace
\begin{aligned}
  \partial_z\rho(x,z)&\geq\min\left(1,\frac{h}{5}\right),\quad\forall(x,z)\in\widetilde{\Omega},\\
  \lA\nabla_{\!x,z}\rho\rA_{L^\infty(\widetilde{\Omega})}&\leq\mathcal{F}\left(\lA\eta\rA_{W^{1,\infty}(\R^d)}\right).
\end{aligned}
\right.
\end{equation*}
This proves that the map $(x,z)\mapsto(x,\rho(x,z))$ is a $C^1$-diffeomorphism from~$\widetilde{\Omega}$ to~$\Omega$.

\begin{lem}
Let $I=[-1,0]$. We have in Sobolev-type spaces, for any~$\sigma> 1/2+d/2$,
\begin{equation} \label{eq:rhosob}
\begin{gathered}
\lA\partial_z\rho-\frac{h}{4}\rA_{X^{\sigma-\frac{1}{2}}(I)}+\lA\nabla_{\!x}\rho\rA_{X^{\sigma-\frac{1}{2}}(I)}
      \leq\mathcal{F}\left(\lA\eta\rA_{W^{1,\infty}(\R^d)}\right)\lA\eta\rA_{H^{\sigma+\frac{1}{2}}(\R^d)},\\
\lA\partial_z\rho-\frac{h}{4}\rA_{L^1(I;H^{\sigma-\frac{1}{2}}(\R^d))}
	  +\lA\nabla_{\!x}\rho\rA_{L^1(I;H^{\sigma-\frac{1}{2}}(\R^d))}
	  \leq C\lA\eta\rA_{H^{\sigma+\frac{1}{2}}(\R^d)}.
\end{gathered}
\end{equation}
With $0\leq r\leq1/2$, we have in H\"{o}lder-type spaces
\begin{equation} \label{eq:rhohold}
\begin{gathered}
\lA\partial_z\rho-\frac{h}{4}\rA_{C^0(I;C^r_*(\R^d))}
	  +\lA\nabla_{\!x}\rho\rA_{C^0(I;C^r_*(\R^d))}
      \leq\mathcal{F}\left(\lA\eta\rA_{W^{1,\infty}(\R^d)}\right)\lA\eta\rA_{C^{1+r}_*(\R^d)},\\
\lA\partial_z\rho-\frac{h}{4}\rA_{\tilde{L}^2(I;C^{\mez+r}_*(\R^d))}
	  +\lA\nabla_{\!x}\rho\rA_{\tilde{L}^2(I;C^{\mez+r}_*(\R^d))}
	  \leq C[\lA\eta\rA_{L^2(\R^d)}+\lA\eta\rA_{C^{1+r}_*(\R^d)}],\\
\lA\nabla_{x,z}^2\rho\rA_{\tilde{L}^2(I;C^{-\mez+r}_*(\R^d))}
	  \leq C[\lA\eta\rA_{L^2(\R^d)}+\lA\eta\rA_{C^{1+r}_*(\R^d)}].
\end{gathered}
\end{equation}
\end{lem}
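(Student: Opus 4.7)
The plan is to substitute the explicit formula (\ref{eq:defvar}) for $\rho$ on $\widetilde{\Omega}_1$ into each left-hand side and then reduce every estimate to standard bounds on the Poisson semigroup $\{e^{c z\langle D_x\rangle}\}_{z\le 0}$ applied to $\eta$. Direct differentiation, together with the substitution $\eta_* = -h/4 + e^{-\nu\langle D_x\rangle}\eta$, gives
\begin{equation*}
\partial_z\rho - h/4 = e^{\delta z\langle D_x\rangle}\eta + (1+z)\delta\langle D_x\rangle e^{\delta z\langle D_x\rangle}\eta - e^{-\nu\langle D_x\rangle}\eta,
\end{equation*}
\begin{equation*}
\nabla_x\rho = (1+z)e^{\delta z\langle D_x\rangle}\nabla_x\eta - z\,\nabla_x e^{-\nu\langle D_x\rangle}\eta.
\end{equation*}
Every right-hand side is thus a finite linear combination of operators of the form $\langle D_x\rangle^\alpha e^{c|z|\langle D_x\rangle}$, $\alpha\in\{0,1\}$, with $c>0$ fixed, acting on $\eta$.

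Two ingredients suffice. First, for every $s\in\R$ and $z\le 0$, the operator $e^{c z\langle D_x\rangle}$ is uniformly bounded on $H^s(\R^d)$ and on $C^s_*(\R^d)$; these give directly the $C^0(I;C^r_*)$ bound and the $L^\infty_z$ component of $X^{\sigma-1/2}$. Second, on a Littlewood--Paley block $\Delta_j$ of frequency $\sim 2^j$, the semigroup carries an exponential factor $e^{-c' 2^j|z|}$, hence
\begin{equation*}
\int_{-1}^0 2^j e^{-c' 2^j|z|}\,\mathrm{d}z \lesssim 1, \qquad \Bigl(\int_{-1}^0 2^{2j} e^{-2c' 2^j|z|}\,\mathrm{d}z\Bigr)^{1/2} \lesssim 2^{j/2}.
\end{equation*}
The first identity converts $L^1_z$-integration into a one-derivative gain, producing the $L^1(I; H^{\sigma-1/2})$ bound in (\ref{eq:rhosob}). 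The second converts $L^2_z$-integration into a half-derivative gain, producing the $X^{\sigma-1/2}$ bound as well as the two Chemin--Lerner Hölder bounds in $\tilde L^2(I; C^{1/2+r}_*)$ after summing dyadic squares in $\ell^2$.

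For the estimate on $\nabla^2_{x,z}\rho$, differentiating once more brings out an additional factor of $\langle D_x\rangle$, which is exactly compensated by the drop in target regularity from $C^{1/2+r}_*$ to $C^{-1/2+r}_*$; the same Bernstein--Poisson argument therefore applies. The contribution of $\widetilde{\Omega}_2$ is trivial since there $\rho(x,z)=z+1+\eta_*(x)$ and $\eta_*+h/4\in H^\infty$. The main technical obstacle is purely organizational: one must perform the Littlewood--Paley decomposition \emph{before} taking the $L^2_z$ norm in the definition of $\tilde L^2(I;C^r_*)$, so that the exponential decay $e^{-c'2^j|z|}$ pairs cleanly with the Bernstein factor $2^{j r}$; the low-frequency block, which escapes the exponential decay, is precisely what generates the extra $\|\eta\|_{L^2(\R^d)}$ term on the right-hand side of (\ref{eq:rhohold}).
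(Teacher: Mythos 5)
Your proof is correct and follows essentially the same route as the paper: substitute the explicit formula (\ref{eq:defvar}) for $\rho$, reduce each bound to Poisson-semigroup operators $\langle D_x\rangle^\alpha e^{c z\langle D_x\rangle}$ acting on $\eta$, and apply Bernstein estimates plus the parabolic smoothing $\lA e^{-t\kappa\langle D_x\rangle}\Delta_j u\rA_{L^p}\lesssim e^{-c t 2^j}\lA\Delta_j u\rA_{L^p}$ on each Littlewood--Paley block, treating the low-frequency block separately to obtain the $\lA\eta\rA_{L^2}$ term. One small slip: for the Chemin--Lerner H\"{o}lder norms $\tilde L^2(I;C^{\mez+r}_*)$ the outer sum over dyadic blocks is an $\ell^\infty$ supremum, not an $\ell^2$ sum (that is the Sobolev case $X^{\sigma-1/2}$); this does not affect the argument.
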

\begin{proof}
(\ref{eq:rhosob}) is proved in \cite{AlazardBurqZuilyExw/ST}, and the first part of (\ref{eq:rhohold}) is a straightforward consequence of Lemma \ref{lem:poissker}.
The second part of (\ref{eq:rhohold}) will be proved with Littlewood-Paley decomposition, following \cite{WangZhangBrkWW}.
We have $\lA\nabla_{\!x}\rho\rA_{\tilde{L}^2(I;C^{\mez+r}_*(\R^d))}
  \lesssim\lA\langle D_x\rangle\rho\rA_{\tilde{L}^2(I;C^{\mez+r}_*(\R^d))},$ so that we only need to control this last norm.
We can split $\langle D_x\rangle\rho$ in two parts, 
$(1+z)e^{\delta z\langle D_x\rangle}\langle D_x\rangle\eta$ and $-ze^{-\nu\langle D_x\rangle}\langle D_x\rangle\eta$, whose norms will be respectively $A$ and $B$.
For the first part, we have
\begin{align*}
  A&\leq\sup_{j>0}2^{j(\mez+r)}\lA\Delta_je^{\delta z\langle D_x\rangle}\langle D_x\rangle\eta\rA_{L^2(I;L^\infty(\R^d))}
    +\lA\Delta_0e^{\delta z\langle D_x\rangle}\langle D_x\rangle\eta\rA_{L^1(I;L^\infty(\R^d))}\\
    &\leq C\sup_{j>0}2^{j(\frac{3}{2}+r)}\lA e^{c\delta z2^j}\rA_{L^{2}(I)}\lA\Delta_j\eta\rA_{L^{\infty}(\R^d)}
      +\lA e^{\delta z\langle D_x\rangle}\Delta_0\eta\rA_{L^2(I\times\R^d)}\\
    &\leq C\sup_{j\geq0}2^{j(1+r)}\lA\Delta_j\eta\rA_{L^\infty(\R^d)}+\lA\Delta_0\eta\rA_{L^{2}(\R^d)},
\end{align*}
where we have used extensively Bernstein estimates (\ref{lem:Berns}) and the smoothing effect (\ref{lem:parabsmooth}).
The same method applies for $B$, taking $z=-1$ and bounding $e^{-c\nu2^j}$ by $1/2^j$.
This gives the expected result, and computations for $\partial_z\rho-h/4$ and the second-order terms are identical.
\end{proof}

Now for a function $v$ defined on $\Omega$, put
$$\widetilde{v}(x,z):=v(x,\rho(x,z)).$$
We then have 
\begin{equation}
\left\lbrace
\begin{gathered} \label{eq:Lambda}
  (\partial_yv)(x,\rho(x,z))=\Lambda_1\widetilde{v}(x,z),\quad(\nabla_{\!x}v)(x,\rho(x,z))=\Lambda_2\widetilde{v}(x,z),\\
  \Lambda_1:=\frac{1}{\partial_z\rho}\partial_z,\quad\Lambda_2:=\nabla_{\!x}-\frac{\nabla_{\!x}\rho}{\partial_z\rho}\partial_z.
\end{gathered}
\right.
\end{equation}

If $v$ is a solution of $\Delta_{x,y}v=F_0$ in $\Omega$ then $\widetilde{v}$ satisfies
$$(\Lambda_1^2+\Lambda_2^2)\widetilde{v}=\widetilde{F}_0\quad\text{in }\widetilde{\Omega}.$$
This equation can be expanded to
\begin{equation} \label{eq:lapeqflat}
\left\lbrace
\begin{gathered}
  (\partial_z^2+\alpha\Delta_{x}+\beta\cdot\nabla_{\!x}\partial_z-\gamma\partial_z)\widetilde{v}=\widetilde{F}_0,\\
  \widetilde{v}(z=0)=f,\\
  \alpha:=\frac{(\partial_z\rho)^2}{1+\lvert\nabla_{\!x}\rho\rvert^2},\quad\beta:=-2\frac{\partial_z\rho\nabla_{\!x}\rho}{1+\lvert\nabla_{\!x}\rho\rvert^2},\quad
    \gamma:=\frac{1}{\partial_z\rho}(\partial_z^2\rho+\alpha\Delta_{x}\rho+\beta\cdot\nabla_{\!x}\partial_z\rho).
\end{gathered}
\right.
\end{equation}

We also remark that 
$$(\Lambda_1^2+\Lambda_2^2)\rho=0\quad\text{in }\widetilde{\Omega},$$
and that 
$$[\Lambda_1,\Lambda_2]=0.$$

We now derive some estimates on the new coefficients.
\begin{lem} \label{lem:ellcoefs}
Let $I=[-1,0]$. We have for~$\sigma> 1/2+d/2$
\begin{align}
\lA\alpha-\frac{h^2}{16}\rA_{X^{\sigma-\frac{1}{2}}(I)}
      &\leq\mathcal{F}\lp\lA\eta\rA_{W^{1,\infty}(\R^d)}\rp\lA\eta\rA_{H^{\sigma+\frac{1}{2}}(\R^d)}, \label{eq:alphasob}\\
\lA\beta\rA_{X^{\sigma-\frac{1}{2}}(I)}
      &\leq\mathcal{F}\lp\lA\eta\rA_{W^{1,\infty}(\R^d)}\rp\lA\eta\rA_{H^{\sigma+\frac{1}{2}}(\R^d)}, \label{eq:betasob}\\
\lA\gamma\rA_{X^{\sigma-\frac{3}{2}}(I)}&\leq \mathcal{F}\lp\lA\eta\rA_{W^{1,\infty}(\R^d)}\rp 
    \lb1+\lA\eta\rA_{H^{\sigma+\frac{1}{2}}(\R^d)}\rb, \label{eq:gammasob}
\end{align}
and for any $0\leq r\leq1/2$, we have
\begin{align}
\lA\alpha-\frac{h^2}{16}\rA_{C^{0}(I;C^{r}_*(\R^d))}
      &\leq\mathcal{F}\lp\lA\eta\rA_{W^{1,\infty}(\R^d)}\rp\lA\eta\rA_{C^{1+r}_*(\R^d)}, \label{eq:alphahold}\\
\lA\beta\rA_{C^{0}(I;C^{r}_*(\R^d))}
      &\leq\mathcal{F}\lp\lA\eta\rA_{W^{1,\infty}(\R^d)}\rp\lA\eta\rA_{C^{1+r}_*(\R^d)}. \label{eq:betahold}
\end{align}

\end{lem}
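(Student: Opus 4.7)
The plan is to reduce every estimate to a Moser-type nonlinear composition estimate, applied to the bounds on $\rho$ established in the previous lemma, together with the observation that $\alpha-h^2/16$ and $\beta$ vanish at the ``background'' values $(\partial_z\rho,\nabla_x\rho)=(h/4,0)$.

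First I would write $\alpha$ and $\beta$ as smooth functions of $U:=(\partial_z\rho-h/4,\nabla_x\rho)$. More precisely, set
\[
F(a,b):=\frac{(a+h/4)^2}{1+|b|^2},\qquad G(a,b):=-2\frac{(a+h/4)\,b}{1+|b|^2},
\]
so that $\alpha-h^2/16=F(U)-F(0)$ and $\beta=G(U)-G(0)$. Since $\|\nabla_{x,z}\rho\|_{L^\infty(\widetilde\Omega)}\leq\mathcal F(\|\eta\|_{W^{1,\infty}})$ from the previous lemma, $F$ and $G$ are smooth on a neighbourhood of the range of $U$ and the denominators stay bounded below. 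The Sobolev estimates (\ref{eq:alphasob}), (\ref{eq:betasob}) then follow from a tame composition estimate of the form $\|H(U)-H(0)\|_{X^{\sigma-1/2}(I)}\leq\mathcal F(\|U\|_{L^\infty})\|U\|_{X^{\sigma-1/2}(I)}$, valid for $\sigma-1/2>d/2$ (which is the assumption $\sigma>1/2+d/2$, ensuring that $X^{\sigma-1/2}(I)$ is an algebra via paraproduct decomposition). The bound on $U$ supplied by (\ref{eq:rhosob}) then closes the estimates.

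For $\gamma$, I would write it as $\gamma=\frac{1}{\partial_z\rho}\bigl(\partial_z^2\rho+\alpha\,\Delta_x\rho+\beta\!\cdot\!\nabla_x\partial_z\rho\bigr)$ and treat each product separately. The factor $1/\partial_z\rho$ is of the form $H(U)$ with $H$ smooth on the relevant range, so it lies in $X^{\sigma-1/2}(I)$ with norm controlled by $\mathcal F(\|\eta\|_{W^{1,\infty}})\|\eta\|_{H^{\sigma+1/2}}$ as above. The three inner terms $\partial_z^2\rho$, $\Delta_x\rho$, $\nabla_x\partial_z\rho$ are all first derivatives of the components of $U$ and therefore sit in $X^{\sigma-3/2}(I)$ by (\ref{eq:rhosob}). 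Using the paraproduct rule $X^{\sigma-1/2}\cdot X^{\sigma-3/2}\hookrightarrow X^{\sigma-3/2}$ (again needing $\sigma-1/2>d/2$), and incorporating the already-proved $X^{\sigma-1/2}$ bounds for $\alpha$ and $\beta$, gives (\ref{eq:gammasob}). The harmless constant $1$ on the right-hand side of (\ref{eq:gammasob}) comes from the fact that even at $U=0$ the expression $\frac{1}{h/4}\,\partial_z^2\rho$ etc.\ need not vanish.

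The Hölder bounds (\ref{eq:alphahold}) and (\ref{eq:betahold}) follow by the same composition argument, now in the space $C^0(I;C^r_*(\R^d))$: one uses that $C^r_*$ for $r\geq 0$ combines with $L^\infty$ bounds on the denominator to give a Moser-type estimate $\|H(U)-H(0)\|_{C^0(I;C^r_*)}\leq\mathcal F(\|U\|_{L^\infty})\|U\|_{C^0(I;C^r_*)}$, and then invokes the first line of (\ref{eq:rhohold}) to control the right-hand side by $\mathcal F(\|\eta\|_{W^{1,\infty}})\|\eta\|_{C^{1+r}_*}$. The main obstacle is purely bookkeeping: one has to verify that the paraproduct/Moser estimates hold in the dyadic Chemin–Lerner spaces $X^s(I)$ and $\tilde L^p(I;C^\sigma_*)$ with tame constants, and keep track of which factor is measured in $L^\infty$ versus in the high-regularity space so that the constant remains of the form $\mathcal F(\|\eta\|_{W^{1,\infty}})$ times a linear function of the top-order norm $\|\eta\|_{H^{\sigma+1/2}}$ or $\|\eta\|_{C^{1+r}_*}$.
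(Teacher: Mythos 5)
Your approach is essentially the same as the paper's: the paper writes $(\partial_z\rho)^2 = h^2/16 + G$ and decomposes $\alpha - h^2/16$ into rational combinations of $G$ and $\nabla_x\rho$, then applies the tame product/composition estimates of (\ref{eq:tamestsob}) together with (\ref{eq:rhosob}) and (\ref{eq:rhohold}); your Moser-type $\|H(U)-H(0)\|$ packaging is just a cleaner way of phrasing the same reduction, and the required composition estimate in $X^{\mu}(I)$ follows slice-by-slice from the appendix estimate once $\mu>d/2$, as you note.

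One imprecision to tidy: for (\ref{eq:gammasob}) you treat $\partial_z^2\rho$ as ``a first derivative of a component of $U$, hence in $X^{\sigma-3/2}(I)$ by (\ref{eq:rhosob}).'' That is literally true for the spatial derivatives $\Delta_x\rho,\ \nabla_x\partial_z\rho$ (differentiating in $x$ sends $X^{\sigma-1/2}$ to $X^{\sigma-3/2}$), but $\partial_z^2\rho=\partial_z U_1$ is a $z$-derivative, which does not come for free from the $L^\infty_z\cap L^2_z$ structure of $X^{\sigma-1/2}$. You need the separate observation that $\partial_z^2\rho\sim\langle D_x\rangle^2 e^{\delta z\langle D_x\rangle}\eta$ on $\widetilde\Omega_1$, so the parabolic smoothing of Lemma~\ref{lem:parabsmooth} gives the required $L^2_zH^{\sigma-1}$ control directly (this is contained in the second-order bounds of \cite{AlazardBurqZuilyExw/ST} that the paper refers to). Similarly, $1/\partial_z\rho$ itself is not in $X^{\sigma-1/2}$ --- it has the nonzero background value $4/h$; it is $1/\partial_z\rho-4/h$ that lies there, and the background times the second-derivative terms is exactly where the ``$1$'' absorbed into $\mathcal F$ and the linear $\|\eta\|_{H^{\sigma+1/2}}$ factor come from. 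With those two clarifications the argument is sound and matches the paper's.
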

\begin{proof}
We see from (\ref{eq:rhosob}) that we can write 
$$(\partial_z\rho)^2=\frac{h^2}{16}+G\text{ with }\lA G\rA_{X^{\sigma-\frac{1}{2}}(I)}
    \leq\mathcal{F}(\lA\eta\rA_{W^{1,\infty}(\R^d)})\lA\eta\rA_{H^{\sigma+\frac{1}{2}}(\R^d)}.$$
We can now decompose 
$$\alpha-\frac{h^2}{16}=-\frac{h^2}{16}\times\frac{\lvert\nabla_{\!x}\rho\rvert^2}{1+\lvert\nabla_{\!x}\rho\rvert^2}
      +G-G\frac{\lvert\nabla_{\!x}\rho\rvert^2}{1+\lvert\nabla_{\!x}\rho\rvert^2},$$
and we use the tame estimates of (\ref{eq:tamestsob}) to conclude. 
The other inequalities are all proved with the same method, using the other estimates of (\ref{eq:rhosob}) and (\ref{eq:rhohold}).
\end{proof}

\subsection{Elliptic regularity in the new domain}

We now study the new equation (\ref{eq:lapeqflat}), following the method from \cite{AlazardBurqZuilyExw/ST}, with tame estimates at every step.
Recall from~(\ref{eq:parabspaces}) the definition of the spaces
\begin{equation*}
\begin{aligned}
  X^\mu(I)&=L^\infty_z(I;H^\mu(\R^d))\cap L^2_z(I;H^{\mu+\frac{1}{2}}(\R^d)),\\
  Y^\mu(I)&=L^1_z(I;H^\mu(\R^d))+L^2_z(I;H^{\mu-\frac{1}{2}}(\R^d)).
\end{aligned}
\end{equation*}

We suppose $\widetilde{v}$ to be a solution of (\ref{eq:lapeqflat}) in $I\times\R^d$, $I=[-1,0]$ with the additional assumption
\begin{equation} \label{eq:assumpsob}
\lA\widetilde{v}\rA_{X^{-\frac{1}{2}}([-1,0])}<\infty.
\end{equation}
We know from \cite{AlazardBurqZuilyExw/ST} that this estimation holds for our variational solutions of~(\ref{eq:lapeq}), with
$$\lA\widetilde{v}\rA_{X^{-\frac{1}{2}}([-1,0])}\leq\mathcal{F}\lp\lA\eta\rA_{W^{1,\infty}(\R^d)}\rp\lA f\rA_{H^{\frac{1}{2}}(\R^d)}.$$

The main result of the section will be stated in two versions, corresponding to our two main theorems.
\begin{prop} \label{prop:regsob}
Let~$\epsilon>0$, $s_0>1/2+d/2$, and $s>1+d/2$. Let
$$-\frac{1}{2}\leq\sigma\leq s-\frac{1}{2}.$$
Consider $f\in H^{\sigma+1}(\R^d)$, $\widetilde{F}\in Y^{\sigma}(I)$ and $\widetilde{v}$ satisfying the hypothesis (\ref{eq:assumpsob}), solution to (\ref{eq:lapeqflat}).
Then for any $z_0\in(-1,0)$,
\begin{multline} \label{eq:elliphold}
\lA\nabla_{\!x,z}\widetilde{v}\rA_{X^{\sigma}([z_0,0])}\leq\mathcal{F}\lp\lA\eta\rA_{(C^{1+\eps}_*\cap L^2)(\R^d)}\rp\left[\lA\nabla_{\!x}f\rA_{H^{\sigma}(\R^d)}
	+\lA \widetilde{F}_0\rA_{Y^{\sigma}(I)}\right.\\
    \left.+\lA\eta\rA_{H^{s+\frac{1}{2}}(\R^d)}\lA\nabla_{\!x,z}\widetilde{v}\rA_{L^\infty(I\times\R^d)}
	+\lA\nabla_{\!x,z}\widetilde{v}\rA_{X^{-\frac{1}{2}}([-1,0])}\right], 
\end{multline}
and
\begin{equation} \label{eq:ellipsob}
\lA\nabla_{\!x,z}\widetilde{v}\rA_{X^{\sigma}([z_0,0])}\leq\mathcal{F}\lp\lA\eta\rA_{H^{s_0+\mez}(\R^d)}\rp\left[\lA\nabla_{\!x}f\rA_{H^{\sigma}(\R^d)}
	+\lA \widetilde{F}_0\rA_{Y^{\sigma}(I)}
    +\lA\nabla_{\!x,z}\widetilde{v}\rA_{X^{-\frac{1}{2}}([-1,0])}\right]. 
\end{equation}
\end{prop}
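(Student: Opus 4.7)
The plan is to follow the paradifferential scheme of Alazard--Burq--Zuily \cite{AlazardBurqZuilyExw/ST} and prove both estimates by induction on $\sigma$ in increments of $\mez$, starting from the base case $\sigma = -\mez$ which is guaranteed by the variational bound $\|\widetilde{v}\|_{X^{-1/2}} \les \mathcal{F}(\|\eta\|_{W^{1,\infty}})\|f\|_{H^{1/2}}$ recalled above. The core of the argument is a paradifferential factorization of the elliptic operator
$$P := \partial_z^2 + \alpha\Delta_x + \beta\cdot\nabla_x\partial_z - \gamma\partial_z$$
in the form $P = (\partial_z - T_a)(\partial_z - T_A) + R_0$, where $a, A$ are order-$1$ paradifferential symbols obtained by completing the square on the principal symbol $-\tau^2 + i\beta\cdot\xi\,\tau - \alpha|\xi|^2$. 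They satisfy $\RE a \leq -c\langle\xi\rangle$ and $\RE A \geq c\langle\xi\rangle$ thanks to the positivity of $\alpha$ (assured once $\delta$ in~(\ref{eq:defvar}) is chosen small), and $R_0$ is a lower-order paradifferential remainder controlled tamely via the coefficient bounds of Lemma~\ref{lem:ellcoefs}.

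Setting $w := (\partial_z - T_A)\widetilde{v}$, the equation $P\widetilde{v} = \widetilde{F}_0$ is decoupled as the first-order system $(\partial_z - T_a)w = \widetilde{F}_0 - R_0\widetilde{v}$ and $(\partial_z - T_A)\widetilde{v} = w$. The sign of $\RE a$ makes the first equation parabolic-smoothing when propagated backwards from $z = 0$, where the boundary value $w|_{z=0}$ involves $\nabla_x f$ together with the symbols acting on $f$; the sign of $\RE A$ makes the second smoothing when propagated forwards. Duhamel representations in the spaces $X^\mu, Y^\mu$ (Appendix~\ref{ap:para}) yield a gain of a half derivative at each step, producing a bound on $\|\nabla_{x,z}\widetilde{v}\|_{X^\sigma([z_0,0])}$ in terms of $\|\nabla_x f\|_{H^\sigma}$, $\|\widetilde{F}_0\|_{Y^\sigma}$, the inductive norm of $\widetilde{v}$ at the previous step, and products involving $R_0\widetilde{v}$ and the action of $T_a, T_A$ on the data.

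The two versions of the estimate diverge precisely at the control of these product terms. Applying Bony's decomposition $fg = T_f g + T_g f + R(f,g)$ to products of the coefficients (built paradifferentially from $\eta$ and its derivatives) against $\nabla_{x,z}\widetilde{v}$, the paraproduct $T_g f$ and the remainder $R(f,g)$ can be estimated by the Hölder norm of the coefficients times a Sobolev norm of $\widetilde{v}$, while $T_f g$ requires an $L^\infty$-type bound on $\nabla_{x,z}\widetilde{v}$ multiplied by a high Sobolev norm of the coefficients. In~(\ref{eq:ellipsob}), the hypothesis $s_0 > \mez + d/2$ gives a Sobolev embedding $H^{s_0+\mez} \hookrightarrow C^{1+\eps}_*$ for small $\eps$, so that $\|\eta\|_{C^{1+\eps}_*} \les \mathcal{F}(\|\eta\|_{H^{s_0+\mez}})$ and the entire prefactor collapses into a function of $\|\eta\|_{H^{s_0+\mez}}$. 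In~(\ref{eq:elliphold}), the prefactor depends only on the Hölder norm, at the price of the additive term $\|\eta\|_{H^{s+\mez}}\|\nabla_{x,z}\widetilde{v}\|_{L^\infty}$ arising exactly from the $T_f g$ piece, where the high Sobolev regularity of $\eta$ is paired with a bounded rather than Sobolev norm of $\widetilde{v}$.

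The main obstacle is ensuring the factorization is tame at every induction step: the symbols $a, A$ and the remainder $R_0$ must be controlled in the symbol norms feeding into the composition and commutator estimates linearly in the highest norm of $\eta$, using Lemma~\ref{lem:ellcoefs}. The $\gamma$ coefficient, which that lemma controls only in $X^{\sigma-3/2}$ (one derivative below $\alpha, \beta$), requires a slightly separate treatment, compensated by the fact that it multiplies only $\partial_z\widetilde{v}$ and so enters at the correct total order. A secondary difficulty is the bookkeeping of $X^\sigma$ versus $Y^\sigma$ norms along the Duhamel integrations, where the half-derivative smoothing of the parabolic semigroups generated by $T_a$ and $T_A$ is essential to absorb the remainder terms and close the induction.
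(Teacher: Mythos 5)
Your overall strategy aligns with the paper's: a paradifferential factorization $P=(\partial_z-T_a)(\partial_z-T_A)+R_0$, decoupling into forward and backward parabolic problems solved with the parabolic regularity of Proposition~\ref{prop:parabsob}, an induction on $\sigma$, and the Bony decomposition of coefficient-times-gradient products to explain where the two versions of the estimate diverge. However, two concrete steps would fail as you have stated them.

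First, the induction cannot advance in increments of $1/2$. The symbols $a,A$ lie only in $\Gamma^1_\eps$, since $\alpha,\beta$ inherit the $C^\eps_*$ regularity of $\nabla\eta$, so the composition remainder $T_aT_A-T_{aA}$ is of order $2-\eps$, not $3/2$; feeding $\nabla_{\!x,z}\widetilde v\in X^\sigma$ into it produces a source only in $Y^{\sigma+\eps}$, and the same $\eps$ ceiling reappears in the Bony remainder estimates for $\widetilde F_1$ and $\widetilde F_2$. The $1/2$-derivative smoothing of the parabolic semigroup resolves the passage $Y^\mu\to X^\mu$ but cannot overshoot the coefficient regularity. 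The correct step, as in Proposition~\ref{prop:regsobboot}, is $\delta\leq\inf(\eps,\,s_0-1/2-d/2)$. Your claim that $T_fg$ (coefficient on the outside) carries the $L^\infty$ norm of $\nabla_{\!x,z}\widetilde v$ is also transposed: in Bony's decomposition it is $T_g f$, the paraproduct of the coefficient by $\nabla_{\!x,z}\widetilde v$, that produces the term $\lA\eta\rA_{H^{s+\mez}}\lA\nabla_{\!x,z}\widetilde v\rA_{L^\infty}$.

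Second, your assignment of boundary data is circular. You propose to propagate the $w$-equation backward from $z=0$ using $w|_{z=0}$ "involving $\nabla_x f$ and the symbols acting on $f$"; but $w|_{z=0}=\partial_z\widetilde v|_{z=0}-T_A f$, and the trace $\partial_z\widetilde v|_{z=0}$ is exactly the unknown (essentially $G(\eta)f$) the elliptic estimate is designed to control. Moreover, with $\RE a\leq -c\langle\xi\rangle$, the equation $(\partial_z-T_a)w=\cdots$, i.e.\ $\partial_z w+T_{-a}w=\cdots$, is parabolic \emph{forward} in $z$, not backward. The paper sidesteps this by inserting a cutoff $\chi$ vanishing at $z_0$ and setting $\widetilde w:=\chi(z)(\partial_z-T_A)\widetilde v$, which has zero data at $z=z_0$; the $\widetilde w$-equation is solved forward, and only the $\widetilde v$-equation (after $z\mapsto-z$, making $\RE(-(-A))>0$) is solved backward from $z=0$, which is where the data $f$ legitimately enters. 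The commutator $\chi'(z)(\partial_z-T_A)\widetilde v$ added to the source is absorbed by the inductive bound $\lA\nabla_{\!x,z}\widetilde v\rA_{X^\sigma([z_0,0])}$.
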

\begin{rem}
The proof will show that the function $\mathcal{F}$ depends on $z_0$ and on $\sigma$. On every application, those parameters will be fixed, and independent of time.
\end{rem}

As in \cite{AlazardBurqZuilyExw/ST}, this will be proved by induction on the regularity $\sigma$.
The property that inequalities (\ref{eq:elliphold}) and~(\ref{eq:ellipsob})  hold for $\sigma$, for all admissible $z_0$, will be denoted by~$\mathcal{H}_\sigma$.
In this notation, hypothesis~(\ref{eq:assumpsob}) means that $\mathcal{H}_{-1/2}$ is satisfied.

Proposition \ref{prop:regsob} is then a consequence of
\begin{prop} \label{prop:regsobboot}
For any $\delta$ such that 
  $$0<\delta\leq\inf\lp\eps,s_0-\mez-\frac{d}{2}\rp\leq\frac{1}{2},$$
if $\mathcal{H}_\sigma$ is satisfied for some $-1/2\leq\sigma\leq s-1/2-\delta$, then $\mathcal{H}_{\sigma+\delta}$ is satisfied.
\end{prop}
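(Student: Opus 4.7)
The plan is to run the standard paradifferential factorization scheme of Alazard--Burq--Zuily for the elliptic equation (\ref{eq:lapeqflat}), carefully tracking the dependence of constants so as to obtain both the Hölder-type estimate (\ref{eq:elliphold}) and the Sobolev-type estimate (\ref{eq:ellipsob}) in a single induction step. First, I would localize in $z$: choose $z_1$ with $-1<z_1<z_0$ and a cutoff $\chi(z)$ equal to $1$ on $[z_0,0]$ and supported in $[z_1,0]$. Then $\chi\widetilde v$ satisfies an equation of the same form on $J:=[z_1,0]$ with an additional source absorbing the commutators of $\chi''$ and $\chi'\partial_z$ with $\widetilde v$; these commutators are supported in a region where $\mathcal{H}_\sigma$ already yields the required $Y^{\sigma+\delta}(J)$-control.

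Next I would paralinearize and factor the principal part,
$$\partial_z^2+T_\alpha\Delta_x+T_\beta\cdot\nabla_x\partial_z=(\partial_z-T_a)(\partial_z-T_A)+R_0,$$
with symbols $A,a$ of order $1$ solving $A+a\sim i\beta\cdot\xi$ and $Aa\sim-\alpha|\xi|^2$ (up to $\partial_z$-corrections). Positivity of $\alpha$ (close to $h^2/16$) and smallness of $|\beta|$ relative to $\alpha$ give positive discriminant for large $|\xi|$, so one can arrange $\RE A\geq c\langle\xi\rangle$ and $\RE a\leq -c\langle\xi\rangle$. The estimates of Lemma~\ref{lem:ellcoefs} on $\alpha,\beta,\gamma$ translate into the corresponding paradifferential seminorms of $A$ and $a$, while the residuals $(\alpha-T_\alpha)\Delta_x\widetilde v$, $(\beta-T_\beta)\cdot\nabla_x\partial_z\widetilde v$, $(\gamma-T_\gamma)\partial_z\widetilde v$ together with $R_0$ are treated via Bony's decomposition combined with the inductive bound $\mathcal{H}_\sigma$. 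The admissible gain $\delta$ is exactly the paraproduct margin of the coefficients: $\epsilon$ in the Hölder case, $s_0-\tfrac12-\tfrac d2$ in the Sobolev case, and the $\tfrac12$ ceiling from the highest-order tame estimate, which is precisely the constraint on $\delta$ in the statement.

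With the factorization in hand, set $U:=(\partial_z-T_A)\widetilde v$. Then $U$ solves a forward-parabolic equation $(\partial_z-T_a)U=F_1\in Y^{\sigma+\delta}(J)$, and a Duhamel/energy estimate propagated upward from $z=z_1$ (where the trace of $U$ is controlled by $\mathcal{H}_\sigma$) gives $U\in X^{\sigma+\delta}([z_1,0])$. I then recover $\widetilde v$ by integrating the backward-parabolic equation $\partial_z\widetilde v-T_A\widetilde v=U$ downward from $z=0$ with trace $f\in H^{\sigma+\delta+1}$; since $\RE A\geq c\langle\xi\rangle$ this is a smoothing evolution for $z<0$, producing $\nabla_{x,z}\widetilde v\in X^{\sigma+\delta}([z_0,0])$, which is exactly $\mathcal{H}_{\sigma+\delta}$.

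The main technical obstacle is verifying that every estimate is \emph{tame}, with the right dependence on $\eta$. In the Sobolev case (\ref{eq:ellipsob}) this is relatively painless once $s_0>\tfrac12+\tfrac d2$: paraproduct and remainder operators acting on coefficients in the Sobolev classes of Lemma~\ref{lem:ellcoefs} are bounded by a function of $\|\eta\|_{H^{s_0+1/2}}$. The Hölder case (\ref{eq:elliphold}) is more delicate, because the residuals $T_{\Delta_x\widetilde v}\alpha$ and $R(\alpha,\Delta_x\widetilde v)$ must be estimated without appealing to a high Sobolev norm of $\alpha$; the only tame way to do so is to pull out a factor of $\|\nabla_{x,z}\widetilde v\|_{L^\infty(I\times\R^d)}$, which is precisely why this norm appears on the right-hand side of (\ref{eq:elliphold}). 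Careful bookkeeping of paraproduct estimates in both the Sobolev scale $X^\bullet$ and with this extra $L^\infty$-pull, together with the parabolic energy estimates for $T_a$ and $T_A$ in both scales, is the heart of the argument.
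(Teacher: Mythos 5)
Your overall architecture (paralinearize, factor into a forward-smoothing and a backward-smoothing paradifferential evolution, bootstrap one $\delta$ at a time, with the $\delta\leq\min(\eps,s_0-\tfrac12-\tfrac d2,\tfrac12)$ constraint coming from the paraproduct margins) matches the paper's proof. The one genuine problem is in the trace handling for the forward step, which is actually the crux of the bootstrap.

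In the first paragraph you localize by multiplying $\widetilde v$ by a cutoff $\chi$, but in the third paragraph you set $U:=(\partial_z-T_A)\widetilde v$ (no cutoff) and claim its trace at $z=z_1$ is ``controlled by $\mathcal{H}_\sigma$''. This does not work as stated. The hypothesis $\mathcal{H}_\sigma$ only gives $\nabla_{x,z}\widetilde v\in X^\sigma=L^\infty_z H^\sigma\cap L^2_z H^{\sigma+\frac12}$: the $L^\infty_z$ piece gives trace control only at level $H^\sigma$, and the $L^2_z$ piece gives none pointwise, so the trace of $U$ at a fixed $z_1$ is generically only in $H^\sigma$ while Proposition~\ref{prop:parabsob} at level $\sigma+\delta$ requires a trace in $H^{\sigma+\delta}$. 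You therefore cannot close the gain of $\delta$ this way. The point of the cutoff is precisely to make the initial trace of the forward evolution vanish identically; the paper sets $\widetilde w:=\chi(z)(\partial_z-T_A)\widetilde v$ with $\chi(\zeta_0)=0$, so that $\widetilde w\rvert_{z=\zeta_0}=0$, and the commutator $\chi'(\partial_z-T_A)\widetilde v$ is absorbed into the source, where (and only where) your observation that $X^\sigma\subset Y^{\sigma+\delta}$ for $\delta\leq\tfrac12$ applies. If you insist on cutting off $\widetilde v$ rather than $(\partial_z-T_A)\widetilde v$, you can make it work, but then $U$ must be $(\partial_z-T_A)(\chi\widetilde v)$ throughout, its trace is zero by construction, not by $\mathcal{H}_\sigma$, and you pick up an extra $\chi''\widetilde v$ term to estimate. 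As written, the proposal conflates source control in $Y^{\sigma+\delta}$ (for which $\mathcal{H}_\sigma$ plus $\delta\leq\tfrac12$ suffices) with initial-data control in $H^{\sigma+\delta}$ (for which it does not).

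A smaller point: the paper runs the backward step on $\nabla_x\widetilde v$ rather than on $\widetilde v$, writing $\partial_z\nabla_x\widetilde v-T_A\nabla_x\widetilde v=\nabla_x\widetilde w+T_{\nabla_x A}\widetilde v$, which is what lets the final estimate depend only on $\lA\nabla_x f\rA_{H^{\sigma+\delta}}$ rather than on $\lA f\rA_{H^{\sigma+1+\delta}}$. Your version, integrating for $\widetilde v$ directly, would need the full $H^{\sigma+1+\delta}$ norm of $f$; that is harmless for the bootstrap but gives a slightly weaker conclusion than Proposition~\ref{prop:regsob}.
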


To prove this, we first estimate the lower order term.
\begin{lem} \label{lem:estrellsob1}
For all $J\subset I$,
\begin{equation}
  \lA\widetilde{F}_1\rA_{Y^{\sigma+\delta}(J)}\leq\mathcal{F}\lp\lA\eta\rA_{W^{1,\infty}\cap L^2}\rp\lp1+\lA\eta\rA_{C^{1+\delta}_*}\rp
    \left[\lA\eta\rA_{H^{s+\frac{1}{2}}}\lA\partial_z\widetilde{v}\rA_{L^\infty(\R^d\times J)}+\lA\partial_z\widetilde{v}\rA_{X^{\sigma}(J)}\right],
\end{equation}
and
\begin{equation}
  \lA\widetilde{F}_1\rA_{Y^{\sigma+\delta}(J)}\leq\mathcal{F}\lp\lA\eta\rA_{H^{s_0+\mez}}\rp\lA\partial_z\widetilde{v}\rA_{X^{\sigma}(J)},
\end{equation}
with $\widetilde{F}_1=\gamma\partial_z\widetilde{v}$.
\end{lem}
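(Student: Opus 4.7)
The plan is to treat $\widetilde{F}_1 = \gamma\, \partial_z \widetilde v$ as a product and to combine Bony's paraproduct decomposition with bounds on $\gamma$ obtained from Lemma~\ref{lem:ellcoefs}, completed, for the Hölder version, by additional Littlewood--Paley estimates derived along the lines of the proof of~(\ref{eq:rhohold}). The target space is $Y^{\sigma+\delta}(J)=L^1(J;H^{\sigma+\delta}(\R^d))+L^2(J;H^{\sigma+\delta-\mez}(\R^d))$, and the key structural feature to exploit is the half-derivative gain in $L^2_z$ built into the $X^\mu$ spaces, which allows $L^2_z \cdot L^2_z \hookrightarrow L^1_z$ combinations to feed the first summand of $Y^{\sigma+\delta}$.

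The first step is to upgrade the bounds on the coefficient $\gamma = \frac{1}{\partial_z\rho}(\partial_z^2\rho+\alpha\Delta_x\rho+\beta\cdot\nabla_{\!x}\partial_z\rho)$. In the Sobolev setting, applying~(\ref{eq:gammasob}) at $\sigma=s_0$ and using the embedding $H^{s_0+\mez}\hookrightarrow W^{1,\infty}$ (which holds since $s_0>1/2+d/2$) yields $\lA\gamma\rA_{X^{s_0-\frac{3}{2}}(I)}\leq\mathcal{F}(\lA\eta\rA_{H^{s_0+\mez}})$. In the Hölder setting, combining~(\ref{eq:alphahold})--(\ref{eq:betahold}), tame product rules, and the last line of~(\ref{eq:rhohold}), I would derive a bound of the form $\lA\gamma\rA_{\tilde{L}^2(I;C^{-\mez+\delta}_*(\R^d))}\leq\mathcal{F}(\lA\eta\rA_{W^{1,\infty}\cap L^2})(1+\lA\eta\rA_{C^{1+\delta}_*})$; in parallel,~(\ref{eq:gammasob}) at $\sigma=s$ supplies a tame high-Sobolev bound carrying the full $\lA\eta\rA_{H^{s+\mez}}$.

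Then, writing $\gamma\,u=T_\gamma u+T_u\gamma+R(\gamma,u)$ with $u=\partial_z\widetilde v$, I would estimate each piece in $Y^{\sigma+\delta}(J)$ via the paradifferential product rules from Appendix~\ref{ap:para}. For the Sobolev version, all three pieces are treated uniformly: using $\gamma\in L^2_z(I;H^{s_0-1}_x)$ from the upgraded bound and $u\in L^2_z(I;H^{\sigma+\mez}_x)$ from $X^\sigma$, Cauchy--Schwarz in $z$ combined with a standard Sobolev product rule (which closes precisely because $\delta\leq s_0-1/2-d/2$) sends the product into $L^1_z(I;H^{\sigma+\delta}_x)\subset Y^{\sigma+\delta}(J)$. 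This gives the announced bound $\mathcal{F}(\lA\eta\rA_{H^{s_0+\mez}})\lA\partial_z\widetilde v\rA_{X^\sigma(J)}$.

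For the Hölder version, the tame multilinear structure must be preserved by dispatching the paraproducts according to frequency. The piece $T_u\gamma$, which puts the high frequencies on $\gamma$, is estimated via~(\ref{eq:gammasob}) at $\sigma=s$, absorbing $u$ in $L^\infty(J\times\R^d)$; this is the origin of the contribution $\lA\eta\rA_{H^{s+\mez}}\lA\partial_z\widetilde v\rA_{L^\infty}$. The remaining pieces $T_\gamma u$ and $R(\gamma,u)$ only call for $\gamma$ in the Hölder-type norm derived above, so they depend on $\eta$ only through $\lA\eta\rA_{W^{1,\infty}}$ and $\lA\eta\rA_{C^{1+\delta}_*}$, and they absorb $u$ through its $X^\sigma(J)$ norm. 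The main obstacle will be the bookkeeping: ensuring that every paraproduct piece lands in the correct summand of $Y^{\sigma+\delta}$ with the right functional dependence on $\eta$ and $u$, and in particular avoiding cross-terms in which $\lA\eta\rA_{H^{s+\mez}}$ would end up multiplying a norm of $\partial_z\widetilde v$ stronger than $L^\infty$.
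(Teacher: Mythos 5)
Your Sobolev branch matches the paper's. Your Hölder branch, however, takes a genuinely different route, and it runs into exactly the obstacle you flag at the end. You propose to bound $\gamma$ once and for all in $\tilde{L}^2(I;C^{-\mez+\delta}_*)$ and then Bony-decompose $\gamma\,\partial_z\widetilde v$; the paper instead expands $\widetilde F_1$ by the definition of $\gamma$, writing
\begin{equation*}
\widetilde F_1=\partial_z^2\rho\cdot\frac{1}{\partial_z\rho}\partial_z\widetilde v+\Delta_x\rho\cdot\frac{\alpha}{\partial_z\rho}\partial_z\widetilde v+\nabla_{\!x}\partial_z\rho\cdot\frac{\beta}{\partial_z\rho}\partial_z\widetilde v,
\end{equation*}
and Bony-decomposes each term with $\partial_z^2\rho$ (resp.\ $\Delta_x\rho$, $\nabla_{\!x}\partial_z\rho$) as the rough factor and $\frac{1}{\partial_z\rho}\partial_z\widetilde v$ (resp.\ the analogous combinations) treated as single tame objects in $X^\sigma\cap L^\infty$.

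The concrete gap is that the claimed Hölder bound $\lA\gamma\rA_{\tilde{L}^2(I;C^{-\mez+\delta}_*)}\leq\mathcal F\lp\lA\eta\rA_{W^{1,\infty}\cap L^2}\rp\lp1+\lA\eta\rA_{C^{1+\delta}_*}\rp$ fails when $\delta\leq 1/4$, a range permitted by Proposition~\ref{prop:regsobboot}, which only requires $\delta\leq\inf(\eps,\,s_0-\mez-\frac{d}{2})$ with $\eps$ possibly small. Indeed, writing $\gamma=\frac{1}{\partial_z\rho}\,g$ with $g=\partial_z^2\rho+\alpha\Delta_x\rho+\beta\cdot\nabla_{\!x}\partial_z\rho\in\tilde{L}^2(I;C^{-\mez+\delta}_*)$, the Bony remainder $R\lp g,\frac{1}{\partial_z\rho}\rp$ has regularity sum $(-\mez+\delta)+\delta=2\delta-\mez<0$ for $\delta<1/4$, so~(\ref{eq:paradifremsob}) does not apply. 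If you instead estimate this remainder against $\frac{1}{\partial_z\rho}-\frac{4}{h}\in C^0(I;H^{s-\mez})$, you pay a factor of $\lA\eta\rA_{H^{s+\mez}}$ and obtain a piece $\gamma_2$ of $\gamma$ in $L^2(J;H^{s-1+\delta})$; the paraproduct $T_{\gamma_2}u$ against $u\in X^\sigma$ then produces precisely the forbidden cross-term $\lA\eta\rA_{H^{s+\mez}}\lA\partial_z\widetilde v\rA_{X^\sigma}$. The paper's finer decomposition avoids this: the ``good'' package $\frac{1}{\partial_z\rho}\partial_z\widetilde v$ is bounded tamely in $X^\sigma\cap L^\infty$ as a single object before the one and only Bony decomposition (against $\partial_z^2\rho$), so no remainder is ever paraproducted a second time. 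A secondary slip: $R(\gamma,u)$ does not in fact close with the Hölder norm of $\gamma$ alone, since $\sigma+(-\mez+\delta)$ is negative near $\sigma=-\mez$; like $T_u\gamma$, it must be paired with $u\in L^\infty\subset C^0_*$ and a Sobolev bound on $\gamma$.
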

\begin{proof}
For the first inequality, we decompose~$\widetilde{F}_1$ as 
$$\widetilde{F}_1=\partial_z^2\rho\frac{1}{\partial_z\rho}\partial_z\widetilde{v}+\Delta_x\rho\frac{\alpha}{\partial_z\rho}\partial_z\widetilde{v}
+\nabla_{\!x}\partial_z\rho\cdot\frac{\beta}{\partial_z\rho}\partial_z\widetilde{v}:=A+B+C.$$
We then use the paraproduct estimates~(\ref{eq:pararem1}), (\ref{eq:paraprod1}), and~(\ref{eq:paraprod2}) to get
\begin{align*}
\lA A\rA_{Y^{\sigma+\delta}}&\leq\lA A\rA_{L^2(J;H^{s+\delta-\mez})}\\
&\leq\lA\partial_z^2\rho\rA_{\widetilde{L}^2\lp J;C^{\delta-\mez}\rp}\lA\frac{1}{\partial_z\rho}\partial_z\widetilde{v}\rA_{X^\sigma}
+\lA\frac{1}{\partial_z\rho}\partial_z\widetilde{v}\rA_{L^\infty\lp J\times\R^d\rp}\lA\partial_z^2\rho\rA_{L^2\lp J;H^{s-1}\rp},	
\end{align*}
and using the estimates~(\ref{eq:rhohold}) and tame estimates gives the majoration for~$A$.
Estimates on~$B$ and~$C$ follows along the same lines.
For the second one, paraproduct estimates give immediately 
\begin{equation*}
	\lA\gamma\partial_z\widetilde{v}\rA_{L^1(J;H^{\sigma+\delta})}\leq\lA\partial_z\widetilde{v}\rA_{L^2(J;H^{\sigma+\mez})}\lA\gamma\rA_{L^2(J;H^{s_0-1})},
\end{equation*}
and using the estimate on~$\gamma$ from Lemma~\ref{lem:ellcoefs} enable us to conclude.
\end{proof}

We now replace multiplication by $\alpha$ (resp. $\beta$) with paramultiplication by $T_\alpha$ (resp. $T_\beta$).
\begin{lem} \label{lem:estrellsob2}
$\widetilde{v}$ satisfies the paradifferential equation 
\begin{equation*}
\partial_z^2\widetilde{v}+T_\alpha\Delta_x\widetilde{v}+T_{\beta}\cdot\nabla_{\!x}\partial_z\widetilde{v}=\widetilde{F}_0+\widetilde{F}_1+\widetilde{F}_2,
\end{equation*}
for some remainder 
\begin{equation*}
\widetilde{F}_2=(T_\alpha-\alpha)\Delta_x\widetilde{v}+(T_{\beta}-\beta)\cdot\nabla_{\!x}\partial_z\widetilde{v}
\end{equation*}
satisfying  for all $J\subset I$
\begin{equation*}
\lA\widetilde{F}_2\rA_{Y^{\sigma+\delta}(J)}\leq
      \mathcal{F}(\lA\eta\rA_{W^{1,\infty}(\R^d)})\left[1
      +\lA\eta\rA_{H^{s+\frac{1}{2}}(\R^d)}\right]\lA\nabla_{\!x,z}\widetilde{v}\rA_{L^\infty(\R^d\times J)},
\end{equation*}
or 
\begin{equation*}
	\lA\widetilde{F}_2\rA_{Y^{\sigma+\delta}(J)}\leq
      \mathcal{F}(\lA\eta\rA_{H^{s_0}(\R^d)})\lA\nabla_{\!x,z}\widetilde{v}\rA_{X^\sigma(J)}.
\end{equation*}

\end{lem}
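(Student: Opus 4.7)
The paradifferential identity is immediate: starting from (\ref{eq:lapeqflat}), I move $(T_\alpha-\alpha)\Delta_x\widetilde{v}+(T_\beta-\beta)\cdot\nabla_{\!x}\partial_z\widetilde{v}$ to the right and group it as $\widetilde{F}_2$, and similarly put $\gamma\partial_z\widetilde{v}=\widetilde{F}_1$ on the right. The substance of the lemma is the two estimates on $\widetilde{F}_2$.

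To estimate $\widetilde{F}_2$, I would invoke Bony's decomposition. Since paramultiplication by a constant agrees with ordinary multiplication ($T_c u=cu$), I can write $T_\alpha-\alpha=T_{\alpha-h^2/16}-(\alpha-h^2/16)$; Bony's identity then yields
\begin{equation*}
(T_\alpha-\alpha)\Delta_x\widetilde{v}=-T_{\Delta_x\widetilde{v}}(\alpha-h^2/16)-R(\alpha-h^2/16,\Delta_x\widetilde{v}),
\end{equation*}
and analogously $(T_\beta-\beta)\cdot\nabla_{\!x}\partial_z\widetilde{v}=-T_{\nabla_{\!x}\partial_z\widetilde{v}}\beta-R(\beta,\nabla_{\!x}\partial_z\widetilde{v})$. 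Each of the four resulting pieces is of the form $T_a b$ or $R(a,b)$ where $a\in\{\alpha-h^2/16,\beta\}$ has already been estimated in Lemma~\ref{lem:ellcoefs} and $b$ is a second spatial derivative of $\widetilde{v}$.

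For the Sobolev bound, I apply (\ref{eq:alphasob})--(\ref{eq:betasob}) with the index taken to be $s_0$, so that $\alpha-h^2/16$ and $\beta$ sit in $X^{s_0-1/2}(I)$ with a constant of the form $\mathcal{F}(\lA\eta\rA_{H^{s_0+1/2}})$. Because $s_0>1/2+d/2$, the paraproduct and remainder estimates from Appendix~\ref{ap:para}, paired with the $X^\sigma(J)$-bound on $\nabla_{x,z}\widetilde{v}$, close off each of the four pieces and give the desired inequality. For the Hölder bound, I combine the $L^\infty_z C^r_*$ estimates (\ref{eq:alphahold})--(\ref{eq:betahold}) (taken with $r=\delta$) with the top-level Sobolev estimates (\ref{eq:alphasob})--(\ref{eq:betasob}) at $\sigma=s-1/2$ to handle the high-frequency tails of $\alpha,\beta$; the $L^\infty$ norm of $\nabla_{x,z}\widetilde{v}$ arises naturally in the paraproducts where $b$ sits, while $\lA\eta\rA_{H^{s+1/2}}$ is brought in exactly once through the top-level Sobolev bound, which is what yields the tame structure.

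The main obstacle is keeping the Hölder estimate tame while landing in the genuinely anisotropic target space $Y^{\sigma+\delta}(J)=L^1_zH^{\sigma+\delta}+L^2_zH^{\sigma+\delta-1/2}$. The key ingredient is the $L^2_z$-smoothing captured in (\ref{eq:rhosob})--(\ref{eq:rhohold}): the bound $\nabla_{x,z}\rho\in\widetilde{L}^2_zC^{1/2+r}_*$ gives $\alpha,\beta$ half a spatial derivative of regularity in an $L^2_z$ sense, which matches the $L^2_zH^{\sigma+\delta-1/2}$ summand of $Y^{\sigma+\delta}$, while the $L^\infty_zC^r_*$ bound feeds into the $L^1_zH^{\sigma+\delta}$ summand. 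Splitting each paraproduct and remainder according to the Littlewood--Paley blocks of the non-constant-coefficient factor and routing the low-frequency part through the $L^\infty_z$ estimate and the high-frequency tail through the smoothing estimate produces the stated inequalities.
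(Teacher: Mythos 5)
Your opening move---subtracting the constant $h^2/16$ and applying Bony's decomposition so that each piece has the form $T_b a$ or $R(a,b)$ with $a\in\{\alpha-h^2/16,\beta\}$---is essentially what the paper does, and your description of the Sobolev bound (place $\nabla_{x,z}\widetilde{v}$ in $L^2_zH^{\sigma+1/2}$, place $\alpha-h^2/16,\beta$ in $L^2_zH^{s_0}$ via (\ref{eq:alphasob})--(\ref{eq:betasob}), land in $L^1_zH^{\sigma+\delta}$) matches the paper's route. One small caution: your claim that $T_c u=cu$ for a constant $c$ is not literally true, since $T_1=\psi(D)$ is a high-frequency cutoff; the paper therefore keeps an extra term $-\frac{h^2}{16}(T_1-1)\Delta_x\widetilde{v}$, which is smoothing and is precisely what produces the lone ``$+1$'' in the stated estimate.

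The real problem is your account of the H\"older bound, which departs from what is needed and is not likely to close as described. The paper's H\"older estimate does \emph{not} invoke the $C^r_*$ bounds (\ref{eq:alphahold})--(\ref{eq:betahold}) on $\alpha,\beta$, and there is no Littlewood--Paley frequency-routing. Since $Y^{\sigma+\delta}=L^1_zH^{\sigma+\delta}+L^2_zH^{\sigma+\delta-1/2}$ is a \emph{sum} of spaces, it suffices to estimate in a single summand. The paper places $\Delta_x\widetilde{v}$ and $\nabla_x\partial_z\widetilde{v}$ in $L^\infty_zC^{-1}_*$ (controlled by $\lA\nabla_{x,z}\widetilde{v}\rA_{L^\infty}$) and places $\alpha-h^2/16,\beta$ in $L^2_zH^{\sigma+\delta+1/2}\subset L^2_zH^s$ via the \emph{Sobolev} estimate (\ref{eq:alphasob}) at top regularity, which is where the single factor $\lA\eta\rA_{H^{s+1/2}}$ enters; the $L^\infty_z\cdot L^2_z=L^2_z$ structure then lands directly in $L^2_zH^{\sigma+\delta-1/2}$, giving the tame bound in one application of (\ref{eq:paraprod2}) and (\ref{eq:paradifremsob}). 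The $\widetilde{L}^2_zC^{1/2+r}_*$ smoothing bound on $\nabla\rho$ from (\ref{eq:rhohold}), which you single out as the key ingredient, plays no role for $\widetilde{F}_2$; it is used in Lemma~\ref{lem:estrellsob1} for $\widetilde{F}_1=\gamma\partial_z\widetilde{v}$, where second derivatives of $\rho$ appear. As written, your block-by-block splitting is too vague to check and interleaves regularity placements in a way that is hard to make tame.
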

\begin{proof}
First, we have 
$$\widetilde{F}_2=\left(T_{\alpha-\frac{h^2}{16}}-\left(\alpha-\frac{h^2}{16}\right)\right)\Delta_x\widetilde{v}
  -\frac{h^2}{16}(T_1-1)\Delta_x\widetilde{v}+(T_{\beta}-\beta)\cdot\nabla_{\!x}\partial_z\widetilde{v},$$
which, according to~(\ref{eq:paraprod2}) and~(\ref{eq:paradifremsob}) gives
\begin{multline*}
\lA\widetilde{F}_2\rA_{Y^{\sigma+\delta}}\leq C\left[\lA\Delta_x\widetilde{v}\rA_{L^{\infty}C^{-1}_*}
    \left(\lA\alpha-\frac{h^2}{16}\rA_{L^2H^{\sigma+\delta+\frac{1}{2}}}+1\right)\right.\\
    \left.+\lA\nabla_{\!x}\partial_z\widetilde{v}\rA_{L^{\infty}C^{-1}_*}\lA\beta\rA_{L^2H^{\sigma+\delta+\frac{1}{2}}}\right].
\end{multline*}
As $\sigma+\delta+1/2\leq s$, we have
$$\lA\widetilde{F}_2\rA_{Y^{\sigma+\delta}}\leq C\lb1+\lA\alpha-\frac{h^2}{16}\rA_{X^{s-\frac{1}{2}}}+\lA\beta\rA_{X^{s-\frac{1}{2}}}\rb
    \lA\nabla_{\!x,z}\widetilde{v}\rA_{L^\infty}.$$
Using the estimates (\ref{eq:alphasob}) and (\ref{eq:betasob}) completes the proof of the first inequality.

Using again (\ref{eq:paraprod2}) and (\ref{eq:paradifremsob}) gives
\begin{equation*}
	\lA\lp T_{\alpha-\frac{h^2}{16}}-\lp\alpha-\frac{h^2}{16}\rp\rp\Delta_x\widetilde{v}\rA_{L^1H^{\sigma+\delta}}\leq 
	C\lA\Delta_x\widetilde{v}\rA_{L^2H^{\sigma-\mez}}\lp\lA\alpha-\frac{h^2}{16}\rA_{L^2H^{s_0+\mez}}+1\rp,
\end{equation*}
and
\begin{equation*}
	\lA\lp T_\beta-\beta\rp\cdot\nabla_{\!x}\partial_z\widetilde{v}\rA_{L^1H^{\sigma+\delta}}\leq
	C\lA\nabla_{\!x}\partial_z\widetilde{v}\rA_{L^2H^{\sigma-\mez}}\lA\beta\rA_{L^2H^{s_0}},
\end{equation*}
so that
\begin{equation*}
	\lA\widetilde{F}_2\rA_{Y^{\sigma+\delta}}\leq C\lb1+\lA\alpha-\frac{h^2}{16}\rA_{X^{s_0-\frac{1}{2}}}+\lA\beta\rA_{X^{s_0-\frac{1}{2}}}\rb
    \lA\nabla_{\!x,z}\widetilde{v}\rA_{X^\sigma}.
\end{equation*}
The estimates~(\ref{eq:alphasob}) and~(\ref{eq:betasob}) enable us to conclude once again.
\end{proof}

We can now decouple the equation into a forward and a backward parabolic evolution equation.
\begin{lem} \label{lem:estrellsob3}
There exists two symbols $a,A$ in $\Gamma^1_\eps(\R^d\times I)$ and a remainder $\widetilde{F}_3$ such that
\begin{equation} \label{eq:couppar}
(\partial_z-T_a)(\partial_z-T_A)\widetilde{v}=\widetilde{F}_0+\widetilde{F}_1+\widetilde{F}_2+\widetilde{F}_3,
\end{equation}
with
\begin{equation} \label{eq:symbpar}
\mathcal{M}^1_\eps(a)+\mathcal{M}^1_\eps(A)\leq\mathcal{F}\left(\lA\eta\rA_{W^{1,\infty}}\right)\lA\eta\rA_{C^{1+\eps}_*},
\end{equation}
and  for all $J\subset I$
\begin{equation*}
\lA\widetilde{F}_3\rA_{Y^{\sigma+\delta}(J)}\leq
      \mathcal{F}\left(\lA\eta\rA_{W^{1,\infty}}\right)\lA\eta\rA_{C^{1+\eps}_*}\lA\nabla_{\!x,z}\widetilde{v}\rA_{X^{\sigma}(I)}.
\end{equation*}
\end{lem}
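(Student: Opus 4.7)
The plan is to factor the principal paradifferential operator $\partial_z^2 + T_\beta\cdot\nabla_{\!x}\partial_z + T_\alpha\Delta_x$ by formally treating $\partial_z$ as a scalar variable $X$ and replacing $\nabla_{\!x}\mapsto i\xi$, $\Delta_x\mapsto-|\xi|^2$. This produces the characteristic polynomial
\[X^2 + i(\beta\cdot\xi)X - \alpha|\xi|^2 = 0,\]
and I define $a, A$ as its two roots, chosen so that $\RE A \geq 0$ and $\RE a \leq 0$:
\[A := \mez\lp-i\beta\cdot\xi + \sqrt{4\alpha|\xi|^2-(\beta\cdot\xi)^2}\rp,\quad a := \mez\lp-i\beta\cdot\xi - \sqrt{4\alpha|\xi|^2-(\beta\cdot\xi)^2}\rp.\]
These satisfy $a + A = -i\beta\cdot\xi$ and $aA = -\alpha|\xi|^2$. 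Substituting the formulas for $\alpha, \beta$ into the discriminant yields
\[4\alpha|\xi|^2-(\beta\cdot\xi)^2 = \frac{4(\partial_z\rho)^2\lb|\xi|^2(1+|\nabla_{\!x}\rho|^2)-(\nabla_{\!x}\rho\cdot\xi)^2\rb}{(1+|\nabla_{\!x}\rho|^2)^2}\geq c|\xi|^2\]
for some $c > 0$ depending only on $\lA\eta\rA_{W^{1,\infty}}$, using the lower bound on $\partial_z\rho$. Hence $\sqrt{\cdot}$ is well-defined and homogeneous of degree $1$ in $\xi$, and (\ref{eq:alphahold})--(\ref{eq:betahold}) combined with smooth dependence and tame estimates give $a, A \in \Gamma^1_\eps$ together with the symbol norm bound~(\ref{eq:symbpar}).

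Next I would compute $\widetilde{F}_3$ by expanding the product and using $\partial_z T_A = T_A\partial_z + T_{\partial_z A}$:
\[(\partial_z - T_a)(\partial_z - T_A) = \partial_z^2 - T_{a+A}\partial_z + T_aT_A - T_{\partial_z A}.\]
The identifications $T_{a+A} = -T_\beta\cdot\nabla_{\!x}$ and $T_{aA} = -T_\alpha\Delta_x$ hold at the paradifferential level for these symbols, which are polynomial in $\xi$ with $x$-dependent coefficients. Subtracting the original principal operator therefore leaves
\[\widetilde{F}_3 = \lp T_aT_A - T_{aA}\rp\widetilde{v} - T_{\partial_z A}\widetilde{v}.\]

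To estimate the first piece I apply the paradifferential composition formula: for $a, A \in \Gamma^1_\eps$,
\[\lA\lp T_aT_A - T_{aA}\rp u\rA_{H^{s-2+\eps}} \leq C\lp\mathcal{M}^1_\eps(a)\mathcal{M}^1_0(A) + \mathcal{M}^1_0(a)\mathcal{M}^1_\eps(A)\rp\lA u\rA_{H^s},\]
so taking $s = \sigma + \delta + \frac{3}{2} - \eps \leq \sigma + \frac{3}{2}$ (using $\delta \leq \eps$) and integrating in $z$ controls the $L^2_z H^{\sigma+\delta-\frac{1}{2}}$ component of $Y^{\sigma+\delta}(J)$ by $\mathcal{F}(\lA\eta\rA_{W^{1,\infty}})\lA\eta\rA_{C^{1+\eps}_*}\lA\nabla_{\!x,z}\widetilde{v}\rA_{X^\sigma(I)}$. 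For the second piece, $\partial_z A$ is a symbol of order $1$ whose $x$-dependence runs through $\partial_z\alpha, \partial_z\beta$, and hence through $\nabla_{\!x,z}^2\rho$; the bound on $\lA\nabla_{\!x,z}^2\rho\rA_{\tilde{L}^2(I;C^{-\mez+\eps}_*)}$ from~(\ref{eq:rhohold}) supplies the required symbol regularity, and standard paradifferential continuity combined with integration in $z$ delivers a bound of the same tame form.

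The main obstacle is achieving linear dependence on $\lA\eta\rA_{C^{1+\eps}_*}$: a naive composition estimate would produce the product $\mathcal{M}^1_\eps(a)\mathcal{M}^1_\eps(A)$, and hence two factors of the higher Hölder norm. The resolution lies in the precise form of the composition estimate above, where only one of the two symbols is measured at Hölder regularity $\eps$ while the other contributes only through the $L^\infty$-type norm $\mathcal{M}^1_0$, which can be absorbed into $\mathcal{F}(\lA\eta\rA_{W^{1,\infty}})$. Comparable care is needed in the $T_{\partial_z A}$ term, where the limited $z$-regularity of $\partial_z\alpha, \partial_z\beta$ must be compensated by the $\tilde{L}^2_z$ smoothing built into the definition of the $Y^{\sigma+\delta}$ norm.
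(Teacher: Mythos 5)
Your proof is correct and follows the same ABZ factorization that the paper invokes without detail; the choice of $a$ and $A$ as roots of the characteristic polynomial, the derivation of $\widetilde{F}_3=(T_aT_A-T_{aA})\widetilde{v}-T_{\partial_z A}\widetilde{v}$, the handling of $T_{\partial_z A}$ via the $\tilde{L}^2_z C^{-\mez+\eps}_*$ regularity of $\nabla_{x,z}^2\rho$, and the tameness mechanism in the composition estimate are all carried out properly. Two small remarks: (i) your sign in $a$ is in fact the right one --- the paper's $a=\frac{1}{2}(i\beta\cdot\xi-\sqrt{4\alpha|\xi|^2-(\beta\cdot\xi)^2})$ is a typo, since only $a=\frac{1}{2}(-i\beta\cdot\xi-\sqrt{\cdot})$ yields $a+A=-i\beta\cdot\xi$ and $aA=-\alpha|\xi|^2$ as needed to absorb the principal part --- and (ii) your intermediate line ``$T_{aA}=-T_\alpha\Delta_x$'' carries an extraneous minus sign (since $\Delta_x$ itself has symbol $-|\xi|^2$, one has $T_{aA}=T_\alpha\Delta_x$ exactly), but this slip does not propagate and your final identity for $\widetilde{F}_3$ is correct.
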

\begin{proof}
We see from \cite{AlazardBurqZuilyExw/ST} that this holds true with 
\begin{equation}
a=\frac{1}{2}(i\beta\cdot\xi-\sqrt{4\alpha\lvert\xi\rvert^2-(\beta\cdot\xi)^2}),\quad
    A=\frac{1}{2}(-i\beta\cdot\xi+\sqrt{4\alpha\lvert\xi\rvert^2-(\beta\cdot\xi)^2}).
\end{equation}
Using the H\"{o}lder estimates (\ref{eq:alphahold}) and (\ref{eq:betahold}) gives (\ref{eq:symbpar}).
A straightforward estimate of the remainder along the lines of \cite{AlazardBurqZuilyExw/ST} ends the proof.
\end{proof}

\begin{proof}[Proof of Proposition \ref{prop:regsobboot}]
Still following \cite{AlazardBurqZuilyExw/ST}, we apply proposition \ref{prop:parabsob} twice.
We will prove only inequality~(\ref{eq:elliphold}), since the proof of inequality~(\ref{eq:ellipsob}) is along the same lines.

Suppose first that $\mathcal{H}_\sigma$ is satisfied. This means that for $J_0=[\zeta_0,0]$, we have
\begin{multline*} 
\lA\nabla_{\!x,z}\widetilde{v}\rA_{X^{\sigma}([\zeta_0,0])}\leq\mathcal{F}\left(\lA\eta\rA_{(C^{1+\eps}_*\cap L^2)(\R^d)}\right)\left[\lA f\rA_{H^{\sigma+1}(\R^d)}
	+\lA \widetilde{F}_0\rA_{Y^{\sigma}(I)}\right.\\
    \left.+\lA\eta\rA_{H^{s+\frac{1}{2}}(\R^d)}\lA\nabla_{\!x,z}\widetilde{v}\rA_{L^\infty(I\times\R^d)}
	+\lA\widetilde{v}\rA_{X^{-\frac{1}{2}}([-1,0])}\right]. 
\end{multline*}
We will then prove that, for any $\zeta_1>\zeta_0$, we have
\begin{multline*} 
\lA\nabla_{\!x,z}\widetilde{v}\rA_{X^{\sigma+\delta}([\zeta_1,0])}\leq\mathcal{F}(\lA\eta\rA_{(C^{1+\eps}_*\cap L^2)(\R^d)})\lb\lA f\rA_{H^{\sigma+1+\delta}(\R^d)}
	+\lA \widetilde{F}_0\rA_{Y^{\sigma+\delta}(J_0)}\right.\\
    \left.+\lA\eta\rA_{H^{s+\frac{1}{2}}(\R^d)}\lA\nabla_{\!x,z}\widetilde{v}\rA_{L^\infty(I\times\R^d)}
	+\lA\nabla_{\!x,z}\widetilde{v}\rA_{X^{\sigma}(J_0)}\rb, 
\end{multline*}
which will finish the proof of the proposition.

Take $\chi$ a cutoff function such that $\chi(\zeta_0)=0$ and $\chi(z)=1$ for $z\geq\zeta_1$. We then put $\widetilde{w}:=\chi(z)(\partial_z-T_A)\widetilde{v}$. We see from (\ref{eq:couppar}) that
$$\partial_z\widetilde{w}-T_a\widetilde{w}=\widetilde{F}',$$
with
$$\widetilde{F}'=\chi(z)(\widetilde{F}_0+\widetilde{F}_1+\widetilde{F}_2+\widetilde{F}_3)+\chi'(z)(\partial_z-T_A)\widetilde{v}.$$
All those terms, the last one excepted, have already been estimated.
Since $\delta\leq1/2$, a simple computation gives 
\begin{equation*} 
\lA(\partial_z-T_A)\widetilde{v}\rA_{Y^{\sigma+\delta}(J_0)}\leq\lA(\partial_z-T_A)\widetilde{v}\rA_{X^{\sigma}(J_0)}\leq
  \mathcal{F}\left(\lA\eta\rA_{W^{1,\infty}(\R^d)}\right)\lA\eta\rA_{C^{1+\eps}_*}\lA\nabla_{\!x,z}\widetilde{v}\rA_{X^{\sigma}(J_0)},
\end{equation*}
where we have used the fact that $A$ is a symbol of order 1, whose norm has already been estimated in (\ref{eq:symbpar}). 

We see from the definition of $a$ that it is elliptic, with an ellipticity constant depending only on $h$. Since we have $w|_{z=z_0}=0$, Proposition \ref{prop:parabsob} gives the estimate
\begin{align*}
  \lA\widetilde{w}\rA_{X^{\sigma+\delta}(I_0)}&\leq\mathcal{F}\left(\lA\eta\rA_{C^{1+\eps}(\R^d)}\right)\lA\widetilde{F}'\rA_{Y^{\sigma+\delta}(J_0)}\\
    &\leq\mathcal{F}\left(\lA\eta\rA_{(C^{1+\eps}_*\cap L^2)(\R^d)}\right)\left[\lA\widetilde{F}_0\rA_{Y^{\sigma+\delta}(J_0)}\right.\\
	&\left.+\lA\eta\rA_{H^{s+\frac{1}{2}}(\R^d)}\lA\nabla_{\!x,z}\widetilde{v}\rA_{L^\infty(I\times\R^d)}
	+\lA\nabla_{\!x,z}\widetilde{v}\rA_{X^{\sigma}(J_0)}\right],
\end{align*}
where we have used the estimates from Lemma \ref{lem:estrellsob1}, \ref{lem:estrellsob2} and \ref{lem:estrellsob3}.
Notice that on $J_1:=[\zeta_1,0]$, we have $\chi=1$, so that
$$\partial_z\widetilde{v}-T_A\widetilde{v}=\widetilde{w}.$$
In fact, we have 
$$\partial_z\nabla_{\!x}\widetilde{v}-T_A\nabla_{\!x}\widetilde{v}=\nabla_{\!x}\widetilde{w}+T_{\nabla_{\!x}A}\widetilde{v}.$$
Changing $z$ to $-z$, and using again Proposition \ref{prop:parabsob}, we have 
$$\lA\nabla_{\!x}\widetilde{v}\rA_{X^{\sigma+\delta}(J_1)}\leq\mathcal{F}\lp\lA\eta\rA_{C^{1+\eps}(\R^d)}\rp\lb\lA\nabla_{\!x}f\rA_{H^{\sigma+\delta}(\R^d)}
      +\lA\nabla_{\!x}\widetilde{w}\rA_{Y^{\sigma+\delta}(J_1)}+\lA\nabla_{\!x}\widetilde{v}\rA_{X^{\sigma}(J_0)}\rb.$$
Since $\lA\nabla_{\!x}\widetilde{w}\rA_{Y^{\sigma+\delta}(J_1)}\leq\lA\widetilde{w}\rA_{X^{\sigma+\delta}(J_1)}$, we obtain the estimate
\begin{multline} 
\lA\nabla_{\!x}\widetilde{v}\rA_{X^{\sigma+\delta}([\zeta_1,0])}\leq\mathcal{F}\left(\lA\eta\rA_{(C^{1+\eps}_*\cap L^2)(\R^d)}\right)\left[\lA\nabla_{\!x}f\rA_{H^{\sigma+1+\delta}(\R^d)}
	+\lA \widetilde{F}_0\rA_{Y^{\sigma+\delta}(J_0)}\right.\\
    \left.+\lA\eta\rA_{H^{s+\frac{1}{2}}(\R^d)}\lA\nabla_{\!x,z}\widetilde{v}\rA_{L^\infty(I\times\R^d)}
	+\lA\nabla_{\!x,z}\widetilde{v}\rA_{X^{\sigma}(J_0)}\right]. 
\end{multline}

The estimate for $\partial_z\widetilde{v}$ follows from $\partial_z\widetilde{v}=T_A\widetilde{v}+\widetilde{w}$, the estimate for $w$ and the fact that $A$ is of order 1.
This completes the proof of Proposition \ref{prop:regsobboot}, and hence of Proposition \ref{prop:regsob}.
\end{proof}

\subsection{Applications} \label{subsec:apl}
In this section we apply the previous elliptic estimates to study the Dirichlet-Neumann operator and its paralinearization.
\begin{prop} \label{prop:DN}
Let~$d\geq1$, $s>1+d/2$, $s_0>1/2+d/2$, and~$1/2\leq\sigma\leq s+1/2$.
Then
\begin{equation*}
\lA G(\eta)f\rA_{H^{\sigma-1}}\leq\mathcal{F}\lp\lA\eta\rA_{C^{1+\eps}_*}\rp\left[\lA f\rA_{H^\sigma}+\lA\eta\rA_{H^{s+\mez}}\lA\nabla_{x,z}v\rA_{L^\infty}\right],
\end{equation*}
and
\begin{equation*}
\lA G(\eta)f\rA_{H^{\sigma-1}}\leq\mathcal{F}\lp\lA\eta\rA_{H^{s_0+\mez}}\rp\lA f\rA_{H^\sigma},
\end{equation*}
where~$v$ is the harmonic extension of~$f$.
We also have
$$\lA G(\eta)f\rA_{L^\infty}\leq\mathcal{F}\lp\lA\eta\rA_{W^{1,\infty}}\rp\lA\nabla_{x,z}v\rA_{L^\infty}.$$
\end{prop}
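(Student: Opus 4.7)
The plan is to express $G(\eta)f$ via the flattening change of variables from the previous subsection and reduce the analysis to Proposition~\ref{prop:regsob}. Using the identities~(\ref{eq:Lambda}) together with $\rho(x,0) = \eta(x)$ (so that $\nabla_x\rho|_{z=0} = \nabla_x\eta$), a direct computation yields
\begin{equation*}
G(\eta)f = \Lambda_1\widetilde{v}\big|_{z=0} - \nabla_x\eta\cdot\Lambda_2\widetilde{v}\big|_{z=0} = \frac{1+|\nabla_x\eta|^2}{\partial_z\rho}\,\partial_z\widetilde{v}\bigg|_{z=0} - \nabla_x\eta\cdot\nabla_x\widetilde{v}\big|_{z=0},
\end{equation*}
so $G(\eta)f$ is a sum of products of the boundary trace $\nabla_{x,z}\widetilde{v}|_{z=0}$ with coefficients depending on $\eta$ and $\rho$. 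The $L^\infty$ bound is then immediate: the lower bound $\partial_z\rho \geq \min(1, h/5)$ from the construction of $\rho$ and the pointwise control of $\nabla_x\eta$ by $\|\eta\|_{W^{1,\infty}}$ give $\|G(\eta)f\|_{L^\infty} \leq \mathcal{F}(\|\eta\|_{W^{1,\infty}})\|\nabla_{x,z}v\|_{L^\infty}$, after noting that $\nabla_{x,z}\widetilde v$ and $\nabla_{x,y}v$ differ by coefficients bounded in the same way.

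For the two Sobolev inequalities, I would apply Proposition~\ref{prop:regsob} at regularity index $\sigma - 1 \in [-1/2, s-1/2]$. The source term $\widetilde{F}_0$ vanishes because $\widetilde{v}$ is harmonic in the flattened coordinates, and the base case $\|\nabla_{x,z}\widetilde{v}\|_{X^{-1/2}} \leq \mathcal{F}(\|\eta\|_{W^{1,\infty}})\|f\|_{H^{1/2}}$ is the variational estimate recalled right after~(\ref{eq:assumpsob}). Since $\|f\|_{H^{1/2}} \leq \|f\|_{H^\sigma}$ for $\sigma\geq 1/2$, the inequalities~(\ref{eq:elliphold}) and~(\ref{eq:ellipsob}) yield, for any fixed $z_0 \in (-1,0)$,
\begin{equation*}
\|\nabla_{x,z}\widetilde{v}\|_{X^{\sigma-1}([z_0,0])} \leq \mathcal{F}(\|\eta\|_{C^{1+\eps}_*\cap L^2})\bigl[\|f\|_{H^\sigma} + \|\eta\|_{H^{s+\frac12}}\|\nabla_{x,z}v\|_{L^\infty}\bigr],
\end{equation*}
and similarly the Sobolev-only version with prefactor $\mathcal{F}(\|\eta\|_{H^{s_0+1/2}})$.

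To conclude I would take the trace at $z=0$ and apply a paraproduct/tame product estimate to the explicit formula above. The $L^\infty$ and $H^{\sigma-1}$ control of the coefficients $\nabla_x\eta$ and $(1+|\nabla_x\eta|^2)/\partial_z\rho$ follows from the estimates on $\partial_z\rho$ already used in Lemma~\ref{lem:ellcoefs} together with composition estimates for smooth functions in Sobolev spaces; Bony's paraproduct decomposition then yields the desired tame inequality with the top-order norm $\|\eta\|_{H^{s+1/2}}$ appearing only linearly, matching the structure of the statement. The main technical point is justifying the anisotropic trace $\nabla_{x,z}\widetilde{v}|_{z=0} \in H^{\sigma-1}$ with norm bounded by $\|\nabla_{x,z}\widetilde{v}\|_{X^{\sigma-1}([z_0,0])}$: for $\nabla_x\widetilde{v}$ this is simply the trace $\nabla_x f$, while for $\partial_z\widetilde{v}$ one exploits the paradifferential decoupling~(\ref{eq:couppar}), which provides the $z$-continuity needed to extract a genuine trace at $z=0$, as was done in~\cite{AlazardBurqZuilyExw/ST}.
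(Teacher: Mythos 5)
Your proposal is correct and follows essentially the same route the paper takes: express $G(\eta)f$ via the flattened formula (paper's equation~(\ref{eq:DN})), derive the $L^\infty$ bound directly from it, and obtain the two Sobolev bounds by applying Proposition~\ref{prop:regsob} at index $\sigma-1$, bounding the base-case $X^{-1/2}$ norm by the variational estimate, and finishing with the tame product estimates~(\ref{eq:tamestsob}) on the coefficients from~(\ref{eq:rhosob}). The one detail you spell out that the paper leaves implicit is the trace argument for $\partial_z\widetilde{v}|_{z=0}$ via the parabolic decoupling, and that reasoning is sound.
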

\begin{rem}
The term~$\lA\nabla_{x,z}v\rA_{L^\infty}$ can generally be expressed only with terms defined on~$\Sigma$, using the maximum principle of Proposition~\ref{prop:maxprinc} and its consequence, 
Proposition~\ref{prop:maxprincder}.
\end{rem}
\begin{proof}
As seen in~\cite{AlazardBurqZuilyExw/ST}, the Dirichlet-Neumann can be expressed by
\begin{equation} \label{eq:DN}
	G(\eta)f=\left.\frac{1+\la\nabla_{\!x}\rho\ra^2}{\partial_z\rho}\partial_zv-\nabla_{\!x}\rho\cdot\nabla_{\!x}v\ra_{z=0}.
\end{equation}

Now using Proposition~\ref{prop:regsob}, the estimates on~$\rho$ from (\ref{eq:rhosob}), and the tame estimates (\ref{eq:tamestsob}), the first two estimates of the proposition immediately follow.
The last one is a straightforward consequence of~(\ref{eq:DN}).
\end{proof}

We know from~\cite{AlazardBurqZuilyExw/ST} that the Dirichlet-Neumann can be expressed as 
\begin{equation} \label{eq:paraDN}
G(\eta)=T_\lambda+R(\eta),
\end{equation}
where 
\begin{equation}
\lambda(x,\xi):=\sqrt{(1+\la\nabla\eta(x)\ra^2)\la\xi\ra^2-(\nabla\eta(x)\cdot\xi)^2},
\end{equation}
and $R(\eta)$ is a smoothing operator.
Using tame estimates, we obtain
\begin{prop} \label{prop:remDN}
  Let~$d\geq1$, $s>1+d/2$, $0<\eps\leq\eps'\leq1/2$ and~$1/2\leq\sigma\leq s+1/2$.
Then 
\begin{equation*}
\lA R(\eta)f\rA_{H^{\sigma-1+\eps'}}\leq
\mathcal{F}\lp\lA\eta\rA_{C^{1+\eps}_*}\rp\lp1+\lA\eta\rA_{C^{1+\eps'}_*}\rp\left[\lA f\rA_{H^\sigma}+\lA\eta\rA_{H^{s+\mez}}\lA\nabla_{x,z}v\rA_{L^\infty}\right],
\end{equation*}
and if~$s_0>1/2+d/2$ and~$\eps'\leq s_0-d/2-1/2$,
\begin{equation*}
\lA R(\eta)f\rA_{H^{\sigma-1+\eps'}}\leq
\mathcal{F}\lp\lA\eta\rA_{H^{s_0+\mez}}\rp\lA f\rA_{H^\sigma}.
\end{equation*}
\end{prop}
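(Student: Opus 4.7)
The plan is to revisit the derivation of the paralinearization formula \eqref{eq:paraDN} from \cite{AlazardBurqZuilyExw/ST}, tracking every remainder so as to extract tame estimates. Starting from the explicit expression \eqref{eq:DN} for $G(\eta)f$ at $z=0$, the objective is to replace each pointwise multiplication by a paramultiplication and to compute $\partial_z\widetilde{v}|_{z=0}$ via the factorization of Lemma \ref{lem:estrellsob3}, showing along the way that all resulting residuals are smoother by $\eps'$ derivatives than the principal operator $T_\lambda$.

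Setting $\widetilde{w}=(\partial_z-T_A)\widetilde{v}$, Lemma \ref{lem:estrellsob3} gives the forward parabolic equation $(\partial_z-T_a)\widetilde{w}=\widetilde{F}_0+\widetilde{F}_1+\widetilde{F}_2+\widetilde{F}_3$, with $a$ elliptic of order $1$ and negative real part. The boundary trace $\widetilde{w}|_{z=0}$ is therefore controlled, through the parabolic regularity of Proposition \ref{prop:parabsob}, by the $Y^{\sigma-1+\eps'}(I)$-norms of the source terms. Lemmas \ref{lem:estrellsob1} and \ref{lem:estrellsob2} bound these source terms in the appropriate norms by $\lA\nabla_{\!x,z}\widetilde{v}\rA_{X^{\sigma-1}}$ times a Hölder factor of $\eta$ (plus the contribution $\lA\eta\rA_{H^{s+\mez}}\lA\nabla_{\!x,z}v\rA_{L^\infty}$ present in the first version), which by Proposition \ref{prop:regsob} is itself tame. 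Inserting the identity $\partial_z\widetilde{v}|_{z=0}=T_{A|_{z=0}}f+\widetilde{w}|_{z=0}$ into \eqref{eq:DN} and linearizing the outer factors $(1+|\nabla_{\!x}\rho|^2)/\partial_z\rho$ and $\nabla_{\!x}\rho$ by paramultiplication yields $G(\eta)f=T_\mu f+R(\eta)f$, where $\mu$ combines $A|_{z=0}$ with the traces of $\partial_z\rho$ and $\nabla_{\!x}\rho$. A direct computation at $z=0$, using $\nabla_{\!x}\rho|_{z=0}=\nabla\eta$, identifies the principal part of $\mu$ with $\lambda$, the subprincipal part being a symbol of order $0$ that is absorbed into $R(\eta)$.

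The gain of $\eps'$ derivatives in $R(\eta)f$ comes from two sources. First, every paralinearization residual of the form $(T_u-u)w$ gains the Hölder regularity $C^{\eps'}_*$ of $u$ via \eqref{eq:paradifremsob}; the relevant $u$ are functions of $\eta$ and of the traces of $\nabla_{\!x}\rho,\partial_z\rho$, all controlled in $C^{\eps'}_*$ by \eqref{eq:rhohold} and Lemma \ref{lem:ellcoefs}, which is exactly the source of the factor $1+\lA\eta\rA_{C^{1+\eps'}_*}$. Second, the parabolic smoothing at the decoupling step contributes one extra derivative to $\widetilde{w}|_{z=0}$. Combining these yields the first estimate, while the second estimate follows by the same argument using instead the Sobolev-based bounds in Lemma \ref{lem:ellcoefs}, the restriction $\eps'\leq s_0-d/2-\mez$ being precisely what is needed to absorb the Hölder factor into $\mathcal{F}(\lA\eta\rA_{H^{s_0+\mez}})$ via the embedding $H^{s_0+\mez}\hookrightarrow C^{1+\eps'}_*$.

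The main obstacle is bookkeeping: the remainder must be written as an explicit sum of terms in which the high-order Sobolev norm $\lA\eta\rA_{H^{s+\mez}}$ enters only linearly and multiplies Hölder-controlled quantities, never other Sobolev norms, so that the estimate remains tame. This forces a systematic use of the paraproduct estimates in the form where low Hölder norms multiply high Sobolev norms, together with the elliptic estimate of Proposition \ref{prop:regsob}, which is itself already tame in this sense.
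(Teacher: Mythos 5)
Your proposal reproduces the paper's proof essentially verbatim: it uses the same forward-parabolic bound via Proposition~\ref{prop:parabsob} on $(\partial_z-T_A)\widetilde v$ in $X^{\sigma-1+\eps'}$, the same replacement of the outer coefficients $(1+|\nabla_{\!x}\rho|^2)/\partial_z\rho$ and $\nabla_{\!x}\rho$ by paramultiplications with paraproduct remainders, the same composition estimate $T_{\zeta_1}T_A-T_{\zeta_1 A}$ of order $1-\eps'$, and the same embedding $H^{s_0+\frac12}\hookrightarrow C^{1+\eps'}_*$ to explain the constraint $\eps'\leq s_0-d/2-\mez$. Two minor inaccuracies do not affect the argument: the identification $\zeta_1 A-i\zeta_2\cdot\xi\big|_{z=0}=\lambda$ is exact (there is no order-zero subprincipal part to absorb), and the gain of $\eps'$ derivatives on $\widetilde w|_{z=0}$ comes entirely from the improved $Y^{\sigma-1+\eps'}$ control of the source terms (via the Hölder or Sobolev regularity of the coefficients), not from an additional smoothing contributed by Proposition~\ref{prop:parabsob} itself.
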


\begin{proof}
As in the proof of Proposition~\ref{prop:regsobboot}, we use Proposition~\ref{prop:parabsob} to get
$$\lA(\partial_z-T_A)\widetilde{v}\rA_{X^{\sigma-1+\eps'}(J_0)}\leq\mathcal{F}\left(\lA\eta\rA_{C^{1+\eps}(\R^d)}\right)\lA\widetilde{F}'\rA_{Y^{\sigma-1+\eps'}(J_0)}$$
Now since
$$G(\eta)f=\left.\frac{1+\la\nabla_{\!x}\rho\ra^2}{\partial_z\rho}\partial_zv-\nabla_{\!x}\rho\cdot\nabla_{\!x}v\ra_{z=0},$$
we set
$$\zeta_1:=\frac{1+\la\nabla_{\!x}\rho\ra^2}{\partial_z\rho},\quad\zeta_2:=\nabla_{\!x}\rho.$$
According to~(\ref{eq:rhosob}), 
\begin{equation}\label{3.56bis}
\lA \zeta_1-\frac{4}{h}\rA_{C([-1,0];H^{s-\mez})}+
\lA \zeta_2\rA_{C([-1,0];H^{s-\mez})}\leq \mathcal{F}(\lA\eta\rA_{W^{1,\infty}})\lA\eta\rA_{H^{s+\frac{1}{2}}}.
\end{equation}
Let 
$$
R':=\zeta_1\partial_z v
-\zeta_2 \cdot\nabla_{\!x} v -(T_{\zeta_1}  \partial_z v  
- T_{\zeta_2}\nabla_{\!x} v).
$$
Using the tame estimates of~(\ref{eq:tamestsob}), we verify that 
$$
\lA R'\rA_{C^0(I;H^{\sigma-1+\eps'})}\leq\mathcal{F}(\lA\eta\rA_{W^{1,\infty}})\lA\partial_zv\rA_{L^\infty}\lA\eta\rA_{H^{s+\mez}}.
$$
Furthermore, 
\begin{equation*}
T_{\zeta_1}  \partial_z v  
- T_{\zeta_2}\partial_x v\rvert_{z=0}
-(T_{\zeta_1}   T_A  v -T_{i\zeta_2\cdot \xi}  v) \rvert_{z=0} :=R'',
\end{equation*}
with 
~$$\lA R''\rA_{H^{\sigma-1+\eps'}}\leq \mathcal{F}\lp\lA\eta\rA_{C^{1+\eps}_*}\rp\lp1+\lA\eta\rA_{C^{1+\eps'}_*}\rp\left[\lA f\rA_{H^\sigma}+\lA\eta\rA_{H^{s+\mez}}\lA\nabla_{x,z}v\rA_{L^\infty}\right].$$
Finally,  we have
\begin{equation*}
\lA T_{\zeta_1(z)}   T_{A(z)} -T_{\zeta_1(z) A(z)}\rA_{ H^{\sigma}\rightarrow H^{\sigma-1+\eps'}}
\leq \mathcal{F}(\lA\eta\rA_{W^{1,\infty}})\lA\eta\rA_{C^{1+\eps'}_*},
\end{equation*}
and hence
$$
G(\eta)f=T_{\zeta_1 A}  v -T_{i\zeta_2\cdot \xi}  v \big\arrowvert_{z=0} +R(\eta)f,
$$
where 
$$\lA R(\eta)f\rA_{H^{\sigma-1+\eps'}}\leq
\mathcal{F}(\lA\eta\rA_{C^{1+\eps}_*})\lp1+\lA\eta\rA_{C^{1+\eps'}_*}\rp\left[\lA f\rA_{H^\sigma}+\lA\eta\rA_{H^{s+\mez}}\lA\nabla_{x,z}v\rA_{L^\infty}\right].$$
Let 
\begin{equation*}
\lambda:=\frac{1 +|\partial_x \rho |^2}{\partial_z \rho} A -i  \partial_x \rho
\cdot  \xi\big\arrowvert_{z=0}= \sqrt{(1+|\partial_x \eta(x)|^2)| \xi |^2
-\bigl(\partial_x \eta (x)\cdot \xi\bigr)^2}.
\end{equation*}
Then 
$$
G(\eta)f=T_{\lambda} f +R(\eta)f,
$$
which concludes the proof of the first inequality. The second one is proved along the same lines.
\end{proof}

\section{Paralinearization of the system} \label{sec:Paralin}

We still follow~\cite{AlazardBurqZuilyExw/ST} to reduce the equations to a paradiferential hyperbolic system.
This process yields remainders that we need to estimate. To simplify the expression of these estimates, we will denote by~$K$ a constant of the form
$$K(t):=\mathcal{F}\left(\frac{1}{h},\;\frac{1}{c},\;
\lA\eta(t)\rA_{W^{1+\eps,\infty}(\R^d)},\;
  \lA a(t)\rA_{W^{\eps,\infty}(\R^d)},\;\lA\eta\rA_{L^2(\R^d)},\;\lA(V,B)\rA_{W^\eps(\R^d)}\right),$$ 
  with~$\mathcal{F}$ positive nondecreasing. It will appear in the proof of the first version of our main theorem, Theorem~\ref{thm:mainhold}, involving only H\"{o}lder norms.
  For the proof of the second version, Theorem~\ref{thm:mainsob}, involving the reference Sobolev norm of index~$s_0>1/2+d/2$, we will use a constant
\begin{equation*}
	K_0(t):=\mathcal{F}\lp\frac{1}{h},\frac{1}{c},\lA a(t)\rA_{W^{\eps,\infty}(\R^d)},
	    \lA\lp\eta,\psi,V,B\rp(t)\rA_{H^{s_0+\mez}(\R^d)\times H^{s_0+\mez}(\R^d)\times H^{s_0}(\R^d)\times H^{s_0}(\R^d)}\rp.
\end{equation*}
Using Sobolev embeddings, we see that we can take~$K\leq K_0$. 

We still denote by $C$ a generic constant. 
To get the optimal regularity, we need to change the unknowns we are working with, using instead
$$\zeta=\nabla\eta,\quad B=\partial_y\phi\rvert_{y=\eta},\quad V=\nabla_{\!x}\phi\rvert_{y=\eta},\quad a=-\partial_yP\rvert_{y=\eta},$$
where~$\phi$ is the velocity potential and~$P$ the pressure, uniquely determined by the equation
$$-P=\partial_t\phi+\mez\la\nab\phi\ra^2+gy.$$

Those follow the following evolution equations.
\begin{prop} \label{prop:evoleq}
	We have
	\begin{align}
		\left(\partial_t+V\cdot\nabla\right)B&=a-g, \label{eq:Beq}\\
		\left(\partial_t+V\cdot\nabla\right)V&=-a\zeta, \label{eq:Veq}\\
		\left(\partial_t+V\cdot\nabla\right)\zeta&=G(\eta)V+\zeta G(\eta)B+\gamma, \label{eq:zetaeq}
	\end{align}
	where the remainder~$\gamma$ satisfies the estimates
	\begin{equation} \label{eq:rembothold}
		\lA\gamma\rA_{H^{s-\mez}}\leq K\left[\lA(V,B,\psi)\rA_{H^{s-\mez}\times H^{s-\mez}\times H^{s+\mez}}+\lA\eta\rA_{H^{s+\mez}}\lA(V,B)\rA_{C_*^{1+\eps}}\right],
	\end{equation}
	and
	\begin{equation} \label{eq:rembotsob}
	\lA\gamma\rA_{H^{s-\mez}}\leq K_0\left[\lA(V,B,\psi)\rA_{H^{s-\mez}\times H^{s-\mez}\times H^{s+\mez}}\right]	.
	\end{equation}

\end{prop}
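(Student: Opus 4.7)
The plan is as follows. Identities (\ref{eq:Beq}) and (\ref{eq:Veq}) follow directly from the Bernoulli formulation. Writing $B(t,x) = (\partial_y\phi)(t,x,\eta(t,x))$ and applying the chain rule yields
$$(\partial_t + V\cdot\nabla)B = (\partial_t\partial_y\phi + V\cdot\nabla\partial_y\phi)\rvert_{y=\eta} + (\partial_y^2\phi)\rvert_{y=\eta}(\partial_t\eta + V\cdot\nabla\eta).$$
The kinematic condition $\partial_t\eta + V\cdot\nabla\eta = B$ collapses the right-hand side to $(\partial_t\partial_y\phi + v\cdot\nab\partial_y\phi)\rvert_{y=\eta}$; applying $\partial_y$ to the Bernoulli equation $-P = \partial_t\phi + \mez\la\nab\phi\ra^2 + gy$ and evaluating at $y=\eta$ then gives $(\partial_t + V\cdot\nabla)B = -(\partial_yP)\rvert_{y=\eta} - g = a - g$. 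The derivation of (\ref{eq:Veq}) is parallel: apply $\nabla_x$ to Bernoulli and use that $P\rvert_{y=\eta}=0$ forces $(\nabla_xP)\rvert_{y=\eta} = -(\partial_yP)\rvert_{y=\eta}\nabla\eta = a\zeta$.

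For (\ref{eq:zetaeq}), differentiating $\partial_t\eta = B - V\cdot\nabla\eta$ in $x$ (using $V\cdot\nabla\zeta_i = V_j\partial_i\zeta_j$) yields the exact identity
$$(\partial_t+V\cdot\nabla)\zeta_i = \partial_iB - \zeta_j\partial_iV_j.$$
I would then recast this as $G(\eta)V_i + \zeta_iG(\eta)B + \gamma_i$. The key observation is that $\partial_i\phi$ and $\partial_y\phi$ are themselves harmonic, so modulo the bottom condition (which does not transport to derivatives of $\phi$ unless $\Gamma$ is flat) they serve as harmonic extensions of $V_i$ and $B$. Writing
$$G(\eta)B = (\partial_y^2\phi - \zeta\cdot\nabla\partial_y\phi)\rvert_{y=\eta} + r_B,\quad G(\eta)V_i = (\partial_y\partial_i\phi - \zeta\cdot\nabla\partial_i\phi)\rvert_{y=\eta} + r_{V_i},$$
where $r_B$ and $r_{V_i}$ are bottom-correction terms vanishing in the infinite-depth case, and combining with the surface chain rules $\partial_iB = (\partial_i\partial_y\phi)\rvert_{y=\eta} + \zeta_i(\partial_y^2\phi)\rvert_{y=\eta}$, $\partial_jV_i = (\partial_j\partial_i\phi)\rvert_{y=\eta} + \zeta_j(\partial_y\partial_i\phi)\rvert_{y=\eta}$ together with the symmetry $\partial_iV_j - \partial_jV_i = \zeta_i\partial_jB - \zeta_j\partial_iB$, a short algebraic computation identifies $\gamma_i$ with these bottom corrections plus products of surface quantities that are quadratic in $\zeta$.

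The $H^{s-\mez}$ estimate on $\gamma$ follows by combining the elliptic regularity of Proposition~\ref{prop:DN} and its paradifferential sharpening Proposition~\ref{prop:remDN} with the tame product estimates recalled in Appendix~\ref{ap:para}; the $L^\infty$ norm of $\nab\phi\rvert_{y=\eta}$ arising in intermediate steps is controlled in terms of the Hölder norm of $(V,B)$ via the maximum principle of Proposition~\ref{prop:maxprincder}. The main obstacle is preserving the tame structure of the estimate: for (\ref{eq:rembothold}) the high Sobolev norm $\lA\eta\rA_{H^{s+\mez}}$ must multiply only the $C^{1+\eps}_*$ norm of $(V,B)$, never their $H^{s-\mez}$ norm. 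This is achieved by systematically replacing ordinary products with paraproducts \`{a} la Bony and absorbing each commutator into the constant $K$ through the tame bounds of Proposition~\ref{prop:remDN} and Lemma~\ref{lem:ellcoefs}. The Sobolev variant (\ref{eq:rembotsob}) is simpler: at the reference regularity $s_0+\mez > 1+d/2$, Sobolev embedding controls all low-order factors directly, and the second inequality of Proposition~\ref{prop:DN} handles the remaining elliptic terms without needing the tame splitting.
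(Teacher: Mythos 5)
Your derivations of (\ref{eq:Beq}) and (\ref{eq:Veq}) are standard and fine (the paper simply cites \cite{AlazardBurqZuilyExw/ST} for them), and your algebraic identification of the source of $\gamma$ in (\ref{eq:zetaeq}) is the same as the paper's: differentiating $\partial_t\eta = B - V\cdot\nabla\eta$ and using the chain rule produces terms of the form $(\partial_y - \nabla\eta\cdot\nabla)\partial_i\phi\rvert_\Sigma$ and $\zeta_i(\partial_y-\nabla\eta\cdot\nabla)\partial_y\phi\rvert_\Sigma$, and the remainder $\gamma_i$ is exactly the deviation $R_i + \zeta_i R_0$ where $R_i = (\partial_y-\nabla\eta\cdot\nabla)(\partial_i\phi - \theta_i)\rvert_\Sigma$ measures the mismatch at $\Gamma$ between $\partial_i\phi$ and the true harmonic extension $\theta_i$ of $V_i$. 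One small point: your invocation of the irrotationality identity $\partial_iV_j - \partial_jV_i = \zeta_i\partial_jB - \zeta_j\partial_iB$ and your claim that $\gamma_i$ contains ``products of surface quantities quadratic in $\zeta$'' are both unnecessary; a direct chain-rule computation already gives $\partial_iB - \zeta_j\partial_iV_j = (\partial_y-\nabla\eta\cdot\nabla)\partial_i\phi\rvert_\Sigma + \zeta_i(\partial_y-\nabla\eta\cdot\nabla)\partial_y\phi\rvert_\Sigma$ with no extra terms, so $\gamma_i = R_i + \zeta_i R_0$ exactly.

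The genuine gap is in the estimate of $\gamma$. You cite Proposition~\ref{prop:DN} and Proposition~\ref{prop:remDN} (Dirichlet--Neumann bounds and paralinearization remainder) plus tame products, but these tools cannot by themselves control $R_i$: the issue is not to estimate $G(\eta)$ acting on something, it is to estimate the \emph{difference} $\partial_i\phi - \theta_i$ near $\Sigma$, which vanishes identically on $\Sigma$ but not on $\Gamma$. The mechanism that makes $\gamma$ a smooth remainder is the localization: multiplying $\partial_i\phi - \theta_i$ by a cutoff $\chi_0$ equal to $1$ near $\Sigma$ produces a function $U_i$ satisfying $\Delta_{x,y}U_i = F_i$ with a source $F_i$ supported in a strip $S_{1/2,1/5}$ strictly below the surface; lemma~3.16 of~\cite{AlazardBurqZuilyExw/ST} then gives $C^\infty$ bounds on $F_i$ purely in terms of $\lA(V,B)\rA_{H^{1/2}\times H^{1/2}}$, after which the flattened elliptic estimate (\ref{eq:elliphold}) of Proposition~\ref{prop:regsob} applied with $f=0$ (together with the maximum principle of Proposition~\ref{prop:maxprincder} to convert the interior $L^\infty$ gradient norm back to $\lA(V,B)\rA_{C_*^{1+\eps}}$) yields the tame bound (\ref{eq:rembothold}). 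Without this cutoff-and-interior-regularity argument, the claim that $R_i$ gains $s - 1/2$ derivatives is unsupported; the Dirichlet--Neumann propositions you cite would at best estimate $G(\eta)V_i$ itself, not the discrepancy $R_i$.
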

\begin{proof}
	The first two equations are proved in~\cite{AlazardBurqZuilyExw/ST}. 
	For the estimations of ~$\gamma$, we start from the equation
	$$\partial_t\eta=B-V\cdot\nabla\eta.$$
	Differentiating this with respect to~$x_i$ yields for~$i=1,...,d$
	$$\left(\partial_t+V\cdot\nabla\right)\partial_i\eta=\partial_iB-\sum_{j=1}^d\partial_iV_j\partial_j\eta,$$
	and using the definitions of~$V$ and~$B$ and the chain rule,
	\begin{equation}
		\left(\partial_t+V\cdot\nabla\right)\partial_i\eta=\left[\partial_y\partial_i\phi-\nabla\eta\cdot\nabla\partial_i\phi\right]\rvert_{y=\eta}
		+\partial_i\eta\left[\partial_y(\partial_y\phi)-\nabla\eta\cdot\nabla\partial_y\phi\right]\rvert_{y=\eta}.
	\end{equation}
	
	We now introduce~$\theta_i$, the variational solution of the problem
	$$\Delta_{x,y}\theta_i=0\text{ in }\Omega,\quad\theta_i\rvert_\Sigma=V_i,\quad\partial_n\theta_i=0\text{ on }\Gamma.$$
	Then 
	$$G(\eta)V_i=\left(\partial_y\theta_i-\nabla\eta\cdot\nabla\theta_i\right)\rvert_\Sigma.$$
	We can now write 
	$$\left(\partial_y-\nabla\eta\cdot\nabla\right)\partial_i\phi\rvert_{y=\eta}=G(\eta)V_i+R_i,
	\quad\text{where }R_i=\left(\partial_y-\nabla\eta\cdot\nabla\right)\left(\partial_i\phi-\theta_i\right)\rvert_\Sigma.$$
	If there was no bottom, we would see that at least formally, $R_i$ would be 0. Then, in our setting, we expect a control of this remainder, and to obtain it, 
	we continue to follow~\cite{AlazardBurqZuilyExw/ST} and localize the problem near~$\Sigma$.
	
	Let~$\chi_0\in C^\infty(\R)$, $\eta_1\in H^\infty(\R^d)$ be such that~$\chi_0(z)=1$ if~$z\geq0$, $\chi_0(z)=0$ if~$z\leq-1/4$, and
	$$\eta(x)-\frac{h}{4}\leq\eta_1(x)\leq\eta(x)-\frac{h}{5}.$$
	Set
	$$U_i(x,y)=\chi_0\left(\frac{y-\eta_1(x)}{h}\right)\left(\partial_i\phi-\theta_i\right)(x,y).$$
	We see that
	$$R_i=\left(\partial_y-\nabla\eta\cdot\nabla\right)U_i\rvert_\Sigma.$$
	And~$U_i$ satisfies the equation
	$$\Delta_{x,y}U_i=\left[\Delta_{x,y},\chi_0\left(\frac{y-\eta_1(x)}{h}\right)\right]\left(\partial_i\phi-\theta_i\right):=F_i$$
	with
	$$\text{supp}F_i\subset S_{\frac{1}{2},\frac{1}{5}}:=\left\lbrace(x,y):x\in\R^d,\eta(x)-\frac{h}{2}\leq y\leq\eta(x)-\frac{h}{5}\right\rbrace.$$
	We can then control the right hand term of this equation, using lemma 3.16 of~\cite{AlazardBurqZuilyExw/ST}, which gives for all~$\alpha\in\N^{d+1}$,
	\begin{equation} \label{eq:remtr}
		\lA D_{x,y}^\alpha F_i\rA_{L^\infty(S_{\frac{1}{2},\frac{1}{5}})\cap L^2(S_{\frac{1}{2},\frac{1}{5}})}\leq C_\alpha\lA(V,B)\rA_{H^\mez\times H^\mez}.
	\end{equation}
	Then changing variables, we get on the domain~$\widetilde{\Omega}$ that
	$$\left(\partial_z^2+\alpha\Delta+\beta\cdot\nabla\partial_z-\gamma\partial_z\right)\widetilde{U}_i=\frac{(\partial_z\rho)^2}{1+\la\nabla\rho\ra^2}\widetilde{F}_i.$$
	We can then apply the estimate~(\ref{eq:elliphold}), with~$f=0$. 
	$$\lA\nabla_{\!x,z}\widetilde{U}_i\rA_{C^0([z_0,0];H^{s-\mez})}\leq K\left[\lA\widetilde{F}_i\rA_{Y^{s-\mez}}+
	\lA\eta\rA_{H^{s+\mez}}\lA\nabla_{\!x,z}\widetilde{U}_i\rA_{L^\infty}+\lA\nabla_{\!x,z}\widetilde{U}_i\rA_{X^{-\mez}}\right].$$
	Using equation~\ref{eq:remtr}, the control on the $X^{-\mez}$ norm of a variational solution, and the maximum principle for gradients of Proposition~\ref{prop:maxprincder}, this yields
	$$\lA\nabla_{\!x,z}\widetilde{U}_i\rA_{C^0([z_0,0];H^{s-\mez})}\leq K\left[\lA(V,B,\psi)\rA_{H^{s-\mez}\times H^{s-\mez}\times H^{s+\mez}}
	+\lA\eta\rA_{H^{s+\mez}}\lA(V,B)\rA_{C_*^{1+\eps}}\right].$$
	Since 
	$$R_i=\left.\left[\left(\frac{1+\la\nabla\eta\ra^2}{1+\delta\langle D_x\rangle\eta}\partial_z-\nabla\eta\cdot\nabla\right)\widetilde{U}_i\right]\ra_{z=0},$$
	we have 
	$$\lA R_i\rA_{H^{s-\mez}}\leq K\left[\lA(V,B,\psi)\rA_{H^{s-\mez}\times H^{s-\mez}\times H^{s+\mez}}+\lA\eta\rA_{H^{s+\mez}}\lA(V,B)\rA_{C_*^{1+\eps}}\right].$$
	The same argument shows that 
	$$\left(\partial_y-\nabla\eta\cdot\nabla\right)\partial_y\phi=G(\eta)B+R_0,$$
	where~$R_0$ satisfies the same estimate as~$R_i$. This proves the first estimate. The second one follows exactly the same scheme, using~(\ref{eq:ellipsob}) instead.
\end{proof}

Using the same method, and following proposition~4.5 of~\cite{AlazardBurqZuilyExw/ST}, it is possible to prove the following relation.
\begin{prop} \label{prop:linkVB}
	We have $G(\eta)B=-\text{div}\;V+\gamma'$, where
	$$\lA\gamma'\rA_{H^{s-\mez}}\leq K\left[\lA(V,B)\rA_{H^{s-\mez}\times H^{s-\mez}}+\lA\eta\rA_{H^{s+\mez}}\lA(V,B)\rA_{C_*^{1+\eps}}\right],$$
	and
	\begin{equation*}
		\lA\gamma'\rA_{H^{s-\mez}}\leq K_0\left[\lA(V,B)\rA_{H^{s-\mez}\times H^{s-\mez}}\right].
	\end{equation*}

\end{prop}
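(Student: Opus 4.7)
My plan is to mirror the scheme used to prove Proposition \ref{prop:evoleq}. I would first introduce $\theta$, the variational solution of $\Delta_{x,y}\theta=0$ in $\Omega$ with $\theta|_\Sigma=B$ and $\partial_n\theta|_\Gamma=0$, so that by definition of the Dirichlet-Neumann, $G(\eta)B=(\partial_y\theta-\nabla\eta\cdot\nabla\theta)|_\Sigma$. Setting $R_0:=(\partial_y-\nabla\eta\cdot\nabla)(\theta-\partial_y\phi)|_\Sigma$, the identity to prove becomes $G(\eta)B=(\partial_y-\nabla\eta\cdot\nabla)\partial_y\phi|_\Sigma+R_0$, together with $(\partial_y-\nabla\eta\cdot\nabla)\partial_y\phi|_\Sigma=-\dive V$. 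The latter is a direct chain-rule computation: differentiating $V_i=(\partial_i\phi)|_{y=\eta}$ yields $\partial_iV_i=\partial_i^2\phi|_{y=\eta}+\partial_i\eta\,\partial_i\partial_y\phi|_{y=\eta}$, and summing in $i$ while using harmonicity of $\phi$ to replace $\Delta_x\phi$ with $-\partial_y^2\phi$ produces the claim. Thus $\gamma'=R_0$ and only this remainder is to be estimated.

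The function $w:=\theta-\partial_y\phi$ is harmonic in $\Omega$ with zero Dirichlet trace at $\Sigma$; in the infinite-depth case it would vanish identically, and any nonzero contribution is forced to come from the bottom. I would therefore localize near $\Sigma$ by setting $U(x,y):=\chi_0\bigl((y-\eta_1(x))/h\bigr)w(x,y)$, with $\chi_0$ and $\eta_1$ chosen as in the proof of Proposition \ref{prop:evoleq}. Then $U$ vanishes near $\Gamma$, and $\Delta_{x,y}U=F:=[\Delta_{x,y},\chi_0((y-\eta_1)/h)]w$ is supported in the fixed intermediate strip $S_{1/2,1/5}$. Since $w$ is harmonic there and $F$ only sees derivatives of $w$ away from both boundaries, interior elliptic estimates give that all derivatives of $F$ are controlled by $\lA(V,B)\rA_{H^{\mez}\times H^{\mez}}$, exactly as in (\ref{eq:remtr}).

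After straightening with the diffeomorphism $\rho$, $\widetilde{U}$ solves an equation of the form (\ref{eq:lapeqflat}) with source $\widetilde{F}$ and zero Dirichlet trace at $z=0$. I would then apply estimate (\ref{eq:elliphold}) (respectively (\ref{eq:ellipsob})) of Proposition \ref{prop:regsob}, combining it with the maximum principle of Proposition \ref{prop:maxprincder} to bound $\lA\nabla_{x,z}\widetilde{U}\rA_{L^\infty}$ by Hölder norms of the trace, and with the standard control of variational solutions in $X^{-\mez}$. This should produce
$$\lA\nabla_{x,z}\widetilde{U}\rA_{C^0([z_0,0];H^{s-\mez})}\leq K\lb\lA(V,B,\psi)\rA_{H^{s-\mez}\times H^{s-\mez}\times H^{s+\mez}}+\lA\eta\rA_{H^{s+\mez}}\lA(V,B)\rA_{C^{1+\eps}_*}\rb,$$
and analogously with $K$ replaced by $K_0$ and the $\eta$-term dropped. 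Since $R_0$ is an algebraic combination of traces at $z=0$ of $\nabla_{x,z}\widetilde{U}$ with coefficients involving $\nabla\eta$, tame product estimates applied to this combination yield the desired bounds on $\gamma'=R_0$.

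The main technical obstacle will be the same as in Proposition \ref{prop:evoleq}: maintaining the tame structure of the estimate. I need to ensure that $\lA\eta\rA_{H^{s+\mez}}$ enters only multiplied by the low-regularity norm $\lA(V,B)\rA_{C^{1+\eps}_*}$ in the $K$-version, and does not appear at all in the $K_0$-version. This will rely on applying Proposition \ref{prop:regsob} in the appropriate form, and on the observation that the localization procedure isolates $F$ in a strip whose geometry depends only on $h$, so that the constants in the interior elliptic bounds on $F$ are independent of the high Sobolev norm of the free surface.
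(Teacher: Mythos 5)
Your proposal is correct and follows the same route the paper takes: the paper explicitly defers the proof of Proposition~\ref{prop:linkVB} to the method of Proposition~\ref{prop:evoleq}, and in the proof of that proposition the identity $(\partial_y-\nabla\eta\cdot\nabla)\partial_y\phi\rvert_\Sigma=G(\eta)B+R_0$ and the estimate on $R_0$ are already established; combining that with the chain-rule computation $(\partial_y-\nabla\eta\cdot\nabla)\partial_y\phi\rvert_\Sigma=-\dive V$ (using harmonicity of $\phi$), as you do, gives exactly $\gamma'$ and its bounds. The localization via $\chi_0$, the commutator source $F$ supported in $S_{1/2,1/5}$, the application of (\ref{eq:elliphold})/(\ref{eq:ellipsob}) together with the maximum principle of Proposition~\ref{prop:maxprincder} and the $X^{-1/2}$ control of variational solutions are all the same ingredients the paper uses.
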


We will now perform a paralinearization of the system. We will start with the estimate for the first theorem.
We introduce as a new unknown 
$$U:=V+T_\zeta B.$$
Rather than estimating~$U$ and ~$\zeta$ in Sobolev spaces, it will be easier to estimate
\begin{align}
	U_s&:=\jap^sV+T_\zeta\jap^sB,\\
	\zeta_s&:=\jap^s\zeta.
\end{align}

\begin{prop}
	We have 
	\begin{equation} \label{eq:syspara}
		\left\lbrace
		\begin{gathered}
			\left(\partial_t+T_V\cdot\nabla\right)U_s+T_a\zeta_s=f_1,\\
			\left(\partial_t+T_V\cdot\nabla\right)\zeta_s=T_\lambda U_s+f_2,
		\end{gathered}
		\right.
	\end{equation}
	where~$\lambda$ is the symbol
	$$\lambda(t;x,\xi):=\sqrt{\left(1+\la\nabla\eta(t,x)\ra^2\right)\la\xi\ra^2-\left(\nabla\eta(t,x)\cdot\xi\right)^2},$$
	and where
	\begin{equation} \label{eq:remhold1}
	\begin{aligned}
		\lA f_1\rA_{L^2}\leq K&\left\lbrace\left[\lA\nabla\eta\rA_{C^\mez_*}\lA\nabla B\rA_{C_*^{-\mez}}+\lA\nabla B\rA_{L^\infty}+\lA V\rA_{W^{1,\infty}}\right]\lA V\rA_{H^s}\right.\\
						   &+\left[1+\lA a\rA_{C^\mez_*}\right]\lA\zeta\rA_{H^{s-\mez}}\\
						   &+\left[\lA V\rA_{C^{1+\eps}_*}+\lA\nabla B\rA_{L^\infty}\right]\lA B\rA_{H^s}\\
						   &\left.+\lA\nabla\eta\rA_{C^\mez_*}\lA a-g\rA_{H^{s-\mez}}\right\rbrace,
	\end{aligned}
	\end{equation}
	and
	\begin{equation} \label{eq:remhold2}
	\begin{aligned}
		\lA f_2\rA_{H^{-\mez}}\leq K&\left[\lp1+\lA\nabla\eta\rA_{C^\mez_*}\rp\lp\lA B\rA_{H^s}+\lA V\rA_{H^s}\rp\right.\\
					     &\left.+\lp\lA B\rA_{C^{1+\eps}_*}+\lA V\rA_{C^{1+\eps}_*}\rp\lA\eta\rA_{H^{s+\mez}}+\lA\psi\rA_{H^{s+\mez}}\right].
	\end{aligned}
	\end{equation}

\end{prop}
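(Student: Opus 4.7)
The plan is to symmetrize the system (\ref{eq:Veq})-(\ref{eq:Beq})-(\ref{eq:zetaeq}) by applying Bony's paralinearization to each nonlinearity, combining the equations through the unknown $U = V + T_\zeta B$ (whose purpose is to exhibit an algebraic cancellation), and then passing to the weighted unknowns $(U_s, \zeta_s)$ via $\jap^s$ together with commutator estimates. This follows the general scheme of Alazard-Burq-Zuily, with the low-regularity side kept in Hölder and $L^\infty$ norms to obtain tame bounds.

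For the first equation of (\ref{eq:syspara}), I would apply $\jap^s$ to (\ref{eq:Veq}) and $T_\zeta \jap^s$ to (\ref{eq:Beq}) and sum. In each equation I decompose products by Bony: $V\cdot\nabla V = T_V\cdot\nabla V + T_{\nabla V}\cdot V + R(V,\nabla V)$, $a\zeta = T_a\zeta + T_\zeta a + R(a,\zeta)$, and similarly for $V\cdot\nabla B$. The crucial algebraic cancellation is between $-T_\zeta a$ (from paralinearizing $a\zeta$ in the $V$-equation) and $+T_\zeta a$ (from $T_\zeta(a-g)$ after applying $T_\zeta$ to the $B$-equation), leaving only the $T_a\zeta_s$ term on the left-hand side. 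The remainder $f_1$ then collects: the commutator $[\jap^s, T_V\cdot\nabla]$, bounded by $\lA V\rA_{W^{1,\infty}}\lA V\rA_{H^s}$ via paradifferential symbolic calculus; the Bony remainder $R(a,\zeta)$ giving $\lA a\rA_{C^\mez_*}\lA\zeta\rA_{H^{s-\mez}}$; the terms $T_{\nabla B}V$ and $R(V,\nabla B)$ giving the $\lA\nabla B\rA_{L^\infty}\lA V\rA_{H^s}$ and $\lA V\rA_{C^{1+\eps}_*}\lA B\rA_{H^s}$ contributions; the commutator $[\jap^s, T_\zeta](a-g)$ producing $\lA\nabla\eta\rA_{C^\mez_*}\lA a-g\rA_{H^{s-\mez}}$; and the commutator $[\partial_t + T_V\cdot\nabla, T_\zeta]$ acting on $\jap^s B$, which via (\ref{eq:zetaeq}) and symbolic calculus yields the mixed term $\lA\nabla\eta\rA_{C^\mez_*}\lA\nabla B\rA_{C^{-\mez}_*}\lA V\rA_{H^s}$.

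For the second equation of (\ref{eq:syspara}), I paralinearize (\ref{eq:zetaeq}) using $G(\eta)V = T_\lambda V + R(\eta)V$ from (\ref{eq:paraDN}) applied componentwise. Substituting $V = U - T_\zeta B$ into $T_\lambda V$ and invoking symbolic composition to replace $T_\lambda T_\zeta B$ by $T_{\lambda\zeta}B$ up to a lower-order remainder (controlled via Hölder regularity of $\zeta$), I reach the equation $(\partial_t + T_V\cdot\nabla)\zeta = T_\lambda U + f_2^{(0)}$. The $\zeta G(\eta)B$ term is handled via Proposition \ref{prop:linkVB} (replacing it by $-\zeta\,\text{div}\,V$ plus a controlled error), the smoothing remainder $R(\eta)V$ is bounded via Proposition \ref{prop:remDN}, and the transport remainder $\gamma$ is bounded via Proposition \ref{prop:evoleq}. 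Applying $\jap^s$ and commuting with $T_V\cdot\nabla$ produces $f_2$ satisfying (\ref{eq:remhold2}).

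The main obstacle will be the meticulous bookkeeping required to pin down the remainders in the precise mixed Hölder-Sobolev form of (\ref{eq:remhold1})-(\ref{eq:remhold2}). In particular, extracting the factor $\lA\nabla\eta\rA_{C^\mez_*}\lA\nabla B\rA_{C^{-\mez}_*}\lA V\rA_{H^s}$ requires applying the paradifferential composition bound to $[T_V\cdot\nabla, T_\zeta]$ at the exact right order; obtaining $\lA\nabla\eta\rA_{C^\mez_*}\lA a-g\rA_{H^{s-\mez}}$ with the correct norm split depends on how $T_\zeta$ and $\jap^s$ are interchanged on $a-g$ (using the standard paraproduct commutator estimate with the Hölder index $1/2$); and showing that the composition error $T_\lambda T_\zeta - T_{\lambda\zeta}$ lands in $H^{-\mez}$ with the required bound, rather than in $L^2$, is the delicate point separating (\ref{eq:remhold1}) from (\ref{eq:remhold2}).
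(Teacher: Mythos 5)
Your treatment of the first equation is essentially the paper's: Bony decomposition of $a\zeta$ and $V\cdot\nabla V$, the cancellation of $T_\zeta a$ against $T_\zeta(\partial_t+V\cdot\nabla)B$ using equation~(\ref{eq:Beq}), and then commutation with $\jap^s$. That part is sound.

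The second equation has a genuine gap in how you propose to treat $\zeta G(\eta)B$. After writing $T_\lambda V = T_\lambda U - T_\lambda T_\zeta B$, the dangling term $-T_\lambda T_\zeta B$ lies only in $H^{s-1}$, which is half a derivative short of what the bound~(\ref{eq:remhold2}) requires. It can only be absorbed by keeping the Dirichlet--Neumann structure $G(\eta)B = T_\lambda B + R(\eta)B$ from~(\ref{eq:paraDN}), so that $\zeta G(\eta)B = T_\zeta T_\lambda B + (\zeta - T_\zeta)T_\lambda B + \zeta R(\eta)B$, and then $-T_\lambda T_\zeta B + T_\zeta T_\lambda B = [T_\zeta,T_\lambda]B$ gains $1/2$ derivative through symbolic calculus at Hölder regularity $1/2$ on $\zeta$. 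Replacing $\zeta G(\eta)B$ by $-\zeta\,\dive V + \zeta\gamma'$ via Proposition~\ref{prop:linkVB} destroys precisely this $T_\lambda B$ structure: you are then left with $-T_\lambda T_\zeta B - \zeta\,\dive V$, each piece only in $H^{s-1}$, and recovering the cancellation would force you to undo the substitution by writing $\dive V = -G(\eta)B + \gamma' = -T_\lambda B - R(\eta)B + \gamma'$, which is circular. The paper never invokes Proposition~\ref{prop:linkVB} in this proof — it is used only in Section~\ref{sec:OrUnk} for the parametrix reconstructing $B$ from $U$. You should instead assemble the remainder as $h_2 = -(V-T_V)\cdot\nabla\zeta + [T_\zeta,T_\lambda]B + R(\eta)V + \zeta R(\eta)B + (\zeta-T_\zeta)T_\lambda B + \gamma$ and estimate each piece.

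A smaller bookkeeping issue: the mixed term $\lA\nabla\eta\rA_{C^\mez_*}\lA\nabla B\rA_{C^{-\mez}_*}\lA V\rA_{H^s}$ in~(\ref{eq:remhold1}) does not come from $[\partial_t+T_V\cdot\nabla,T_\zeta]$; it arises from $[T_\zeta,\jap^s]$ applied to $(\partial_t+T_V\cdot\nabla)B$, after decomposing $(\partial_t+T_V\cdot\nabla)B = (a-g) - (V-T_V)\cdot\nabla B$ and estimating the paraproduct piece. The commutator $[\partial_t+T_V\cdot\nabla,T_\zeta]\jap^s B$ instead produces the $\bigl(\lA\nabla B\rA_{L^\infty}+\lA\nabla\eta\rA_{L^\infty}\lA V\rA_{C^{1+\eps}_*}\bigr)\lA B\rA_{H^s}$ contribution.
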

\begin{proof}
	The computations are long, however they still mirror the ones of~\cite{AlazardBurqZuilyExw/ST}.
	First we paralinearize the equation
	$$\left(\partial_t+V\cdot\nabla\right)V+a\zeta=0.$$
	We will prove the identity
	$$\left(\partial_t+T_V\cdot\nabla\right)V+T_a\zeta+T_\zeta\left(\partial_t+T_V\cdot\nabla\right)B=h_1$$
	with a remainder~$h_1$ satisfying
	$$\lA h_1\rA_{H^s}\leq K\left[\lA\nabla V\rA_{L^\infty}\lA V\rA_{H^s}+\left(1+\lA a\rA_{C^\mez_*}\right)\lA\zeta\rA_{H^{s-\mez}}+\lA\nabla B\rA_{L^\infty}\lA V\rA_{H^s}\right].$$
	First we have~$V\cdot\nabla V=T_V\cdot\nabla V+A_1$ with
	$$\lA A_1\rA_{H^s}\leq C\lA\nabla V\rA_{L^\infty}\lA V\rA_{H^s}.$$
	We also have $(a-g)\zeta=T_{a-g}\zeta+T_\zeta(a-g)+R(\zeta,a-g)$ where 
	$$\lA R(\zeta,a-g)\rA_{H^s}\leq C\lA\zeta\rA_{H^{s-\mez}}\lA a\rA_{C^\mez_*}.$$
	We can now replace~$a$ by~$g+\left(\partial_t B+V\cdot\nabla B\right)$ to get
	$$T_\zeta a=T_\zeta\lp\partial_tB+T_V\cdot\nabla B\rp+T_\zeta\lp V-T_V\rp\cdot\nabla B,$$
	with 
	$$\lA T_\zeta\lp V-T_V\rp\cdot\nabla B\rA_{H^s}\leq C\lA\nabla\eta\rA_{L^\infty}\lA\nabla B\rA_{L^\infty}\lA V\rA_{H^s}.$$
	
	We then commute our identity with~$\jap^s$.
	Using the product estimates of Theorem~\ref{thm:para}, we have the estimates
	\begin{gather}
		\lA\left[T_V\cdot\nabla,\jap^s\right]V\rA_{L^2}\leq C\lA V\rA_{W^{1,\infty}}\lA V\rA_{H^s},\\
		\lA\left[T_a,\jap^s\right]\zeta\rA_{L^2}\leq C\lA a\rA_{C^\mez_*}\lA\zeta\rA_{H^{s-\mez}},
	\end{gather}
	\begin{multline}
		\lA\left[T_\zeta,\jap^s\right]\lp\partial_t+T_V\cdot\nabla\rp B\rA_{L^2}\leq C\lA\zeta\rA_{C^\mez_*}\lA\lp\partial_t+T_V\cdot\nabla\rp B\rA_{H^{s-\mez}}\\
											   \leq C\lA\zeta\rA_{C^\mez_*}\left[\lA\lp\partial_t+V\cdot\nabla\rp B\rA_{H^{s-\mez}}
													      +\lA\lp V-T_V\rp\cdot\nabla B\rA_{H^{s-\mez}}\right]\\
											   \leq C\lA\zeta\rA_{C^\mez_*}\left[\lA a-g\rA_{H^{s-\mez}}+\lA\nabla B\rA_{C^{-\mez}_*}\lA V\rA_{H^s}\right],		
	\end{multline}
	\begin{gather}
		\lA T_\zeta\left[T_V\cdot\nabla,\jap^s\right]B\rA_{L^2}\leq C\lA\zeta\rA_{L^\infty}\lA V\rA_{W^{1,\infty}}\lA B\rA_{H^s},\\
		\lA\left[T_\zeta,\partial_t+T_V\cdot\nabla\right]\jap^sB\rA_{L^2}\leq C\left[\lA\nabla B\rA_{L^\infty}+\lA\nabla\eta\rA_{L^\infty}\lA V\rA_{C^{1+\eps}_*}\right]\lA B\rA_{H^s}.
	\end{gather}
	Those commutators estimates prove that
	\begin{equation}
		\lp\partial_t+T_V\cdot\nabla\rp\lp\jap^sV+T_\zeta\jap^sB\rp+T_a\jap^s\zeta=f_1,
	\end{equation}
	where~$f_1$ satisfies~(\ref{eq:remhold1}).
	
	We now paralinearize the equation 
	$$\lp\partial_t+V\cdot\nabla\rp\zeta=G(\eta)V+\zeta G(\eta)B+\gamma.$$
	We use the paralinearization of the Dirichlet-Neumann~(\ref{eq:paraDN}) to get
	$$\lp\partial_t+T_V\cdot\nabla\rp\zeta=T_\lambda U+h_2$$
	with
	$$h_2=-\lp V-T_V\rp\cdot\nabla\zeta+\left[T_\zeta,T\lambda\right]B+R(\eta)V+\zeta R(\eta)B+\lp\zeta-T_\zeta\rp T_\lambda B+\gamma.$$
	From Theorem~\ref{thm:paraprod}, we get 
	$$\lA\lp V-T_V\rp\cdot\nabla\zeta\rA_{H^{s-\mez}}\leq C\lA\nabla\eta\rA_{C^\mez_*}\lA V\rA_{H^s}.$$
	Theorem~\ref{thm:para} and simple estimates on the symbol~$\lambda$ give
	\begin{align}
		\lA\left[T_\zeta,T_\lambda\right]B\rA_{H^{s-\mez}}&\leq C\lp\M_0^0(\zeta)\M^1_\mez(\lambda)+\M^0_\mez(\zeta)M^1_0(\lambda)\rp\lA B\rA_{H^s}\\
								   &\leq K\lA\nabla\eta\rA_{C^\mez_*}\lA B\rA_{H^s}.
	\end{align}
	Then with the estimates of Proposition~\ref{prop:remDN} and the maximum principle~(\ref{prop:maxprincder}), we have 
	$$\lA R(\eta)V\rA_{H^{s-\mez}}\leq K\lp\lA V\rA_{H^s}+\lA\eta\rA_{H^{s+\mez}}\lA\nabla V\rA_{C^{1+\eps}_*}\rp,$$
	and using paraproduct from Theorem~\ref{thm:paraprod}, a rough estimate of~$\lA R(\eta)B\rA_{L^\infty}$, and the maximum principle~(\ref{prop:maxprincder}),
	\begin{align}
		\lA\zeta R(\eta)B\rA_{H^{s-\mez}}&\leq C\lp\lA\zeta\rA_{H^{s-\mez}}\lA R(\eta)B\rA_{L^\infty}+\lA\zeta\rA_{L^\infty}\lA R(\eta)B\rA_{H^{s-\mez}}\rp\\
						  &\leq K\lp\lA\eta\rA_{H^s+\mez}\lA B\rA_{C^{1+\eps}_*}+\lA B\rA_{H^s}\rp.
	\end{align}
	At last, we see thanks to the estimates of Theorem~\ref{thm:para} that
	$$\lA\lp\zeta-T_\zeta\rp T_\lambda B\rA_{H^{s-\mez}}\leq K\lA B\rA_{C^{1+\eps}_*}\lA\zeta\rA_{H^{s-\mez}}.$$
	Then as in the previous part, commuting the equation with~$\jap^s$ and using~(\ref{eq:rembothold}) yields~(\ref{eq:remhold2}).
\end{proof}

In order to obtain a closed inequality system, we need an estimate of~$\lA a(t)-g\rA_{H^{s-\mez}}$ in terms of the Sobolev norms of the unknowns~$\eta,\psi,V,B$.
This is the object of the following proposition.
\begin{prop} \label{prop:Tay}
	The Taylor coefficient satisfies the estimates
	\begin{equation*}
		\lA a-g\rA_{H^{s-\mez}}\leq K\lb\lA\eta\rA_{H^{s+\mez}}\lp1+\lA(V,B)\rA_{C^{1+\eps}_*}\rp+\lA(V,B)\rA_{C^{1+\eps}_*}\lA\psi\rA_{H^{s+\mez}}\rb,
	\end{equation*}
	and
	\begin{equation*}
		\lA a-g\rA_{H^{s-\mez}}\leq K_0\lA\eta,\psi,V,B\rA_{H^{s+\mez}\times H^{s+\mez}\times H^s\times H^s}
	\end{equation*}

\end{prop}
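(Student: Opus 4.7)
The plan is to express $a-g$ as a boundary trace of the gradient of a function solving a variant of the Laplace problem studied in Section \ref{sec:EllReg}, so that the tame elliptic estimates of Proposition \ref{prop:regsob} can be applied directly. Setting $Q := P + gy$, the boundary condition $P\rvert_\Sigma = 0$ forces $\nabla P\rvert_\Sigma$ to be normal to $\Sigma$, hence $a - g = -\partial_y Q\rvert_{y=\eta}$. From the Euler equation together with $\div_{x,y} v = 0$ and $\rot_{x,y} v = 0$, the function $Q$ satisfies
\begin{equation*}
\Delta_{x,y} Q = -\lvert \nabla_{x,y}^2 \phi\rvert^2 \quad \text{in } \Omega, \qquad Q\rvert_\Sigma = g\eta,
\end{equation*}
together with a bottom condition that is handled variationally as in Subsection \ref{subsec:vardef}.

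I would straighten the domain via the diffeomorphism $(x,z)\mapsto(x,\rho(x,z))$ of Section \ref{sec:EllReg} and apply the tame elliptic estimate of Proposition \ref{prop:regsob} at regularity $\sigma = s - \mez$ to $\widetilde{Q}(x,z) = Q(x,\rho(x,z))$. The Dirichlet contribution produces the $\lA\eta\rA_{H^{s+\mez}}$ term of the announced inequality, while the low-regularity quantity $\lA\nabla_{x,z}\widetilde{Q}\rA_{X^{-\mez}}$ and the $L^\infty$ quantity $\lA\nabla_{x,z}\widetilde{Q}\rA_{L^\infty}$ appearing on the right-hand side of (\ref{eq:elliphold}) are controlled by the maximum principles of Propositions \ref{prop:maxprinc} and \ref{prop:maxprincder}, contributing only to the absorbed factor $K$.

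The central ingredient is the $Y^{s-\mez}(I)$ control of the pulled-back source $\lvert\nabla_{x,y}^2\phi\rvert^2$. I would apply Proposition \ref{prop:regsob} once more to the straightened velocity potential $\widetilde{\phi}$ with Dirichlet data $\psi$, so as to bound $\nabla_{x,z}^2 \widetilde{\phi}$ in $X^{s-1}$ in terms of $\lA\psi\rA_{H^{s+\mez}}$ plus the correction $\lA\eta\rA_{H^{s+\mez}}\lA\nabla_{x,z}\widetilde{\phi}\rA_{L^\infty}$. Combined with a tame paraproduct estimate of the form
\begin{equation*}
\lA \lvert\nabla_{x,y}^2\phi\rvert^2 \rA_{H^{s-\mez}} \les \lA\nabla_{x,y}^2\phi\rA_{L^\infty}\lA\nabla_{x,y}^2\phi\rA_{H^{s-\mez}}
\end{equation*}
and Proposition \ref{prop:maxprincder} applied to $\partial\phi$ to convert the $L^\infty$ factor into $\lA(V,B)\rA_{C^{1+\eps}_*}$, this produces exactly the announced structure, with every highest-order Sobolev norm entering linearly.

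The main difficulty will be keeping every step tame in the presence of the nonlinear source $\lvert\nabla^2\phi\rvert^2$: the paraproduct decomposition must be performed carefully, and the boundary value of $\nabla^2\phi$ must be traded for $(V,B)$ using the same maximum principle machinery used throughout Section \ref{sec:EllReg}. The second estimate involving $K_0$ follows by the same scheme but uses (\ref{eq:ellipsob}) instead of (\ref{eq:elliphold}); since $s_0 > \mez+d/2$ sits above the Sobolev embedding threshold, the $L^\infty$ factors are absorbed directly into the non-decreasing function defining $K_0$, and the Hölder-times-Sobolev bookkeeping collapses to a single Sobolev bound on $(\eta,\psi,V,B)$.
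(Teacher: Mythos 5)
Your overall scheme matches the paper's: set $Q:=P+gy$ (the paper's $\mathcal{P}$), identify $a-g$ as $-\partial_y Q\rvert_\Sigma$, straighten by $\rho$, apply Proposition~\ref{prop:regsob} at $\sigma=s-\mez$, and feed the source through elliptic regularity for $\widetilde\phi$ plus tame product rules. However, three steps in the quantitative chain are off and need repair.

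First, the claimed $X^{s-1}$ control of $\nabla_{x,z}^2\widetilde\phi$ does not follow from a single application of Proposition~\ref{prop:regsob}. At the maximal admissible level $\sigma=s-\mez$ with data $\psi\in H^{s+\mez}$, (\ref{eq:elliphold}) gives $\nabla_{x,z}\widetilde\phi\in X^{s-\mez}$; differentiating once more in $x$ (and recovering $\partial_z^2\widetilde\phi$ from $(\Lambda_1^2+\Lambda_2^2)\widetilde\phi=0$) only yields the pulled-back Hessian $\Lambda^2\widetilde\phi\in X^{s-\frac{3}{2}}$. This is half a derivative short of what you assert, and it is precisely the level the paper works with in the $K$ version. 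The $X^{s-1}$ bound is obtained only in the $K_0$ version, and by a genuinely different mechanism: one introduces the straightened harmonic extension $\tau_i$ of $V_i$, estimates $\nabz(\tau_i-\Lambda_i\widetilde\phi)$ in $X^{s-\mez}$, and $\nabz\tau_i$ in $X^{s-1}$ from $\lA V\rA_{H^s}$ via (\ref{eq:ellipsob}); combining these is what buys the extra half derivative. It is not a routine substitution of (\ref{eq:ellipsob}) for (\ref{eq:elliphold}).

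Second, as a consequence, the tame estimate $\lA\la\nabla^2\phi\ra^2\rA_{H^{s-\mez}}\les\lA\nabla^2\phi\rA_{L^\infty}\lA\nabla^2\phi\rA_{H^{s-\mez}}$ cannot be used at fixed $z$: with $\Lambda^2\widetilde\phi$ only in $L^2_z H^{s-1}\cap L^\infty_z H^{s-\frac{3}{2}}$, the product is not in $H^{s-\mez}$ pointwise in $z$. One must target the $Y^{s-\mez}$ norm and exploit its $L^1_z H^{s-\mez}+L^2_z H^{s-1}$ structure, placing $\la\Lambda^2\widetilde\phi\ra^2$ in the $L^2_z H^{s-1}$ component. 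The paper's estimate preceding (\ref{eq:sourcesob}) does exactly this, and in addition recovers the low-regularity factor $\lA\Lambda^2\widetilde\phi\rA_{L^\infty(I;C^{-1}_*)}$ from $\lA\Lambda\widetilde\phi\rA_{L^\infty}$ via the equation rather than from Proposition~\ref{prop:maxprincder} applied to a second derivative.

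Third, the $X^{-\mez}$ term for $\nabla_{x,z}\widetilde Q$ on the right-hand side of (\ref{eq:elliphold}) is not a maximum-principle bound and does not contribute only to $K$: it is a variational energy estimate carrying the inhomogeneity $\la\nabla\phi\ra^2$, and in the paper it produces a term of the form $K\lp1+\lA(V,B)\rA_{C^{1+\eps}_*}\rp\lp\lA\eta\rA_{H^{s+\mez}}+\lA\psi\rA_{H^{s+\mez}}\rp$. This does have the announced shape, but note that $\lA(V,B)\rA_{C^{1+\eps}_*}$ cannot be absorbed into $K$, which by its definition controls $(V,B)$ only in $W^{\eps,\infty}$.
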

\begin{proof}
	The pressure is defined by
	$$P=-\lp\partial_t\phi+\mez\la\nabla_{\!x}\phi\ra^2+\mez\lp\partial_y\phi\rp^2+gy\rp,$$
	where~$\phi$ is the harmonic extension of ~$\psi$. This means that~$P$ satisfies the elliptic equation
	$$\Delta_{x,y}P=-\la\nab^2\phi\ra^2,$$
	with $P=0$ on the free surface $\Sigma$. 
	We change variables using the transformation~$\rho$ from~(\ref{eq:defvar}), and set 
	$$\ph(x,z)=\phi(x,\rho(x,z)),\quad\P(x,z)=P(x,\rho(x,z))+g\rho(x,z),$$
	with 
	$$a-g=\left.-\frac{1}{\partial_z\rho}\partial_z\P\ra_{z=0}.$$
	The elliptic equation on~$P$ becomes
	\begin{align}
		&\partial_z^2\P+\alpha\Delta_x\P+\beta\cdot\nabla_{\!x}\partial_z\P-\gamma\partial_z\P=-\alpha\la\Lambda^2\ph\ra^2\quad&&\text{for }z<0,\\
		&\P=g\eta												       &&\text{on }z=0,
	\end{align}
	where~$\Lambda=(\Lambda_1,\Lambda_2)$ is defined in~(\ref{eq:Lambda}).
	
	We first need to study the right-hand term of the equation.
	Since~$\phi$ is harmonic, we recover from Proposition~\ref{prop:regsob} and the variational estimate of~$\lA\nabz\ph\rA_{X^{-\mez}}$ the inequality
	$$\lA\nabz\ph\rA_{X^{s-\mez}}\leq K\lb\lA\eta\rA_{H^{s+\mez}}+\lA\psi\rA_{H^{s+\mez}}\rb.$$
	Now using the fact that~$(\Lambda_1^2+\Lambda_2^2)\ph=0$, we can recover estimates on~$\partial_z^2\ph$ from estimates on~$\nabla_{\!x}\nabz\ph$, so that
	$$\lA\Lambda^2\ph\rA_{X^{s-\frac{3}{2}}}\leq K\lb\lA\eta\rA_{H^{s+\mez}}+\lA\psi\rA_{H^{s+\mez}}\rb.$$
	At last, using the paraproduct rules, and the estimates on~$\alpha$ from Lemma~\ref{lem:ellcoefs}, we find
	\begin{align}
	\lA-\alpha\la\Lambda^2\ph\ra^2\rA_{Y^{s-\mez}}\leq&K\lA\Lambda^2\ph\rA_{L^\infty}\lb\lA\eta\rA_{H^{s+\mez}}\lA\Lambda^2\ph\rA_{L^\infty([-1,0];C^{-1}_*)}+\lA\Lambda^2\ph\rA_{X^{s-\frac{3}{2}}}\rb\\
						       \leq&K\lA(V,B)\rA_{C^{1+\eps}_*}\lb\lA\eta\rA_{H^{s+\mez}}+\lA\psi\rA_{H^{s+\mez}}\rb,
	\end{align}
	where~$\lA\Lambda^2\ph\rA_{L^\infty([-1,0];C^{-1}_*)}$ has been estimated from~$\lA\Lambda\ph\rA_{L^\infty}$ using~$(\Lambda_1^2+\Lambda_2^2)\ph=0$ once again.
	
	For the version with a reference Sobolev index, we recall from the proof of Proposition~\ref{prop:evoleq} that if~$\theta_i$ is the harmonic extension of~$V_i$, and if~$\tau_i$ is its straightening
	by the diffeomorphism~$\rho$ to the strip, then up to a harmless restriction of the interval~$J$ close to the boundary we have
	\begin{equation*}
		\lA\nabz\lp\tau_i-\Lambda_i\ph\rp\rA_{X^{s-\mez}(J)}\leq K_0\lA(\psi,V,B)\rA_{H^{s+\mez}\times H^{s-\mez}\times H^{s-\mez}}.
	\end{equation*}
	Also, we have from~(\ref{eq:ellipsob})
	\begin{equation*}
		\lA\nabz\tau_i\rA_{X^{s-1}}\leq K_0\lA V\rA_{H^s}.
	\end{equation*}
	Combining those two results and doing the same for~$B$ and~$\partial_z\ph$ gives
	\begin{equation*}
		\lA\Lambda^2\ph\rA_{X^{s-1}}\leq K_0\lA\eta,\psi,V,B\rA_{H^{s+\mez}\times H^{s+\mez}\times H^s\times H^s}.
	\end{equation*}
	At last, using paraproduct estimates, we gain
	\begin{equation} \label{eq:sourcesob}
		\lA-\alpha\la\Lambda^2\ph\ra^2\rA_{Y^{s-\mez}}\leq K_0\lA\eta,\psi,V,B\rA_{H^{s+\mez}\times H^{s+\mez}\times H^s\times H^s}.
	\end{equation}

	We then take from~\cite{AlazardBurqZuilyExw/ST} the estimate
	\begin{align}
		\lA\nabz\P\rA_{X^{-\mez}}&\leq K\lb\lA\eta\rA_{H^\mez}+\lA\la\nabla\ph\ra^2\rA_{X^\mez}\rb\\
				       &\leq K\lb1+\lA(V,B)\rA_{C^{1+\eps}_*}\rb\lb\lA\eta\rA_{H^{s+\mez}}+\lA\psi\rA_{H^{s+\mez}}\rb.
	\end{align}
	A last application of the elliptic regularity of Proposition~\ref{prop:regsob} gives the estimate
	\begin{align}
		\lA\partial_z\P\rA_{X^{s-\mez}}&\leq K\lb\lA\eta\rA_{H^{s+\mez}}\lp1+\lA\partial_z\P\rA_{L^\infty}\rp+\lA-\alpha\la\Lambda^2\ph\ra^2\rA_{Y^{s-\mez}}+\lA\nabz\P\rA_{X^{-\mez}}\rb\\
					       &\leq K\lb\lA\eta\rA_{H^{s+\mez}}\lp1+\lA(V,B)\rA_{C^{1+\eps}_*}\rp+\lA(V,B)\rA_{C^{1+\eps}_*}\lA\psi\rA_{H^{s+\mez}}\rb,
	\end{align}
	and a last use of the paraproduct gives the first result.
	The second one follows along the same lines, using~(\ref{eq:ellipsob}) and~(\ref{eq:sourcesob}) instead.
\end{proof}

We can now perform a symmetrization of the system as follows
\begin{prop} \label{prop:parasyshold}
	We introduce the symbols
	$$\gamma:=\sqrt{a\lambda},\quad q:=\sqrt{\frac{a}{\lambda}},$$
	and the new variable 
	$$\theta_s:=T_q\zeta_s.$$
	Then we have the equations
	\begin{equation} \label{eq:syssym}
		\left\lbrace
		\begin{aligned}
			\partial_tU_s+T_V\cdot\nabla U_s+T_\gamma\theta_s&=F_1,\\
			\partial_t\theta_s+T_V\cdot\nabla\theta_s-T_\gamma U_s&=F_2,\\
		\end{aligned}
		\right.
	\end{equation}
	where the source terms~$F_1$ and~$F_2$ satisfy
	\begin{equation*}
		\begin{aligned}
			\lA F_1\rA_{L^2}\leq K&\lb\lp1+\lA\nabla\eta\rA_{C^\mez_*}\rp\lp1+\lA(V,B)\rA_{C^{1+\eps}_*}\rp\lA(V,B)\rA_{H^s}\right.\\
					      &+\lp1+\lA a\rA_{C^\mez_*}+\lp1+\lA(V,B)\rA_{C^{1+\eps}_*}\rp\lp1+\lA\nabla\eta\rA_{C^\mez_*}\rp\rp\lp1+\lA\zeta\rA_{H^{s-\mez}}\rp\\
					      &\left.+\lp1+\lA(V,B)\rA_{C^{1+\eps}_*}\rp\lp1+\lA\nabla\eta\rA_{C^\mez_*}\rp\lA\psi\rA_{H^{s+\mez}}\rb,
		\end{aligned}
	\end{equation*}
	and
	\begin{equation*}
		\begin{aligned}
			\lA F_2\rA_{L^2}\leq K&\lb\lp1+\lA\nabla\eta\rA_{C^\mez_*}\rp\lA(V,B)\rA_{H^s}\right.\\
					      &+\lp1+\lA(V,B)\rA_{C^{1+\eps}_*}+\lA\partial_ta+V\cdot\nabla a\rA_{L^\infty}\rp\lp1+\lA\zeta\rA_{H^{s-\mez}}\rp\\
					      &+\lA\psi\rA_{H^{s+\mez}}\\
					      &\left.+\lp1+\lA a\rA_{C^\mez_*}+\lA\nabla\eta\rA_{C^\mez_*}\rp\lA U_s\rA_{L^2}\rb.
		\end{aligned}
	\end{equation*}

\end{prop}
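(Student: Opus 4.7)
The plan is to symmetrize the paradifferential system~\eqref{eq:syspara} by applying $T_q$ to the second equation, exploiting the algebraic identities $\gamma q = a$ and $q\lambda = \gamma$ to convert both off-diagonal operators $T_a$ and $T_\lambda$ into the same symmetric operator $T_\gamma$. The remainders will be collected into $F_1$, $F_2$ and controlled by paradifferential symbolic calculus (Theorem~\ref{thm:para}), together with the Taylor coefficient estimate from Proposition~\ref{prop:Tay}.

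First I would rewrite the first equation of~\eqref{eq:syspara}: since $a = \gamma q$, symbolic calculus gives $T_a \zeta_s = T_\gamma T_q \zeta_s + (T_{\gamma q} - T_\gamma T_q)\zeta_s = T_\gamma \theta_s + r_1$. Viewing $\gamma \in \Gamma^{1/2}_{1/2}$ and $q \in \Gamma^{-1/2}_{1/2}$ with semi-norms controlled by $\lA \nabla\eta\rA_{C^{\mez}_*}$ and $\lA a\rA_{C^{\mez}_*}$, the composition theorem bounds $\lA r_1\rA_{L^2}$ by $K\lp 1 + \lA a\rA_{C^{\mez}_*} + \lA\nabla\eta\rA_{C^{\mez}_*}\rp\lA \zeta\rA_{H^{s-\mez}}$. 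Setting $F_1 := f_1 + r_1$ and substituting~\eqref{eq:remhold1}, then using Proposition~\ref{prop:Tay} to eliminate the $\lA a - g\rA_{H^{s-\mez}}$ factor hidden in~$f_1$ in favor of $\lA \psi\rA_{H^{s+\mez}}$ and Hölder/Sobolev norms of $\eta$, yields the announced bound on $F_1$.

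For the second equation, applying $T_q$ and commuting produces
\begin{equation*}
\partial_t \theta_s + T_V\cdot\nabla\theta_s - T_\gamma U_s = T_q f_2 + T_{\partial_t q}\zeta_s + [T_V\cdot\nabla,\, T_q]\zeta_s + (T_q T_\lambda - T_\gamma)U_s.
\end{equation*}
The composition $T_q T_\lambda - T_{q\lambda}$ is handled by symbolic calculus as above. The crucial step is the combination $T_{\partial_t q}\zeta_s + [T_V\cdot\nabla, T_q]\zeta_s$: the principal symbol of the commutator equals $V \cdot \nabla q$ modulo symbols of order zero, so this sum reproduces $T_{D_t q}\zeta_s$ plus admissible remainders, where $D_t := \partial_t + V\cdot\nabla$ denotes the material derivative. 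Writing $q = \sqrt{a/\lambda}$, one expands $D_t q$ in terms of $D_t a$ and $D_t \lambda$; since $\lambda$ is a smooth function of $\nabla\eta$ and $\xi$, the factor $D_t \lambda$ is controlled pointwise by $D_t\nabla\eta$, which is computed by differentiating $\partial_t\eta = B - V\cdot\nabla\eta$ and bounded in $L^\infty$ via equation~\eqref{eq:zetaeq} together with the $L^\infty$ estimate on the Dirichlet-Neumann from Proposition~\ref{prop:DN} and the gradient maximum principle Proposition~\ref{prop:maxprincder}. This is precisely what makes the quantity $\lA D_t a\rA_{L^\infty}$ appear in the bound for $F_2$; the remaining terms arise from $T_q f_2$ via~\eqref{eq:remhold2}.

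The main obstacle is this last identification: combining $T_{\partial_t q}$ and the transport commutator $[T_V \cdot\nabla, T_q]$ into a single operator associated with $D_t q$, modulo operators bounded $H^{s-\mez} \to L^2$ with constants depending only on the low-regularity quantities in the statement. The remainder of the argument is careful bookkeeping of symbol orders so that no constant picks up a norm stronger than $\lA \nabla\eta\rA_{C^{\mez}_*}$, $\lA a\rA_{C^{\mez}_*}$, $\lA (V,B)\rA_{C^{1+\eps}_*}$, or $\lA D_t a\rA_{L^\infty}$.
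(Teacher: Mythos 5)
Your proposal is correct and follows essentially the same route as the paper: apply $T_q$ to the second equation of~\eqref{eq:syspara}, use the factorizations $a=\gamma q$ and $q\lambda=\gamma$ together with the symbolic calculus of Theorem~\ref{thm:para} to convert both off-diagonal terms to $T_\gamma$, and absorb the $\lA a-g\rA_{H^{s-\mez}}$ term hidden in $f_1$ via Proposition~\ref{prop:Tay}. The ``main obstacle'' you flag --- packaging $T_{\partial_t q}\zeta_s+[T_V\cdot\nabla,T_q]\zeta_s$ into a bound involving only $\M^{-\mez}_0(q)$, $\lA V\rA_{C^{1+\eps}_*}$ and $\M^{-\mez}_0(\partial_t q+V\cdot\nabla q)$ --- is not actually an open step: it is precisely the content of Lemma~\ref{lem:comconv}, which the paper invokes verbatim here; you should cite it rather than leave it as a heuristic. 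One small simplification relative to your sketch: to bound $\lA(\partial_t+V\cdot\nabla)\nabla\eta\rA_{L^\infty}$ you do not need~\eqref{eq:zetaeq}, the Dirichlet--Neumann $L^\infty$ bound, or the gradient maximum principle; it suffices to differentiate $(\partial_t+V\cdot\nabla)\eta=B$ in $x$, giving $(\partial_t+V\cdot\nabla)\partial_i\eta=\partial_i B-\partial_iV\cdot\nabla\eta$, hence the bound $\lA\nabla B\rA_{L^\infty}+\lA\nabla V\rA_{L^\infty}\lA\nabla\eta\rA_{L^\infty}$, which is exactly what makes $\lA(V,B)\rA_{C^{1+\eps}_*}$ appear alongside $\lA\partial_t a+V\cdot\nabla a\rA_{L^\infty}$ in the $F_2$ estimate.
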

\begin{proof}
	We have from the preceding system~(\ref{eq:syspara}) that~(\ref{eq:syssym}) is satisfied for
	$$F_1=f_1+\lp T_\gamma T_q-T_a\rp\zeta_s,$$
	$$F_2=T_q f_2+\lp T_qT_\lambda-T_\gamma\rp U_s-\left[T_q,\partial_t+T_V\cdot\nabla\right]\zeta_s.$$
	
	Thanks to Lemma~\ref{lem:comconv}, we have 
	$$\lA\left[T_q,\partial_t+T_V\cdot\nabla\right]\zeta_s\rA_{L^2}\leq C\lb\M^{-\mez}_0(q)\lA V\rA_{C^{1+\eps}_*}+\M^{-\mez}_0(\partial_t q+V\cdot\nabla q)\rb\lA\zeta_s\rA_{H^{-\mez}}.$$
	It can be computed that 
	$$\M^{-\mez}_0(q)\leq K,$$
	and that
	$$\M^{-\mez}_0(\partial_t q+V\cdot\nabla q)\leq K\lp1+\lA\partial_ta+V\cdot\nabla a\rA_{L^\infty}+\lA\partial_t\nabla\eta+V\cdot\nabla\nabla\eta\rA_{L^\infty}\rp.$$
	A differentiation of the identity
	$$(\partial_t+V\cdot\nabla)\eta=B$$
	gives the estimate
	$$\lA(\partial_t+V\cdot q)\partial_{x_i}\eta\rA_{L^\infty}\leq\lA\nabla B\rA_{L^\infty}+\lA\nabla V\rA_{L^\infty}\lA\nabla\eta\rA_{L^\infty},$$
	so that 
	\begin{equation}
		\lA\left[T_q,\partial_t+T_V\cdot\nabla\right]\zeta_s\rA_{L^2}\leq K\lp\lA(V,B)\rA_{C^{1+\eps}_*}+\lA\partial_ta+V\cdot\nabla a\rA_{L^\infty}\rp\lA\zeta_s\rA_{H^{-\mez}}.
	\end{equation}
	The estimates of the other terms give respectively
	\begin{equation}
		\begin{aligned}
			\lA(T_\gamma T_q-T_a)\zeta_s\rA_{L^2}&\leq C\lb\M^\mez_\mez(\gamma)\M^{-\mez}_0(q)+\M^\mez_0(\gamma)\M^{-\mez}_\mez(q)\rb\lA\zeta_s\rA_{H^{-\mez}}\\
							      &\leq K\lb1+\lA a\rA_{C^\mez_*}+\lA\nabla\eta\rA_{C^\mez_*}\rb\lA\zeta_s\rA_{H^{-\mez}},
		\end{aligned}
	\end{equation}
	\begin{equation}
		\begin{aligned}
			\lA(T_qT_\lambda-T_\gamma)U_s\rA_{L^2}&\leq C\lb\M^{-\mez}_\mez(q)\M^1_0(\lambda)+\M^{-\mez}_0(q)\M^1_{\mez}(\lambda)\rb\lA U_s\rA_{L^2}\\
							       &\leq K\lb1+\lA a\rA_{C^\mez_*}+\lA\nabla\eta\rA_{C^\mez_*}\rb\lA U_s\rA_{L^2},
		\end{aligned}
	\end{equation}
	and lastly
	\begin{equation}
		\begin{aligned}
			\lA T_qf_2\rA_{L^2}&\leq C\M^{-\mez}_0(q)\lA f_2\rA_{H^{-\mez}}\\
					    &\leq K\lA f_2\rA_{H^{-\mez}}.
		\end{aligned}
	\end{equation}
	This, together with the previous estimates, give the expected result.
\end{proof}

The analogous result with the reference Sobolev index is 
\begin{prop} \label{prop:parasyssob}
	The source terms~$F_1$ and~$F_2$ from the preceding proposition satisfy
	\begin{equation*}
		\begin{aligned}
			\lA F_1\rA_{L^2}\leq K_0&\lb\lp1+\lA\nabla\eta\rA_{C^\mez_*}+\lA(V,B)\rA_{C^{1+\eps}_*}\rp\lA(V,B)\rA_{H^s}\right.\\
					      &+\lp1+\lA a\rA_{C^\mez_*}+\lA\nabla\eta\rA_{C^\mez_*}\rp\lp1+\lA\zeta\rA_{H^{s-\mez}}\rp\\
					      &\left.+\lp1+\lA\nabla\eta\rA_{C^\mez_*}\rp\lA\psi\rA_{H^{s+\mez}}\rb,
		\end{aligned}
	\end{equation*}
	and
	\begin{equation*}
		\begin{aligned}
			\lA F_2\rA_{L^2}\leq K_0&\lb\lp1+\lA\nabla\eta\rA_{C^\mez_*}\rp\lA(V,B)\rA_{H^s}\right.\\
					      &+\lp1+\lA(V,B)\rA_{C^{1+\eps}_*}+\lA\partial_ta+V\cdot\nabla a\rA_{L^\infty}\rp\lp1+\lA\zeta\rA_{H^{s-\mez}}\rp\\
					      &+\lA\psi\rA_{H^{s+\mez}}\\
					      &\left.+\lp1+\lA a\rA_{C^\mez_*}+\lA\nabla\eta\rA_{C^\mez_*}\rp\lA U_s\rA_{L^2}\rb.
		\end{aligned}
	\end{equation*}
\end{prop}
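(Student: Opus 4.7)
The plan is to keep the same decomposition as in Proposition~\ref{prop:parasyshold}:
\[
F_1 = f_1 + (T_\gamma T_q - T_a)\zeta_s, \qquad F_2 = T_q f_2 + (T_q T_\lambda - T_\gamma) U_s - [T_q, \partial_t + T_V \cdot \nabla] \zeta_s,
\]
and to upgrade every bound from $K$ to $K_0$. The purely paradifferential contributions from the two symbolic remainders and the transport commutator are handled verbatim via Theorem~\ref{thm:para} and Lemma~\ref{lem:comconv}; their estimates feel only $L^\infty$ and $C^\mez_*$ norms of $a$, $\nabla\eta$, $V$, $B$ and $\partial_t a + V\cdot\nabla a$. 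The $L^\infty$ norms of $a$, $V$, $B$, $\nabla\eta$ are absorbed in $K_0$ through the embeddings $H^{s_0}\hookrightarrow L^\infty$ and $H^{s_0+\mez}\hookrightarrow W^{1,\infty}$ (since $s_0>1/2+d/2$), while the $C^\mez_*$ norms of $a$ and $\nabla\eta$, the $C^{1+\eps}_*$ norm of $(V,B)$, and $\lA \partial_t a + V\cdot\nabla a\rA_{L^\infty}$ appear as explicit factors in the stated bounds.

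The substantive new input is the $K_0$-version of \eqref{eq:remhold1}--\eqref{eq:remhold2} for $f_1$ and $f_2$. To obtain it, I would rerun the paralinearization argument that produced the system~\eqref{eq:syspara}, replacing each ingredient that carried a $K$-dependency by its Sobolev counterpart: the remainder bound~\eqref{eq:rembotsob} on $\gamma$ in place of~\eqref{eq:rembothold}, the Sobolev estimate on $R(\eta)$ in Proposition~\ref{prop:remDN}, and the Sobolev estimate on $a-g$ from Proposition~\ref{prop:Tay}. The commutators with $\jap^s$ are treated identically; each time an intermediate $W^{1,\infty}$, $L^\infty$, or $L^2 H^{s_0}$ norm appears, it is absorbed into $K_0$ by Sobolev embedding, while the genuine $C^\mez_*$, $C^{1+\eps}_*$, or material-derivative norms are kept as standalone factors multiplying the higher-order Sobolev norm of $(V,B,\psi,\zeta)$.

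The main obstacle is bookkeeping. One must check line by line which intermediate norm can be absorbed into $K_0$ and which must survive as an explicit factor. The key constraint is that a $C^r_*$ bound with $r>0$ of $a$ or $\nabla\eta$ cannot be absorbed into $K_0$ at this regularity (that would require $s_0>1+d/2$), while their $L^\infty$ norms can. Once this dictionary is fixed, summing the contributions of $f_1$, $(T_\gamma T_q - T_a)\zeta_s$, $T_q f_2$, $(T_q T_\lambda - T_\gamma)U_s$ and $[T_q, \partial_t + T_V \cdot \nabla]\zeta_s$ produces the announced bounds on $F_1$ and $F_2$.
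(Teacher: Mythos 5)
Your plan is the right one, and it is what the paper leaves implicit: Proposition~\ref{prop:parasyssob} is stated without a separate proof because it is expected to follow from the same decomposition of~$F_1,F_2$ as in Proposition~\ref{prop:parasyshold}, with each $K$-flavored ingredient (the estimates~(\ref{eq:rembothold}), the Hölder variant of Proposition~\ref{prop:remDN}, the first estimate of Proposition~\ref{prop:Tay}) replaced by its $K_0$-flavored counterpart. One point worth making explicit, which your write-up gestures at but does not quite articulate: the \emph{products} of Hölder norms in the bound on~$F_1$ in Proposition~\ref{prop:parasyshold}, e.g.\ $\lp1+\lA\nabla\eta\rA_{C^\mez_*}\rp\lp1+\lA(V,B)\rA_{C^{1+\eps}_*}\rp$, collapse to \emph{sums} in the Sobolev version precisely because the $K_0$-estimate of Proposition~\ref{prop:Tay} no longer carries the factor~$\lA(V,B)\rA_{C^{1+\eps}_*}$. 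Since in~$f_1$ that Taylor-coefficient bound is multiplied by~$\lA\nabla\eta\rA_{C^\mez_*}$, the Hölder version yields a genuine product of two low-regularity factors, whereas the Sobolev version does not — this is the structural change that later lets Theorem~\ref{thm:mainsob} live with $L^1_t$ control instead of $L^3_t$. Also, a small correction of bookkeeping: the reason the $C^\mez_*$ norm of~$a$ must survive as an explicit factor is not a Sobolev-embedding threshold on~$s_0$ (as you suggest), but simply that~$K_0$ only tracks the~$W^{\eps,\infty}$ norm of~$a$; for~$\nabla\eta$ the obstruction is indeed the embedding $H^{s_0-\mez}\hookrightarrow C^{s_0-\mez-d/2}_*$, which under $s_0>1/2+d/2$ only reaches $C^\eps_*$, not $C^\mez_*$. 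These caveats do not affect the validity of your argument.
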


\section{Estimates of the original unknowns} \label{sec:OrUnk}

In order to obtain a closed system of energy estimates, we need a control of the norms of~$\eta,\psi,V,B$, in terms of lower order norms and of norms of~$U_s,\theta_s$.
The formers will then be studied using transport equations on the various unknowns, and for the latters we will use the paralinearized system of Proposition~\ref{prop:parasyshold}.
\begin{prop} \label{prop:backhold}
	There holds
	\begin{equation*}
		\lA\eta\rA_{H^{s+\mez}}\leq K\lp\lA\theta_s\rA_{L^2}+\lA\zeta_s\rA_{H^{-1}}\rp,
	\end{equation*}
	\begin{multline*}
		\lA(V,B)\rA_{H^s}\leq K\lb1+\lA U_s\rA_{L^2}+\lp1+\lA\eta\rA_{C^\frac{3}{2}_*}\rp\lA(V,B)\rA_{H^{s-\mez}}\right.\\+
		\left.\lA(V,B)\rA_{C^{1+\eps}_*}\lp\lA\theta_s\rA_{L^2}+\lA\zeta_s\rA_{H^{-1}}\rp\rb,
	\end{multline*}
	and
	\begin{multline*}
		\lA\psi\rA_{H^{s+\mez}}\leq K\lb1+\lA U_s\rA_{L^2}+\lp1+\lA\eta\rA_{C^\frac{3}{2}_*}\rp\lA(V,B)\rA_{H^{s-\mez}}\right.\\+
		\left.\lA(V,B)\rA_{C^{1+\eps}_*}\lp\lA\theta_s\rA_{L^2}+\lA\zeta_s\rA_{H^{-1}}\rp+\lA\psi\rA_{L^2}\rb.
	\end{multline*}
\end{prop}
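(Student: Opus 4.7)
The plan is to recover the three quantities in sequence, using paradifferential ellipticity at each stage.

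\textbf{Step 1 ($\eta$).} The symbol $q=\sqrt{a/\lambda}$ is elliptic of order $-1/2$, with $\mathcal{M}_0^{-1/2}(q)+\mathcal{M}_{1/2}^{-1/2}(q)\leq K$ thanks to the lower bounds $a\geq c$ and $\lambda\gtrsim\la\xi\ra$ and the H\"older bounds on $a$ and $\nabla\eta$ present in the definition of $K$. Construct a parametrix $T_{1/q}$ of order $1/2$; by symbolic calculus $T_{1/q}T_q=\mathrm{Id}+R$ where $R:H^{-1}\to L^2$ with norm bounded by $K$. Applying $T_{1/q}$ to the definition $\theta_s=T_q\zeta_s$ gives
\begin{equation*}
\zeta_s=T_{1/q}\theta_s-R\zeta_s,
\end{equation*}
so that $\lA\zeta_s\rA_{H^{-1/2}}\leq K(\lA\theta_s\rA_{L^2}+\lA\zeta_s\rA_{H^{-1}})$. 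Since $\zeta_s=\sqrla^s\nabla\eta$, the left-hand side controls $\lA\eta\rA_{H^{s+1/2}}$ up to $\lA\eta\rA_{L^2}$ (which is already inside $K$), yielding the first inequality.

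\textbf{Step 2 ($V$, $B$).} Combine the two algebraic identities relating $V$ and $B$: first the definition $U_s=\sqrla^sV+T_\zeta\sqrla^sB$, second Proposition~\ref{prop:linkVB} applied at order $s$, which after paralinearization $G(\eta)=T_\lambda+R(\eta)$ and commuting $\sqrla^s$ through $G(\eta)$ gives
\begin{equation*}
T_\lambda B_s+\dive V_s=-R(\eta)B_s-[\sqrla^s,G(\eta)]B+\sqrla^s\gamma',\qquad B_s:=\sqrla^sB,\ V_s:=\sqrla^sV.
\end{equation*}
Substituting $V_s=U_s-T_\zeta B_s$ into the second identity produces a scalar paradifferential equation
\begin{equation*}
T_{\lambda-i\zeta\cdot\xi}B_s=-\dive U_s+(\text{lower order remainders}),
\end{equation*}
whose principal symbol $\lambda-i\zeta\cdot\xi$ has real part $\lambda\gtrsim\la\xi\ra$ and is thus elliptic of order $1$. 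A parametrix of order $-1$ inverts this, giving $\lA B_s\rA_{L^2}$ in terms of $\lA U_s\rA_{L^2}$ and the remainders; then $\lA V_s\rA_{L^2}\leq\lA U_s\rA_{L^2}+\lA T_\zeta B_s\rA_{L^2}$. The remainders $R(\eta)B$, $[\sqrla^s,G(\eta)]B$ and $\sqrla^s\gamma'$ are estimated by Proposition~\ref{prop:remDN}, paradifferential commutator estimates (Theorem~\ref{thm:para}) and Proposition~\ref{prop:linkVB}; each produces either a term of the form $(1+\lA\eta\rA_{C^{3/2}_*})\lA(V,B)\rA_{H^{s-1/2}}$, or $\lA(V,B)\rA_{C^{1+\eps}_*}\lA\eta\rA_{H^{s+1/2}}$. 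Using Step~1 to replace $\lA\eta\rA_{H^{s+1/2}}$ by $\lA\theta_s\rA_{L^2}+\lA\zeta_s\rA_{H^{-1}}$ yields exactly the second inequality.

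\textbf{Step 3 ($\psi$).} Differentiating $\psi(x)=\phi(x,\eta(x))$ gives the elementary identity $\nabla\psi=V+B\,\nabla\eta=V+B\zeta$, hence
\begin{equation*}
\lA\psi\rA_{H^{s+1/2}}\leq\lA\psi\rA_{L^2}+\lA V\rA_{H^{s-1/2}}+\lA B\zeta\rA_{H^{s-1/2}}.
\end{equation*}
Write $B\zeta=T_B\zeta+T_\zeta B+R(B,\zeta)$ and estimate the three terms by paraproduct rules: $\lA T_B\zeta\rA_{H^{s-1/2}}\les\lA B\rA_{L^\infty}\lA\eta\rA_{H^{s+1/2}}$, $\lA T_\zeta B\rA_{H^{s-1/2}}\les\lA\zeta\rA_{L^\infty}\lA B\rA_{H^{s-1/2}}$, and the remainder bound contains $\lA B\rA_{C^{1+\eps}_*}\lA\eta\rA_{H^{s+1/2}}$. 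Substituting the Step~1 control of $\lA\eta\rA_{H^{s+1/2}}$ and the Step~2 control of $\lA(V,B)\rA_{H^s}$ gives the third inequality.

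\textbf{Main obstacle.} The delicate point is Step~2: the paradifferential system for $(V_s,B_s)$ must be inverted in a genuinely tame way, so that the product $\lA(V,B)\rA_{C^{1+\eps}_*}\lA\eta\rA_{H^{s+1/2}}$ arising from $R(\eta)B$ and from the top-order piece of $[\sqrla^s,G(\eta)]B$ is kept \emph{linear} in the highest Sobolev norm, and not multiplied by other high norms. This requires using the Sobolev-H\"older version of the elliptic regularity estimate (Proposition~\ref{prop:remDN}) together with the sharp paraproduct and commutator estimates, exactly the same philosophy as in the paralinearization of the previous section.
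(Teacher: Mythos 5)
Your Steps 2 and 3 broadly match the paper's argument: take the divergence of $U=V+T_\zeta B$, link $\dive V$ to $G(\eta)B$ via Proposition~\ref{prop:linkVB}, paralinearize $G(\eta)$, and invert the elliptic order-$1$ symbol $-\lambda+i\zeta\cdot\xi$ with a single parametrix (whose remainder is acceptable there precisely because the estimate for $(V,B)$ is allowed to carry the explicit factor $1+\lA\eta\rA_{C^{3/2}_*}$); and $\nabla\psi=V+B\nabla\eta$ for the last inequality. These parts are fine.

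Step~1, however, contains a genuine gap. You assert $\mathcal{M}^{-1/2}_{1/2}(q)\leq K$ and that a single application of symbolic calculus produces a remainder $R=I-T_{1/q}T_q$ mapping $H^{-1}\to L^2$ with operator norm bounded by $K$. Neither is true under the constant $K$: $K$ only involves the H\"older norms $\lA\eta\rA_{W^{1+\eps,\infty}}$ and $\lA a\rA_{W^{\eps,\infty}}$ with $\eps>0$ arbitrarily small, so the symbol $q=\sqrt{a/\lambda}$ has only $C^\eps_*$ spatial regularity controlled by $K$. Theorem~\ref{thm:para}$(ii)$ then gives a composition error of order $-\eps$, i.e.\ $R:H^{-1}\to H^{-1+\eps}$ only — far from the $1/2$ derivative you need, let alone a full derivative. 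A one-shot parametrix would instead force a factor $\lA a\rA_{C^{1/2}_*}+\lA\nabla\eta\rA_{C^{1/2}_*}$ into the bound, exactly the kind of quantity the first inequality of the proposition must \emph{not} contain (it is only integrable in time in the main theorems, not bounded). The paper's fix is to iterate the parametrix as a finite Neumann series: choose an integer $N$ with $(N+1)\eps>1/2$ and write
\begin{equation*}
\zeta_s=\lp I+R+\dots+R^N\rp T_{1/q}T_q\zeta_s+R^{N+1}\zeta_s,
\end{equation*}
so that $R^{N+1}:H^{-1}\to H^{-1/2}$ and each term $R^kT_{1/q}\theta_s$ lands in $H^{-1/2+k\eps}\subset H^{-1/2}$, all with norms bounded by powers of $K$. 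Without this device the first estimate does not close in the stated form, and since Steps~2 and~3 both feed $\lA\eta\rA_{H^{s+1/2}}$ back through Step~1, the gap propagates to the whole proposition.
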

\begin{proof}
	 We start with the estimate on~$\eta$.
	 Fist we remark that 
	 \begin{align*}
	 	\lA\eta\rA_{H^{s+\mez}}&\leq\lA\eta\rA_{L^2}+\lA\nabla\eta\rA_{H^{s-\mez}}\\
				       &\leq K\lp1+\lA\zeta_s\rA_{H^{-\mez}}\rp,
	 \end{align*}
	 since $\zeta_s=\jap^s\nabla\eta$.
	 We then construct and use a parametrix to go back from~$\theta_s=T_q\zeta_s$ to~$\zeta_s$.
	 If our~$\eps$ is small enough, typically $0<\eps<s-d/2-1$, we choose~$N$ an integer such that $(N+1)\eps>1/2$, and we take $R=I-T_{1/q}T_q$, keeping in mind that $q=\sqrt{a/\lambda}$. Then
	 \begin{align*}
	 	\zeta_s&=T_{1/q}T_q\zeta_s+R\zeta_s\\
		       &=\lp I+R+\dots+R^N\rp T_{1/q}T_q\zeta_s+R^{N+1}\zeta_s.
	 \end{align*}
	 Then from the composition formula for paradifferential operators in Theorem~\ref{thm:para}, we have
	 \begin{align*}
	 	\lA R\rA_{H^\mu\rightarrow H^{\mu+\eps}}&\leq C\lp\M^{-\mez}_\eps(q)\M^\mez_0(1/q)+\M^{-\mez}_0(q)\M^\mez_\eps(1/q)\rp\\
							     &\leq K,
	 \end{align*}

	 and from the definition of~$q$, we see that
	 $$\lA T_{1/q}\rA_{L^2\rightarrow H^{-\mez}}\leq \M^\mez_0(1/q)\leq K.$$
	 Those estimations put together give 
	 $$\lA\eta\rA_{H^{s+\mez}}\leq K\lp\lA\theta_s\rA_{L^2}+\lA\zeta_s\rA_{H^{-1}}\rp.$$
	 
	 To simplify the equations, we worked with the unknown~$U_s=\jap^sV+T_\zeta\jap^sB$. We first show how to go back from this to~$U=V+T_\zeta B$.
	 We have
	 \begin{equation} \label{eq:UtoUs}
	 	\jap^s U=U_s+\lb\jap^s,T_\zeta\rb B,
	 \end{equation}

	 and Theorem~\ref{thm:para} gives
	 $$\lA\lb\jap^s,T_\zeta\rb B\rA_{L^2}\leq\lA\zeta\rA_{C^\mez_*}\lA B\rA_{H^{s-\mez}}.$$
	 Putting those two identities together gives
	 $$\lA U\rA_{H^s}\leq\lA U_s\rA_{L^2}+\lA\nabla\eta\rA_{C^\mez_*}\lA B\rA_{H^{s-\mez}}.$$
	 Then to get back to~$B$ from this, we take the divergence of~$U$ and use Proposition~\ref{prop:linkVB} to link~$V$ and $B$ and the paralinearization of the Dirichlet-Neumann~(\ref{eq:paraDN}), so that
	 \begin{align*}
	 	\div U&=\div T_\zeta B\\
		      &=G(\eta)B+\gamma'+T_{\div\zeta}B+T_\zeta\cdot\nabla B\\
		      &=T_p B+R(\eta)B+T_{\div\zeta}B+\gamma',
	 \end{align*}
	 where
	 $$p:=-\lambda+i\zeta\cdot\xi.$$
	 Now~$p$ is a symbol of order~$1$ and~$1/p$ of order~$-1$, with
	 \begin{equation} \label{eq:normp}
	 	\M^1_r(p)+\M^{-1}_r(1/p)\leq K\lA\nabla\eta\rA_{C^r_*}.
	 \end{equation}
	 Now we use a new parametrix from~$T_q B$ to~$B$, giving
	 \begin{equation} \label{eq:UtoB}
	 \begin{aligned}
	 	B&=T_{1/p}T_pB+(I-T_{1/p}T_p)B\\
		 &=T_{1/p}\div U-T_{1/p}\gamma'+T_{1/p}\lp-T_{\div\zeta}-R(\eta)\rp B+(I-T_{1/p}T_p)B. 
	 \end{aligned}
	 \end{equation}
	 This gives, using~(\ref{eq:normp}), Proposition~\ref{prop:linkVB}, Proposition~\ref{prop:remDN}, and the maximum principle~\ref{prop:maxprincder},
	 \begin{align*}
	 	\lA B\rA_{H^s}&\leq K\lp\lA U\rA_{H^s}+\lA\gamma'\rA_{H^{s-1}}+\lA\nabla\eta\rA_{C^{\mez}_*}\lA B\rA_{H^{s-\mez}}+\lA R(\eta)B\rA_{H^{s-1}}\rp\\
			      &\leq K\lp\lA U_s\rA_{L^2}+\lp1+\lA\eta\rA_{C^\frac{3}{2}_*}\rp\lA(V,B)\rA_{H^{s-\mez}}+\lA(V,B)\rA_{C^{1+\eps}_*}\lA\eta\rA_{H^{s+\mez}}\rp,
	 \end{align*}
	 which combined with the previous estimate on~$\eta$ gives the expected result.
	 Using the relation
	 $$U=V+T_\zeta B$$
	 gives the same estimation on~$V$.
	 
	 At last, we have the identity
	 $$\nabla\psi=V+B\nabla\eta,$$
	 and the quantities in the right side have all been estimated, so that a tame estimate on~$B\nabla\eta$ concludes the proof.
\end{proof}

For the version with a reference Sobolev norm, we have a simpler proposition.
\begin{prop}
	There holds
	\begin{equation*}
		\lA\eta\rA_{H^{s+\mez}}\leq K_0\lp\lA\theta_s\rA_{L^2}+\lA\zeta_s\rA_{H^{-1}}\rp,
	\end{equation*}
	\begin{equation*}
		\lA(V,B)\rA_{H^s}\leq K_0\lb\lA U_s\rA_{L^2}+\lA(V,B)\rA_{H^{s-\mez}}\rb,
	\end{equation*}
	and
	\begin{equation*}
		\lA\psi\rA_{H^{s+\mez}}\leq K_0\lb\lA U_s\rA_{L^2}+\lA(V,B)\rA_{H^{s-\mez}}\rb.
	\end{equation*}
\end{prop}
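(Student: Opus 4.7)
The plan is to repeat the argument of Proposition~\ref{prop:backhold}, substituting everywhere the reference-Sobolev versions of the auxiliary estimates (Propositions~\ref{prop:linkVB} and~\ref{prop:remDN}) in place of their H\"{o}lder counterparts. The simpler form of the right-hand sides reflects the fact that, under the hypothesis $s_0>\mez+d/2$, all the H\"{o}lder-type norms of $\eta$ and $(V,B)$ appearing in Proposition~\ref{prop:backhold} are controlled by $K_0$ via Sobolev embedding, and hence drop out of the final inequalities.

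For the bound on $\eta$, I would first reduce to estimating $\lA\zeta_s\rA_{H^{-\mez}}$ using $\lA\eta\rA_{H^{s+\mez}}\leq\lA\eta\rA_{L^2}+\lA\zeta_s\rA_{H^{-\mez}}$, then invoke the parametrix $I=T_{1/q}T_q+R$, in which the remainder $R$ gains $\eps_0:=s_0-\mez-d/2>0$ derivatives by the paradifferential composition formula applied with the Sobolev regularity of $q$. Iterating $N$ times with $N\eps_0\geq\mez$ and using the symbol bound $\M^{\mez}_0(1/q)\leq K_0$ closes the first estimate.

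For the bound on $(V,B)$, I would first pass from $U_s$ to $U=V+T_\zeta B$ via the identity $\jap^sU=U_s+[\jap^s,T_\zeta]B$, using a paradifferential commutator estimate absorbed into $K_0$ thanks to $\zeta\in H^{s_0-\mez}$. Next, exactly as in Proposition~\ref{prop:backhold}, I would start from
\begin{equation*}
\div U=T_p B+T_{\div\zeta}B-R(\eta)B+\gamma',\qquad p:=-\lambda+i\zeta\cdot\xi,
\end{equation*}
and apply the $(-1)$-order parametrix $T_{1/p}$, iterated enough times that the parametrix remainder gains $\mez$ derivative. The terms $T_{1/p}R(\eta)B$ and $T_{1/p}\gamma'$ are bounded in $H^s$ by $K_0\lA(V,B)\rA_{H^{s-\mez}}$ thanks to the reference-Sobolev versions of Propositions~\ref{prop:remDN} and~\ref{prop:linkVB}, with an appropriate choice of the Sobolev indices $(\sigma,\eps')$ exploiting $\eps'\leq s_0-d/2-\mez$. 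The relation $V=U-T_\zeta B$ then yields the second inequality, and the third inequality follows from $\nabla\psi=V+B\nabla\eta$ together with the tame product estimate and the already-established bounds on $V$ and $\eta$.

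The main technical obstacle is the term $T_{1/p}T_{\div\zeta}B$: in Proposition~\ref{prop:backhold} it produced the factor $\lA\eta\rA_{C^{3/2}_*}\lA B\rA_{H^{s-\mez}}$, but for $s_0\in(\mez+d/2,1+d/2)$ the H\"{o}lder norm $\lA\eta\rA_{C^{3/2}_*}$ is not controlled by $K_0$. To handle it, I would exploit that $\div\zeta=\Delta\eta\in H^{s_0-\frac{3}{2}}$ and use a refined paraproduct estimate to get
\begin{equation*}
\lA T_{\div\zeta}B\rA_{H^{s-1}}\leq K_0\lA B\rA_{H^{s-\eps_0}},
\end{equation*}
then apply the interpolation inequality $\lA B\rA_{H^{s-\eps_0}}\leq\delta\lA B\rA_{H^s}+C_\delta\lA B\rA_{H^{s-\mez}}$ with $\delta$ small enough (depending on $K_0$) to absorb the $\lA B\rA_{H^s}$ contribution into the left-hand side. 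This interpolation-absorption trick is the only substantive new ingredient compared to the proof of Proposition~\ref{prop:backhold}.
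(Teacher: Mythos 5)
Your overall strategy --- interpolation-absorption for the low-regularity intermediate terms --- is sound and would close the estimate, but it takes a genuinely different route from the paper and the bookkeeping contains two errors that leave real gaps.

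The paper's route: combining the identities~(\ref{eq:UtoUs}) and~(\ref{eq:UtoB}) at once, it bundles \emph{all} of the problematic pieces into a single operator
\begin{equation*}
R:=T_{\frac{1}{p}}\lp\div\jap^{-s}\lb\jap^s,T_\zeta\rb-R(\eta)-T_{\div\zeta}\rp+\lp I-T_{\frac{1}{p}}T_p\rp,
\end{equation*}
which is of order $-\eps$ with $\lA R\rA_{H^\sigma\rightarrow H^{\sigma+\eps}}\leq K_0$, and then iterates $B=(I+R+\dots+R^N)M+R^{N+1}B$ with $(N+1)\eps>\mez$. The commutator $\lb\jap^s,T_\zeta\rb$, the smoothing remainder $R(\eta)$, and $T_{\div\zeta}$ are all killed by the iteration, with no absorption step at all. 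Your route instead singles out $T_{\div\zeta}B$ for an interpolation-absorption argument. The absorption step itself is valid (with $\delta$ depending on $K_0$, which then enters $C_\delta$ and hence the final $K_0$, and assuming a priori that $B\in H^s$, which is legitimate in an a priori estimate).

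The problem is that you assert two bounds that fail precisely in the nontrivial regime $\eps_0:=s_0-\mez-d/2<\mez$, i.e.\ $s_0<1+d/2$. First, you claim the commutator estimate for $\lb\jap^s,T_\zeta\rb B$ is ``absorbed into $K_0$'' thanks to $\zeta\in H^{s_0-\mez}$. But $H^{s_0-\mez}(\R^d)\hookrightarrow C^{\eps_0}_*(\R^d)$ only, so Theorem~\ref{thm:para}$(ii)$ gives a commutator of order $s-\eps_0$, hence $\lA\lb\jap^s,T_\zeta\rb B\rA_{L^2}\leq K_0\lA B\rA_{H^{s-\eps_0}}$, \emph{not} $K_0\lA B\rA_{H^{s-\mez}}$. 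Second, you claim $T_{1/p}R(\eta)B$ is bounded in $H^s$ by $K_0\lA(V,B)\rA_{H^{s-\mez}}$. The constraint $\eps'\leq s_0-d/2-\mez=\eps_0$ in Proposition~\ref{prop:remDN} forces $\lA R(\eta)B\rA_{H^{s-1}}\leq K_0\lA B\rA_{H^{s-\eps_0}}$, so after applying $T_{1/p}$ one again only lands at $K_0\lA B\rA_{H^{s-\eps_0}}$. Both of these terms therefore require the same interpolation-absorption you devised for $T_{\div\zeta}B$; once it is applied uniformly to all three (or once they are simply grouped into $R$ as in the paper), the argument closes. The $T_{1/p}\gamma'$ term, by contrast, is indeed fine as you stated, since Proposition~\ref{prop:linkVB} already gives $H^{s-\mez}$ on the right and $T_{1/p}$ has order $-1$.

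Finally, for the first estimate on $\eta$ the paper simply observes that it follows from Proposition~\ref{prop:backhold} and $K\leq K_0$; your re-derivation via a parametrix with remainder of order $-\eps_0$ is correct but superfluous.
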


\begin{proof}
	The first estimate is a simple consequence of the previous proposition and of our hypothesis that~$K\leq K_0$.
	
	For the second one, we combine~(\ref{eq:UtoUs}) and~(\ref{eq:UtoB}) to get
	\begin{align*}
		B&=T_{\frac{1}{p}}\lb\div\jap^{-s}U_s+\gamma'\rb+\lb T_{\frac{1}{p}}\lp\div\jap^{-s}\lb\jap^s,T_\zeta\rb-R(\eta)-T_{\div\zeta}\rp+\lp I-T_{\frac{1}{p}}T_p\rp\rb B,\\
		 &:=M+RB.
	\end{align*}
	As before, this gives
	\begin{equation*}
		B=\lp I+R+\dots+R^N\rp M+R^{N+1}B,
	\end{equation*}
	where again~$\lp N+1\rp\eps>1/2$ with~$0<\eps<s_0-1/2-d/2$.
	Then we see using Proposition~\ref{prop:linkVB} that
	\begin{equation*}
		\lA M\rA_{H^s}\leq K_0\lb\lA U_s\rA_{L^2}+\lA(V,B)\rA_{H^{s-\mez}}\rb.
	\end{equation*}
	We see using~(\ref{eq:normp}) and Proposition~\ref{prop:remDN} that~$R$ is of order~$-\eps$, with
	\begin{equation*}
		\lA R\rA_{H^\sigma\rightarrow H^{\sigma+\eps}}\leq K_0
	\end{equation*}
	when~$s-\mez\leq\sigma\leq s$.
	This gives the estimate on~$B$, and the estimates on~$V$ and~$\psi$ are deduced from it as in the previous proposition.
\end{proof}

\section{Energy estimates} \label{sec:EneEst}
We start with a standard energy estimate on the now symmetric quasilinear system~(\ref{eq:syssym}).
\begin{prop} \label{prop:quases}
 The following estimates hold punctually in time
 \begin{equation} \label{eq:Uthetaesthold}
 \begin{aligned}
 	\tder\lA\lp U_s,\theta_s\rp\rA_{L^2}\leq&K\lp\lA a\rA_{C^\mez_*}+\lA\lp\partial_t+V\cdot\nabla\rp a\rA_{L^\infty}+Q\lp\lA\nabla\eta\rA_{C^\mez_*},\lA\lp V,B\rp\rA_{C^{1+\eps}_*}\rp\rp\\
						 &\times\lp\lA\lp U_s,\theta_s\rp\rA_{L^2}+\lA\psi\rA_{L^2}+\lA\zeta_s\rA_{H^{-1}}+\lA\lp V,B\rp\rA_{H^{s-\mez}}\rp,
 \end{aligned}
 \end{equation}
  where $Q$ is an explicit polynomial of degree~$3$,
  and
  \begin{equation} \label{eq:Uthetaestsob}
 \begin{aligned}
 	\tder\lA\lp U_s,\theta_s\rp\rA_{L^2}\leq&K_0\lp\lA a\rA_{C^\mez_*}+\lA\lp\partial_t+V\cdot\nabla\rp a\rA_{L^\infty}+\lA\nabla\eta\rA_{C^\mez_*}+\lA\lp V,B\rp\rA_{C^{1+\eps}_*}\rp\\
						 &\times\lp\lA\lp U_s,\theta_s\rp\rA_{L^2}+\lA\psi\rA_{L^2}+\lA\zeta_s\rA_{H^{-1}}+\lA\lp V,B\rp\rA_{H^{s-\mez}}\rp.
 \end{aligned}
 \end{equation}
\end{prop}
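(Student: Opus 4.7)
The plan is to perform a standard $L^2$ energy estimate on the symmetric paradifferential system~(\ref{eq:syssym}), then substitute the source estimates of Proposition~\ref{prop:parasyshold} (resp.~\ref{prop:parasyssob}) and the parametrix inversions of Proposition~\ref{prop:backhold} (resp.\ its Sobolev analogue) to close the estimate in terms of $\lA (U_s,\theta_s)\rA_{L^2}$ and lower-order norms. Concretely, I would take the $L^2$ inner product of the first equation of~(\ref{eq:syssym}) with $U_s$ and of the second with $\theta_s$, take real parts, and sum. The two off-diagonal contributions would cancel exactly if $T_\gamma$ were self-adjoint, so after using $\ls T_\gamma\theta_s,U_s\rs=\ls\theta_s,T_\gamma^* U_s\rs$ the identity becomes
\begin{equation*}
\mez\tder\lA(U_s,\theta_s)\rA^2_{L^2} = -\RE\ls T_V\cdot\nabla U_s,U_s\rs-\RE\ls T_V\cdot\nabla\theta_s,\theta_s\rs+\RE\ls(T_\gamma-T_\gamma^*)U_s,\theta_s\rs+\RE\ls F_1,U_s\rs+\RE\ls F_2,\theta_s\rs.
\end{equation*}

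Next I would exploit the two paradifferential cancellations. Since $V$ is real and $V\in W^{1,\infty}$, writing $T_V\cdot\nabla+(T_V\cdot\nabla)^*=[T_V,\nabla]\cdot-\nabla\cdot(T_V^*-T_V)$ and observing that $[T_V,\nabla]$ is of order $0$ while $T_V^*-T_V$ is of order $-1$ (so its composition with $\nabla$ is of order $0$) gives $|\RE\ls T_V\cdot\nabla u,u\rs|\leq K\lA u\rA_{L^2}^2$. For the off-diagonal defect, $\gamma=\sqrt{a\lambda}$ is a real symbol of order $\mez$ whose H\"older-$\mez$ seminorm is controlled by $K\lp1+\lA a\rA_{C^\mez_*}+\lA\nabla\eta\rA_{C^\mez_*}\rp$, so the paradifferential adjoint formula (Theorem~\ref{thm:para}) shows $T_\gamma-T_\gamma^*$ is of order $\mez-\mez=0$ with $L^2\to L^2$-norm $\leq K\lp\lA a\rA_{C^\mez_*}+\lA\nabla\eta\rA_{C^\mez_*}\rp$. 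Combining these bounds with Cauchy--Schwarz on the source terms and dividing by $\lA(U_s,\theta_s)\rA_{L^2}$ produces
\begin{equation*}
\tder\lA(U_s,\theta_s)\rA_{L^2}\leq K\lp1+\lA a\rA_{C^\mez_*}+\lA\nabla\eta\rA_{C^\mez_*}\rp\lA(U_s,\theta_s)\rA_{L^2}+\lA F_1\rA_{L^2}+\lA F_2\rA_{L^2}.
\end{equation*}

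To conclude, I would substitute the estimates on $\lA F_1\rA_{L^2}$ and $\lA F_2\rA_{L^2}$ from Proposition~\ref{prop:parasyshold} (resp.~\ref{prop:parasyssob}) and use Proposition~\ref{prop:backhold} (resp.\ its Sobolev analogue) to replace the higher-order norms $\lA(V,B)\rA_{H^s}$, $\lA\eta\rA_{H^{s+\mez}}$, $\lA\zeta\rA_{H^{s-\mez}}$ and $\lA\psi\rA_{H^{s+\mez}}$ by combinations of $\lA(U_s,\theta_s)\rA_{L^2}$, $\lA\zeta_s\rA_{H^{-1}}$, $\lA(V,B)\rA_{H^{s-\mez}}$ and $\lA\psi\rA_{L^2}$ with H\"older prefactors. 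In the Sobolev version these prefactors are absorbed into $K_0$, giving~(\ref{eq:Uthetaestsob}). In the H\"older version, the quadratic-in-H\"older prefactors of Proposition~\ref{prop:parasyshold} get multiplied by the extra factor $\lA(V,B)\rA_{C^{1+\eps}_*}$ arising when converting $\lA\eta\rA_{H^{s+\mez}}$ and $\lA\psi\rA_{H^{s+\mez}}$ via Proposition~\ref{prop:backhold}, which accounts for the degree-three polynomial $Q$ in~(\ref{eq:Uthetaesthold}). The term $\lA\lp\partial_t+V\cdot\nabla\rp a\rA_{L^\infty}$ enters through the estimate of the commutator $[T_q,\partial_t+T_V\cdot\nabla]\zeta_s$ already present in Propositions~\ref{prop:parasyshold}--\ref{prop:parasyssob}.

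The principal technical obstacle is the careful bookkeeping of symbol orders and regularities in the symmetrization step: one must verify that $T_\gamma-T_\gamma^*$ is \emph{exactly} of order $0$, which requires matching the H\"older regularity $\mez$ of the symbol $\gamma$ to its differential order $\mez$, and that in the transport contribution the defect $T_V^*-T_V$ composed with $\nabla$ (plus the commutator $[T_V,\nabla]$) really yields an order-$0$ operator, so that no derivative can escape from $(U_s,\theta_s)$ with a coefficient stronger than those allowed in $K$.
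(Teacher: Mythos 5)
Your proof follows the paper's argument exactly: take the $L^2$ inner products of the symmetric system~(\ref{eq:syssym}) with $(U_s,\theta_s)$, exploit that $(T_V\cdot\nabla)+(T_V\cdot\nabla)^*$ and $T_\gamma-T_\gamma^*$ are both of order~$0$ by Theorem~\ref{thm:para} (with the $\mathcal{M}^{1/2}_{1/2}(\gamma)$ seminorm producing the $\lA a\rA_{C^{1/2}_*}+\lA\nabla\eta\rA_{C^{1/2}_*}$ factor), then insert the $F_1,F_2$ bounds of Propositions~\ref{prop:parasyshold}--\ref{prop:parasyssob} and convert the higher-order norms back to $\lA(U_s,\theta_s)\rA_{L^2}$, $\lA\zeta_s\rA_{H^{-1}}$, $\lA(V,B)\rA_{H^{s-1/2}}$, $\lA\psi\rA_{L^2}$ via Proposition~\ref{prop:backhold}. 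Your accounting of where the degree-three polynomial $Q$ comes from (the extra $\lA(V,B)\rA_{C^{1+\eps}_*}$ factor from Proposition~\ref{prop:backhold} multiplying the already quadratic prefactors in the H\"older version of $F_1$) is correct and in fact spells out a step the paper leaves implicit.
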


\begin{proof}
  Multiplication of the equations by~$U_s$ and~$\theta_s$ respectively, followed by integration in space gives
  $$\tder\lb\lA U_s\rA_{L^2}+\lA\theta_s\rA_{L^2}\rb\leq A+B+C,$$
  with
  \begin{align*}
  	A:=&\ls T_V\cdot\nabla U_s,U_s\rs+\ls T_V\cdot\nabla\theta_s,\theta_s\rs,\\
  	B:=&\ls T_\gamma \theta_s,U_s\rs-\ls T_\gamma U_s,\theta_s\rs,\\
  	C:=&\ls F_1,U_s\rs+\ls F_2,\theta_s\rs.
  \end{align*}
Now using Theorem~\ref{thm:para}, we see that 
$$\lA\lp T_V\cdot\nabla\rp^*+\lp T_V\cdot\nabla\rp\rA_{L^2\rightarrow L^2}\leq C\lA V\rA_{W^{1,\infty}},$$
and
$$\lA T_\gamma-T_\gamma^*\rA_{L^2\rightarrow L^2}\leq C\M^\mez_\mez(\gamma),$$
and the estimates on~$F_1$ and~$F_2$ of Proposition~\ref{prop:parasyshold} and Proposition~\ref{prop:parasyssob} complete the proof.
\end{proof}

The next proposition exploits the transport equations available on the remaining variables to close the system of estimates.
\begin{prop} \label{prop:tres}
	With the same~$Q$ as in the previous proposition, there holds, for~$A=\lA\psi\rA_{L^2}$, $A=\lA\zeta_s\rA_{H^{-1}}$ or~$A=\lA\lp V,B\rp\rA_{H^{s-\mez}}$,
	\begin{equation*}
	\begin{aligned}
		\tder A\leq&K\lp\lA a\rA_{C^\mez_*}+\lA\lp\partial_t+V\cdot\nabla\rp a\rA_{L^\infty}+Q\lp\lA\nabla\eta\rA_{C^\mez_*},\lA\lp V,B\rp\rA_{C^{1+\eps}_*}\rp\rp\\
						 &\times\lp\lA\lp U_s,\theta_s\rp\rA_{L^2}+\lA\psi\rA_{L^2}+\lA\zeta_s\rA_{H^{-1}}+\lA\lp V,B\rp\rA_{H^{s-\mez}}\rp,
	\end{aligned}
	\end{equation*}
	and
	\begin{equation*}
	\begin{aligned}
		\tder A\leq&K_0\lp\lA a\rA_{C^\mez_*}+\lA\lp\partial_t+V\cdot\nabla\rp a\rA_{L^\infty}+\lA\nabla\eta\rA_{C^\mez_*}+\lA\lp V,B\rp\rA_{C^{1+\eps}_*}\rp\\
						 &\times\lp\lA\lp U_s,\theta_s\rp\rA_{L^2}+\lA\psi\rA_{L^2}+\lA\zeta_s\rA_{H^{-1}}+\lA\lp V,B\rp\rA_{H^{s-\mez}}\rp.
	\end{aligned}
	\end{equation*}
\end{prop}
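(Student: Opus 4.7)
The plan is to treat each of the three quantities separately by using a suitable transport equation, absorbing the principal term via a skew-symmetry argument, and then bounding the resulting source with the tame estimates from Section~\ref{sec:Paralin} and the backward estimates of Proposition~\ref{prop:backhold}.

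\textbf{Estimate for $\lA\psi\rA_{L^2}$.} I would start from the Bernoulli--Zakharov equation (second equation of~(\ref{eq:ZaCrSu})), pair with $\psi$ in $L^2$, and bound the right-hand side by $g\lA\eta\rA_{L^2}\lA\psi\rA_{L^2}$ together with nonlinear terms of the form $\lA\nabla\psi\rA_{L^\infty}^2\lA\psi\rA_{L^2}$ and $\lA G(\eta)\psi\rA_{L^\infty}^2\lA\psi\rA_{L^2}$. The last one is controlled by Proposition~\ref{prop:DN} (the $L^\infty$ version) and the maximum principle of Proposition~\ref{prop:maxprincder}, giving contributions of Hölder type in $(V,B)$ that fit into the polynomial~$Q$ or into $K_0$.

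\textbf{Estimate for $\lA\zeta_s\rA_{H^{-1}}$.} Since $\lA\zeta_s\rA_{H^{-1}}=\lA\jap^{s-1}\nabla\eta\rA_{L^2}$, I would apply $\jap^{s-1}$ to equation~(\ref{eq:zetaeq}), pair with $\jap^{s-1}\zeta$, and exploit the skew-symmetry of $V\cdot\nabla$ modulo a commutator controlled by $\lA\nabla V\rA_{L^\infty}\lA\zeta\rA_{H^{s-1}}$. The source $G(\eta)V+\zeta G(\eta)B+\gamma$ is then measured in $H^{s-1}$ using Proposition~\ref{prop:DN} for the Dirichlet--Neumann terms, the paraproduct rules to handle $\zeta G(\eta)B$, and~(\ref{eq:rembothold})--(\ref{eq:rembotsob}) for $\gamma$. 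In all pieces the Sobolev $H^{s+\mez}$-norms of $\eta$ and $\psi$ are converted back to $\lA\theta_s\rA_{L^2}+\lA\zeta_s\rA_{H^{-1}}+\lA U_s\rA_{L^2}$ via Proposition~\ref{prop:backhold}.

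\textbf{Estimate for $\lA(V,B)\rA_{H^{s-\mez}}$.} Here I would commute $\jap^{s-\mez}$ with equations~(\ref{eq:Beq}) and~(\ref{eq:Veq}) and pair with $\jap^{s-\mez}V$, $\jap^{s-\mez}B$ in $L^2$. The transport commutator $[\jap^{s-\mez},V\cdot\nabla]$ contributes $\lA\nabla V\rA_{L^\infty}\lA(V,B)\rA_{H^{s-\mez}}$, absorbed into the right-hand side. The forcing involves $\lA a-g\rA_{H^{s-\mez}}$ and $\lA a\zeta\rA_{H^{s-\mez}}$, which I would estimate by Proposition~\ref{prop:Tay} combined with a paraproduct decomposition $a\zeta=T_a\zeta+T_\zeta a+R(a,\zeta)$; using $\lA a\rA_{C^\mez_*}$ and $\lA\nabla\eta\rA_{C^\mez_*}$ as the Hölder factors produces exactly the structure demanded in $K$ (and the simpler one in $K_0$). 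Again Proposition~\ref{prop:backhold} converts the resulting $H^{s+\mez}$-norms of $\eta,\psi$ back to $\lA\theta_s\rA_{L^2}+\lA\zeta_s\rA_{H^{-1}}+\lA U_s\rA_{L^2}+\lA(V,B)\rA_{H^{s-\mez}}$ plus lower-order terms.

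\textbf{Main difficulty.} The bookkeeping is the real obstacle: on the Hölder side one must verify that every nonlinear term generated by paralinearization, by the remainder $\gamma$, by the Dirichlet--Neumann smoothing part $R(\eta)$, and by the pressure formula for $a$, falls within a degree-$3$ polynomial in $(\lA\nabla\eta\rA_{C^\mez_*},\lA(V,B)\rA_{C^{1+\eps}_*})$ times the expected $H^s$-type unknowns. The term $\lA(\partial_t+V\cdot\nabla)a\rA_{L^\infty}$ appears only through the commutator $[T_q,\partial_t+T_V\cdot\nabla]$, but because we are here estimating $\psi$, $\zeta_s$, $V$, $B$ directly, it enters only through Proposition~\ref{prop:Tay} and the time derivative of $a$ in the forcing of $(V,B)$; keeping track of it carefully so that no term of degree higher than $3$ appears is the delicate point. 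For the $K_0$ version the analogous verification is linear and essentially straightforward, using only tame Sobolev paraproduct estimates and $K\leq K_0$.
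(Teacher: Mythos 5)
The key step in your treatment of $\lA\zeta_s\rA_{H^{-1}}$ is broken. You claim that after commuting $\jap^{s-1}$ with equation~(\ref{eq:zetaeq}) the commutator $[\jap^{s-1},V\cdot\nabla]\zeta$ is controlled by $\lA\nabla V\rA_{L^\infty}\lA\zeta\rA_{H^{s-1}}$. The ordinary Kato--Ponce/Coifman--Meyer commutator estimate does not give this: it produces an \emph{additional} term of the type $\lA V\rA_{H^{s-1}}\lA\nabla\zeta\rA_{L^\infty}$, and $\nabla\zeta=\nabla^2\eta$ is exactly the curvature of the free surface. Requiring $\lA\nabla^2\eta\rA_{L^\infty}$ defeats the entire point of the blow-up criterion being proved (see the discussion after Theorem~\ref{thm:mainsob2}: a bound on $\lA\eta\rA_{W^{2,\infty}}$ is precisely what one wants to avoid). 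There is also no way to absorb it via Proposition~\ref{prop:backhold}, since that proposition controls Sobolev norms, not $\lA\nabla^2\eta\rA_{L^\infty}$, and Sobolev embedding into $W^{2,\infty}$ fails under the assumption $s>1+d/2$.

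The paper circumvents this by first proving a refined transport lemma in which the commutator is split paradifferentially:
\begin{equation*}
\tder\lA u\rA_{H^\sigma}\lesssim\lA V\rA_{W^{1,\infty}}\lA u\rA_{H^\sigma}
+\lA(V-T_V)\cdot\nabla u\rA_{H^\sigma}
+\lA(V-T_V)\cdot\jap^\sigma\nabla u\rA_{L^2}
+\lA f\rA_{H^\sigma}.
\end{equation*}
For $u=\zeta$ and $\sigma=s-1$, the paraproduct rules of Theorem~\ref{thm:paraprod} then put the low regularity on $\zeta$ in negative Hölder norm ($\lA\nabla\zeta\rA_{C^{-1/2}_*}\leq\lA\nabla\eta\rA_{C^{1/2}_*}$) and the high regularity on $V$ in Sobolev norm ($\lA V\rA_{H^{s-1/2}}$ or $\lA V\rA_{H^s}$), so that every factor is either in $K$, in $Q$, or on the right-hand side of the claimed estimate. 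This paradifferential splitting is the idea missing from your argument, and without it the estimate does not close.

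Two smaller remarks. For $\lA\psi\rA_{L^2}$ your route (pairing the Zakharov equation with $\psi$ directly) is workable, but the paper's observation that $\psi$ satisfies the transport equation $(\partial_t+V\cdot\nabla)\psi=-g\eta+\tfrac12 V^2+\tfrac12 B^2$ gives a noticeably cleaner estimate, with forcing in $L^2$ bounded immediately by $\lA\eta\rA_{L^2}$, $\lA(V,B)\rA_{L^\infty}$, and $\lA(V,B)\rA_{H^{s-1/2}}$, all of which are directly available; in your form the $L^2$ bound should read $\lA\nabla\psi\rA_{L^\infty}\lA\nabla\psi\rA_{L^2}$ rather than $\lA\nabla\psi\rA_{L^\infty}^2\lA\psi\rA_{L^2}$. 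For $\lA(V,B)\rA_{H^{s-1/2}}$ your plain commutator would survive (the problematic second term becomes $\lA V\rA_{H^{s-1/2}}\lA\nabla B\rA_{L^\infty}$, and $\lA\nabla B\rA_{L^\infty}$ \emph{is} among the controlled quantities), but the paper still uses the same refined lemma there for uniformity; you should too, to keep the powers of the Hölder norms within the degree-$3$ polynomial $Q$.
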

We will need the following lemma on transport equations
\begin{lem}
If~$\sigma>0$, and if $u$ solves
$$\lp\partial_t+V\cdot\nabla\rp u=f,$$
Then 
$$\tder\lA u\rA_{L^2}\lesssim \lA V\rA_{W^{1,\infty}}\lA u\rA_{L^2}+\lA f\rA_{L^2},$$
and
$$\tder\lA u\rA_{H^\sigma}\lesssim \lA V\rA_{W^{1,\infty}}\lA u\rA_{H^\sigma}+\lA\lp V-T_V\rp\cdot\nabla u\rA_{H^\sigma}+\lA\lp V-T_V\rp\cdot\jap^\sigma\nabla u\rA_{L^2}+\lA f\rA_{H^\sigma}.$$
\end{lem}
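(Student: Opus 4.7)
For the $L^2$ estimate the plan is the classical energy method: multiply the equation by $u$, integrate in space, and use the identity $\int_{\R^d} u\,V\cdot\nabla u\,\d x = -\mez\int_{\R^d}(\div V)u^2\d x$ to obtain
\begin{equation*}
\tder\lA u\rA_{L^2}^2 \leq 2\lA u\rA_{L^2}\lA f\rA_{L^2}+\lA\div V\rA_{L^\infty}\lA u\rA_{L^2}^2,
\end{equation*}
and then divide by $2\lA u\rA_{L^2}$.

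For the $H^\sigma$ estimate, I would commute $\jap^\sigma$ through the equation and apply the $L^2$ bound to the resulting equation for $u_\sigma := \jap^\sigma u$. The commuted equation reads $(\partial_t+V\cdot\nabla)u_\sigma = \jap^\sigma f - [\jap^\sigma, V\cdot\nabla]u$, so the $L^2$ estimate just proved gives
\begin{equation*}
\tder\lA u\rA_{H^\sigma} \lesssim \lA V\rA_{W^{1,\infty}}\lA u\rA_{H^\sigma} + \lA f\rA_{H^\sigma} + \lA[\jap^\sigma, V\cdot\nabla]u\rA_{L^2},
\end{equation*}
reducing the matter to the commutator. The key step is to introduce the paraproduct decomposition $V = T_V + (V-T_V)$, which yields the identity
\begin{equation*}
[\jap^\sigma, V\cdot\nabla]u = [\jap^\sigma, T_V\cdot\nabla]u + \jap^\sigma\bigl((V-T_V)\cdot\nabla u\bigr) - (V-T_V)\cdot\nabla u_\sigma.
\end{equation*}
The last two terms are exactly the remainders appearing on the right-hand side, since $\jap^\sigma$ and $\nabla$ are Fourier multipliers and therefore commute, so $(V-T_V)\cdot\nabla u_\sigma = (V-T_V)\cdot\jap^\sigma\nabla u$.

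The main obstacle is then to control the paradifferential commutator $[\jap^\sigma,T_V\cdot\nabla]$ using only the $W^{1,\infty}$ norm of $V$, and not a stronger Hölder norm. This is where the paradifferential splitting pays off: since $\jap^\sigma$ is a Fourier multiplier of order $\sigma$ and $T_V\cdot\nabla = T_{iV\cdot\xi}$ is a paradifferential operator of order $1$ with a symbol of $W^{1,\infty}$ regularity, the symbolic calculus recalled in Appendix~\ref{ap:para} (Theorem~\ref{thm:para}) gives the sharp bound
\begin{equation*}
\lA[\jap^\sigma, T_V\cdot\nabla]u\rA_{L^2} \lesssim \lA V\rA_{W^{1,\infty}}\lA u\rA_{H^\sigma},
\end{equation*}
which supplies the first term of the stated inequality and completes the plan. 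A naive commutator $[\jap^\sigma, V\cdot\nabla]$ with the full $V$ would have required strictly more regularity, so the splitting into a paraproduct part (tractable by symbolic calculus at minimal regularity) and an explicit remainder left on the right-hand side is essential.
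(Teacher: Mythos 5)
Your proposal is correct and follows essentially the same route as the paper: after the standard $L^2$ energy estimate, one commutes $\jap^\sigma$ through the equation and splits the commutator $[\jap^\sigma, V\cdot\nabla]$ via the paraproduct decomposition $V=T_V+(V-T_V)$, controlling the paradifferential piece by symbolic calculus (Theorem~\ref{thm:para}) with the $W^{1,\infty}$ norm of $V$, and unfolding the second commutator into the two explicit remainder terms on the right-hand side. The paper phrases step three as $[\jap^\sigma,V-T_V]\cdot\nabla u$ rather than expanding it immediately, but the resulting inequality is identical to yours.
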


\begin{proof}
The~$L^2$ energy estimate is standard, and the Sobolev estimate follows from commuting the equation with~$\jap^\sigma$, using the~$L^2$ estimate, and observing that
\begin{align*}
 \lA\lb\jap^\sigma,V\rb\nabla u\rA_{L^2}\leq&\lA\lb\jap^\sigma,T_V\rb\cdot\nabla u\rA_{L^2}+\lA\lb\jap^\sigma,V-T_V\rb\cdot\nabla u\rA_{L^2}\\
				   \leq&\lA V\rA_{W^{1,\infty}}\lA u\rA_{H^\sigma}+\lA\lp V-T_V\rp\cdot\nabla u\rA_{H^\sigma}+\lA\lp V-T_V\rp\cdot\jap^\sigma\nabla u\rA_{L^2}.
\end{align*}
\end{proof}

\begin{proof}[Proof of Proposition \ref{prop:tres}]
	First, from the equation on~$\psi$ and the definitions of~$V$ and~$B$ we have the transport equation
	$$\lp\partial_t+V\cdot\nabla\rp\psi=-g\eta+\mez V^2+\mez B^2.$$
	The previous~$L^2$ estimate and a simple tame estimate on the~$L^2$ norms of~$V^2$ and~$B^2$ give the estimate on~$\psi$.
	
	We then recall equation~(\ref{eq:zetaeq}),
	$$\left(\partial_t+V\cdot\nabla\right)\zeta=G(\eta)V+\zeta G(\eta)B+\gamma,$$
	and use the previous Sobolev estimate with $\sigma=s-1$ to get
	\begin{align*}
		\tder\lA\zeta\rA_{H^{s-1}}\lesssim&\lA V\rA_{W^{1,\infty}}\lA\zeta\rA_{H^{s-1}}+\lA\lp V-T_V\rp\cdot\nabla\zeta\rA_{H^{s-1}}+\lA\lp V-T_V\rp\cdot\jap^{s-1}\nabla\zeta\rA_{L^2}\\
		                                  &+\lA G(\eta)V\rA_{H^{s-1}}+\lA\zeta\rA_{L^\infty}\lA G(\eta)B\rA_{H^{s-1}}+\lA\zeta\rA_{H^{s-1}}\lA G(\eta)B\rA_{L^\infty}\\
		                                  &+\lA\gamma\rA_{H^{s-1}}.
	\end{align*}
	Using the parproduct rules from Theorem~\ref{thm:paraprod} gives
	\begin{align*}
		\lA\lp V-T_V\rp\cdot\nabla\zeta\rA_{H^{s-1}}&\leq\lA\nabla\zeta\rA_{C^{-\mez}_*}\lA V\rA_{H^{s-\mez}}\\
							&\leq\lA\nabla\eta\rA_{C^\mez_*}\lA V\rA_{H^{s-\mez}}
	\end{align*}
	and
	\begin{align*}
		\lA\lp V-T_V\rp\cdot\jap^{s-1}\nabla\zeta\rA_{L^2}&\leq\lA\jap^{s-1}\nabla\zeta\rA_{C^{1-s+\eps}_*}\lA V\rA_{H^s}\\
							      &\leq\lA\nabla\eta\rA_{C^\eps_*}\lA V\rA_{H^s}.
	\end{align*}
	To estimate the Dirichlet-Neumann operators, we use Proposition~\ref{prop:DN}, and~$\gamma$ is estimated using~(\ref{eq:rembothold}) or~(\ref{eq:rembotsob}).
	
	Recall also that $B$ follows equation~(\ref{eq:Beq}),
	$$\left(\partial_t+V\cdot\nabla\right)B=a-g.$$
	The Sobolev estimate gives
	\begin{align*}
		\tder\lA B\rA_{H^{s-\mez}}\lesssim&\lA V\rA_{W^{1,\infty}}\lA B\rA_{H^{s-\mez}}+\lA\lp V-T_V\rp\cdot\nabla B\rA_{H^{s-\mez}}\\
		                                  &+\lA\lp V-T_V\rp\cdot\jap^{s-\mez}\nabla B\rA_{L^2}+\lA a-g\rA_{H^{s-\mez}},
	\end{align*}
	and as for~$\zeta$, we have
	\begin{align*}
		\lA\lp V-T_V\rp\cdot\nabla B\rA_{H^{s-\mez}}&\leq\lA\nabla B\rA_{L^\infty}\lA V\rA_{H^{s-\mez}}
	\end{align*}
	and
	\begin{align*}
		\lA\lp V-T_V\rp\cdot\jap^{s-\mez}\nabla B\rA_{L^2}&\leq\lA\jap^{s-\mez}\nabla B\rA_{C^{\mez-s+\eps}_*}\lA V\rA_{H^{s-\mez}}\\
							      &\leq\lA B\rA_{C^{1+\eps}_*}\lA V\rA_{H^{s-\mez}}.
	\end{align*}
	$a-g$ is estimated using Proposition~\ref{prop:Tay}.
	
	At last~$V$ follows equation~(\ref{eq:Veq}),
	$$\left(\partial_t+V\cdot\nabla\right)V=-a\zeta,$$
	so we use the same bound for the commutator that we used for~$B$ and remark that
	$$\lA a\zeta\rA_{H^{s-\mez}}\lesssim \lA a-g\rA_{H^{s-\mez}}\lA\zeta\rA_{L^\infty}+\lp g+\lA a\rA_{L^\infty}\rp\lA\zeta\rA_{H^{s-\mez}}$$
	from which the proposition follows.
\end{proof}

\section{Proof of the main results} \label{sec:Proof}

The main theorems will follow as usual from the expression of the energy estimates. The one with only H\"{o}lder components is
\begin{prop}
Let $d\geq1$, $s>1+d/2$, and $\eps>0$. Let $(\eta,\psi)$ be a solution of the water-waves system~(\ref{eq:ZaCrSu}) on~$[0,T]$
from theorem~\ref{thm:AlazardBurqZuilyExw/ST}, and define $V,B,h,a,c$ in the same way. Then
\begin{multline} \label{eq:boundhold}
\sup_{0\leq t\leq T}\lA(\eta,\psi,V,B)(t)\rA_{H^{s+\frac{1}{2}}(\R^d)\times H^{s+\frac{1}{2}}(\R^d)\times H^s(\R^d)\times H^s(\R^d)}\\
\leq\mathcal{F}\left(T,\sup_{0\leq t\leq T}\frac{1}{h},\sup_{0\leq t\leq T}\frac{1}{c},
\sup_{0\leq t\leq T}\lA\eta(t)\rA_{W^{1+\eps,\infty}(\R^d)},
  \sup_{0\leq t\leq T}\lA(V,B)(t)\rA_{W^{\eps,\infty}(\R^d)},\right.\\
  \sup_{0\leq t\leq T}\lA a\rA_{W^{\eps,\infty}(\R^d)},
  \int_0^T\lA(\partial_t a+V\cdot\nabla a)(t)\rA_{L^\infty(\R^d)}\;\mathrm{d}t,
  \int_0^T\lA a(t)\rA_{W^{\frac{1}{2},\infty}(\R^d)}\;\mathrm{d}t,\\
  \left.\int_0^T\lA\nabla\eta(t)\rA^3_{W^{\frac{1}{2},\infty}(\R^d)}\;\mathrm{d}t,
  \int_0^T\lA(V,B)(t)\rA^3_{W^{1+\eps,\infty}(\R^d)}\;\mathrm{d}t\right),
\end{multline}
with $\mathcal{F}$ a positive, strictly increasing function of each of its variable, depending only on $d$, $s$, $\eps$,
the bottom $\Gamma$, and on $\lA\eta_0,\psi_0,V_0,B_0\rA_{H^{s+\frac{1}{2}}(\R^d)\times H^{s+\frac{1}{2}}(\R^d)\times H^s(\R^d)\times H^s(\R^d)}$.
\end{prop}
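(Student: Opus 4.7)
The plan is to introduce a single scalar energy functional $M(t)$ that tracks every quantity appearing on the right-hand sides of Propositions~\ref{prop:quases} and~\ref{prop:tres}, derive a Grönwall-type differential inequality for $M(t)$, and then invert via Proposition~\ref{prop:backhold} to recover the Sobolev norms of the original unknowns. Concretely, I would set
$$M(t):=\lA U_s(t)\rA_{L^2}+\lA\theta_s(t)\rA_{L^2}+\lA\psi(t)\rA_{L^2}+\lA\zeta_s(t)\rA_{H^{-1}}+\lA(V,B)(t)\rA_{H^{s-\mez}}.$$
Summing inequality~(\ref{eq:Uthetaesthold}) with the three scalar inequalities of Proposition~\ref{prop:tres}, which share the same controlling factor, yields
$$\tder M(t)\leq K(t)\lb\lA a\rA_{C^\mez_*}+\lA(\partial_t+V\cdot\nabla)a\rA_{L^\infty}+Q\lp\lA\nabla\eta\rA_{C^\mez_*},\lA(V,B)\rA_{C^{1+\eps}_*}\rp\rb M(t),$$
where $K(t)$ is nondecreasing in the quantities $1/h$, $1/c$, $\lA\eta\rA_{W^{1+\eps,\infty}}$, $\lA(V,B)\rA_{W^{\eps,\infty}}$ and $\lA a\rA_{W^{\eps,\infty}}$ listed on the RHS of~(\ref{eq:boundhold}); the auxiliary quantity $\lA\eta(t)\rA_{L^2}$ hidden in $K$ is controlled uniformly in $t$ by conservation of the Hamiltonian~(\ref{eq:Ham}).

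Next I would invoke Grönwall's lemma on $[0,T]$. Since $Q$ is polynomial of degree at most $3$ in each variable, $Q(x,y)\leq C(1+x^3+y^3)$, and the time integral of the bracket is thus dominated by
$$C\,T+\int_0^T\lA a\rA_{W^{\mez,\infty}}\d t+\int_0^T\lA(\partial_t+V\cdot\nabla)a\rA_{L^\infty}\d t+\int_0^T\lA\nabla\eta\rA^3_{W^{\mez,\infty}}\d t+\int_0^T\lA(V,B)\rA^3_{W^{1+\eps,\infty}}\d t,$$
using the identification $C^\mez_*=W^{\mez,\infty}$ valid at non-integer smoothness. Each of these terms is exactly one of those listed on the right-hand side of~(\ref{eq:boundhold}), so Grönwall yields $\sup_{[0,T]}M(t)\leq \mathcal{F}(\cdots)\,M(0)$; and $M(0)$ is itself bounded by $\lA\eta_0,\psi_0,V_0,B_0\rA_{H^{s+\mez}\times H^{s+\mez}\times H^s\times H^s}$ via the paraproduct defining identities of $U_s$, $\theta_s$, $\zeta_s$ together with Theorem~\ref{thm:para}.

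To finish, I would apply Proposition~\ref{prop:backhold} pointwise in~$t$: the first estimate there gives directly $\lA\eta(t)\rA_{H^{s+\mez}}\leq K(t)\,M(t)$, while the remaining two bound $\lA(V,B)(t)\rA_{H^s}$ and $\lA\psi(t)\rA_{H^{s+\mez}}$ in terms of $M(t)$ up to a factor $(1+\lA\eta\rA_{C^\frac{3}{2}_*})$. This last factor is controlled by the Sobolev embedding $H^{s+\mez}\hookrightarrow C^\frac{3}{2}_*$ (valid since $s>1+d/2$) applied to the bound just obtained on $\lA\eta\rA_{H^{s+\mez}}$, producing at worst a quadratic dependence on $\sup M(t)$ which is harmless. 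The main obstacle in the argument is bookkeeping rather than analysis: one must carefully verify that every quantity absorbed into the nondecreasing function $K$ is among those listed on the RHS of~(\ref{eq:boundhold}), and check that the cubic degree of $Q$ is exactly what makes the $L^1$-in-time norms of the cubes of $\lA\nabla\eta\rA_{W^{\mez,\infty}}$ and $\lA(V,B)\rA_{W^{1+\eps,\infty}}$ the right quantities to appear in the conclusion rather than, say, $L^3$- or $L^\infty$-in-time bounds.
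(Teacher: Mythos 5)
Your plan matches the paper's strategy (define the scalar energy $\mathcal{A}=M$, combine Propositions~\ref{prop:quases} and~\ref{prop:tres}, Gr\"onwall, bound the time integral of the cubic polynomial $Q$, control $\lA\eta\rA_{L^2}$ via the Hamiltonian, then recover $(\eta,\psi,V,B)$ via Proposition~\ref{prop:backhold}) almost step for step, and most of your details are right. However, there is a genuine gap in the final step. Proposition~\ref{prop:backhold} does \emph{not} bound $\lA(V,B)\rA_{H^s}$ in terms of $M$ and $\lA\eta\rA_{C^{3/2}_*}$ alone: it also carries the factor $\lA(V,B)\rA_{C^{1+\eps}_*}\bigl(\lA\theta_s\rA_{L^2}+\lA\zeta_s\rA_{H^{-1}}\bigr)$. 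You never address this factor. Unlike $\lA\eta\rA_{C^{3/2}_*}$, which you can legitimately bound through the first estimate $\lA\eta\rA_{H^{s+1/2}}\leq K M$ and then Sobolev embedding, the factor $\lA(V,B)\rA_{C^{1+\eps}_*}$ is not \emph{a priori} controlled by $M$ (the embedding $H^{s-1/2}\hookrightarrow C^{1+\eps}_*$ fails unless $s>3/2+d/2+\eps$, which is not assumed), and bounding it by $C\lA(V,B)\rA_{H^s}$ produces the circular inequality
\[
\lA(V,B)\rA_{H^s}\leq K'\bigl(1+M\lA(V,B)\rA_{H^s}\bigr),
\]
in which the prefactor $K'M$ can be large, so absorption fails.

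The paper closes this loop with a different device: the small-parameter inequality
\[
\lA(\eta,V,B)\rA_{C^{3/2}_*\times C^{1+\eps}_*\times C^{1+\eps}_*}\leq C_\nu\,\lA(\eta,V,B)\rA_{L^2}+\nu\,\lA(\eta,V,B)\rA_{H^{s+1/2}\times H^s\times H^s},
\]
valid for all $\nu>0$ precisely because the embedding indices are \emph{strictly} subcritical ($3/2<s+1/2-d/2$ and $1+\eps<s-d/2$). Since $M$ is already bounded on $[0,T]$ by Gr\"onwall, one can choose $\nu$ small depending on $\sup M$ and absorb the $\nu M\lA(V,B)\rA_{H^s}$ term into the left-hand side. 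Your proof should invoke this absorption lemma instead of the crude embedding; as written, the argument is circular at the last step.
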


\begin{proof}
We see from propositions~\ref{prop:quases} and~\ref{prop:tres} that if
$$\mathcal{A}:=\lA\lp U_s,\theta_s\rp\rA_{L^2}+\lA\psi\rA_{L^2}+\lA\zeta_s\rA_{H^{-1}}+\lA\lp V,B\rp\rA_{H^{s-\mez}},$$
and if 
$$\mathcal{B}:=\lp\lA a\rA_{C^\mez_*}+\lA\lp\partial_t+V\cdot\nabla\rp a\rA_{L^\infty}+Q\lp\lA\nabla\eta\rA_{C^\mez_*},\lA\lp V,B\rp\rA_{C^{1+\eps}_*}\rp\rp,$$
with~$Q$ the polynomial of degree~$3$ of those propositions, then for all~$t\in\lb0,T\rb$,
\begin{equation*}
	\tder\mathcal{A}(t)\leq K(t)\times\mathcal{B}(t)\times\mathcal{A}(t)\leq\mathcal{K}\times\mathcal{B}(t)\times\mathcal{A}(t),
\end{equation*}
where
$$\mathcal{K}=\sup_{0\leq t\leq T}K(t).$$

Using Gr\"{o}nwall's lemma gives
$$\sup_{0\leq t\leq T}\mathcal{A}(t)\leq\mathcal{K}\mathcal{A}(0)\exp\lp\int_0^T\mathcal{B}(t)\d t\rp.$$
Using H\"{o}lder inequality to bound
$$\int_0^TQ\lp\lA\nabla\eta\rA_{C^\mez_*},\lA\lp V,B\rp\rA_{C^{1+\eps}_*}\rp\d t$$
gives 
\begin{multline}
	\sup_{0\leq t\leq T}\mathcal{A}(t)\leq
	  \mathcal{F}\left(T,\sup_{0\leq t\leq T}\frac{1}{h},\sup_{0\leq t\leq T}\frac{1}{c},
\sup_{0\leq t\leq T}\lA\eta(t)\rA_{W^{1+\eps,\infty}(\R^d)},
  \sup_{0\leq t\leq T}\lA(V,B)(t)\rA_{W^{\eps,\infty}(\R^d)},\right.\\
  \sup_{0\leq t\leq T}\lA a\rA_{W^{\eps,\infty}(\R^d)},
  \int_0^T\lA(\partial_t a+V\cdot\nabla a)(t)\rA_{L^\infty(\R^d)}\;\mathrm{d}t,
  \int_0^T\lA a(t)\rA_{W^{\frac{1}{2},\infty}(\R^d)}\;\mathrm{d}t,\\
  \left.\int_0^T\lA\nabla\eta(t)\rA^3_{W^{\frac{1}{2},\infty}(\R^d)}\;\mathrm{d}t,
  \int_0^T\lA(V,B)(t)\rA^3_{W^{1+\eps,\infty}(\R^d)}\;\mathrm{d}t\right),
\end{multline}
with $\mathcal{F}$ a positive, strictly increasing function of each of its variable, depending only on $d$, $s$, $\eps$,
the bottom $\Gamma$, and on~$\mathcal{A}(0)$, which is easily seen to be controlled by the initial data
$\lA\eta_0,\psi_0,V_0,B_0\rA_{H^{s+\frac{1}{2}}\times H^{s+\frac{1}{2}}\times H^s\times H^s}$.

The water-waves system~\ref{eq:ZaCrSu} is Hamiltonian, and the Hamiltonian~(\ref{eq:Ham}) controls the~$L^2$ norm of~$\eta$, so that
$$\sup_{0\leq t\leq T}\lA\eta(t)\rA_{L^2}\leq\lA\eta_0\rA_{H^{s+\mez}}$$.

Now to finish the proof of the estimate, we remark that for any~$\nu>0$ there exists a constant~$C_\nu>0$ such that
\begin{equation*}
	\lA\lp\eta,V,B\rp\rA_{C^{\frac{3}{2}}_*\times C^{1+\eps}_*\times C^{1+\eps}_*}\leq C_\nu\lA\lp\eta,V,B\rp\rA_{L^2}+\nu\lA\lp\eta,V,B\rp\rA_{H^{s+\mez}\times H^s\times H^s},
\end{equation*}
which, combined with Proposition~\ref{prop:backhold} gives that~$\lA\lp\eta,V,B\rp\rA_{H^{s+\mez}\times H^s\times H^s}$ is finite as soon as the right side of~(\ref{eq:boundhold}) is bounded.
\end{proof}

The energy estimate with reference Sobolev index is proved along the same lines, using the corresponding estimates.
\begin{prop}
Let $d\geq1$, $s>1+d/2$, $s>s_0>1/2+d/2$ and $s_0-1/2-d/2>\eps>0$. Let $(\eta,\psi)$ be a solution of the water-waves system~(\ref{eq:ZaCrSu}) on~$[0,T]$
from theorem~\ref{thm:AlazardBurqZuilyExw/ST}, and define $V,B,h,a,c$ in the same way. Then
\begin{multline}
\sup_{0\leq t\leq T}\lA(\eta,\psi,V,B)(t)\rA_{H^{s+\frac{1}{2}}(\R^d)\times H^{s+\frac{1}{2}}(\R^d)\times H^s(\R^d)\times H^s(\R^d)}\\
\leq\mathcal{F}\left(T,\sup_{0\leq t\leq T}\frac{1}{h},\sup_{0\leq t\leq T}\frac{1}{c},
\sup_{0\leq t\leq T}\lA\lp\eta,\psi,V,B\rp(t)\rA_{H^{s_0+\mez}(\R^d)\times H^{s_0+\mez}(\R^d)\times H^{s_0}(\R^d)\times H^{s_0}(\R^d)},\right.\\
  \sup_{0\leq t\leq T}\lA a\rA_{W^{\eps,\infty}(\R^d)},
  \int_0^T\lA(\partial_t a+V\cdot\nabla a)(t)\rA_{L^\infty(\R^d)}\;\mathrm{d}t,
  \int_0^T\lA a(t)\rA_{W^{\frac{1}{2},\infty}(\R^d)}\;\mathrm{d}t,\\
  \left.\int_0^T\lA\nabla\eta(t)\rA_{W^{\frac{1}{2},\infty}(\R^d)}\;\mathrm{d}t,
  \int_0^T\lA(V,B)(t)\rA_{W^{1+\eps,\infty}(\R^d)}\;\mathrm{d}t\right),
\end{multline}
with $\mathcal{F}$ a positive, strictly increasing function of each of its variable, depending only on $d$, $s$, $s_0$, $\eps$,
the bottom $\Gamma$, and on $\lA\eta_0,\psi_0,V_0,B_0\rA_{H^{s+\frac{1}{2}}(\R^d)\times H^{s+\frac{1}{2}}(\R^d)\times H^s(\R^d)\times H^s(\R^d)}$.
\end{prop}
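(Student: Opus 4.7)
The plan is to mirror the proof of the previous proposition (the Hölder version), but using the $K_0$-based estimates from Proposition~\ref{prop:quases} (estimate~\eqref{eq:Uthetaestsob}) and the $K_0$-version of Proposition~\ref{prop:tres} instead of the $K$-based ones. I set
\[
\mathcal{A}(t):=\lA(U_s,\theta_s)(t)\rA_{L^2}+\lA\psi(t)\rA_{L^2}+\lA\zeta_s(t)\rA_{H^{-1}}+\lA(V,B)(t)\rA_{H^{s-\mez}},
\]
and combine the two propositions to obtain
\[
\tder\mathcal{A}(t)\leq K_0(t)\,\mathcal{B}_0(t)\,\mathcal{A}(t),
\]
with
\[
\mathcal{B}_0(t):=\lA a(t)\rA_{C^\mez_*}+\lA(\partial_t a+V\cdot\nabla a)(t)\rA_{L^\infty}+\lA\nabla\eta(t)\rA_{C^\mez_*}+\lA(V,B)(t)\rA_{C^{1+\eps}_*}.
\]
The crucial point is that each of the four summands in $\mathcal{B}_0(t)$ enters only \emph{linearly}, so the time integration after Grönwall produces only the $L^1_t$ quantities listed in the statement (whereas in the Hölder version one had to pay the price of the degree-$3$ polynomial $Q$ and hence $L^3_t$ norms).

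Next I bound $\mathcal{K}_0:=\sup_{0\leq t\leq T}K_0(t)$ by the supremum term in the statement (this is exactly the definition of $K_0$, involving $1/h$, $1/c$, $\lA a\rA_{W^{\eps,\infty}}$, and $\lA(\eta,\psi,V,B)\rA_{H^{s_0+\mez}\times H^{s_0+\mez}\times H^{s_0}\times H^{s_0}}$). Grönwall's lemma then yields
\[
\sup_{0\leq t\leq T}\mathcal{A}(t)\leq\mathcal{K}_0\,\mathcal{A}(0)\,\exp\!\Bigl(\mathcal{K}_0\int_0^T\mathcal{B}_0(t)\,\mathrm{d}t\Bigr),
\]
where $\mathcal{A}(0)$ is controlled by the initial Sobolev norms at level $s$ as in the previous proof.

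To close the estimate, I invoke the second proposition of Section~\ref{sec:OrUnk} (the $K_0$ version of Proposition~\ref{prop:backhold}): it gives
\[
\lA\eta\rA_{H^{s+\mez}}+\lA(V,B,\psi)\rA_{H^s\times H^s\times H^{s+\mez}}\leq K_0\bigl[\mathcal{A}(t)+\text{lower order}\bigr],
\]
so that a bound on $\mathcal{A}$ transfers to the desired Sobolev norms of $(\eta,\psi,V,B)$ at level $s$. The $L^2$ norm of $\eta$ needed along the way is supplied by conservation of the Hamiltonian~\eqref{eq:Ham}, as in the Hölder version.

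The main (and essentially only) subtlety is that $\lA(V,B)\rA_{H^{s-\mez}}$ appears inside $\mathcal{A}$, so the inequality produced by Propositions~\ref{prop:quases} and \ref{prop:tres} is genuinely a closed Grönwall inequality on $\mathcal{A}$ itself; all other Sobolev norms on the right-hand side of the bound for $\tder\mathcal{A}$ are either already in $\mathcal{A}$ or get absorbed by $\mathcal{K}_0$ via the $H^{s_0+\mez}$-control. Once this bookkeeping is in place, no fresh estimate is needed beyond those already established in Sections~\ref{sec:Paralin}--\ref{sec:EneEst}; the rest is a purely formal repetition of the previous proposition's argument with $K\rightsquigarrow K_0$ and $Q(\cdot,\cdot)\rightsquigarrow\lA\nabla\eta\rA_{C^\mez_*}+\lA(V,B)\rA_{C^{1+\eps}_*}$.
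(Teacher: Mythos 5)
Your proposal is correct and follows exactly the approach the paper intends: the paper itself only says this proposition is ``proved along the same lines'' as the H\"older version, and your argument fills that in faithfully by substituting the $K_0$-estimates of Propositions~\ref{prop:quases} (inequality~\eqref{eq:Uthetaestsob}) and~\ref{prop:tres}, observing that $\mathcal{B}_0$ is a linear sum so Gr\"onwall produces only $L^1_t$ norms, and closing via the $K_0$-version of Proposition~\ref{prop:backhold}. The only superfluous step is the appeal to Hamiltonian conservation for $\lA\eta\rA_{L^2}$: in the Sobolev version that quantity is already dominated by the assumed control of $\lA(\eta,\psi,V,B)\rA_{H^{s_0+\mez}\times H^{s_0+\mez}\times H^{s_0}\times H^{s_0}}$ inside $\mathcal{K}_0$, so unlike the H\"older case no separate interpolation or conserved-quantity argument is needed.
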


At last Theorem~\ref{thm:mainsob2} is a consequence of the following proposition from~\cite{AlazardBurqZuilyStrEstGrav},
\begin{prop}[Proposition~3.6 of~\cite{AlazardBurqZuilyStrEstGrav}]
	For~$s_0>3/4+d/2$, and $0<\eps<s_0-3/4-d/2$,
	\begin{multline}
		\lA a\rA_{C^\mez_*}+\lA\lp\partial_t+V\cdot\nabla\rp a\rA_{L^\infty}\\\leq
		  \mathcal{F}\lp\lA\lp\eta,\psi,V,B\rp(t)\rA_{H^{s_0+\mez}\times H^{s_0+\mez}\times H^{s_0}\times H^{s_0}}\rp
		  \lb1+\lA\eta\rA_{C^{\mez+\eps}_*}+\lA\lp V,B\rp\rA_{C^{1+\eps}_*}\rb.
	\end{multline}
\end{prop}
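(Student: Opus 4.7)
The plan is to view both~$a$ and its material derivative as boundary traces of solutions to elliptic problems on the straightened strip, then apply H\"older analogues of the elliptic regularity of Section~\ref{sec:EllReg}, with careful paraproduct control of the sources.

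First, recall from the proof of Proposition~\ref{prop:Tay} that $a - g = -\frac{1}{\partial_z\rho}\partial_z\mathcal{P}\rvert_{z=0}$, where~$\mathcal{P}$ satisfies
\begin{equation*}
\partial_z^2\mathcal{P} + \alpha\Delta_x\mathcal{P} + \beta\cdot\nabla_x\partial_z\mathcal{P} - \gamma\partial_z\mathcal{P} = -\alpha\lvert\Lambda^2\ph\rvert^2, \quad \mathcal{P}\rvert_{z=0} = g\eta.
\end{equation*}
To control $\lA a\rA_{C^\mez_*}$ I would establish a H\"older variant of Proposition~\ref{prop:regsob}: the paradifferential factorization~(\ref{eq:couppar}) through forward/backward parabolic operators with symbols in~$\Gamma^1_\eps$ still applies (the symbol estimate~(\ref{eq:symbpar}) is already H\"older-based), so the same decoupling combined with a parabolic regularization estimate in a $C^0_z(C^{\mez+\eps}_*)$-type space bounds $\lA\partial_z\mathcal{P}\rvert_{z=0}\rA_{C^\mez_*}$ by $\lA\eta\rA_{C^{\mez+\eps}_*}$, the lower-order $X^{-\mez}$ piece (absorbed by the variational estimate), and the source $\alpha\lvert\Lambda^2\ph\rvert^2$ measured in a Besov space of negative regularity. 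The quadratic source is then handled by Bony's paraproduct: one factor of $\Lambda^2\ph$ is placed in $L^\infty$, controlled by $\lA(V,B)\rA_{C^{1+\eps}_*}$ via the maximum principle Proposition~\ref{prop:maxprincder} applied to the harmonic extensions of~$V$ and~$B$; the other factor is estimated in a Besov space of negative regularity using Proposition~\ref{prop:regsob} at the reference index~$s_0$. The threshold $s_0 > 3/4 + d/2$ enters exactly here: Sobolev embedding gives $H^{s_0-1} \hookrightarrow C^{s_0-1-d/2}_*$ with $s_0-1-d/2 > -1/4$, which is the precise margin needed to land in the target after paraproducting against an $L^\infty$ function.

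Second, for $(\partial_t + V\cdot\nabla)a$, I would differentiate the elliptic problem for~$\mathcal{P}$ along a suitable lift of the material derivative to~$\widetilde{\Omega}$. Since the coefficients $\alpha,\beta,\gamma$ and the diffeomorphism~$\rho$ themselves depend on~$\eta$ (hence on time), the raw lifted material derivative of~$\mathcal{P}$ does not solve a clean elliptic equation. The standard remedy is the Alinhac good unknown: subtracting a correction of the form $\frac{(\partial_t + V\cdot\nabla_x)\rho}{\partial_z\rho}\partial_z\mathcal{P}$ cancels the top-order commutators with the $\rho$-dependent coefficients. This yields an elliptic equation for the corrected quantity whose new source is bounded by $\lA\nabla(V,B)\rA_{L^\infty}$ times the H\"older norm of $\nabla_{x,z}\mathcal{P}$ already controlled in the previous step, plus lower-order contributions from material-differentiating the boundary condition (which gives $gB$ via $\partial_t\eta + V\cdot\nabla\eta = B$) and the quadratic source. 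A second application of the H\"older elliptic regularity, followed by reading the trace at $z=0$, delivers the desired $L^\infty$ bound on $(\partial_t + V\cdot\nabla)a$.

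The main obstacle is the bookkeeping in the good unknown construction: one must verify that all commutators generated by the $\rho$-dependence of the coefficients fit simultaneously into the prescribed right-hand side $\mathcal{F}(\lA\cdot\rA_{H^{s_0+\mez}})[1 + \lA\eta\rA_{C^{\mez+\eps}_*} + \lA(V,B)\rA_{C^{1+\eps}_*}]$, without picking up any higher H\"older norms of the unknowns. The threshold $s_0 > 3/4 + d/2$ must be exactly saturated by the Sobolev--H\"older embeddings at several points for the whole argument to close, which is where the technical difficulty lies.
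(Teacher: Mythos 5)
Your high-level plan — express $a$ and its material derivative as boundary traces of elliptic problems and apply the tame elliptic regularity — is correct, but for the second (and harder) half your route diverges from the paper's in a way that costs you the main simplification. You go to the straightened strip $\widetilde{\Omega}$ first, apply the lifted material derivative there, and then try to repair the resulting non-commutativity with the time-dependent coefficients $\alpha,\beta,\gamma$ via an Alinhac good unknown. The paper instead never lifts the material derivative at all: it commutes in the \emph{physical} domain $\Omega$, and there the computation collapses. Writing $a = -\partial_y P\rvert_{y=\eta}$, one has
\begin{equation*}
\lp\partial_t+V\cdot\nabla\rp a
= -\partial_y\lp\partial_t+\nab\phi\cdot\nab\rp P\rvert_{y=\eta}
- \lb\lp\partial_t+\nab\phi\cdot\nab\rp,\partial_y\rb P\rvert_{y=\eta},
\end{equation*}
and the commutator term is exactly $a\,\div V$ once one uses $\Delta_{x,y}\phi=0$ and the fact that $\nabla(P\rvert_{y=\eta})=0$ ties $\nabla_{\!x}P\rvert_{y=\eta}$ to $\nabla\eta\cdot\partial_yP\rvert_{y=\eta}$. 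Moreover, $\phi$ harmonic makes the operator commutator trivial:
\begin{equation*}
\Delta_{x,y}\lp\partial_t+\nab\phi\cdot\nab\rp P = 4\,\nab^2\phi\cdot\nab^2 P,
\end{equation*}
and $(\partial_t+\nab\phi\cdot\nab)P$ inherits a zero Dirichlet condition on $\Sigma$ because $\Sigma$ is a material surface and $P\rvert_\Sigma=0$. So the material derivative of $P$ already solves a clean elliptic problem with zero boundary data, and one can then straighten and feed it directly into the elliptic machinery of Proposition~\ref{prop:regsob} (as in Proposition~\ref{prop:Tay}).

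This is exactly the observation you declare unavailable when you write that ``the raw lifted material derivative of~$\mathcal{P}$ does not solve a clean elliptic equation'': it does, provided you commute before changing variables. Your good-unknown proposal is plausible as a workaround — the correction $\frac{(\partial_t + V\cdot\nabla_x)\rho}{\partial_z\rho}\partial_z\mathcal{P}$ is precisely the chain-rule term that the paper sidesteps — but it commits you to tracking $(\partial_t+V\cdot\nabla)\alpha,\,(\partial_t+V\cdot\nabla)\beta,\,(\partial_t+V\cdot\nabla)\gamma$ and their interaction with the paradifferential decoupling, none of which you actually carry out, and you end the proposal by conceding that the bookkeeping ``is where the technical difficulty lies.'' That acknowledged gap is genuine: as written, the proof does not close. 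The missing idea is specifically to exploit the harmonicity of $\phi$ and the materiality of $\Sigma$ in Eulerian variables, which removes both the boundary-term problem and the coefficient-commutator problem in one stroke.
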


\begin{proof}
	The estimate on~$\lA a\rA_{C^\mez_*}$ is proved in details in~\cite{AlazardBurqZuilyStrEstGrav}. We will record here a proof of the estimate on~$\lA\lp\partial_t+V\cdot\nabla\rp a\rA_{L^\infty}$.
	First, observe that 
	\begin{align*}
		\lp\partial_t+V\cdot\nabla\rp a&=\left.\lp\partial_t+\nab\phi\cdot\nab\rp\lp-\partial_yP\rp\ra_{y=\eta}\\
						&=-\left.\partial_y\lp\partial_t+\nab\phi\cdot\nab\rp P\ra_{y=\eta}-\left.\lb\lp\partial_t+\nab\phi\cdot\nab\rp,\partial_y\rb P\ra_{y=\eta}.
	\end{align*}
	The second term on the right hand side is
	\begin{align*}
		-\left.\lb\lp\partial_t+\nab\phi\cdot\nab\rp,\partial_y\rb P\ra_{y=\eta}&=\nabla_{\!x}\partial_y\phi\cdot\nabla_{\!x}P\rvert_{y=\eta}+\partial^2_y\phi\partial_yP\rvert_{y=\eta}\\
											 &=a\nabla\eta\cdot\nabla_{\!x}\partial_y\phi\rvert_{y=\eta}+a\Delta_x\phi\rvert_{y=\eta}\\
											 &=a\div V,
	\end{align*}
	where we have used that since~$P\rvert_{y=\eta}=0$, 
	\begin{equation*}
		0=\nabla(P\rvert_{y=\eta})=(\nabla_{\!x}P)\rvert_{y=\eta}+\nabla\eta(\partial_yP)\rvert_{y=\eta},
	\end{equation*}
	and that
	\begin{equation*}
		\div V=\Delta_x\phi\rvert_{y=\eta}+\nabla\eta\cdot(\nabla_{\!x}\partial_y\phi)\rvert_{y=\eta}.
	\end{equation*}
	The proposition will then be proved once we have shown that 
	\begin{equation} \label{eq:convPell}
		\lA\partial_y\lp\partial_t+\nab\phi\cdot\nab\rp P\rA_{X^{s_0-\frac{3}{4}}}\leq\mathcal{F}\lp\lA\lp\eta,\psi,V,B\rp(t)\rA_{H^{s_0+\mez}\times H^{s_0+\mez}\times H^{s_0}\times H^{s_0}}\rp.
	\end{equation}
	This is a consequence of the following elliptic equation
	\begin{align*}
		\Delta_{x,y}\lp\partial_t+\nab\phi\cdot\nab\rp P&=\lp\partial_t+\nab\phi\cdot\nab\rp\Delta_{x,y}P+\lb\Delta_{x,y},\lp\partial_t+\nab\phi\cdot\nab\rp\rb P\\
								 &=-\lp\partial_t+\nab\phi\cdot\nab\rp\la\nab^2\phi\ra^2+2\nab^2\phi\cdot\nab^2P\\
								 &=-2\nab^2\phi\cdot\nab^2(-\partial_t\phi-\mez\la\nab\phi\ra^2-gy)+2\nab^2\phi\cdot\nab^2P\\
								 &=4\nab^2\phi\cdot\nab^2P.
	\end{align*}
	Now~(\ref{eq:convPell}) follow along the same lines as in Proposition~\ref{prop:Tay}, using the regularity on~$P$ already established in this proposition.
\end{proof}

\begin{appendices}

\section{Paradifferential calculus} \label{ap:para}
We review the fundamental results of Bony's paradifferential calculus, introduced in~\cite{BonyPara}, following M\'etivier presentation in \cite{MetivierParadiff}. 
\subsection{Paradifferential operators}
\begin{defn}
Given~$\rho\in [0, 1]$ and~$m\in\R$,~$\Gamma_{\rho}^{m}(\R^d)$ denotes the space of
locally bounded functions~$a(x,\xi)$
on~$\R^d\times(\R^d\setminus 0)$,
which are~$C^\infty$ with respect to~$\xi$ for~$\xi\neq0$ and
such that, for all~$\alpha\in\N^d$ and all~$\xi\neq 0$, the function
$x\mapsto \partial_\xi^\alpha a(x,\xi)$ belongs to~$W^{\rho,\infty}(\R^d)$ and there exists a constant
$C_\alpha$ such that,
\begin{equation*}
\forall\lvert\xi\rvert\geq\mez,\quad 
\lA\partial_\xi^\alpha a(\cdot,\xi)\rA_{W^{\rho,\infty}(\R^d)}\leq C_\alpha
(1+\lvert\xi\rvert)^{m-\lvert\alpha\rvert}.
\end{equation*}
\end{defn}

From a symbol~$a$, we define
the paradifferential operator~$T_a$ by
\begin{equation*}
\widehat{T_a u}(\xi)=(2\pi)^{-d}\int \chi(\xi-\eta,\eta)\widehat{a}(\xi-\eta,\eta)\psi(\eta)\widehat{u}(\eta)
\d\eta,
\end{equation*}
where
$\widehat{a}(\theta,\xi)=\int e^{-ix\cdot\theta}a(x,\xi)\d x$
is the Fourier transform of~$a$ with respect to the first variable. 
$\chi$ and~$\psi$ are two fixed~$C^\infty$ functions such that:
\begin{equation*}
\psi(\eta)=0\quad\text{for } \la\eta\ra\leq1,\qquad
\psi(\eta)=1\quad\text{for }\la\eta\ra\geq2,
\end{equation*}
and~$\chi(\theta,\eta)$ 
satisfies, for~$0<\eps_1<\eps_2$ small,
$$
\chi(\theta,\eta)=1\quad\text{if}\quad\la\theta\ra\leq\eps_1\la\eta\ra,\qquad
\chi(\theta,\eta)=0\quad\text{if}\quad\la\theta\ra\geq\eps_2\la\eta\ra,
$$
and such that
$$
\forall(\theta,\eta)\,:\qquad \la\partial_\theta^\alpha\partial_\eta^\beta\chi(\theta,\eta)\ra\leq 
C_{\alpha,\beta}(1+\la\eta\ra)^{-\la\alpha\ra-\la\beta\ra}.
$$

\subsection{Symbolic calculus}
In order to get quantitative results about operator norm estimates, we introduce the following semi-norms
\begin{defn}
For~$m\in\R$,~$\rho\in [0,1]$ and~$a\in \Gamma^m_{\rho}(\R^d)$, put
\begin{equation*}
M_{\rho}^{m}(a)= 
\sup_{\lvert\alpha\rvert\leq\frac{3d}{2}+1+\rho ~}\sup_{\lvert\xi\rvert\geq1/2~}
\lA (1+\lvert\xi\rvert)^{\lvert\alpha\rvert-m}\partial_\xi^\alpha a(\cdot,\xi)\rA_{W^{\rho,\infty}(\R^d)}.
\end{equation*}
\end{defn}

The natural spaces for paradifferential operators to act on are the following
\begin{defn}[Besov spaces]
Consider a dyadic decomposition of the identity:
$I=\Delta_{-1}+\sum_{q=0}^{\infty}\Delta_q$. If $s$ is any real number, we define the Besov class $B^s_{p,q}(\R^d)$ as the space of 
functions of tempered distributions $u$ such that
$$\lA u\rA_{B^s_{p,q}}:=\left(\sum_{k\in\N}2^{pks}\lA\Delta_ku\rA_{L^q}^p\right)^\frac{1}{p}<+\infty.$$
\end{defn}
We rename the space $B^s_{\infty,\infty}$ as the Zygmund space $C^s_*$.
\begin{rem}
The space $C^s_*(\R^d)$ is the usual H\"{o}lder space $W^{s,\infty}(\R^d)$ if $s>0$ is not an integer.
The space $B^s_{2,2}(\R^d)$ is the usual Sobolev space $H^s(\R^d)$.
\end{rem}

We will mainly use the Sobolev and Zygmund cases. 
\begin{defn}
Let~$m\in\R$.
An operator~$T$ is said to be of  order~$m$ if, for all~$\mu\in\R$,
it is bounded from~$H^{\mu}$ to~$H^{\mu-m}$ and from~$C^{\mu}_*$ to~$C^{\mu-m}_*$, where the~$C^\mu_*$ are the Zygmund spaces defined in  
\end{defn}

We resume most of the calculus properties we will use in the following theorem.
\begin{thm} \label{thm:para}
Let~$m\in\R$ and~$\rho\in [0,1]$. 

$(i)$ If~$a \in \Gamma^m_0(\R^d)$, then~$T_a$ is of order~$m$. 
Moreover, for all~$\mu\in\R$ there exists a constant~$K$ such that
\begin{equation*}
\lA T_a \rA_{H^{\mu}\rightarrow H^{\mu-m}}\leq K M_{0}^{m}(a),\qquad  
\lA T_a \rA_{C^{\mu}_*\rightarrow C^{\mu-m}_*}\leq K M_{0}^{m}(a).
\end{equation*}
$(ii)$ If~$a\in \Gamma^{m}_{\rho}(\R^d), b\in \Gamma^{m'}_{\rho}(\R^d)$ then 
$T_a T_b -T_{a b}$ is of order~$m+m'-\rho$. 
Moreover, for all~$\mu\in\R$ there exists a constant~$K$ such that
\begin{equation*}
\begin{aligned}
\lA T_a T_b  - T_{a b}   \rA_{H^{\mu}\rightarrow H^{\mu-m-m'+\rho}}&\leq 
K M_{\rho}^{m}(a)M_{0}^{m'}(b)+K M_{0}^{m}(a)M_{\rho}^{m'}(b),\\
\lA T_a T_b  - T_{a b}   \rA_{C^{\mu}_*\rightarrow C^{\mu-m-m'+\rho}_*}&\leq 
K M_{\rho}^{m}(a)M_{0}^{m'}(b)+K M_{0}^{m}(a)M_{\rho}^{m'}(b).
\end{aligned}
\end{equation*}
$(iii)$ Let~$a\in \Gamma^{m}_{\rho}(\R^d)$. Denote by 
$(T_a)^*$ the adjoint operator of~$T_a$ and by~$\overline{a}$ the complex conjugate of~$a$. Then 
$(T_a)^* -T_{\overline{a}}$ is of order~$m-\rho$. 
Moreover, for all~$\mu$ there exists a constant~$K$ such that
\begin{equation*}
\lA (T_a)^*   - T_{\overline{a}}   \rA_{H^{\mu}\rightarrow H^{\mu-m+\rho}}\leq 
K M_{\rho}^{m}(a), \qquad 
\lA (T_a)^*   - T_{\overline{a}}   \rA_{C^{\mu}_*\rightarrow C^{\mu-m+\rho}_*}\leq
K M_{\rho}^{m}(a).
\end{equation*}
\end{thm}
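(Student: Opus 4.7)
\textbf{Proof plan for Theorem \ref{thm:para}.} The three assertions are the classical Bony–Meyer symbolic calculus statements, and the plan is to follow Meyer's proof as presented in \cite{MetivierParadiff}, checking at each step that the constants depend only on the semi-norms $M_\rho^m$ and that both the Sobolev and Zygmund scales are handled by the same argument. The common mechanism is spectral localization: the cutoffs $\chi$ and $\psi$ force $\widehat{T_a u}(\xi)$ to be supported where $|\xi|\gtrsim 1$, and for each frequency piece $\Delta_j u$ of $u$ only frequencies $|\theta|\leq \eps_2 2^j$ of $a$ contribute. Combined with Bernstein inequalities, this allows everything to be reduced to estimates on dyadic blocks that gain or lose derivatives in a controlled way.

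For (i), I would first decompose $T_a u=\sum_j T_a\Delta_j u$, note that the spectrum of each summand is contained in an annulus $\{|\xi|\sim 2^j\}$ (for $j$ large) thanks to the support properties of $\chi$ and $\psi$, and bound $\|T_a\Delta_j u\|_{L^p}\leq C\,2^{jm} M_0^m(a)\|\Delta_j u\|_{L^p}$ using the kernel representation of $T_a$ restricted to that annulus (the kernel is an $L^1$ convolution operator with norm controlled by finitely many semi-norms of $a(\cdot,\xi)$ in $L^\infty$, which is where the index $3d/2+1+\rho$ on $M_\rho^m$ comes from via Fourier inversion). Summing dyadically in $\ell^2$ for $H^\mu$ or in $\ell^\infty$ for $C^\mu_*$ yields both bounds simultaneously.

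For (ii) and (iii), the plan is to compute the full symbol of the composition $T_a T_b$ (respectively of $(T_a)^*$) via the oscillatory integral formula, perform a first-order Taylor expansion of the symbol of the outer operator in the frequency variable around the spectrum of the inner operator, and identify the leading term as $T_{ab}$ (respectively $T_{\bar a}$). The remainder is a paradifferential operator with a symbol in a residual class, and the key point is to show that this residual symbol defines an operator of order $m+m'-\rho$ (respectively $m-\rho$). Concretely, one controls the remainder kernel by $M_\rho^m(a)M_0^{m'}(b)+M_0^m(a)M_\rho^{m'}(b)$ using the $W^{\rho,\infty}$ regularity of the symbols in $x$: this is where the gain of $\rho$ derivatives comes from, via the inequality $\|a(x,\xi)-a(y,\xi)\|\lesssim M_\rho^m(a)|x-y|^\rho \langle\xi\rangle^m$ when $\rho\in(0,1)$ (or its obvious analog using finite differences when $\rho=1$). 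Once the residual operator is identified, the boundedness bounds from (i) applied in the appropriate lower-order class produce the claimed norm estimates on both Sobolev and Zygmund spaces.

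The bulk of the work will be in (ii), where one has to combine two oscillatory integrals and carefully track the $\chi,\psi$ cutoffs to ensure the Taylor remainder is integrable in $\theta$; the main obstacle is not conceptual but bookkeeping, namely keeping the dependence on $M_0$ and $M_\rho$ linear in each factor (so that one obtains the symmetric bilinear expression in the statement, rather than a product of two $M_\rho$'s). Since all of this is standard and worked out in \cite{MetivierParadiff}, I would essentially cite that reference and only indicate the identification of the semi-norms $M^m_\rho$ used here with those used there, verifying that the order $3d/2+1+\rho$ of derivatives in $\xi$ suffices to run the kernel argument uniformly in the two functional scales.
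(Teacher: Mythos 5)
Your proposal is correct and takes essentially the same route as the paper: Theorem~\ref{thm:para} is stated there as a recalled result of Bony's symbolic calculus with no proof given, the appendix simply deferring to M\'etivier's presentation in \cite{MetivierParadiff}, which is exactly what you do. Your sketch (spectral localization and dyadic kernel bounds for $(i)$, Taylor expansion of the composed symbol with the $W^{\rho,\infty}$ regularity supplying the gain of $\rho$ for $(ii)$--$(iii)$) is the standard argument from that reference, so no further comparison is needed.
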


In this article, we need to consider paradifferential operators with negative regularity. 
As a consequence, we extend our previous definition.
\begin{defn}
For~$m\in \R$ and~$\rho\in (-\infty, 0)$,~$\Gamma^m_\rho(\R^d)$ denotes the space of
distributions~$a(x,\xi)$
on~$\R^d\times(\R^d\setminus 0)$,
which are~$C^\infty$ with respect to~$\xi$ and 
such that, for all~$\alpha\in\N^d$ and all~$\xi\neq 0$, the function
$x\mapsto \partial_\xi^\alpha a(x,\xi)$ belongs to $C^\rho_*(\R^d)$ and there exists a constant
$C_\alpha$ such that
\begin{equation*}
\forall\lvert\xi\rvert\geq\mez,\quad \lA\partial_\xi^\alpha a(\cdot,\xi)\rA_{C^\rho_*}\leq C_\alpha
(1+\lvert\xi\rvert)^{m-\lvert\alpha\rvert}.
\end{equation*}
Then~$\dot\Gamma_{\rho}^{m}(\R^d)$ denotes the subspace of 
$\Gamma_{\rho}^{m}(\R^d)$ which consists of symbols 
$a(x,\xi)$ which are homogeneous of degree~$m$  
with respect to~$\xi$. For~$a\in \Gamma^m_\rho$, we define 
\begin{equation*}
M_{\rho}^{m}(a)= 
\sup_{\lvert\alpha\rvert\leq\frac {3d} 2 +\rho+1 ~}\sup_{\lvert\xi\rvert \geq 1/2~}
\lA (1+\lvert\xi\rvert)^{\lvert\alpha\rvert-m}\partial_\xi^\alpha a(\cdot,\xi)\rA_{C^{\rho}_*(\R^d)}.
\end{equation*}
\end{defn}

\subsection{Paraproducts and product rules}
If~$a=a(x)$ is a function of~$x$ only, the paradifferential operator~$T_a$ is called a paraproduct. 
\begin{defn}
Given two functions~$a,b$ defined on~$\R^d$ we define the remainder 
$$
R(a,u)=au-T_a u-T_u a.
$$
\end{defn}
We record here various estimates about paraproducts (see chapter 2 in~\cite{BahouriCheminDanchinFourNL} or~\cite{CheminPerIncFlu}).
\begin{thm} \label{thm:paraprod}
\begin{enumerate}
\item  Let~$\alpha,\beta\in \R$. If~$\alpha+\beta>0$ then
\begin{align}
&\lA R(a,u) \rA _{H^{\alpha + \beta-\frac{d}{2}}(\R^d)}
\leq K \lA a \rA _{H^{\alpha}(\R^d)}\lA u\rA _{H^{\beta}(\R^d)}, \label{eq:pararem1}\\ 
&\lA R(a,u) \rA _{C^{\alpha + \beta}_*(\R^d)} 
\leq K \lA a \rA _{C^{\alpha}_*(\R^d)}\lA u\rA _{C^{\beta}_*(\R^d)},\\
&\lA R(a,u) \rA _{H^{\alpha + \beta}(\R^d)} \leq K \lA a \rA _{C^{\alpha}_*(\R^d)}\lA u\rA _{H^{\beta}(\R^d)}.\label{eq:paradifremsob}
\end{align}

\item   Let~$m>0$ and~$s\in \R$. Then
\begin{align}
&\lA T_a u\rA_{H^s}\leq K \lA a\rA_{L^\infty}\lA u\rA_{H^s} \label{eq:paraprod1}\\
&\lA T_a u\rA_{H^{s-m}}\leq K \lA a\rA_{C^{-m}_*}\lA u\rA_{H^{s}},\label{eq:paraprod2}\\
&\lA T_a u\rA_{C^{s-m}_*}\leq K \lA a\rA_{C^{-m}_*}\lA u\rA_{C^{s}_*},\\
&\lA T_a u\rA_{C^{s}_*}\leq K \lA a\rA_{L^\infty}\lA u\rA_{C^{s}_*}.
\end{align}
\item Let~$s_0,s_1,s_2$ be such that 
$s_0\le s_2$ and~$s_0 < s_1 +s_2 -\frac{d}{2}$, 
then
\begin{equation*}
\lA T_a u\rA_{H^{s_0}}\le K \lA a\rA_{H^{s_1}}\lA u\rA_{H^{s_2}}.
\end{equation*}
\end{enumerate}
\end{thm}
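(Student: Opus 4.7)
The plan is to prove all three parts by the standard Littlewood-Paley method underlying Bony's paradifferential calculus, resting on the decomposition
\begin{equation*}
au = T_a u + T_u a + R(a,u),\qquad T_a u = \sum_q S_{q-N_0}a\,\Delta_q u,\qquad R(a,u) = \sum_{|p-q|\leq N_0}\Delta_p a\,\Delta_q u,
\end{equation*}
where $N_0$ is a fixed integer tied to the cutoffs $\chi,\psi$ used to define $T_a$. The two key spectral facts are that each paraproduct summand $S_{q-N_0}a\,\Delta_q u$ is localized in an annulus $\{|\xi|\sim 2^q\}$ — making it "diagonal" against $\Delta_j$ — whereas each remainder summand $\Delta_p a\,\tilde\Delta_p u$ is localized only in a ball $\{|\xi|\lesssim 2^p\}$, so that $\Delta_j R(a,u)$ involves the tail $p\geq j-N_1$. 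All pointwise bounds will come from Bernstein's inequality.

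For part (2), I would apply $\Delta_j$ to $T_a u$: only finitely many $q\sim j$ contribute, and the two pointwise inequalities
\begin{equation*}
\|S_{q-N_0}a\|_{L^\infty}\leq \|a\|_{L^\infty},\qquad \|S_{q-N_0}a\|_{L^\infty}\lesssim 2^{qm}\|a\|_{C^{-m}_*}
\end{equation*}
(the second by Bernstein plus the definition of $C^{-m}_*$, valid for $m>0$) give $\|\Delta_j T_a u\|_{L^2}\lesssim \|a\|_{L^\infty}\|\Delta_j u\|_{L^2}$ and $\|\Delta_j T_a u\|_{L^2}\lesssim 2^{jm}\|a\|_{C^{-m}_*}\|\Delta_j u\|_{L^2}$ respectively. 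Square-summing against $2^{2js}$ weights yields the $H^s\to H^s$ and $H^s\to H^{s-m}$ estimates; replacing $\ell^2_j$ by $\sup_j$ with weight $2^{js}$ yields the two Zygmund analogues. Part (3) uses the same scheme, but now bounding $\|S_{q-N_0}a\|_{L^\infty}\lesssim \sum_{p<q-N_0}2^{pd/2}\|\Delta_p a\|_{L^2}\lesssim 2^{q(d/2-s_1)_+}c_q\|a\|_{H^{s_1}}$ with $(c_q)\in\ell^2$; the constraint $s_0\leq s_2$ preserves the diagonal matching in $j\sim q$, and $s_0<s_1+s_2-d/2$ is exactly what makes the weighted $\ell^2$-sum converge.

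For part (1), I would write $\Delta_j R(a,u) = \sum_{p\geq j-N_1}\Delta_j(\Delta_p a\,\tilde\Delta_p u)$. In the Zygmund-Zygmund inequality, the pointwise bound $\|\Delta_p a\,\tilde\Delta_p u\|_{L^\infty}\lesssim 2^{-p(\alpha+\beta)}\|a\|_{C^\alpha_*}\|u\|_{C^\beta_*}$ lets one sum in $p$ as a geometric series with ratio $2^{-(\alpha+\beta)}$ — convergent precisely because $\alpha+\beta>0$ — giving the $C^{\alpha+\beta}_*$ control after multiplying by $2^{j(\alpha+\beta)}$. The Sobolev-Sobolev inequality is analogous, but $\|\Delta_p a\,\tilde\Delta_p u\|_{L^1}\leq \|\Delta_p a\|_{L^2}\|\tilde\Delta_p u\|_{L^2}$ combined with Bernstein $\|\cdot\|_{L^2}\lesssim 2^{jd/2}\|\cdot\|_{L^1}$ produces the $d/2$ derivative loss in the target index $\alpha+\beta-d/2$. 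The mixed Sobolev-Zygmund estimate (\ref{eq:paradifremsob}) is a hybrid of the two, putting $a$ in $L^\infty$ via its $C^\alpha_*$ norm and $u$ in $L^2$ via its $H^\beta$ norm, and again the convergence of the $p$-sum against the geometric factor $2^{-p(\alpha+\beta)}$ after multiplying by $2^{j(\alpha+\beta)}$ requires $\alpha+\beta>0$.

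The main obstacle is really just bookkeeping in two places. First, in the remainder estimates, the $p\geq j-N_1$ tail means one is computing a discrete convolution of a geometric weight with an $\ell^2$ (or $\ell^\infty$) sequence, and the sign condition $\alpha+\beta>0$ is what controls this convolution; one must verify this carefully rather than by "diagonal" matching as for paraproducts. Second, one must check that the spectral cutoff $\psi$ in the definition of $T_a$ suppresses the low-frequency block $\Delta_{-1}u$ sufficiently so that the negative-regularity Zygmund norms of $a$ appearing in (\ref{eq:paraprod2}) are never multiplied by an uncontrolled low-frequency piece of $u$. Once those two verifications are in place, the dyadic bookkeeping above delivers all seven inequalities in a uniform manner.
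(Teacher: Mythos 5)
The paper gives no proof of this theorem: it is recorded as a standard result with a citation to Chapter~2 of Bahouri--Chemin--Danchin and to Chemin's book, and your Littlewood--Paley argument (dyadic paraproduct/remainder decomposition, Bernstein inequalities, diagonal matching for $T_a$, and the geometric tail over $p\geq j-N_1$ controlled by $\alpha+\beta>0$ for $R(a,u)$) is exactly the proof given in those references. Your sketch is correct and takes the same approach, up to a harmless imprecision in part (3): for $s_1>d/2$ the bound on $\lA S_{q}a\rA_{L^\infty}$ is uniform rather than of the form $2^{q(d/2-s_1)_+}c_q$ with $(c_q)\in\ell^2$, and at $s_1=d/2$ a logarithmic factor appears, both of which are absorbed using $s_0\le s_2$ and the strict inequality $s_0<s_1+s_2-\frac{d}{2}$.
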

We shall mainly use the following consequences
\begin{prop}
Let~$\mu,m,n\in\R$, $\mu,m,n>0$ and $m,n\not\in\N$. Then
\begin{equation} \label{eq:tamestsob}
  \lA u_1 u_2\rA_{H^\mu}\leq K \bigl(\lA u_1\rA_{C^{-n}_*}\lA u_2\rA_{H^{\mu+n}}+\lA u_2\rA_{C^{-m}_*}\lA u_1\rA_{H^{\mu+m}}\bigr),
\end{equation}
and
\begin{equation}
\lA u_1 u_2\rA_{C^\mu_*}\leq K \bigl(\lA u_1\rA_{C^{-n}_*}\lA u_2\rA_{C^{\mu+n}_*}+\lA u_2\rA_{C^{-m}_*}\lA u_1\rA_{C^{\mu+m}_*}\bigr).
\end{equation}
\end{prop}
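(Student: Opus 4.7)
The natural approach is to apply Bony's paraproduct decomposition and then invoke the already-stated estimates of Theorem~\ref{thm:paraprod} on each piece. Write
\begin{equation*}
u_1 u_2 = T_{u_1} u_2 + T_{u_2} u_1 + R(u_1,u_2),
\end{equation*}
and estimate the three summands separately. The hypotheses $m,n>0$ are what let us place one factor in a negative-index Zygmund space $C^{-m}_*$ or $C^{-n}_*$, while the hypothesis $\mu>0$ is what ensures the remainder term $R$ can be controlled via the gain-of-regularity estimate. The exclusion $m,n\notin\N$ is a harmless convenience that lets us identify $C^k_*$ with $W^{k,\infty}$ in intermediate steps without worrying about the borderline case; it plays no essential role in the estimation.

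For the Sobolev inequality, the two paraproduct terms are handled by~(\ref{eq:paraprod2}) applied with $s=\mu+n$, $m=n$ to $T_{u_1}u_2$ and symmetrically with $s=\mu+m$, $m=m$ to $T_{u_2}u_1$, yielding
\begin{equation*}
\lA T_{u_1} u_2\rA_{H^\mu}\leq K\lA u_1\rA_{C^{-n}_*}\lA u_2\rA_{H^{\mu+n}},\quad
\lA T_{u_2} u_1\rA_{H^\mu}\leq K\lA u_2\rA_{C^{-m}_*}\lA u_1\rA_{H^{\mu+m}}.
\end{equation*}
For the remainder I would use~(\ref{eq:paradifremsob}) with $\alpha=-n$, $\beta=\mu+n$, which is admissible since $\alpha+\beta=\mu>0$; this gives $\lA R(u_1,u_2)\rA_{H^\mu}\leq K\lA u_1\rA_{C^{-n}_*}\lA u_2\rA_{H^{\mu+n}}$, and symmetrically with $\alpha=-m$, $\beta=\mu+m$. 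Summing the three contributions yields~(\ref{eq:tamestsob}).

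The Hölder inequality follows exactly the same scheme but with the Zygmund-valued counterparts from Theorem~\ref{thm:paraprod}: the third paraproduct estimate listed there gives $\lA T_{u_1}u_2\rA_{C^\mu_*}\leq K\lA u_1\rA_{C^{-n}_*}\lA u_2\rA_{C^{\mu+n}_*}$, and the Hölder-Hölder remainder bound in item~(1) of Theorem~\ref{thm:paraprod}, again applied with $\alpha+\beta=\mu>0$, controls $R(u_1,u_2)$ in $C^\mu_*$.

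I do not expect any genuine obstacle here: the statement is essentially a bookkeeping exercise in choosing the right pair of indices $(\alpha,\beta)$ in Bony's decomposition. The only subtle point is verifying that the positivity condition $\alpha+\beta>0$ required by the remainder estimates is precisely $\mu>0$, which is assumed; all other hypotheses ($m,n>0$, non-integer) are what is needed to invoke the negative-regularity paraproduct bounds of Theorem~\ref{thm:paraprod} without ambiguity.
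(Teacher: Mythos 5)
Your proof is correct and matches the intended argument: the paper states this proposition as a direct consequence of Theorem~\ref{thm:paraprod}, and the Bony decomposition $u_1u_2=T_{u_1}u_2+T_{u_2}u_1+R(u_1,u_2)$ with~(\ref{eq:paraprod2}) (and its Zygmund analogue) for the paraproducts and~(\ref{eq:paradifremsob}) (resp.\ the H\"older remainder bound) with $\alpha+\beta=\mu>0$ is exactly the expected route. Your remark on the role of the hypotheses, including the inessential non-integer condition, is also accurate.
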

Recall from \cite{AlazardBurqZuilyExw/ST} the following 
\begin{prop}
\begin{enumerate}
  \item  
  Let~$s>d/2$ and consider~$F\in C^\infty(\C^N)$ such that~$F(0)=0$. 
Then there exists a non-decreasing function~$\mathcal{F}\colon\R^+\rightarrow\R^+$ 
such that
\begin{equation*}
\lA F(U)\rA_{H^s}\leq\mathcal{F}\bigl(\lA U\rA_{L^\infty}\bigr)\lA U\rA_{H^s},
\end{equation*}
for any~$U\in H^s(\R^d)^N$. 
\item 
Let~$s\geq 0$ and consider~$F\in C^\infty(\C^N)$ such that~$F(0)=0$. 
Then there exists a non-decreasing function~$\mathcal{F}\colon\R^+\rightarrow\R^+$ such that
\begin{equation*}
\lA F(U)\rA_{C^{s}_*}\leq\mathcal{F}\bigl(\lA U\rA_{L^\infty}\bigr)\lA U\rA_{C^{s}_*},
\end{equation*}
for any~$U\in C^{s}_*(\R^d)^N$. 
\end{enumerate}
\end{prop}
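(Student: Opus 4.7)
The plan is to use Meyer's dyadic decomposition, a classical argument in nonlinear analysis on Besov spaces. Since $F\in C^\infty$ with $F(0)=0$ and $U\in L^\infty$, the telescoping sum
\begin{equation*}
F(U)=F(\Delta_{-1}U)+\sum_{q\geq0}\bigl[F(S_{q+1}U)-F(S_qU)\bigr]
\end{equation*}
makes sense, where $S_qU=\Delta_{-1}U+\sum_{p=0}^{q-1}\Delta_pU$. Each increment can be written as $m_q\Delta_qU$ with
\begin{equation*}
m_q(x)=\int_0^1F'\lp S_qU(x)+t\Delta_qU(x)\rp\d t.
\end{equation*}
Since $\lA S_qU\rA_{L^\infty}$ and $\lA\Delta_qU\rA_{L^\infty}$ are uniformly bounded by a constant multiple of $\lA U\rA_{L^\infty}$, the smoothness of $F$ yields $\lA m_q\rA_{L^\infty}\leq\mathcal{F}_0(\lA U\rA_{L^\infty})$ for some continuous non-decreasing $\mathcal{F}_0$. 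More generally, the chain rule combined with Bernstein's inequality gives, for every multi-index $\alpha$,
\begin{equation*}
\lA\partial^\alpha m_q\rA_{L^\infty}\leq C_\alpha\mathcal{F}_\alpha(\lA U\rA_{L^\infty})\,2^{q|\alpha|},
\end{equation*}
so that $m_q$ behaves like a symbol of order $0$ essentially localized at frequencies $\lesssim2^q$.

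For part (1), I would use the Littlewood--Paley characterization $\lA F(U)\rA_{H^s}^2\sim\sum_p2^{2ps}\lA\Delta_pF(U)\rA_{L^2}^2$. The low-frequency contribution $F(\Delta_{-1}U)$ is controlled by $\mathcal{F}(\lA U\rA_{L^\infty})\lA\Delta_{-1}U\rA_{L^2}$. For the telescope series one estimates $\lA\Delta_p(m_q\Delta_qU)\rA_{L^2}$ by splitting into the regimes $p\leq q+N_0$ (where a direct $L^\infty\times L^2$ bound $\lA m_q\rA_{L^\infty}\lA\Delta_qU\rA_{L^2}$ suffices) and $p>q+N_0$ (where repeated integration by parts against $\Delta_p$, combined with the Bernstein bounds on $\partial^\alpha m_q$, yields arbitrary polynomial decay in $2^{p-q}$). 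The outcome is that $m_q\Delta_qU$ is frequency-supported up to negligible tails in $\{|\xi|\lesssim2^{q+N_0}\}$, so that after summing in $p$ and $q$ one obtains the stated bound $\lA F(U)\rA_{H^s}\leq\mathcal{F}(\lA U\rA_{L^\infty})\lA U\rA_{H^s}$. The constraint $s>d/2$ enters through the Sobolev embedding $H^s\hookrightarrow L^\infty$ ensuring that $\lA U\rA_{L^\infty}$ is finite whenever $\lA U\rA_{H^s}$ is.

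Part (2) proceeds along the same lines in the Zygmund scale: one uses $\lA F(U)\rA_{C^s_*}=\sup_p2^{ps}\lA\Delta_pF(U)\rA_{L^\infty}$ and replaces the $L^2$ bounds by $L^\infty$ bounds, now using $\lA m_q\Delta_qU\rA_{L^\infty}\leq\mathcal{F}_0(\lA U\rA_{L^\infty})\lA\Delta_qU\rA_{L^\infty}$ directly. The requirement $s\geq0$ suffices since no $\ell^2$ summation in the dyadic index is required, only a supremum.

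The main technical obstacle is the careful bookkeeping of Fourier supports in the products $m_q\Delta_qU$: although $\Delta_qU$ is spectrally localized in the annulus $\{|\xi|\sim2^q\}$, the multiplier $m_q$ is not, so the product a priori has spread-out frequencies. Extracting the near-localization of its Fourier support in $\{|\xi|\lesssim2^q\}$ is what requires the full strength of the Bernstein-type bounds on $\partial^\alpha m_q$ together with repeated integration by parts. This is the classical Bony--Meyer argument and we may refer to M\'etivier's notes~\cite{MetivierParadiff}, already cited in the appendix, for the full technical details.
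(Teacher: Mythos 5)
The paper does not actually prove this proposition --- it is recalled verbatim from~\cite{AlazardBurqZuilyExw/ST} --- so there is no in-paper argument to compare against; your proof is the classical Meyer--Bony composition argument (telescoping $F(S_{q+1}U)-F(S_qU)=m_q\Delta_qU$, symbol-type bounds $\lA\partial^\alpha m_q\rA_{L^\infty}\lesssim \mathcal{F}(\lA U\rA_{L^\infty})2^{q\la\alpha\ra}$, near-localization of the output frequencies, then summation), which is exactly how the cited references establish it. For part (1) your outline is correct: the splitting $p\leq q+N_0$ versus $p>q+N_0$, the integration-by-parts/Bernstein decay in $2^{-(p-q)M}$ with $M>s$, and the final Young-type $\ell^2$ summation in $p$ all go through, the only hidden requirement being $s>0$, which is guaranteed by $s>d/2$; your remark on where $s>d/2$ enters (finiteness of $\lA U\rA_{L^\infty}$) is also accurate, and the low-frequency term $F(\Delta_{-1}U)$ is handled by the chain rule plus Bernstein as you indicate.

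The one genuine soft spot is your closing claim for part (2) that ``no $\ell^2$ summation is required, only a supremum,'' which suggests $s\geq0$ comes for free. It does not: for a fixed output block $\Delta_p$ you must still sum the telescope over $q$, and the near-diagonal/low-output regime gives $\sum_{q\geq p-N_0}2^{ps}\lA\Delta_p(m_q\Delta_qU)\rA_{L^\infty}\lesssim\mathcal{F}(\lA U\rA_{L^\infty})\sum_{q\geq p-N_0}2^{(p-q)s}\lA U\rA_{C^s_*}$, which is geometric only when $s>0$; likewise the uniform convergence $S_qU\to U$ that justifies the telescoping identity is obtained from $\lA U-S_qU\rA_{L^\infty}\lesssim2^{-qs}\lA U\rA_{C^s_*}$, again using $s>0$. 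So your argument proves the Zygmund estimate for $s>0$ but not at the endpoint $s=0$ allowed in the statement; that endpoint either needs a separate treatment or should simply be taken, as the paper does, on the authority of~\cite{AlazardBurqZuilyExw/ST}. In every application made in this paper the relevant exponents are strictly positive, so this does not affect the way the proposition is used.
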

We also recall from \cite{AlazardBurqZuilyExw/ST} the following generalization of (\ref{eq:paraprod2}).
\begin{prop}
Let~$\rho<0$,~$m\in \R$ and~$a\in \dot{ \Gamma}^m_\rho$. Then the operator~$T_a$ is of order~$m-\rho$:
\begin{equation*}
\begin{aligned}
\la T_a \ra_{H^s \rightarrow H^{s-(m- \rho)}}&\leq C M_{\rho}^{m}(a),\\
\la T_a \ra_{C^s_* \rightarrow C^{s-(m- \rho)}_*}&\leq C M_{\rho}^{m}(a).
\end{aligned}
\end{equation*}
\end{prop}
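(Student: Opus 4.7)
The plan is to reduce the negative-regularity case $\rho<0$ to the standard Theorem~\ref{thm:para}$(i)$ by exploiting the spectral localization built into the bilinear cutoff $\chi(\xi-\eta,\eta)$ that appears in the definition of $T_a$. The guiding observation is that because $\chi$ is supported where $\la\xi-\eta\ra\les\eps_2\la\eta\ra$, only the low $x$-frequency portion of the symbol $a$ contributes when $T_a$ acts on a function whose frequencies are localized near $2^q$, and the resulting output is itself spectrally localized near $2^q$. This structural fact lets us absorb the $|\rho|$ derivatives of loss coming from the negative Hölder regularity of $a$ into a single dyadic weight.

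Concretely, I would decompose $u=\sum_q\Delta_q u$ and the symbol in $x$ as $a(x,\xi)=\sum_j\Delta_j^x a(x,\xi)$. Choosing a fixed integer $N$ depending on $\eps_1,\eps_2$, the support property of $\chi$ implies that $T_a\Delta_q u=T_{S_{q-N}^x a}\,\Delta_q u$, with $S_{q-N}^x=\sum_{j\le q-N}\Delta_j^x$, and that this output is Fourier-supported in an annulus of size comparable to $2^q$. Next, the Littlewood--Paley characterization of $C^\rho_*$ for $\rho<0$ together with the hypothesis $a\in\dot\Gamma^m_\rho$ yields
\begin{equation*}
\lA\partial_\xi^\alpha\Delta_j^x a(\cdot,\xi)\rA_{L^\infty}\les 2^{-j\rho}M_\rho^m(a)(1+\la\xi\ra)^{m-\la\alpha\ra}.
\end{equation*}
Summing over $j\le q-N$ and using $-\rho>0$ gives
\begin{equation*}
\lA\partial_\xi^\alpha S_{q-N}^x a(\cdot,\xi)\rA_{L^\infty}\les 2^{-q\rho}M_\rho^m(a)(1+\la\xi\ra)^{m-\la\alpha\ra},
\end{equation*}
which precisely says that $S_{q-N}^x a\in\Gamma^m_0$ with seminorm $M_0^m(S_{q-N}^x a)\les 2^{-q\rho}M_\rho^m(a)$.

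I would then invoke Theorem~\ref{thm:para}$(i)$ for the truncated symbol to obtain
\begin{equation*}
\lA T_{S_{q-N}^x a}\Delta_q u\rA_{H^{\mu-m}}\les 2^{-q\rho}M_\rho^m(a)\lA\Delta_q u\rA_{H^\mu},\qquad\mu\in\R.
\end{equation*}
Choosing $\mu=0$ and using the spectral localization of $T_a\Delta_q u$ near frequency $2^q$ to convert the $H^{-m}$ norm into an $L^2$ norm times $2^{-qm}$, this becomes a dyadic bound
\begin{equation*}
\lA\Delta_q(T_a u)\rA_{L^2}\les 2^{q(m-\rho)}M_\rho^m(a)\lA\Delta_q u\rA_{L^2}.
\end{equation*}
Multiplying by $2^{q(s-(m-\rho))}$ and $\ell^2$-summing via Plancherel/Littlewood--Paley yields the $H^s\to H^{s-(m-\rho)}$ estimate. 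The Zygmund estimate is obtained identically by replacing the $\ell^2$-summation in $q$ by $\ell^\infty$ and using the $L^\infty$ version of the standard theorem.

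The main obstacle will be the rigorous justification of step one, namely that the bilinear cutoff $\chi(\xi-\eta,\eta)$ allows one, up to choosing $N$ depending on $\eps_1,\eps_2$, to replace $a$ by its low-frequency truncation $S^x_{q-N}a$ when acting on $\Delta_q u$, and that the output is spectrally localized in an annulus around $2^q$. This is a standard unpacking of the convolution structure of $T_a$ in Fourier space, but it is the only place where the definition of the paradifferential quantization intervenes, and everything else reduces mechanically to the $\rho\ge 0$ theory already recorded in Theorem~\ref{thm:para}.
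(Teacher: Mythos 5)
Your argument is correct, and it is essentially the proof of this statement in the cited reference \cite{AlazardBurqZuilyExw/ST}: the paper itself does not reprove the proposition but simply recalls it from there. The chain you describe --- the cutoff $\chi$ forces $T_a\Delta_q u=T_{S_{q-N}a}\Delta_q u$ with output spectrally localized in an annulus of size $2^q$, the Littlewood--Paley characterization of $C^\rho_*$ with $\rho<0$ gives $M^m_0(S_{q-N}a)\lesssim 2^{-q\rho}M^m_\rho(a)$, and Theorem~\ref{thm:para}$(i)$ plus almost-orthogonal summation in $q$ yields both the Sobolev and Zygmund bounds --- is exactly the standard reduction used there (the only convention-level wrinkle is that $M^m_\rho$ with $\rho<0$ records slightly fewer $\xi$-derivatives than the definition of $M^m_0$, which is harmless because the kernel estimates behind Theorem~\ref{thm:para}$(i)$ need no more derivatives than $M^m_\rho$ provides).
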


We will also need the following commutator estimate between a paradifferential operator and a convective derivative
\begin{lem}[Lemma 2.16 of~\cite{AlazardBurqZuilyExw/ST}] \label{lem:comconv}
	Let~$V\in C^0([0,T];\C^{1+\eps}_*(\R^d))$ and let~$p=p(t,x,\xi)$ be a symbol homogeneous of order~$m\in\R$ in~$\xi$.
	Then there exists~$C>0$ independent if~$p$ and~$V$, such that for any~$t\in[0,T]$ and any ~$u\in C^0([0,T;H^m(\R^d))$,
	\begin{multline*} 
		\lA\left[T_p,\partial_t+T_V\cdot\nabla\right]u(t)\rA_{L^2(\R^d)}\\
		\leq C\left\lbrace\mathcal{M}^m_0(p)\lA V(t)\rA_{C^{1+\eps}_*}+\mathcal{M}^m_0\lp\partial_tp+V\cdot\nabla p\rp\right\rbrace\lA u(t)\rA_{H^m(\R^d)}.
	\end{multline*}
\end{lem}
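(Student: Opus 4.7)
The plan is to decompose the commutator into time and space pieces, extract the principal part (the paradifferential operator with symbol equal to the material derivative $\partial_t p + V\cdot\nabla p$), and control the remainders using the $C^{1+\eps}$ regularity of $V$ together with the paradifferential calculus of Theorem~\ref{thm:para}. First write $[T_p, \partial_t + T_V\cdot\nabla] = [T_p,\partial_t] + [T_p, T_V\cdot\nabla]$. The time commutator is computed directly: since $\partial_t$ only hits the symbol in the Fourier-multiplier definition of $T_p$, we have $[T_p,\partial_t]u = -T_{\partial_t p}u$. By the analogous computation applied to each $\partial_j$ (differentiating the formula for $T_p u$ with respect to $x_j$ and using the identity $i(\xi_j - \eta_j)\widehat{p}(\xi-\eta,\eta) = \widehat{\partial_j p}(\xi-\eta,\eta)$), $[T_p,\partial_j]u = -T_{\partial_j p}u$. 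Applying the algebraic identity $[A,BC] = [A,B]C + B[A,C]$ with $A = T_p$, $B = T_{V_j}$, $C = \partial_j$ gives
\begin{equation*}
[T_p, T_V\cdot\nabla] = \sum_j [T_p, T_{V_j}]\partial_j - \sum_j T_{V_j}T_{\partial_j p}.
\end{equation*}

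Next, extract the principal part from the space piece: the composition rule from Theorem~\ref{thm:para}(ii) lets us write $T_{V_j}T_{\partial_j p} = T_{V_j\partial_j p} + E_j$, so that $\sum_j T_{V_j}T_{\partial_j p} = T_{V\cdot\nabla p} + E$ with $E = \sum_j E_j$. Gathering the time and space contributions,
\begin{equation*}
[T_p, \partial_t + T_V\cdot\nabla]u = -T_{\partial_t p + V\cdot\nabla p}u + [T_p, T_V]\cdot\nabla u - Eu.
\end{equation*}
The principal term $-T_{\partial_t p + V\cdot\nabla p}$ is bounded as an operator $H^m\to L^2$ by $C\M^m_0(\partial_t p + V\cdot\nabla p)$ thanks to Theorem~\ref{thm:para}(i), yielding the second piece of the target estimate.

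It remains to bound the two remainders $[T_p, T_V]\cdot\nabla$ and $E$, both of which should have norm $\lesssim \M^m_0(p)\lA V\rA_{C^{1+\eps}_*}$ from $H^m$ to $L^2$. Here the hypothesis $V \in C^{1+\eps}_*$ with $1+\eps > 1$ is crucial: Theorem~\ref{thm:para}(ii) shows that each $E_j = T_{V_j}T_{\partial_j p} - T_{V_j\partial_j p}$ gains one full derivative from the composition (because $V_j$ has strictly more than one derivative of regularity), giving $\lA E_j\rA_{H^m\to L^2} \lesssim \M^m_0(p)\lA V_j\rA_{C^{1+\eps}_*}$; similarly, the Poisson-bracket expansion shows $[T_p,T_V]$ is of order $m-1$ with a bound of the same shape, so $[T_p,T_V]\cdot\nabla$ is of order $m$ with the required control. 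The main subtlety is that $p$ is assumed only of $x$-regularity $0$, so $\partial_j p$ is distributional in $x$; the composition must therefore be handled via the extension of the paradifferential calculus to symbols of negative regularity (the last proposition of Appendix~\ref{ap:para}), after which the gain of $1+\eps$ from the regularity of $V$ exactly absorbs the $-1$ regularity of $\partial_j p$, closing the estimate.
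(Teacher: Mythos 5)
Your overall skeleton is the right one (and it is the same strategy as the original proof in Alazard--Burq--Zuily, which this paper only quotes without proof): peel off $[T_p,\partial_t]u=-T_{\partial_t p}u$, recombine it with the paraproduct piece so that only the transported symbol $\partial_t p+V\cdot\nabla p$ appears under a single paradifferential operator, and then show that what is left over is of order $m$ with norm $\mathcal{M}^m_0(p)\Vert V\Vert_{C^{1+\eps}_*}$. The problem is that the two leftover estimates, namely
$\Vert [T_p,T_{V_j}]\partial_j u\Vert_{L^2}\lesssim \mathcal{M}^m_0(p)\Vert V\Vert_{C^{1+\eps}_*}\Vert u\Vert_{H^m}$ and
$\Vert (T_{V_j}T_{\partial_j p}-T_{V_j\partial_j p})u\Vert_{L^2}\lesssim \mathcal{M}^m_0(p)\Vert V\Vert_{C^{1+\eps}_*}\Vert u\Vert_{H^m}$,
are precisely where the whole difficulty of the lemma sits, and you justify them by results that do not apply. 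Theorem~\ref{thm:para}$(ii)$ requires \emph{both} symbols to belong to $\Gamma^{\cdot}_\rho$ with $\rho\in[0,1]$, and its bound involves $\mathcal{M}^m_\rho(a)$ for \emph{each} factor; here $p$ has only $L^\infty$ regularity in $x$ (only $\mathcal{M}^m_0(p)$ is controlled), so applied to $[T_p,T_{V_j}]$ the theorem gives an operator of order $m$ with no gain, not order $m-1$, and applied to $T_{V_j}T_{\partial_j p}-T_{V_j\partial_j p}$ it does not apply at all since $\partial_j p\in\Gamma^m_{-1}$ has negative regularity. The proposition on symbols of negative regularity that you invoke only asserts boundedness of a single operator $T_a$ (of order $m-\rho$); it contains no composition or commutator calculus, so it cannot produce the gain of one derivative you need. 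A Poisson-bracket expansion is likewise unavailable, since any symbolic expansion beyond the zeroth term consumes $x$-regularity of $p$, which is exactly what is missing.

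These commutator gains are true, but proving them is the actual content of the lemma: one has to work directly with the spectral localization built into the paradifferential quantization (e.g.\ commute $V$, or rather its low-frequency truncations, past the dyadic blocks, use $\Vert[S_k V,\Delta_j]\Vert\lesssim 2^{-j}\Vert\nabla V\Vert_{L^\infty}$-type bounds, and sum over frequencies), and it is in this summation that the hypothesis $V\in C^{1+\eps}_*$ with $\eps>0$ -- strictly better than Lipschitz -- is used to avoid a logarithmic divergence. Your write-up flags this regularity as ``crucial'' but never exhibits the mechanism; as written, the proof reduces the lemma to two estimates that are not in the quoted toolbox and are essentially equivalent to the lemma itself. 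To repair it you would need to either prove an asymmetric composition/commutator estimate (regular function times rough-in-$x$ symbol, with gain governed by the regularity of the function alone) or reproduce the frequency-localized argument of the original reference.
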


\subsection{Parabolic evolution equation}
Given~$I\subset\R$, let~$\Gamma^m_\rho(I\times\R^d)$ be the set of symbols $a(z,x,\xi)$ satisfying
\begin{equation*}
\mathcal{M}^m_\rho(a)=\sup_{z\in I}
\sup_{\la\alpha\ra\le \frac{3d}{2}+\rho+1 ~}\sup_{\la\xi\ra \ge 1/2~}
\lA (1+\la\xi\ra)^{\la\alpha\ra-m}\partial_\xi^\alpha a(z;\cdot,\xi)\rA_{W^{\rho,\infty}(\R^d)}<+\infty.
\end{equation*}
We need to study the parabolic evolution equation 
\begin{equation*}
\partial_z w + T_p w =f,\quad w\arrowvert_{z=z_0}=w_0,
\end{equation*}
with an elliptic symbol $p\in\Gamma^m_\rho(I\times\R^d)$, which means that 
\begin{equation} \label{eq:ellipcnd}
\ p(z;x,\xi) \geq c \la\xi\ra,
\end{equation}
for some positive constant~$c$.

Define for $\mu\in\R$ the spaces
\begin{equation} \label{eq:parabspaces}
\begin{aligned}
  X^\mu(I)&=L^\infty_z(I;H^\mu(\R^d))\cap L^2_z(I;H^{\mu+\frac{1}{2}}(\R^d)),\\
  Y^\mu(I)&=L^1_z(I;H^\mu(\R^d))+L^2_z(I;H^{\mu-\frac{1}{2}}(\R^d)).
\end{aligned}
\end{equation}
We will use the following proposition from~\cite{AlazardBurqZuilyExw/ST}.
\begin{prop}[Proposition~2.18 of~\cite{AlazardBurqZuilyExw/ST}] \label{prop:parabsob}
Let~$\rho\in (0,1)$,~$J=[z_0,z_1]\subset\R$, 
$p\in \Gamma^{1}_{\rho}(\R^d\times J)$ 
with the assumption that
$$
\RE p(z;x,\xi) \geq c \la\xi\ra,
$$
for some positive constant~$c$. Assume that ~$w$ solves
\begin{equation*}
\partial_z w + T_p w =F,\quad w\arrowvert_{z=z_0}=w_0.
\end{equation*}
Then we have
\begin{equation*}
\lA w \rA_{X^r}\leq K\left\{\lA w_0\rA_{H^r}+
\lA F\rA_{Y^r}\right\},
\end{equation*}
for some positive constant~$K$ depending only on~$r,\rho,c$ and~$\mathcal{M}^1_\rho(p)$.
\end{prop}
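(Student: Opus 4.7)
The plan is a standard paradifferential parabolic energy estimate built around the ellipticity of $p$. First I would commute the equation with $\Lambda^r := \ls D_x\rs^r$, obtaining
\begin{equation*}
\partial_z(\Lambda^r w) + T_p(\Lambda^r w) = \Lambda^r F + [\Lambda^r, T_p]w,
\end{equation*}
then take the real $L^2$ inner product with $\Lambda^r w$ to produce the energy identity
\begin{equation*}
\mez\frac{d}{dz}\lA\Lambda^r w\rA_{L^2}^2 + \RE\ls T_p\Lambda^r w,\Lambda^r w\rs_{L^2} = \RE\ls\Lambda^r F + [\Lambda^r,T_p]w,\Lambda^r w\rs_{L^2}.
\end{equation*}

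The central ingredient is a sharp Gårding-type lower bound for $T_p$. From $\RE p\geq c\ls\xi\rs$, one factors $\RE p = q\bar q$ with $q\in \Gamma^{1/2}_\rho$ (modulo symbols of lower order, absorbed in bounded remainders); the composition and adjoint rules of Theorem~\ref{thm:para} then give $T_p = T_q^* T_q + R$ with $R$ of order $1-\rho$, and the skew part $iT_{\Im p}$ contributes a self-adjoint error of order $1-\rho$ as well. Together these yield
\begin{equation*}
\RE\ls T_p u,u\rs_{L^2} \geq c'\lA u\rA_{H^{\mez}}^2 - C\lA u\rA_{L^2}^2,
\end{equation*}
with $c', C$ depending only on $c, \rho$ and $\M^1_\rho(p)$. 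Next, by the composition rule the commutator $[\Lambda^r, T_p]$ is of order $r+1-\rho$, so duality gives
\begin{equation*}
\la\ls[\Lambda^r,T_p]w,\Lambda^r w\rs\ra \les \lA w\rA_{H^{r+\mez-\rho}}\lA w\rA_{H^{r+\mez}},
\end{equation*}
and Young's inequality (combined with interpolation between $H^r$ and $H^{r+\mez}$ when $\rho<\mez$) absorbs the $H^{r+\mez}$ factor into the Gårding contribution, up to a lower-order $H^r$ term. For the source, splitting $F = F_1 + F_2$ with $F_1\in L^1_z H^r$ and $F_2\in L^2_z H^{r-\mez}$, one estimates $|\ls\Lambda^r F_1,\Lambda^r w\rs|\leq \lA F_1\rA_{H^r}\lA w\rA_{H^r}$ directly, and $|\ls\Lambda^r F_2,\Lambda^r w\rs|\leq \lA F_2\rA_{H^{r-\mez}}\lA w\rA_{H^{r+\mez}}$, again absorbed by Young into the Gårding positivity.

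Collecting all contributions produces a differential inequality of the form
\begin{equation*}
\frac{d}{dz}\lA w\rA_{H^r}^2 + c''\lA w\rA_{H^{r+\mez}}^2 \leq C\lA w\rA_{H^r}^2 + \lA F_1\rA_{H^r}\lA w\rA_{H^r} + C\lA F_2\rA_{H^{r-\mez}}^2,
\end{equation*}
which upon integration in $z$ and application of Gronwall's lemma yields simultaneously the $L^\infty_z H^r$ and $L^2_z H^{r+\mez}$ bounds that make up the $X^r$ norm, with constants depending only on $|J|, r, \rho, c$ and $\M^1_\rho(p)$. The main obstacle is the bookkeeping of the Gårding positivity: the single $\mez$-derivative of parabolic smoothing must simultaneously absorb the commutator loss (fine since $\rho>0$) and the $L^2_z H^{r-\mez}$ piece of the source (which uses exactly $\mez$ derivative), so the positive constant from Gårding must be split across both uses while leaving a strict inequality. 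A secondary point worth tracking is the rigorous justification of the manipulation for merely $X^r$-regular solutions, which is handled in the usual way by regularizing with a smoothing multiplier and passing to the limit using the uniformity of the estimate.
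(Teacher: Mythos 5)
This proposition is not proved in the paper at all---it is imported verbatim as Proposition~2.18 of~\cite{AlazardBurqZuilyExw/ST}, and the argument given there is the same G\aa rding-plus-energy scheme you describe, so your proposal is essentially the canonical proof: commute with $\langle D_x\rangle^r$, use the paradifferential G\aa rding bound $\RE\langle T_pu,u\rangle\geq c'\|u\|_{H^{1/2}}^2-C\|u\|_{L^2}^2$ (obtained from $q=\sqrt{\RE p}\in\Gamma^{1/2}_\rho$ via the composition and adjoint rules of Theorem~\ref{thm:para}, plus the routine elliptic step $\|u\|_{H^{1/2}}\lesssim\|T_qu\|_{L^2}+\|u\|_{H^{1/2-\rho}}$ through the parametrix $T_{1/q}$, which is the one ingredient you state but do not spell out), absorb the order $r+1-\rho$ commutator and the $L^2_zH^{r-1/2}$ part of the source into the parabolic gain, and conclude by Gronwall after bounding $\int\|F_1\|_{H^r}\|w\|_{H^r}\,\mathrm{d}z\leq\|F_1\|_{L^1_zH^r}\sup_z\|w\|_{H^r}$. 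The only caveat worth noting is that the Gronwall factor makes the constant depend also on $z_1-z_0$ (harmless, since in every application $J\subset[-1,0]$), and that the manipulations are justified for merely $X^r$ solutions by the regularization you mention.
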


In the following study, we will need the following Chemin-Lerner type of time-dependent spaces. See for example~\cite{BahouriCheminDanchinFourNL}.

\begin{defn}
If $s$ is any real number, and if $1\leq p,q,l\leq\infty$, we define the space $\tilde{L}^p_z(I;B^s_{q,l}(\R^d))$ as the space of 
tempered distributions $u$ such that
$$\lA u\rA_{\tilde{L}^p_z(I;B^s_{q,l}(\R^d))}:=\left(\sum_{k\in\N}2^{lks}\lA\Delta_ku\rA_{L^p_z(I;L^q)}^l\right)^\frac{1}{l}<+\infty.$$
\end{defn}
For $q=l=2$, $\tilde{L}^p_z(I;B^s_{2,2}(\R^d))=\tilde{L}^p_z(I;H^s(\R^d))$. Also for $q=l=\infty$, $\tilde{L}^p_z(I;B^s_{\infty,\infty}(\R^d))=\tilde{L}^p_z(I;C^s_*(\R^d))$.
As can be seen for example in \cite{BahouriCheminDanchinFourNL}, we have
$$\lA u\rA_{\tilde{L}^p_z(I;B^s_{q,l})}\leq\lA u\rA_{L^p_z(I;B^s_{q,l})}\text{ if }l\geq p,\qquad\lA u\rA_{\tilde{L}^p_z(I;B^s_{q,l})}\geq\lA u\rA_{L^p_z(I;B^s_{q,l})}\text{ if }l\leq p.$$
The paraproduct properties can be passed to the time dependent spaces as long as the exponents respect the conditions for H\"older inequality.

In the following, we will need the following Bernstein inequalities
\begin{lem}\label{lem:Berns}
Let $1\leq p\leq q\leq \infty, \alpha\in \N^d$. Then it holds that
\begin{gather*}
\lA\partial^\alpha S_k u\rA_{L^q}\leq C2^{kd(\frac{1}{p}-\frac{1}{q}+\la\alpha\ra)}\lA S_ku\rA_{L^p}\quad\text{ for }k\in\N,\\
\lA\Delta_ku\rA_{L^q}\leq C2^{kd(\frac{1}{p}-\frac{1}{q}+\la\alpha\ra)}\sup_{\la\beta\ra=\la\alpha\ra}\lA\partial^\beta\Delta_ku\rA_{L^p}\quad\text{ for }k\geq 1.
\end{gather*}
\end{lem}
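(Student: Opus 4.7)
The plan is to establish both inequalities by reducing them to convolution estimates, using that the Littlewood--Paley blocks are Fourier-localized so that derivatives act as multiplication by a bounded smooth function on their support.

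First I would handle the low-frequency estimate on $S_k u$. Since $\widehat{S_k u}$ is supported in a ball $\{\la\xi\ra\leq c 2^k\}$, one picks $\widetilde\varphi \in C_c^\infty(\R^d)$ equal to $1$ on the unit ball up to scale, and sets $h_k(x):=2^{kd}h(2^k x)$ with $h:=\mathcal{F}^{-1}\widetilde\varphi$, a Schwartz function. By the Fourier support, $S_k u = h_k*S_k u$, hence
\begin{equation*}
\partial^\alpha S_k u = (\partial^\alpha h_k)*S_k u,\qquad (\partial^\alpha h_k)(x)=2^{k(d+\la\alpha\ra)}(\partial^\alpha h)(2^k x).
\end{equation*}
Young's convolution inequality with $1+\tfrac{1}{q}=\tfrac{1}{r}+\tfrac{1}{p}$ (which gives $r\geq 1$ because $p\leq q$) yields $\lA\partial^\alpha S_k u\rA_{L^q}\leq\lA\partial^\alpha h_k\rA_{L^r}\lA S_k u\rA_{L^p}$, and a scaling change of variable produces $\lA\partial^\alpha h_k\rA_{L^r}=C_\alpha 2^{kd(1/p-1/q)+k\la\alpha\ra}$, which is the claimed bound.

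The second inequality follows the same strategy, but exploits that for $k\geq 1$, $\widehat{\Delta_k u}$ is supported in an annulus $\{c_1 2^k\leq \la\xi\ra\leq c_2 2^k\}$, bounded away from the origin. Using the polynomial identity
\begin{equation*}
\la\xi\ra^{2\la\alpha\ra} = \sum_{\la\beta\ra=\la\alpha\ra}\binom{\la\alpha\ra}{\beta}\la\xi^\beta\ra^2,
\end{equation*}
together with a cutoff $\widetilde\psi$ equal to $1$ on the annular support of $\widehat{\Delta_k u}$, one defines for each multi-index $\beta$ of length $\la\alpha\ra$ a symbol
\begin{equation*}
m_{k,\beta}(\xi):=\binom{\la\alpha\ra}{\beta}(-i)^{\la\alpha\ra}\frac{\widetilde\psi(2^{-k}\xi)\,\overline{\xi^\beta}}{\la\xi\ra^{2\la\alpha\ra}},
\end{equation*}
smooth and $L^1$-normalized with the appropriate scaling, and obtains the decomposition $\Delta_k u=\sum_{\la\beta\ra=\la\alpha\ra} m_{k,\beta}(D)\,\partial^\beta\Delta_k u$. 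Each $m_{k,\beta}(D)$ is a convolution with a kernel of the form $2^{k(d-\la\alpha\ra)}n_\beta(2^k\cdot)$ with $n_\beta$ Schwartz, so Young's inequality (with the same $r$ as above) combined with the trivial bound of the sum by the supremum over $\beta$ yields the desired estimate.

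The main technical point---and the place where one must be careful---is the second part: the bound would fail at $k=0$ because then $\widehat{\Delta_0 u}$ may include a neighborhood of the origin, and the symbol $\la\xi\ra^{-2\la\alpha\ra}$ becomes singular there, so the restriction $k\geq 1$ is essential to ensure the cutoff $\widetilde\psi$ can be supported in an annulus avoiding zero. Everything else amounts to bookkeeping of scaling exponents and applying Young's inequality on rescaled Schwartz kernels.
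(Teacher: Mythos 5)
Your proof is correct and is exactly the classical argument (Fourier localization, a convolution with a rescaled Schwartz kernel, then Young's inequality with the scaling $1+1/q=1/r+1/p$); the paper itself offers no proof and simply refers to the treatment in Bahouri--Chemin--Danchin, which proceeds the same way. One thing worth flagging: your computation yields the standard exponents $2^{kd(1/p-1/q)+k\la\alpha\ra}$ for the first bound and $2^{kd(1/p-1/q)-k\la\alpha\ra}$ for the second, whereas the lemma as printed has $2^{kd(1/p-1/q+\la\alpha\ra)}$ in both places; the factor $d$ on $\la\alpha\ra$ and the missing sign change in the second inequality are evidently typos in the statement, and the exponents you derive are the correct ones.
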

We will also use the following parabolic smoothing effect
\begin{lem} \label{lem:parabsmooth}
Let $\kappa>0$ and $p\in [1,\infty]$. Then there exists some $c>0$ such that for any $t>0, k\geq 1$, we have
$$\lA e^{-t\kappa\langle D_x\rangle}\Delta_ku\rA_{L^p}\leq Ce^{-ct2^k}\lA\Delta_ku\rA_{L^p},$$
where~$\jap=(I-\Delta_x)^\mez$.
\end{lem}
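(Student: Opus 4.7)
The strategy is to realize $e^{-t\kappa\jap}\Delta_k u$ as convolution with a kernel and bound that kernel in $L^1$ via Young's inequality. First, I would pick a smooth bump $\tilde\varphi\in C^\infty_c(\R^d\setminus\{0\})$ supported in an annulus $\{c_1\leq|\eta|\leq c_2\}$ and identically $1$ on the support of the Littlewood-Paley symbol $\varphi$ defining $\Delta_k$ for $k\geq 1$. Then $\Delta_k u=\tilde\Delta_k\Delta_k u$, so
$$e^{-t\kappa\jap}\Delta_k u=m_{t,k}(D_x)\Delta_k u,\qquad m_{t,k}(\xi):=\tilde\varphi(2^{-k}\xi)e^{-t\kappa\langle\xi\rangle}.$$
By Young's convolution inequality, $\lA e^{-t\kappa\jap}\Delta_k u\rA_{L^p}\leq\lA\F^{-1}m_{t,k}\rA_{L^1}\lA\Delta_k u\rA_{L^p}$, so the whole task reduces to establishing
$$\lA\F^{-1}m_{t,k}\rA_{L^1}\leq Ce^{-ct2^k}$$
with constants independent of $t>0$ and $k\geq 1$.

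Second, I would extract the exponential factor. On the support of $\tilde\varphi(2^{-k}\xi)$ one has $|\xi|\geq c_1 2^k$, hence $\langle\xi\rangle\geq c_1 2^k$ (using $k\geq 1$ to absorb the $+1$). Choose $c=\kappa c_1/2$; then
$$m_{t,k}(\xi)=e^{-ct 2^k}\cdot\tilde\varphi(2^{-k}\xi)e^{-(t\kappa/2)\langle\xi\rangle}\cdot e^{-(t\kappa/2)\langle\xi\rangle+ct 2^k},$$
and the third factor is bounded by $1$ on the support of $\tilde\varphi(2^{-k}\cdot)$, so it suffices to bound the $L^1$ norm of $\F^{-1}[\tilde\varphi(2^{-k}\xi)e^{-(t\kappa/2)\langle\xi\rangle}]$ uniformly in $t,k$.

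Third, I would unfold by scaling. Setting $\xi=2^k\eta$ and writing $K_{t,k}(x)=2^{kd}h_{t,k}(2^k x)$, one sees that
$$\lA K_{t,k}\rA_{L^1(\R^d)}=\lA h_{t,k}\rA_{L^1(\R^d)},\qquad h_{t,k}(y)=(2\pi)^{-d}\int e^{iy\cdot\eta}\tilde\varphi(\eta)e^{-(t\kappa/2)\langle 2^k\eta\rangle}\d\eta,$$
and $\tilde\varphi$ is compactly supported and smooth. I would show $\lA h_{t,k}\rA_{L^1}\leq C$ uniformly in $t,k$ by integration by parts: for each $N\geq 0$,
$$|y|^{2N}|h_{t,k}(y)|\leq C_N\sup_{|\alpha|\leq 2N}\int_{\supp\tilde\varphi}\bigl|\partial_\eta^\alpha\bigl[\tilde\varphi(\eta)e^{-(t\kappa/2)\langle 2^k\eta\rangle}\bigr]\bigr|\d\eta,$$
and each derivative pulls down a factor $(t\kappa/2)\cdot 2^k$ (from differentiating $\langle 2^k\eta\rangle$) times $e^{-(t\kappa/2)\langle 2^k\eta\rangle}$. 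On $\supp\tilde\varphi$ one has $\langle 2^k\eta\rangle\geq c_1 2^k$, so every such factor is controlled by $s^je^{-s}\leq C_j$ with $s=(t\kappa/2)\langle 2^k\eta\rangle$. Picking $N$ large enough that $(1+|y|)^{-2N}\in L^1(\R^d)$ gives $\lA h_{t,k}\rA_{L^1}\leq C$ uniformly, which combined with the factor $e^{-ct 2^k}$ above finishes the lemma.

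The only mildly delicate point is the bookkeeping in the last step: one must verify that the product of the polynomial-in-$t2^k$ coming from differentiating $e^{-(t\kappa/2)\langle 2^k\eta\rangle}$ and the residual exponential is uniformly bounded on $\supp\tilde\varphi$. This is immediate from the elementary bound $s^je^{-s}\leq C_j$, so no genuine obstacle appears; the result is really just stationary-phase / Schwartz-decay combined with a convenient factorization of the exponential.
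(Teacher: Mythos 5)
Your overall strategy (reduce to an $L^1$ bound for the convolution kernel of $\tilde\varphi(2^{-k}D_x)e^{-t\kappa\jap}$ via Young's inequality, then rescale and integrate by parts) is exactly the classical argument that the paper does not write out but delegates to its reference. However, the reduction in your second step is not valid as stated: from the factorization $m_{t,k}=e^{-ct2^k}\,g_{t,k}\,h_{t,k}$ with $g_{t,k}(\xi)=\tilde\varphi(2^{-k}\xi)e^{-(t\kappa/2)\langle\xi\rangle}$ and $h_{t,k}(\xi)=e^{-(t\kappa/2)\langle\xi\rangle+ct2^k}$, the fact that $\la h_{t,k}\ra\leq1$ on $\supp\tilde\varphi(2^{-k}\cdot)$ does not let you conclude $\lA\F^{-1}m_{t,k}\rA_{L^1}\leq e^{-ct2^k}\lA\F^{-1}g_{t,k}\rA_{L^1}$: products of symbols correspond to convolutions of kernels, and $L^1$ norms of kernels are not monotone under pointwise bounds on one symbol factor. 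Submultiplicativity does not rescue the step either: up to the constant $e^{ct2^k}$, $\F^{-1}h_{t,k}$ is a positive Poisson-type kernel of total mass $e^{-t\kappa/2}$, so $\lA\F^{-1}h_{t,k}\rA_{L^1}=e^{ct2^k-t\kappa/2}$, which is enormous for large $k$. Hence "it suffices to bound $\F^{-1}[\tilde\varphi(2^{-k}\xi)e^{-(t\kappa/2)\langle\xi\rangle}]$ in $L^1$ uniformly" does not follow from what precedes it.

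The gap is easily repaired, and all the needed estimates are already in your third step: run the integration by parts on the full symbol, i.e.\ bound $\la y\ra^{2N}\la h_{t,k}(y)\ra$ for $h_{t,k}(y)=(2\pi)^{-d}\int e^{iy\cdot\eta}\tilde\varphi(\eta)e^{-t\kappa\langle2^k\eta\rangle}\d\eta$ (exponent $t\kappa$, not $t\kappa/2$). Each derivative hitting the exponential brings down a factor of size at most $Ct\kappa2^k$, and then, pointwise on $\supp\tilde\varphi$ (where the integrand is already in absolute value), split $e^{-t\kappa\langle2^k\eta\rangle}\leq e^{-(t\kappa/2)\langle2^k\eta\rangle}\,e^{-ct2^k}$ with $c=\kappa c_1/2$; the first factor absorbs the powers of $t2^k$ via $s^je^{-s}\leq C_j$, and $e^{-ct2^k}$ comes out of the integral. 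This yields $(1+\la y\ra)^{2N}\la h_{t,k}(y)\ra\leq C_Ne^{-ct2^k}$ and hence $\lA\F^{-1}m_{t,k}\rA_{L^1}\leq Ce^{-ct2^k}$; the rest of your argument (Young's inequality and the scale invariance of the $L^1$ norm of the kernel) is correct. In short, the exponential must be extracted inside the kernel estimate, not at the level of Fourier multipliers.
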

The proof is classical (see \cite{BahouriCheminDanchinFourNL}).

\end{appendices}

\section*{Acknowledgments}
The author wishes to thank his advisor, Thomas Alazard, for proposing the subject and for many helpful discussions.

\bibliographystyle{plain}
\bibliography{ExplosionWW}

\end{document}